\documentclass{amsart}
\usepackage[english]{babel}
\usepackage{amsmath}
\usepackage{amssymb}
\usepackage{amsthm}
\usepackage{mathrsfs}
\usepackage{hyperref}
\usepackage{physics}
\usepackage[pdftex]{graphicx}
\usepackage{cleveref} 

\newtheorem{thm}{Theorem}[section]
\newtheorem{lem}[thm]{Lemma}
\newtheorem{prop}[thm]{Proposition}
\newtheorem{defn}[thm]{Definition}

\newtheorem{coro}[thm]{Corollary}

\newtheorem{rmk}[thm]{Remark}

\allowdisplaybreaks

\DeclareMathOperator{\diam}{diam}
\DeclareMathOperator{\supp}{supp}

\DeclareMathOperator{\Span}{span}

\DeclareMathOperator{\spec}{spec}

\numberwithin{equation}{section}

\title[Minimal submanifolds with multiple isolated singularities]{Minimal submanifolds with multiple isolated singularities}
\author{Bryan Dimler}
\address{Department of Mathematics, UC Irvine}
\email{\tt bdimler@uci.edu}

\begin{document}

\begin{abstract}
    We extend Smale's singular bridge principle [\emph{Ann. of Math.} \textbf{130} (1989), 603-642] for $n$-dimensional strictly stable minimal cones in $\mathbb{R}^{n+1}$ $(n \geq 7$) to arbitrary codimension and each $n \geq 3$. We then apply the procedure to copies of the Lawson-Osserman cone to produce a four dimensional minimal graph in $\mathbb{R}^7$ with any finite number of isolated singularities. 
\end{abstract}

\maketitle

\section{Introduction}

Introduced by L\'evy  in 1948, the bridge principle is the idea that it should be possible to glue minimal submanifolds by a thin bridge to produce an ``approximately minimal" submanifold, called the \emph{approximate solution}, and apply a small perturbation so that it becomes minimal (e.g. see \cite{C,L, NS1, W1}). Their stated bridge principle was quite general, but they gave a heuristic argument instead of a rigorous proof (\cite{W1}). In \cite{C}, Courant outlined a potential method of proof, and Kruskal published a proof in this spirit in the 1950's (\cite{K}). However, Kruskal's proof was later found to be incomplete (\cite{JCC}). It was not until the 1980's that the first complete proofs of the bridge principle for stable (i.e. second variation of area nonnegative) hypersurfaces were published (\cite{LM,MY}).

In 1987, Smale proved a bridge principle for minimal submanifolds in Euclidean space of arbitrary dimension and codimension by solving a fixed point problem for the stability operator $L$ acting on normal vector fields on the approximate solution (\cite{NS1}). The theorem is very broad, requiring only that the minimal submanifolds to be glued have no Jacobi fields vanishing on the boundary, and was the first published example of the bridge principle for unstable minimal submanifolds. In 1989, Smale applied their bridge principle combined with the methods in \cite{CHS} to strictly stable (i.e. $L$ positive definite) minimal hypercones to produce the first examples of (stable) minimal hypersurfaces with multiple isolated singularities (\cite{NS2}). Shortly after, White proved bridge principles for general strictly stable, unstable, and singular minimal submanifolds with arbitrary dimension and codimension using geometric measure theory (\cite{W1, W2}). However, in the singular case White required the minimal submanifolds to be glued to be uniquely area-minimizing with respect to their boundaries (e.g. as an integral current) in some open subset of the ambient space instead of being strictly stable. Aside from Smale's and White's work, the bridge principle has been extended to other settings such as constant and positive Gauss curvature surfaces (\cite{Haus}), harmonic maps between manifolds (\cite{LWD1, Mou}), and harmonic diffeomorphisms between manifolds (\cite{LWD2}). For an account of the early history of the bridge principle and its applications, see \cite{W1}. 

Overall, White's bridge principles are more general than Smale's since there is more freedom in constructing the bridges, the singularities do not need to be isolated, and they extend to arbitrary smooth ambient manifolds and critical points of smooth parametric elliptic functionals. On the other hand, in the singular case White's method does not ensure singularity preservation --- it only guarantees the existence of nearby (possibly smooth) area-minimizing minimal submanifolds with boundary coinciding with the original boundaries away from the bridges. As a consequence, Smale's singular bridge principle is well suited for constructing new examples of singular minimal submanifolds. See Sec. 7 for additional details.

In this paper, we apply Smale's singular bridge principle to produce families of strictly stable minimal graphs in Euclidean space having any finite number of isolated singularities and prescribed rates of decay to their tangent cones at their singularities. We begin by generalizing the singular bridge principle to arbitrary codimension and each dimension $n \geq 3$ (\Cref{mainthm}). Since 2-dimensional minimal cones with an isolated singularity must intersect the unit sphere along a great circle, they are flat. Hence, the dimension $n = 3$ is optimal. The main difficulties to overcome in the generalization are as follows: 
\begin{enumerate}
    \item When the dimension of the cones is $n = 3,4,5$, the mean curvature of the bridges joining them must be updated so that their $L^p$ mean curvature ($p \geq 2$) is at most $O(\epsilon^{\frac{5}{p}})$ instead of the assumed estimate $O(\epsilon^{\frac{n-1}{p}})$ when $n \geq 6$, where $\epsilon > 0$ is the width of the bridges (Sec. 8.1). This allows us to prove the estimates needed to solve for the perturbed minimal submanifold as a fixed point problem for $L$ on normal vector fields (Sec. 4). 
    \item There is no canonical normal frame for the perturbed submanifold, so a suitable frame must be developed to show it is strictly stable by comparing the spectrum of $L$ on the perturbed and approximate solutions. 
\end{enumerate}

When $n \geq 6$, preliminary bridges (i.e. the \emph{pre-bridges}) are constructed as ruled surfaces having $(n-1)$-disks of width $\epsilon$ as cross-sections that meet the boundaries of the cones tangentially (though not smoothly) at a point. We then smoothly press the pre-bridges down onto the cones in a small region (i.e. the \emph{patching region}) near their points of contact to obtain new bridges, called the \emph{$\epsilon$-bridges}, and their associated approximate solutions (Sec. 8.1.1). Since the volume of the bridges is $O(\epsilon^{n-1})$, we have significant freedom in how we attach them to the cones since we only need that their mean curvature is pointwise bounded independent of $\epsilon$ to obtain the desired $L^p$ estimates. 

For dimensions $n = 3,4,5$, we begin by carefully extending and flattening the cones onto the tangent $n$-planes lying along the radial paths extending from the vertices of the cones to the points on the extensions at maximum distance from their respective vertices. The extensions are flattened onto the $n$-planes so that their mean curvature is pointwise of order $O(\epsilon^2)$ and their volume is of order $O(\epsilon^{n-1})$ (Sec. 8.1.2). This is enough when $n = 3$, since we can position the extended cones so that they lie tangent to the same $3$-plane and connect the extensions by planar paths with smooth boundary. 

Since an $n$-dimensional Lipschitz minimal graph is smooth when $n \leq 3$ (\cite{FC}), we need a more general construction when $n = 4,5$ as we intend to build graphical bridges for each $n \geq 4$. When $n = 4,5$, we join the extended cones using the same construction for the pre-bridges in the $n \geq 6$ case, where this time the pre-bridges are built so that they smoothly attach to the tangent $n$-planes along their ends; hence, the extended cones. By perturbing the pre-bridges so that their mean curvature is zero along their center curves, we obtain new bridges having mean curvature that is pointwise $O(\epsilon)$ by a Taylor expansion. In this case, the $\epsilon$-bridges consist of two regions: the extended part of the cones and the perturbed pre-bridges. Combining the mean curvature and volume bounds on both regions gives (1) (Sec. 8.1.2). 

Point (2) is resolved by constructing an orthonormal normal frame around each point on the perturbed submanifold whose elements are small perturbations of an orthonormal normal frame on the approximate solution (Sec. 5). The perturbed frame is written as $n_\alpha + \xi_\alpha$, where $n_1,\ldots,n_{m+1}$ is the frame on the approximate solution and the $\xi_\alpha$ are vector fields in the ambient space which tend to zero in $\epsilon$ in suitable norms. This allows us to directly compare the spectrum of $L$ on the approximate solution with its spectrum on the perturbed submanifold. 

After generalizing Smale's singular bridge principle, we apply it to copies of the Lawson-Osserman cone (see Sec. 2.2.2) to produce a 4-dimensional graphical strictly stable minimal submanifold in $\mathbb{R}^7$ having any finite number of isolated singularities (Theorem \ref{gsolution}). This is achieved by first showing that, if the cones to be glued are graphical and the approximate solution is constructed so as to remain graphical, then the perturbed submanifold is a graph for small $\epsilon$ (Proposition \ref{graphical}). We then demonstrate that graphical approximate solutions can always be built from graphical cones when $n \geq 4$ using the constructions described above as our base (Sec. 6.1.1 and 6.1.2). Since the Lawson-Osserman cone is a strictly stable graph (\cite{DL, LO}), the result is immediate. To the author's knowledge, this is the first example of a minimal graph with multiple isolated singularities. In addition, this construction is sharp (\cite{D}; see also Sec. 2.4). 

\vspace{-2.8pt}

\section*{Acknowledgments}
The author would like to express their sincere gratitude to Connor Mooney and Rick Schoen for bringing this problem to their attention and for many insightful discussions on the subject of this paper. In addition, the author thanks Joshua Jordan for several helpful conversations related to this paper. The author was partially supported by NSF grant DMS-2143668 along with C. Mooney's Sloan and UCI Chancellor's Fellowships.

\vspace{-2.8pt}

\section{Preliminaries}
\subsection{Notation and Function Spaces}
Let $(M,g)$ be an $n$-dimensional smooth embedded Riemannian submanifold of $\mathbb{R}^{n+m+1}$ (with or without boundary) where $g$ is the induced metric. We will always assume $M$ is oriented. When $M$ has boundary $\partial M$, we identify $M$ with its interior. We will denote the normal bundle and tangent bundles of $M$ by $NM$ and $TM$, respectively, and will denote the fibers at $x \in M$ for each bundle by $N_xM$ and $T_xM$. As a Riemannian submanifold of $\mathbb{R}^{n+m+1}$, the induced metrics on $NM$ and $TM$ are the Euclidean dot product restricted to each bundle when they are viewed as elements of $\mathbb{R}^{n+m+1}$.

We will denote the connections on $NM$ and $TM$ by $\nabla$ and $\nabla^\prime$, respectively. Precisely, if $X,Y$ are $C^1$ sections of $TM$ and $U$ is a $C^1$ section of $NM$, then
$$
    \nabla_X U = (D_X U)^\perp \text{ and } \nabla_X^\prime Y = (D_X Y)^\top,
$$
where $D$ is the directional derivative on Euclidean space. In local coordinates $x^1, \ldots, x^n$, we have
\begin{equation*}
    D_{\pdv{x^i}} U := \pdv{U}{x^i} = U_{x^i} \text{ and } \nabla_i U := \nabla_{\pdv{x^i}} U.
\end{equation*}

For each $k = 1,2, \ldots$, we will write $\nabla^k$ for the $k$-th order covariant derivative
$$
    \nabla^k: C^\infty(NM) \overset{\nabla}{\rightarrow} C^{\infty}(T^*M \otimes NM)\overset{\nabla}{\rightarrow} \cdots \overset{\nabla}{\rightarrow} C^\infty(T^*M^{\otimes k} \otimes NM),
$$
where $C^\infty(T^*M^{\otimes k} \otimes NM)$ is the space of $C^\infty$ sections of $T^*M^{\otimes k} \otimes NM$. We can similarly define $\nabla^k$ on $C^k$ sections of $NM$. For each $k = 0 ,1,2, \ldots$, we define the space $C^k(NM)$ to be the space of sections of $NM$ whose components relative to any smooth local orthonormal frame for $NM$ are $C^k$ and satisfy
\begin{equation}
    \norm{U}_{C^k(NM)} := \sum_{j = 0}^k \sup_{x \in M} |\nabla^j U(x)| < \infty. \label{Cknorm}
\end{equation}
The spaces $C^k(NM)$ are Banach spaces with norm defined by \eqref{Cknorm}. When $M$ is compact with non-empty boundary, we will denote by $C_0^k(NM) \subset C^k(NM)$ the space of $C^k$ normal sections that vanish on $\partial M$. For each $k = 0,1,2,\ldots,$ the H\"older space $C^{k,\gamma}(NM)$ is the space of sections $U$ of $NM$ whose components relative to any smooth local orthonormal frame are $C^{k,\gamma}$ and satisfy
\begin{equation}
    \norm{U}_{C^{k,\gamma}(NM)} := \norm{U}_{C^k(NM)} + |U|_{k,\gamma,M} < \infty. \label{ckanorm}
\end{equation}
Here, the H\"older semi-norm $|\cdot|_{\gamma,M} := |\cdot|_{0,\gamma, M}$ is defined by
\begin{equation}
    |U|_{\gamma, M} := \sup_{\substack{x,y \in M \\ x \neq y}} \Bigg\{\frac{|U(x) - U(y)|}{d(x,y)^\gamma} \Bigg\}, \label{seminorm}
\end{equation}
where $d(x,y)$ is the Riemannian distance between $x,y \in M$ (see \cite{A}). The H\"older semi-norms $|U|_{k,\gamma, M}$ for $k = 1,2, \ldots$ are defined similarly, with $\nabla^k U$ replacing $U$ in \eqref{seminorm}. However, in this case $|\nabla^k U(x) - \nabla^kU(y)|$ should be understood via parallel translation (e.g. see \cite{N}). The spaces $C^{k,\gamma}(NM)$ are Banach spaces when given the norm \eqref{ckanorm}. We define $C_0^{k,\gamma}(NM)$ similarly to the $C^k$ case.

Let $\mathcal{H}^n$ represent the $n$-dimensional Hausdorff measure on $M$. For $1 \leq p < \infty$, the space $L^p(NM)$ will denote the class of Borel measurable sections of $NM$ such that 
$$
    \norm{U}_{L^p(NM)} := \Big( \int_M |U|^p \, d\mathcal{H}^n\Big)^{\frac{1}{p}} < \infty.
$$
For each $k = 0 ,1,2, \ldots$, let $\mathcal{C}_2^k(NM)$ be the subset of $C^\infty(NM)$ for which the norm
\begin{equation}
    \norm{U}_{H^k(NM)} := \sum_{j = 0}^k \Big(\int_M |\nabla^j U|^2 \, d\mathcal{H}^n\Big)^{\frac{1}{2}}  \label{snorm}
\end{equation}
is finite. The Sobolev space $H^k(NM)$ is the completion of $\mathcal{C}_2^k(NM)$ in $L^2$ for $\norm{\cdot}_{H^k}$. It is a Hilbert space when given the inner product
\begin{equation}
    \langle U, V \rangle_{H^k} := \sum_{j = 0}^k \int_M \langle \nabla^j U , \nabla^j V \rangle\, d\mathcal{H}^n. \label{sprod}
\end{equation}
If $M$ is compact with boundary $\partial M$, then the subspace $H_0^k(NM) \subset H^k(NM)$ is the closure of $C_0^\infty(NM)$ in $H^k(NM)$. When there is no confusion, we will drop the $d\mathcal{H}^n$ term in the integral expressions and will write $C^k(M)$, $L^p(M)$, etc., for the spaces $C^k(NM)$, $L^p(NM)$, and so on.

\subsubsection{Normal Laplacian and Second Fundamental Form}
Let $U$ be a $C^2$ section of $NM$ and let $e_1,\ldots,e_n$ be a smooth local orthonormal frame in a neighborhood of $x \in M$. Then
\begin{equation}
    \Delta U := \tr \nabla^2 U = \sum_{i = 1}^n (\nabla_{e_i} \nabla_{e_i} U - \nabla_{\nabla_{e_i}^\prime e_i} U). \label{laplace}
\end{equation}
The operator $\Delta$ is called the \emph{normal Laplacian}. We will also use $\nabla^\prime$ and $\Delta^\prime$ to denote the gradient and Laplace-Beltrami operator acting on $C^1$ and $C^2$ functions on $M$, respectively.

The \emph{second fundamental form} for $M$ is the symmetric $NM$-valued bilinear form defined on $X,Y \in TM$ by 
$$
    A(X,Y) := (D_{X} Y)^\perp.
$$
We will write $|A|^2$ for the length squared of the second fundamental form. The \emph{mean curvature vector} $H$ for $M$ is the trace of the second fundamental form, and $M$ is a \emph{minimal submanifold} of $\mathbb{R}^{n+m+1}$ if $H \equiv 0$.

\subsubsection{Coordinate Expressions}
Fix $x_0 \in M$, let $x^1, \ldots, x^n$ be local coordinates near $x_0$, and let $n_1, \ldots, n_{m+1}$ be a smooth local orthonormal frame for $NM$ near $x_0$. Let $U \in C^{2}(M)$ and write $U = u^\alpha n_\alpha$ for $C^2$ functions $u^\alpha$ ($\alpha = 1,\ldots, m+1$). For each $i = 1,\ldots, n$ and each $\alpha,\beta = 1,\ldots, m+1$, let $B_{\beta i}^\alpha$ be the connection terms 
$$
    B_{\beta i}^\alpha := (\nabla_i n_\beta) \cdot n_\alpha = (n_\beta)_{x^i} \cdot n_\alpha
$$  
By standard tensor calculus, we have
\begin{align*}
    \nabla U &=  u_{;j}^\alpha \,dx^j \otimes n_\alpha \text{ and }  \\
    \nabla^2 U &=  u_{;ij}^\alpha \, dx^i \otimes dx^j \otimes n_\alpha,
\end{align*}
where we have used Einstein summation and
\begin{align}
    u_{;j}^\alpha &:= u_{x^j}^\alpha + u^\beta B_{\beta j}^\alpha \label{coord1}\\
    u_{;ij}^\alpha &:= u_{x^i x^j}^\alpha + u_{x^j}^\beta B_{\beta i}^\alpha + u_{x^i}^\beta B_{\beta j}^\alpha - \Gamma_{ij}^k u_{x^k}^\alpha + \Big(B_{\beta i, j}^\alpha+ B_{\beta j}^\sigma B_{\sigma i}^\alpha 
 -\Gamma_{ij}^k B_{\beta k}^\alpha \Big)u^\beta,\label{coord2}
\end{align}
$B_{\beta i,j}^\alpha := \pdv{B_{\beta i}^\alpha}{x^j}$, and the $\Gamma_{ij}^k$ are the Christoffel symbols for $\nabla^\prime$ in the given coordinates. In addition, 
\begin{align*}
    |\nabla U|^2 &:= g^{ij} u_{;i}^\alpha u_{;j}^\alpha \\
    |\nabla^2 U|^2 &:= g^{ik}g^{jl} u_{;ij}^\alpha u_{;kl}^\alpha,
\end{align*}
where $g_{ij} := \pdv{x^i} \cdot \pdv{x^j}$, $g := (g_{ij})$, and $g^{-1} := (g^{ij})$. Going forward, Greek indices in summation will always index $1,2,\ldots, m+1$ while $i,j,k$, etc. will always index $1,2, \ldots, n$.

\subsection{Stability}
We define stability of a minimal submanifold in $\mathbb{R}^{n+m+1}$, introduce the stability operator, and provide a brief discussion of stable minimal cones.

\subsubsection{Stability Operator}
Let $M$ be an $n$-dimensional smooth compact minimal submanifold of $\mathbb{R}^{n+m+1}$ with the induced metric and boundary $\partial M$. For a normal section $U \in C_0^2(M)$, the second variation of area if we vary in the direction $U$ is
\begin{equation} 
   Q(U,U) := \int_M \big(|\nabla U|^2 - \langle U , \tilde{A}(U)\rangle ) = - \int_M \langle U ,\Delta U + \tilde{A}(U) \rangle, \label{weak}
\end{equation}
where $\tilde{A}$ is \emph{Simons' operator} on $M$. If $e_1,\ldots, e_n$ is a smooth local orthonormal frame for $TM$, then 
\begin{equation*}
    \tilde{A}(U) = \sum_{i,j = 1}^n\langle A(e_i,e_j) , U \rangle A(e_i,e_j).
\end{equation*}
The operator $L := \Delta + \tilde{A}$ is called the \emph{stability operator} and is defined on $C^2$ normal sections of $M$. It is a (formally) $L^2$ self-adjoint strongly elliptic operator on normal sections in $C_0^\infty(M)$ (\cite{Si}, Proposition 1.2.3; \cite{Sm}). $M$ is said to be \emph{stable} if $Q(U,U) \geq 0$ for all $U \in C_0^\infty(M)$. If $M$ is not compact, we require that $U$ has compact support in $M$ and vanishes on $\partial M$. A minimal submanifold $M$ is \emph{strictly stable} if $L$ is positive definite when identified with $-L$.

We will need to write $L$ in local coordinates. Let $(\Omega, \psi)$ be a smooth local parameterization (i.e. an injective immersion) for $M$ and let $n_1, \ldots, n_{m+1}$ be a smooth local orthonormal frame for $NM\lvert_{\Omega}$. If $U \in C^2(M)$, we can write $U = u^\alpha n_\alpha$ on $\Omega$. In these coordinates, Simons' operator is 
\begin{equation}
    \tilde{A}(U) = (g^{kj}g^{il} A_{kl}^\beta A_{ij}^\alpha u^\beta )n_\alpha, \label{csimons}
\end{equation}
where $A_{ij}^\alpha := \psi_{x^i x^j} \cdot n_\alpha$ and we have summed over repeated indices. Using \eqref{coord1}, \eqref{coord2}, and \eqref{csimons}, for any normal section $U \in C^2(M)$ we can write
\begin{equation}
   LU = (g^{ij}u_{x^ix^j}^\alpha  +  b_{\beta}^{i\alpha} u_{x^i}^\beta + c_{\beta}^\alpha u^\beta)n_\alpha \label{genstab}
\end{equation}
in local coordinates with respect to any local orthonormal frame $n_1,\ldots, n_{m+1}$. 

\subsubsection{Stable Minimal Cones}
Let $\mathbb{S}^{n+m}(p)$ be the unit $n+m$ sphere in $\mathbb{R}^{n+m+1}$ with center at $p$, let $\Sigma$ be a smooth compact closed embedded ($n-1$)-dimensional minimal submanifold of $\mathbb{S}^{n+m}(p)$, and let $C$ be the cone over $\Sigma$ with vertex at $p$ intersected with the closed unit ball $B_1^{n+m+1}(p)$ in $\mathbb{R}^{n+m+1}$. Then $C$ is an $n$-dimensional minimal submanifold of $\mathbb{R}^{n+m+1}$, $\partial C = \Sigma$, and $C$ has an isolated singularity at $p$ provided $\Sigma$ is not a totally geodesic $(n-1)$-sphere. In the latter case, $C$ is flat. Note that, since one dimensional minimal submanifolds of a sphere are pieces of great circles, it is appropriate to assume $n\geq 3$. We will need to work in polar coordinates on $C$:
\begin{equation*}
    x = r\omega \text{ for } x \in C, \text{ } \omega = \theta - p \text{ for } \theta \in \Sigma, \text{ and } r = |x-p|.
\end{equation*}
In these coordinates, $C$ is identified with $C \setminus \{p\}$ and we can write
$$
    C := \{p + t\omega :  t \in (0,1]\}. 
$$ 
The cone $C$ is embedded with this identification.

Defining local orthonormal frames $e_1,\ldots, e_{n-1}$ and $n_1,\ldots,n_{m+1}$ for $T\Sigma$ and $N\Sigma$, respectively, and parallel translating them down the cone, one can compute
\begin{equation}
    A_C(e_i,e_j) = r^{-1}A_\Sigma(e_i,e_j) \text{ for each } i,j = 1,\ldots, n-1, \label{scale}
\end{equation}
where $A_C$ and $A_\Sigma$ are the second fundamental forms for $C$ and $\Sigma$. Using that $\Sigma$ is minimal in $\mathbb{S}^{n+m}(p)$ and the definition of the $e_i$ via parallel transport, we obtain the polar coordinate expression
\begin{equation}
    L_C = \pdv[2]{r} + \frac{n-1}{r} \pdv{r}  +\frac{1}{r^2} L_\Sigma, \label{pstab} 
\end{equation}
where $L_\Sigma := \Delta_{\Sigma} + \tilde{A}_\Sigma$. The operator $L_{\Sigma}$ is strongly elliptic on $C^\infty(\Sigma)$ and self-adjoint with respect to $L^2(\Sigma)$ so the eigenvalues of $L_\Sigma$ are real, countable, and form an increasing sequence
$$
    \mu_1 \leq \mu_2 \leq \mu_3 \leq \cdots 
$$
with $\mu_i \rightarrow \infty$ as $i \rightarrow \infty$. We can define (strict) stability of a minimal cone in $\mathbb{R}^{n+m+1}$ as follows (see \cite{CHS, DL, Si}):

\begin{defn}\label{stabcondition}
    An $n$-dimensional minimal cone $C$ in $\mathbb{R}^{n+m+1}$ is said to be stable (strictly stable) if
    \begin{equation*}
        \frac{(n-2)^2}{4} + \mu_1 \geq 0 \text{ (likewise, } >0). 
    \end{equation*}
\end{defn}
We will write $d_0(C)$ for the quantity on the left-hand side of the inequality above. In this paper, we will be interested in strictly stable minimal cones (i.e. $d_0(C) > 0)$. In codimension one, Lawson's cones (e.g. see \cite{Law, O}) are strictly stable when $n \geq 7$ (\cite{CHS, DL, NS2}). Recently, the author and J. Lee showed that special Langrangian cones in $\mathbb{R}^{2n}$ are strictly stable when $n \geq 5$, and when $n=4$ under the additional assumption that the link is simply connected (\cite{DL}, Theorem 5.4). All coassociative cones in $\mathbb{R}^7$ were proved to be strictly stable (\cite{DL}, Theorem 6.4). 

Of particular interest to the present paper is the \emph{Lawson-Osserman cone} (see \cite{LO}). Let $\eta: \mathbb{S}^3 \rightarrow \mathbb{S}^2$ denote the Hopf map:
\begin{equation}
    \eta(z_1,z_2) = (2\overline{z_1}z_2, |z_1|^2 - |z_2|^2) \label{hopf}
\end{equation}
for $(z_1,z_2) \in \mathbb{C}^2$ with $|z_1|^2 + |z_2|^2 = 1$. After making the identifications $\mathbb{C}^2 \simeq \mathbb{R}^4$ and $\mathbb{C} \times \mathbb{R}\simeq \mathbb{R}^3$, one can show that the graph of $\frac{\sqrt{5}}{2}|x|\eta\Big(\frac{x}{|x|}\Big)$ is a four dimensional minimal cone in $\mathbb{R}^7$ using the symmetries of the Hopf map. In addition, it has an isolated singularity at the origin. The link $\Sigma$ is given by 
\begin{equation}
    \Sigma:= \Big\{\Big(\frac{2}{3}x, \frac{\sqrt{5}}{3}\eta(x)\Big) : |x| = 1\Big\}, \label{LOlink}
\end{equation}
and is a closed three dimensional minimal submanifold of $\mathbb{S}^6$. In \cite{HL}, it was shown that the Lawson-Osserman cone is coassociative; hence, strictly stable by the discussion above. In particular, it is an example of a graphical strictly stable minimal cone with an isolated singularity. 

\subsection{Approximate Solutions}
As in \cite{NS2}, strictly stable minimal cones will be the building blocks in the construction of the approximate solutions $M^\epsilon$. We refer the reader to \cite{NS1} for the construction of $M^\epsilon$ when $n \geq 6$, though a brief description of the construction has been included in Sec. 8.1.1 of the appendix for convenience. The constructions when $n = 3,4,5$ are original, but are inspired by those in \cite{NS1} when $n = 2$ (see Sec. 8.1.2). However, our constructions allow us to obtain stronger $L^p$ mean curvature estimates for the approximate solution without risking blow-up of the coefficients of $L$ on the patching regions as $\epsilon \rightarrow 0$.

We must specify what is meant by an \emph{$\epsilon$-bridge}. Let $C_1$ and $C_2$ be strictly stable cones over $\Sigma_1$ and $\Sigma_2$ and let $q_i \in \Sigma_i = \partial C_i$ for each $i = 1,2$. Going forward, $B_r^n$ will always denote a closed $n$-ball of radius $r$ in Euclidean space and $B_r$ will always denote a geodesic ball in $M^\epsilon$ with radius $r$. In addition, $D_r$ will represent a geodesic disk in $\Sigma_i$ with radius $r$. 

\begin{defn}
An $\epsilon$-bridge $\Gamma_{q_1,q_2}(\epsilon)$ from $q_1$ to $q_2$ is a one-parameter family (defined for small $\epsilon > 0$) of embedded $n$-dimensional strips (i.e. a diffeomorphic copy of $B_1^{n-1}(0) \times [0,1]$) whose ends are smoothly attached to $C_1$ and $C_2$ at the geodesic disks $D_{5\epsilon}(q_1)$ and $D_{5\epsilon}(q_2)$ in $\Sigma_1$ and $\Sigma_2$, respectively. 
\end{defn}

For each $n \geq 3$, we can assume (see Sec. 8.1.1 and Remark \ref{nbdds})
\begin{equation}
    \begin{cases}
        c_0^{-1}\epsilon \leq \diam \Gamma(\epsilon) \leq c_0 \epsilon \\
        \norm{A_{\Gamma(\epsilon)}}_{C^0(\Gamma(\epsilon))} + \norm{\nabla A_{\Gamma(\epsilon)}}_{C^0(\Gamma(\epsilon))} \leq c_0 \\
        \norm{A_{\partial \Gamma(\epsilon)}}_{C^0(\partial \Gamma(\epsilon))} \leq c_0\epsilon^{-1},
    \end{cases} \label{bridge1}
\end{equation}
where $c_0$ is a constant independent of $\epsilon$ and $\nabla$ is the covariant derivative relative to the induced metric, while $A_{\Gamma(\epsilon)}$ and $A_{\partial \Gamma(\epsilon)}$ are the second fundamental forms of $\Gamma(\epsilon):= \Gamma_{q_1,q_2}(\epsilon)$ and $\partial \Gamma(\epsilon)$, respectively.

We now let $C_1,\ldots, C_N$ ($N \geq 2$) be a collection of strictly stable cones with vertices at $p_i$  ($i = 1,\ldots, N$) and smooth links $\Sigma_i \subset \mathbb{S}^{n+m}(p_i)$. Let $\Gamma_1(\epsilon), \ldots, \Gamma_I(\epsilon)$ be a collection of $\epsilon$-bridges, where $\Gamma_l(\epsilon) := \Gamma_{q_{2l-1}, q_{2l}}(\epsilon)$ is an $\epsilon$-bridge from $q_{2l-1}$ to $q_{2l}$, $l = 1,\ldots, I$, and each $q_l$ is in some $\Sigma_i$. Assume that the cones $C_i$ have been joined by the $\Gamma_l(\epsilon)$ so that the new submanifold is connected. Then $I \geq N-1$ and the composition is a smooth submanifold with boundary (not necessarly minimal) in $\mathbb{R}^{n+m+1}$ away from its isolated singularities at the $p_i$. Denote this submanifold by $M^\epsilon$. Then
$$
    \partial M^\epsilon := \Big(\bigcup_{l = 1}^I \partial \Gamma_l(\epsilon) \cup \bigcup_{i = 1}^N \Sigma_i \Big) \setminus \bigcup_{k = 1}^{2I}  D_{5\epsilon}(q_k),
$$
and the points of intersection of the $\Sigma_i$ with the interior of the bridges $\Gamma_l(\epsilon)$ are in the interior of $M^\epsilon$. Furthermore, due to the bridge constructions in Sec. 8.1, we may assume that $M^\epsilon$ satisfies the following $L^p$ mean curvature estimates for each $p \geq 2$ and $\epsilon$ small:
\begin{equation}
    \Big( \int_{M^\epsilon} |H_0|^p  \Big)^{\frac{1}{p}} \leq c_0 \epsilon^{\frac{n-1}{p}} \text{ when } n \geq 6 \label{Hsmall}
\end{equation}
and
\begin{equation}
     \Big( \int_{M^\epsilon} |H_0|^p \,  \Big)^{\frac{1}{p}} \leq c_0 \epsilon^{\frac{5}{p}} \text{ when } n = 3,4,5, \label{Hsmall1}
\end{equation}
 where the constant $c_0$ is independent of $\epsilon$ in each case. In fact, our bridge constructions when $n = 3,4,5$ yield stronger estimates on the mean curvature than assumed in \eqref{Hsmall1}. The estimates \eqref{Hsmall} and \eqref{Hsmall1} make rigorous the notion that $M^\epsilon$ is approximately minimal. Henceforth, we will identify $M^\epsilon$ with $M^\epsilon \setminus \{p_1, \ldots, p_N\}$. Under this identification, $M^\epsilon$ is embedded.

We will write $M_\delta^\epsilon$ to denote the submanifold constructed the same as $M^\epsilon$, but with the cones $C_i$ replaced by the truncated cones $C_{i,\delta}$ where
\begin{equation}
    C_{i,\delta} := \{(1-t)p_i + t \theta: \theta \in \Sigma_i, \text{ } t \in [\delta,1]\}. \label{tcone}
\end{equation}
The submanifolds $M_\delta^\epsilon$ are smooth embedded compact submanifolds with boundary. For $0 < r_1 < r_2 < 1$ and each $i = 1 ,\ldots, N$, we set 
\begin{equation}
    S_{r_1,r_2}^i := \{x \in C_i : r_1 \leq |x - p_i| \leq r_2\} \text{ and } S_{r_1,r_2} := \bigcup_{i = 1}^N S_{r_1,r_2}^i.
\end{equation}
The submanifolds above will be helpful tools for making the necessary Schauder estimates to complete the gluing procedure.

\subsection{Main Results}
We motivate the paper in the spirit of \cite{NS1}. Let $C_c^{2,\gamma}(M^\epsilon)$ denote the $C^{2,\gamma}$ sections of $NM^\epsilon$ supported away from the vertices $p_i$ of $M^\epsilon$ ($i = 1,\ldots, N)$. Here, $\Psi \in C_c^{2,\gamma}$ may take non-zero values on $\partial M^\epsilon$, and will represent our boundary data. The space $C_c^\infty(M^\epsilon)$ is defined similarly, with smooth sections replacing $C^{2,\gamma}$ sections. We seek $U \in C^{2,\gamma}(M^\epsilon)$ such that a small normal perturbation of $M^\epsilon$ by $U$ is minimal in $\mathbb{R}^{n+m+1}$. To do so, we consider sections of $NM^\epsilon$ contained in a suitable closed subspace $\mathscr{K}(\epsilon)$ of $C^{2,\gamma}(M^\epsilon)$ consisting of those sections which decay at least quadratically near the singularities $p_i$ of $M^\epsilon$, are sufficiently small in $C^1$, and are equal to $\Psi \in C_c^{2,\gamma}(M^\epsilon)$ on $\partial M^\epsilon$ for a suitable class of $\Psi$ (in fact, we will assume $\Psi \in C_c^\infty(M^\epsilon))$.\footnote{See the weighted H\"older spaces in Sec. 3.3 and $\mathscr{K}$ at the beginning of Sec. 4.2.} For $U \in \mathscr{K}(\epsilon)$ and $\epsilon$ small, we can associate a perturbed submanifold which we will denote $M_U^\epsilon$. The submanifold $M_U^\epsilon$ is a small normal perturbation of $M^\epsilon$ which is initially $C^{2,\gamma}$ away from $N$ isolated singularities at the $p_i$. After applying a bootstrapping argument, we will update the regularity of $U$ to conclude $M_U^\epsilon$ is smooth away from the $p_i$. 

By \eqref{gest}, there is an $\epsilon_0$ depending only on $c_0$ in \eqref{bridge1} such that, if $\epsilon < \epsilon_0$ and if $(\Omega, \psi)$ is any smooth local parameterization for $M^\epsilon$ defined as in Sec. 8.1, $(\Omega, \psi + U)$ is a local parameterization for $M_U^\epsilon$. For $U \in \mathscr{K}(\epsilon)$, we can define an operator $H(U): \mathscr{K}(\epsilon)\subset C^{2,\gamma}(M^\epsilon)  \rightarrow C^{0,\gamma}(M_U^\epsilon)$ which sends $U$ to the mean curvature vector $H(U)$ for $M_U^\epsilon$. In terms of a local parameterization, we have $H(U) := \Delta_U^\prime(\psi(x) + U(x))$ where $\Delta_U^\prime$ is the Laplace-Beltrami operator on $M_U^\epsilon$ acting component-wise on $\psi + U$ as a vector in $\mathbb{R}^{n+m+1}$. Expanding this expression, we find
\begin{equation}
    H(U) = \frac{1}{\sqrt{\det g(U)}} \pdv{x^i}\Big(\sqrt{\det g(U)}g^{ij}(U)\pdv{x^j}(\psi + U)\Big), \label{HU}
\end{equation}
where $g(U) := (g_{ij}(U))$ is the metric for $M_U^\epsilon$ given by
\begin{equation*}
    g_{ij}(U) := (\psi + U)_{x^i} \cdot (\psi + U)_{x^j} \text{ and } g(U)^{-1} := (g^{ij}(U)). 
\end{equation*}
We would like to solve $H(U) = 0$ for $U \in \mathscr{K}(\epsilon)$. However, $H(U)$ is not an operator on $NM^\epsilon$. To handle this, we project $H(U)$ onto the normal bundle as in \cite{NS1}.

For each $x \in M^\epsilon$, let $\Pi_x: \mathbb{R}^{n+m+1} \rightarrow N_xM^\epsilon$ denote the orthogonal projection onto the fiber $N_x M^\epsilon$. Set $\overline{x}:= x + U(x)$. The following proposition is standard (e.g. see p. 515 in \cite{NS1}):

\begin{prop}\label{Hperp}
    If $U \in \mathscr{K}(\epsilon)$ for $\epsilon < \epsilon_0$, where $\epsilon_0$ is small and depends only on $c_0$, then for $x \in M^\epsilon$ and $V \in N_{\overline{x}}M_U^\epsilon$ we have 
    $$
        \Pi_x V(\overline{x}) = 0 \text{ if and only if } V(\overline{x}) = 0.
    $$
\end{prop}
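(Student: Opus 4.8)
The plan is to reduce the statement to a quantitative estimate on how far the fiber $N_{\overline x}M_U^\epsilon$ can tilt relative to $N_x M^\epsilon$, using the smallness of $U$ guaranteed by the definition of $\mathscr{K}(\epsilon)$ and the uniform bridge bounds \eqref{bridge1}. The ``if'' direction is trivial: if $V(\overline x)=0$ then certainly $\Pi_x V(\overline x)=0$. For the ``only if'' direction, suppose $V\in N_{\overline x}M_U^\epsilon$ with $\Pi_x V(\overline x)=0$, i.e. $V(\overline x)\in T_x M^\epsilon$ as a vector in $\mathbb{R}^{n+m+1}$. I want to show $V(\overline x)=0$, and for this it suffices to show that $N_{\overline x}M_U^\epsilon \cap T_x M^\epsilon=\{0\}$, equivalently that $T_{\overline x}M_U^\epsilon + T_x M^\epsilon$ spans all of $\mathbb{R}^{n+m+1}$ — but cleaner is to argue directly that $\Pi_x$ restricted to $N_{\overline x}M_U^\epsilon$ is injective when $\epsilon$ is small.

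The key step is to control the angle between $T_{\overline x}M_U^\epsilon$ and $T_x M^\epsilon$. In a local parameterization $(\Omega,\psi)$ as in Sec.~8.1, the tangent space $T_x M^\epsilon$ is spanned by $\psi_{x^i}$ and $T_{\overline x}M_U^\epsilon$ by $(\psi+U)_{x^i} = \psi_{x^i} + U_{x^i}$. Writing $U_{x^i}$ in terms of the covariant derivative via \eqref{coord1} and the connection terms $B_{\beta i}^\alpha$, one has a pointwise bound $|U_{x^i}| \leq C(|U| + |\nabla U|)$ where $C$ depends on $|B_{\beta i}^\alpha|$ and the metric, which by \eqref{bridge1} (bounding $|A_{\Gamma(\epsilon)}|$ and hence the normal connection) is controlled independently of $\epsilon$ on the bridges, and is trivially controlled on the cone pieces. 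Hence if $\|U\|_{C^1(M^\epsilon)}$ is smaller than a threshold $\delta_0$ depending only on $c_0$ — which is exactly what membership in $\mathscr{K}(\epsilon)$ provides for $\epsilon<\epsilon_0$ — the frames $\{(\psi+U)_{x^i}\}$ and $\{\psi_{x^i}\}$ are uniformly close, so the orthogonal projection $T_{\overline x}M_U^\epsilon \to T_x M^\epsilon$ along $N_x M^\epsilon$ is an isomorphism, and dually $\Pi_x|_{N_{\overline x}M_U^\epsilon}: N_{\overline x}M_U^\epsilon \to N_x M^\epsilon$ is an isomorphism. Injectivity then gives the claim.

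A slightly more careful way to phrase the last implication, avoiding dimension-counting subtleties at the singular points (where $M^\epsilon$ is identified with $M^\epsilon\setminus\{p_i\}$ and is a genuine embedded submanifold so there is no issue), is: pick an orthonormal basis $\nu_1,\dots,\nu_{m+1}$ of $N_{\overline x}M_U^\epsilon$; since $\nu_\alpha$ is close to lying in $N_x M^\epsilon$ (its $T_x M^\epsilon$-component has norm $O(\|U\|_{C^1})$, because the tangent planes are $O(\|U\|_{C^1})$-close), the matrix $(\Pi_x \nu_\alpha \cdot \Pi_x\nu_\beta)$ is within $O(\|U\|_{C^1})$ of the identity, hence invertible for $\epsilon<\epsilon_0$; this forces $\Pi_x$ to be injective on $N_{\overline x}M_U^\epsilon$, and since $V(\overline x)\in N_{\overline x}M_U^\epsilon$ with $\Pi_x V(\overline x)=0$ we get $V(\overline x)=0$.

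The main obstacle is not conceptual but bookkeeping: one must check that the $C^1$-smallness of $U$ transfers uniformly into a bound on $|U_{x^i}|$ in the chosen charts, i.e. that the charts $(\Omega,\psi)$ from Sec.~8.1 have derivative bounds (on $\psi$ and on the transition to an orthonormal normal frame) that are controlled by $c_0$ independently of $\epsilon$. This is where \eqref{bridge1} is essential — the uniform bound on $\|A_{\Gamma(\epsilon)}\|_{C^0} + \|\nabla A_{\Gamma(\epsilon)}\|_{C^0}$ and the diameter comparison $c_0^{-1}\epsilon \leq \diam\Gamma(\epsilon)\leq c_0\epsilon$ are precisely what allow the bridge charts to be taken with uniformly bounded geometry (after rescaling by $\epsilon$). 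Granting that, the determination of $\epsilon_0$ as depending only on $c_0$ is immediate, and the proof is a one-line linear-algebra argument. I would keep the write-up short, citing \eqref{bridge1} and the chart construction of Sec.~8.1 for the uniform frame bounds and then performing the perturbation-of-basis argument above.
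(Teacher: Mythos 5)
Your argument is correct, and since the paper does not prove this proposition itself but cites it as standard from Smale's 1987 paper, there is no alternative approach to compare against; what you give is the expected perturbation-of-tangent-planes argument. Two remarks on precision. First, the parenthetical aside that $N_{\overline{x}}M_U^\epsilon \cap T_x M^\epsilon = \{0\}$ is equivalently ``$T_{\overline{x}}M_U^\epsilon + T_x M^\epsilon$ spans all of $\mathbb{R}^{n+m+1}$'' is not the right dual statement: taking orthogonal complements yields $T_{\overline{x}}M_U^\epsilon + N_x M^\epsilon = \mathbb{R}^{n+m+1}$, and indeed $T_{\overline{x}}M_U^\epsilon + T_x M^\epsilon$ has dimension at most $2n < n+m+1$ whenever $m \geq n$, so your asserted equivalence is false. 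You discard that formulation anyway in favor of showing $\Pi_x$ is injective on $N_{\overline{x}}M_U^\epsilon$ via the Gram matrix, which is the correct route. Second, the claim that the constant $C$ in $|U_{x^i}| \leq C(|U| + |\nabla U|)$ is ``trivially controlled on the cone pieces'' is not quite right: near each vertex $p_i$ the connection terms $B_{\beta i}^\alpha$ and the second fundamental form scale like $|x-p_i|^{-1}$ (cf. \eqref{scale}, \eqref{psiest}, \eqref{nest}), so $C$ is not uniformly bounded there. What actually makes $|U_{x^i}| \leq c\big(|\nabla U| + |x-p_i|^{-1}|U|\big)$ uniformly small is that $\mathscr{K}$ imposes the decay $|U| \leq c|x-p_i|^{\nu_i}$ and $|\nabla U| \leq c|x-p_i|^{\nu_i-1}$ with $\nu_i \geq 2$; the $C^1$-smallness alone is not enough, and \eqref{bridge1} supplies uniformity only on the bridges, not near the vertices. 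Neither point affects the validity of the proof.
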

It now makes sense to define the map $H^\perp: \mathscr{K}(\epsilon) \times M^\epsilon \rightarrow N_xM^\epsilon$ by $H^\perp(U)(x) := \Pi_xH(U(x))$. Note that $H^\perp(U)$ can be viewed as an analytic mapping of Banach spaces from $\mathscr{K} \subset C^{2,\gamma}(M^\epsilon) \rightarrow C^{0,\gamma}(M^\epsilon)$. Moreover, by Proposition \ref{Hperp} we may take $\epsilon_0$ small to conclude that $H^\perp(U) = 0$ if and only if $H(U) = 0$ for $U \in \mathscr{K}(\epsilon)$ and $\epsilon < \epsilon_0$. Hence, to prove that $M_U^\epsilon$ is minimal it is enough to solve the boundary value problem 
\begin{equation*}
    H^\perp(U) = 0 \text{ on } M^\epsilon \text{ and } U = \Psi \text{ on } \partial M^\epsilon \text{ for } U \in \mathscr{K}(\epsilon), \text{ } \epsilon < \epsilon_0.
\end{equation*}
This is achieved by linearizing the mean curvature operator about the approximate solution $M^\epsilon$ as in \cite{CHS, NS1, NS2}.  

Due to the discussion above, for any $U \in \mathscr{K}(\epsilon)$ with $\epsilon$ small enough we find that $H^\perp(U)$ is well-defined and we can expand $H^\perp(U)$ in a Taylor polynomial as 
\begin{equation*}
    H^\perp(U)(x) = H_0(x) + \dv{H^\perp(tU(x))}{t} \Big\lvert_{t = 0} + \int_0^1(1-t) \dv[2]{H^\perp(tU(x))}{t} \, dt,
\end{equation*}
where $H_0$ is the mean curvature of $M^\epsilon$. It is well known (e.g. see \cite{CHS, Si, NS1, NS2}) that 
$$
    LU = \dv{H^\perp(tU)}{t} \Big\lvert_{t = 0}. 
$$
Thus, if we identify $H_0$ with $-H_0$ and set
\begin{equation}
    E(U) := -\int_0^1(1-t) \dv[2]{H^\perp(tU(x))}{t} \, dt, \label{EU}
\end{equation}
then $M_U^\epsilon$ is a minimal submanifold if and only if 
\begin{equation}\label{jacobi2}
    LU = H_0 + E(U).
\end{equation}

The idea is to invert $L$ in \eqref{jacobi2} for $U \in \mathscr{K}(\epsilon)$ and appropriate boundary data $\Psi \in C_c^\infty(M^\epsilon) \subset C_c^{2,\gamma}(M^\epsilon)$, then solve \eqref{jacobi2} as a fixed point problem using the Schauder fixed point theorem.\footnote{The Schauder fixed point theorem says that if $\mathscr{K}$ is a non-empty convex compact subset of a Banach space $\mathcal{B}$, and if $T : K \rightarrow K$ is a
continuous map, then $T$ has a fixed point (see \cite{GT}).} In the present case, this means:
\begin{equation}
    \begin{cases}
        U =  L^{-1}(H_0 + E(U)) \text{ on } M^\epsilon \\
        U = \Psi \text{ on } \partial M^\epsilon.
    \end{cases} \label{fixed}
\end{equation}
Thus, we need to study the existence and regularity of the Dirichlet problem
\begin{equation}
    \begin{cases}
        LV = F \text{ on } M^\epsilon \\
        V = \Psi \text{ on } \partial M^\epsilon
    \end{cases}\label{dp}
\end{equation}
for appropriate $F \in C_{\text{loc}}^{0,\gamma}(M^\epsilon)$. We can now state the theorems. 

Let $C_1,\ldots, C_N$ be strictly stable minimal cones with links $\Sigma_i$. Theorem \ref{mainthm} below is the natural generalization of Theorem 2.1 in \cite{NS2} to high codimension.
\begin{thm}\label{mainthm}
    Let $n \geq 3$, let $\gamma \in (0,1)$, and let $\nu_1,\ldots, \nu_N$ be any set of $N$ numbers such that $\nu_i \geq 2$ for each $i$. Then there is an $\epsilon_0 > 0$ and a constant $c > 0$ each depending only on $n$, $m$, $\Sigma_1 \dots, \Sigma_N$, $c_0$, $\gamma$, $\nu_i$, $d_0(C_i)$, and $\mathcal{H}^n(M^\epsilon)$ such that, for all $\epsilon \in (0, \epsilon_0)$, there exists a unique smooth solution to \eqref{fixed} satisfying  
    \begin{equation*}
        |U(x)| \leq c|x-p_i|^{\nu_i} \text{ for each } x \in C_i, \text{ } i = 1,\ldots, N 
    \end{equation*}
    for appropriate $\Psi \in C_c^{\infty}(M^\epsilon)$. The perturbed submanifold $M_U^\epsilon$ is smooth away from $N$ isolated singularities at the points $p_i$. Furthermore, it is close to $M^\epsilon$, strictly stable, and is embedded provided $\epsilon_0$ is sufficiently small. If $n \geq 4$ and $M^\epsilon$ is constructed from graphical cones so that it is graphical, we may choose $\epsilon_0$ so that $M_U^\epsilon$ is graphical. 
\end{thm}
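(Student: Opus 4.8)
The plan is to solve the fixed point problem \eqref{fixed} by first setting up an $\epsilon$-uniform linear theory for the Dirichlet problem \eqref{dp}, then running the Schauder fixed point theorem for $T(U):=L^{-1}(H_0+E(U))$ (with $E$ as in \eqref{EU}) on a closed convex set $\mathscr{K}(\epsilon)$ of normal sections, and finally extracting from the resulting fixed point $U$ the asserted regularity, decay, strict stability, embeddedness, and graphicality.

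\textbf{The linear theory.} Near each vertex $p_i$ the approximate solution $M^\epsilon$ coincides with the cone $C_i$, where by \eqref{pstab} $L$ has the separated form $\partial_r^2+\tfrac{n-1}{r}\partial_r+r^{-2}L_{\Sigma_i}$, so its indicial roots are $\tfrac{1}{2}\bigl(-(n-2)\pm\sqrt{(n-2)^2+4\mu_k}\bigr)$; the hypothesis $d_0(C_i)>0$ forces the spectral gap that makes $L_{C_i}$ invertible on functions decaying like $r^{\nu_i}$, $\nu_i\ge2$ (no Jacobi field at that rate), and in fact yields a weighted Hardy inequality $Q_{C_i}(V,V)\ge c\,d_0(C_i)\int r^{-2}V^2$. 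Working in weighted Hölder spaces $C^{k,\gamma}_\nu(M^\epsilon)$, $\nu=(\nu_1,\dots,\nu_N)$, whose norms prescribe decay $|x-p_i|^{\nu_i}$ at the singularities --- with $\mathscr{K}(\epsilon)$ the intersection of a small ball in such a space (smallness measured in a slightly stronger norm so the set is compact in $C^{2,\gamma}_\nu$) with $\{U=\Psi\text{ on }\partial M^\epsilon\}$ --- I would prove that $L$ is invertible from $C^{2,\gamma}_\nu(M^\epsilon)$ onto $C^{0,\gamma}_\nu(M^\epsilon)$ with operator norm bounded independently of small $\epsilon$. This combines rescaled Schauder estimates near the $p_i$ built from the conical model; interior and boundary Schauder estimates on the cone pieces $S_{r_1,r_2}$ and on the bridges, where \eqref{bridge1} keeps the coefficients $b^{i\alpha}_\beta,c^\alpha_\beta$ in \eqref{genstab} pointwise bounded uniformly in $\epsilon$ (the bridges shrink but have bounded $|A|$); and an $\epsilon$-uniform coercivity estimate for $Q_{M^\epsilon}$, proved by contradiction (a sequence $V_j$ on $M^{\epsilon_j}$ with $\|V_j\|=1$, $Q(V_j,V_j)\to0$ would, since the bridges carry volume $O(\epsilon^{n-1})\to0$ and $n\ge3$, converge to a nontrivial limit on $\bigsqcup_i C_i$ with $Q\le0$, contradicting strict stability). \emph{Establishing this uniform linear theory --- the simultaneous control near the conical singularities and across the degenerating bridges --- is the main obstacle.}

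\textbf{The fixed point.} From \eqref{EU}, $E(U)$ is a convergent sum of terms at least quadratic in $U,\nabla U,\nabla^2U$ with coefficients smooth in the (uniformly bounded) geometry of $M^\epsilon$, so the $C^1$-smallness built into $\mathscr{K}(\epsilon)$ yields $\|E(U)\|_{C^{0,\gamma}_\nu}\le c\|U\|_{C^{2,\gamma}_\nu}^2$ and the Lipschitz bound $\|E(U)-E(V)\|_{C^{0,\gamma}_\nu}\le c(\|U\|+\|V\|)\|U-V\|_{C^{2,\gamma}_\nu}$, the weight exponents doubling because two factors of $r^{\nu_i}$ appear (so $E(U)$ still decays at the rate $\nu_i$). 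Since $H_0$ is supported on the patching regions and bridges, the estimates \eqref{Hsmall}, \eqref{Hsmall1} together with $L^p$ elliptic theory, Sobolev embedding with $p>n$, and the linear theory give $\|L^{-1}H_0\|\le c\,\epsilon^{\beta}$ for some $\beta>0$; here the uniform exponent $5/p$ in \eqref{Hsmall1} is exactly what makes the $n=3,4,5$ cases fit the same argument as $n\ge6$. Consequently, for $\epsilon<\epsilon_0$ the map $T$ sends the ball $\mathscr{K}(\epsilon)$ of radius $\sim\epsilon^{\beta}$ into itself, is continuous, and is compact, so the Schauder fixed point theorem produces $U\in\mathscr{K}(\epsilon)$ with $U=L^{-1}(H_0+E(U))$; the Lipschitz bound makes this $U$ unique in $\mathscr{K}(\epsilon)$. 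By Proposition \ref{Hperp} the equality $H^\perp(U)=0$ forces $H(U)=0$, i.e.\ $M_U^\epsilon$ is minimal, and bootstrapping $LU=H_0+E(U)$ with interior Schauder estimates (both sides gaining regularity away from the $p_i$, where $M^\epsilon$ is smooth) upgrades $U$ to $C^\infty(M^\epsilon\setminus\{p_i\})$; all constants track only $n,m,\Sigma_i,c_0,\gamma,\nu_i,d_0(C_i),\mathcal{H}^n(M^\epsilon)$.

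\textbf{Geometric conclusions.} The bound $|U(x)|\le c|x-p_i|^{\nu_i}$ on $C_i$ is just membership in $\mathscr{K}(\epsilon)$, and since $\nu_i\ge2>1$ the blow-ups of $M_U^\epsilon$ at $p_i$ converge to $C_i$, so the $N$ singularities genuinely persist. Embeddedness follows from $\|U\|_{C^1}\to0$ (a small normal graph over the embedded $M^\epsilon$ is embedded). For strict stability of $M_U^\epsilon$ I would, as in the resolution of point (2) of the introduction, construct around each point an orthonormal normal frame $n_\alpha+\xi_\alpha$ with $n_\alpha$ the frame on $M^\epsilon$ and $\xi_\alpha$ ambient fields tending to $0$ in $C^1$ as $\epsilon\to0$; writing $Q_{M_U^\epsilon}$ in this frame and comparing term by term with $Q_{M^\epsilon}$ shows the two quadratic forms differ by an operator of norm $o(1)$, so the uniform positivity of $Q_{M^\epsilon}$ from the linear theory passes to $Q_{M_U^\epsilon}$ for $\epsilon$ small. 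Finally, when $n\ge4$ and $M^\epsilon$ has been built from graphical cones so as to remain graphical, $M_U^\epsilon$ is a $C^1$-small normal perturbation of a graph, hence itself a graph for $\epsilon$ small by Proposition \ref{graphical}.
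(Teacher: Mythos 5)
Your outline (set up an $\epsilon$-uniform linear theory, run Schauder's fixed point theorem, then extract decay, stability, embeddedness and graphicality) tracks the paper's skeleton, and the stability/graphical/embeddedness remarks at the end are sound. But there is a genuine gap in the linear theory that makes the fixed point argument as written fail.

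You assert that the Dirichlet hypothesis $d_0(C_i)>0$ makes ``$L$ invertible from $C_\nu^{2,\gamma}(M^\epsilon)$ onto $C_\nu^{0,\gamma}(M^\epsilon)$ with operator norm bounded independently of $\epsilon$,'' with the boundary data $\Psi$ fixed once and for all. That operator does not exist for the weights under consideration. Lemma~\ref{existence} gives a unique solution $V$ of the Dirichlet problem in $C_{\mathrm{loc}}^{2,\gamma}\cap L_1^2$, but Lemma~\ref{eigenexp} shows that on each cone $C_i$ this $V$ has a Fourier expansion $V(r\omega)=\sum_j\bigl(\alpha_j^i r^{\gamma_j^i(+)}+F_j^i(r)\bigr)\eta_j^i$. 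For any $\nu_i\ge 2$ there are $J_i\ge 1$ indicial roots with $\gamma_1^i(+)\le 0$ and $\gamma_j^i(+)<\nu_i$ for $j\le J_i$, so unless $\alpha_j^i=0$ for $j\le J_i$ the solution decays strictly slower than $r^{\nu_i}$ and is \emph{not} in $C_\nu^{2,\gamma}(M^\epsilon)$. Strict stability ($d_0>0$) removes honest Jacobi fields with Dirichlet boundary data but it does not remove this $\sum_i J_i$-dimensional obstruction coming from slow-decaying homogeneous solutions; no choice of weight $\nu_i\ge 2$ avoids it, and no coercivity estimate fixes it. Consequently the map $T(U)=L^{-1}(H_0+E(U))$ with a fixed $\Psi$ does not even map into $\mathscr{K}\subset C_\nu^{2,\gamma}(M^\epsilon)$, and Schauder's theorem cannot be invoked on it.

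The paper's key device --- and the step your proposal is missing --- is to let the boundary data vary in a $\sum_i J_i$-parameter family $\Psi_\lambda$ (Proposition~\ref{psilambda}) and, for each $U\in\mathscr{K}$, to solve a \emph{separate} fixed point problem for $\lambda\in\Omega_\epsilon$ via the contraction mapping principle (Lemma~\ref{lambdastar1}, Lemma~\ref{contract}) so that the corresponding solution $V$ has $\alpha_j^i(U,\lambda(U))=0$ for all $j\le J_i$. Only then is $T(U):=V$ well-defined as a map $\mathscr{K}\to C_\nu^{2,\gamma}(M^\epsilon)$, and Proposition~\ref{n4reg} and Lemma~\ref{fixedpt2} make this quantitative. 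The continuity of $\lambda(\cdot)$ also underlies the continuity of $T$, which the Schauder argument needs. This inner contraction is precisely where the improved mean curvature estimate $\epsilon^{5/p}$ for $n=3,4,5$ is used: the contraction closes only if $|F-\overline v_\lambda|\le c\,\epsilon^\theta$ with $\theta>p_0$, and the $n\ge6$ exponent $(n-1)/p$ fails to give this margin in low dimensions. You observed that the $5/p$ estimate ``makes $n=3,4,5$ fit the same argument'' but did not identify that it enters through this boundary-data tuning, because the tuning step is absent from your proposal. Finally, a minor point: uniqueness of $U$ in the theorem comes from the uniqueness in Lemma~\ref{existence} (for the fixed $\Psi_{\lambda(U)}$), not from a Lipschitz/Banach bound --- the paper genuinely needs Schauder here, and the set $\mathscr{K}$ is engineered to be compact in a slightly weaker H\"older topology to make that work.
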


Let $\mathcal{G}$ be an $n$-dimensional Lipschitz minimal graph in $\mathbb{R}^{n+m+1}$. If $u: \Omega \subset \mathbb{R}^n \rightarrow \mathbb{R}^{m+1}$ is the defining function for $\mathcal{G}$, then the graph map $\psi(x) := (x,u(x))$ is a weak (integral) solution to the \emph{minimal surface system}:
\begin{equation}
    \begin{cases}
        \sum_{i = 1}^n \pdv{x^i}(\sqrt{g}g^{ij}) = 0, \text{ } j = 1,\ldots, n \\
        \sum_{i,j = 1}^n \pdv{x^i}(\sqrt{g}g^{ij} u_{x^j}^\alpha) = 0, \text{ } \alpha = 1, \ldots, m+1, \label{MSS}
    \end{cases}
\end{equation}
where $g$ is the usual metric on the graph of $u$ defined component-wise by
\begin{equation*}
    g_{ij} = \delta_{ij} + \sum_{\alpha = 1}^{m+1} u_{x^i}^\alpha u_{x^j}^\alpha.
\end{equation*}
When $u$ is $C^2$, the system \eqref{MSS} is equivalent to the following quasilinear elliptic system in non-divergence form:
\begin{equation}
    g^{ij}u_{x^i x^j}^\alpha = 0, \text{ } \alpha = 1,\ldots, m+1. \label{MSS2} 
\end{equation}

In general, any Lipschitz function $u$ such that the graph map $\psi$ solves \eqref{MSS} in the weak sense is said to be a \emph{stationary solution} to the minimal surface system, and $u$ is a stationary solution if and only if its graph is minimal. A stationary solution is a \emph{(strictly) stable solution} if and only if its graph is also (strictly) stable. Applying Theorem \ref{mainthm} to copies of the Lawson-Osserman cone and appealing to the discussion above leads to our most interesting result.
\begin{thm}\label{gsolution}
    Let $N \in \mathbb{N}$. Then there is a smooth compact domain $\Omega \subset \mathbb{R}^4$ and a strictly stable stationary solution $u: \Omega \rightarrow \mathbb{R}^3$ to the minimal surface system which is smooth away from $N$ isolated singularities.
\end{thm}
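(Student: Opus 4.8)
The plan is to deduce \Cref{gsolution} directly from \Cref{mainthm} applied to $N$ congruent copies of the Lawson--Osserman cone. Recall from Sec. 2.2.2 that this cone is the graph over $\mathbb{R}^4$ of $\tfrac{\sqrt5}{2}|x|\,\eta\!\big(\tfrac{x}{|x|}\big)$, a four-dimensional minimal cone in $\mathbb{R}^7$ (so $n=4$, $m=2$, $n+m+1=7$) with a single isolated singularity at the origin and smooth three-dimensional link $\Sigma\subset\mathbb{S}^6$ given by \eqref{LOlink}; by \cite{HL} it is coassociative, hence strictly stable, i.e. $d_0(C)>0$, by the discussion following \Cref{stabcondition} (see also \cite{DL,LO}). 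Thus it is an admissible building block for the gluing machinery of Sec. 2.3 and, crucially, it is graphical.

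First I would fix distinct points $p_1,\dots,p_N\in\mathbb{R}^4$ and let $C_1,\dots,C_N$ be the corresponding translated copies of the Lawson--Osserman cone, each with vertex over $p_i$ and each a graph over the unit ball $B_1^4(p_i)\subset\mathbb{R}^4$ with values in the $\mathbb{R}^3$ factor. Since $n=4\ge 4$, the constructions of Sec. 6.1.1 and 6.1.2 produce, for all small $\epsilon>0$, an approximate solution $M^\epsilon$ obtained by joining the $C_i$ with $\epsilon$-bridges in such a way that $M^\epsilon$ is itself graphical over a domain $\Omega\subset\mathbb{R}^4$ while still satisfying the bridge estimates \eqref{bridge1} and the mean curvature bound \eqref{Hsmall1}; here $\Omega$ is the union of the balls $B_1^4(p_i)$ together with thin graphical necks, which we arrange to have smooth boundary, so $\Omega$ is a smooth compact domain. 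Choosing $\nu_i=2$ for all $i$ and suitable boundary data $\Psi\in C_c^\infty(M^\epsilon)$, \Cref{mainthm} then gives, for $\epsilon$ small, a smooth section $U\in\mathscr{K}(\epsilon)$ solving \eqref{fixed} such that the perturbed submanifold $M_U^\epsilon$ is minimal, strictly stable, embedded, smooth away from $N$ isolated singularities at the $p_i$, and --- by the final sentence of \Cref{mainthm} together with \Cref{graphical} --- graphical over $\Omega$ (shrinking $\epsilon_0$ if necessary so that the normal perturbation by $U$ does not spoil the graph property).

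It then remains only to translate this geometric statement into a statement about functions. Writing $M_U^\epsilon=\grph u$ for $u:\Omega\to\mathbb{R}^3$, minimality of $M_U^\epsilon$ is precisely the assertion that $x\mapsto(x,u(x))$ is a stationary solution of the minimal surface system \eqref{MSS}, and strict stability of $M_U^\epsilon$ is the assertion that $u$ is a strictly stable solution, as recalled just before the statement of the theorem. Since the graph projection $\pi:\mathbb{R}^7\to\mathbb{R}^4$ restricts to a diffeomorphism $M_U^\epsilon\to\Omega$, and $M_U^\epsilon$ is smooth exactly away from the vertices over the $p_i$, the function $u$ is smooth away from the $N$ distinct points $p_i\in\Omega$, and at each such point it has a genuine (non-removable) singularity because its tangent cone there is the Lawson--Osserman cone. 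This yields the required $u:\Omega\to\mathbb{R}^3$ and finishes the proof.

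The main obstacle lies not in the present argument but in arranging the graphical approximate solution: one must place the copies $C_i$ and build the $\epsilon$-bridges so that the composite $M^\epsilon$ is simultaneously a graph over a smooth compact domain in $\mathbb{R}^4$ \emph{and} still obeys the pointwise second-fundamental-form bounds and the $L^p$ mean curvature estimate \eqref{Hsmall1} required by \Cref{mainthm}, and one must know --- this is \Cref{graphical} --- that the subsequent normal perturbation by the small section $U$ cannot destroy graphicality. Once those two points are in place, which is exactly what Sec. 6 and \Cref{graphical} provide, everything above is routine verification of hypotheses and unwinding of definitions.
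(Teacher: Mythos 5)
Your proposal is correct and follows essentially the same route as the paper: apply \Cref{mainthm} to $N$ copies of the (graphical, strictly stable) Lawson--Osserman cone, use the constructions in Sec.~6.1 to make the approximate solution $M^\epsilon$ graphical, and invoke \Cref{graphical} to conclude that $M_U^\epsilon$ remains a graph, then read off the stationary solution. The extra remarks you add (choosing $\nu_i=2$, arranging $\Omega$ to have smooth boundary, identifying the tangent cone at each $p_i$ as the Lawson--Osserman cone to see the singularities are genuine) are all consistent with what \Cref{mainthm} already guarantees.
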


In \cite{D}, it was shown that the singular set of any stationary solution has Hausdorff dimension at most $n-4$ using Savin's Allard Theorem (\cite{Sa}) and a dimension reduction argument. As a consequence, Theorem \ref{gsolution} is sharp. We further remark that if $n \geq 4$, Theorem \ref{mainthm} and the graphical bridge constructions in Sec. 6 imply that given any suitable finite collection $C_1,\ldots, C_N$ of $n$-dimensional graphical strictly stable cones in $\mathbb{R}^{n+m+1}$, there is an associated infinite family of strictly stable stationary solutions with $N$ islolated singularites having prescribed rates of decay to their tangent cones (i.e. the $C_i$) at their singularities.

\section{Existence and Regularity for the Stability Operator}

In this section, we verify the results in Sec. 3 of \cite{NS2} in the high codimension case. We first derive $L^2$ and Schauder estimates for the equation $LU = F$ on $M_\delta^\epsilon$ for $U \in C^{2,\gamma}(M_\delta^\epsilon)$ and $F \in C^{0,\gamma}(M_\delta^\epsilon)$. From the estimates, we obtain existence and regularity to the Dirichlet problem for $L$ on $M^\epsilon$. Using the existence and regularity theory, we can then argue by separation of variables as in \cite{NS2} to prove an asymptotic formula for solutions to the Dirichlet problem which is essential in the proof of Theorem \ref{mainthm}. 

\subsection{Schauder Estimates}
The stability operator on functions is given by 
$$
    L^\prime := \Delta^\prime + |A|^2.
$$
To obtain $C^0$ bounds, we show that if $U \in C_{\text{loc}}^{2,\gamma}(M^\epsilon)$ solves $LU = F$ pointwise on $M^\epsilon$ for $F \in C_{\text{loc}}^{0,\gamma}(M^\epsilon)$, then $|U|$ is an $H^1(M_\delta^\epsilon)$ subsolution to the elliptic scalar equation $L^\prime |U| = - |F|$ on $M_\delta^\epsilon$ for each $\delta > 0$ and apply De Giorgi-Nash-Moser $C^0$ estimates. 

Going forward, we will say a constant $c>0$ is universal if it depends on $n$, $m$, $\Sigma_1, \ldots, \Sigma_N$, $c_0$, $\gamma$, $\nu$, $d_0(C_i)$, and $\mathcal{H}^n(M^\epsilon)$ and is independent of $\epsilon$. We will sometimes allow universal constants to depend on additional parameters, but will make it clear when this is the case. In addition, we can assume that the coefficients of \eqref{genstab} are uniformly bounded independent of $\epsilon$ on any compact subset of $M^\epsilon$ due to the bridge constructions in Sec. 8.1.
\begin{lem}\label{weaksoln}
    Suppose $F \in C_{\text{loc}}^{0,\gamma}(M^\epsilon)$ and that $U \in C_{\text{loc}}^{2,\gamma}(M^\epsilon)$ satisfies $LU = F$ pointwise on $M^\epsilon$. Then for each $\delta > 0$, we have $|U| \in H^1(M_\delta^\epsilon)$ (viewed as a space of functions) and $|U|$ is a weak subsolution to
    $$
        L^\prime|U| = -|F| \text{ on } M_\delta^\epsilon.
    $$
\end{lem}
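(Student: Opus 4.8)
The plan is to establish the subsolution property by a Kato-type inequality together with a testing argument that handles the non-smoothness of $|U|$ at its zero set. First I would observe that, away from the zero set $Z := \{|U| = 0\}$, the function $|U|$ is $C^{2,\gamma}$ on $M^\epsilon$ (since there $|U| = \sqrt{\langle U, U\rangle}$ with the inner product bounded away from $0$), and a direct computation using $LU = F$ and the definition $L = \Delta + \tilde A$ gives, pointwise on $M^\epsilon \setminus Z$,
\begin{equation*}
    \Delta' |U| = \frac{\langle \Delta U, U\rangle}{|U|} + \frac{|\nabla U|^2 |U|^2 - |\langle \nabla U, U\rangle|^2}{|U|^3} \geq \frac{\langle \Delta U, U\rangle}{|U|} = \frac{\langle F - \tilde A(U), U\rangle}{|U|},
\end{equation*}
where the inequality is Cauchy--Schwarz applied fiberwise (the numerator of the second term is $\geq 0$). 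Since $\tilde A$ is the symmetric operator $\tilde A(U) = \sum_{i,j}\langle A(e_i,e_j),U\rangle A(e_i,e_j)$, one has $\langle \tilde A(U), U\rangle = \sum_{i,j}\langle A(e_i,e_j),U\rangle^2 \leq |A|^2 |U|^2$, hence $\langle \tilde A(U),U\rangle/|U| \leq |A|^2 |U|$; combined with $\langle F, U\rangle / |U| \geq -|F|$, this yields the pointwise inequality $\Delta' |U| + |A|^2 |U| \geq -|F|$, i.e. $L'|U| \geq -|F|$, on $M^\epsilon \setminus Z$.

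Next I would promote this to a weak inequality on all of $M_\delta^\epsilon$. The first point is that $|U| \in H^1(M_\delta^\epsilon)$: on the compact manifold with boundary $M_\delta^\epsilon$, $U \in C^{2,\gamma}$ so $U \in H^1$, and $|U|$ is Lipschitz as a function of $U$, so $|U| \in H^1(M_\delta^\epsilon)$ with $|\nabla' |U|| \leq |\nabla' U|$ a.e. (and $\nabla'|U| = \langle \nabla U, U\rangle/|U|$ a.e. on $M_\delta^\epsilon \setminus Z$, while $\nabla'|U| = 0$ a.e. on $Z$). To test against a nonnegative $\varphi \in C_0^\infty(M_\delta^\epsilon)$, I would use the standard regularization $|U|_\eta := \sqrt{|U|^2 + \eta^2} - \eta$, which is $C^{2,\gamma}$ on all of $M_\delta^\epsilon$ (no zero-set issue), satisfies $|U|_\eta \to |U|$ in $H^1(M_\delta^\epsilon)$ as $\eta \to 0$, and for which the computation above gives the pointwise bound
\begin{equation*}
    \Delta' |U|_\eta \geq \frac{\langle \Delta U, U\rangle}{\sqrt{|U|^2+\eta^2}} = \frac{\langle F - \tilde A(U), U\rangle}{\sqrt{|U|^2+\eta^2}} \geq -\frac{|U|}{\sqrt{|U|^2+\eta^2}}\bigl(|F| + |A|^2|U|\bigr) \geq -|F| - |A|^2 |U|
\end{equation*}
everywhere on $M_\delta^\epsilon$. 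Multiplying by $\varphi \geq 0$ and integrating by parts (legitimate since $|U|_\eta \in C^2$ and $\varphi$ has compact support) gives
\begin{equation*}
    \int_{M_\delta^\epsilon} \Bigl( -\langle \nabla' |U|_\eta, \nabla' \varphi\rangle + |A|^2 |U|_\eta \,\varphi\Bigr) \geq -\int_{M_\delta^\epsilon}\bigl(|F| + |A|^2(|U| - |U|_\eta)\bigr)\varphi,
\end{equation*}
and letting $\eta \to 0$ (using $|U|_\eta \to |U|$ in $H^1$, $|A|^2$ and $\varphi$ bounded on the compact set $M_\delta^\epsilon$, and $|U|-|U|_\eta \to 0$ uniformly) yields exactly $\int (-\langle \nabla'|U|, \nabla'\varphi\rangle + |A|^2|U|\varphi) \geq -\int |F|\varphi$, which is the assertion that $|U|$ is a weak subsolution of $L'|U| = -|F|$ on $M_\delta^\epsilon$.

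The main obstacle is the behavior of $|U|$ on its zero set $Z$, where the Kato inequality above is only valid off $Z$ and where $|U|$ fails to be $C^2$; the $\eta$-regularization is precisely what circumvents this, at the cost of the harmless error term $|A|^2(|U| - |U|_\eta)$ which vanishes in the limit because $M_\delta^\epsilon$ is compact and $|A|$ is bounded there. A secondary technical point worth a remark is that all of this is carried out on the truncated manifold $M_\delta^\epsilon$ rather than $M^\epsilon$ itself: this is exactly so that we work on a compact manifold with boundary where $U \in C^{2,\gamma}$ globally and $|A|^2$ is bounded, avoiding the singular points $p_i$ — which is why the statement is phrased for each $\delta > 0$. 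I would also note that $|U| \in H^1(M_\delta^\epsilon)$ suffices for the De Giorgi--Nash--Moser machinery to be applied in the subsequent $C^0$ estimate, so no higher integrability of the gradient is needed here.
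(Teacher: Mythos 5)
Your proof is correct, and it takes a genuinely different route to the weak subsolution inequality than the paper does. The paper works with the truncation functions $\phi_\xi(s) = \max\{0, s-\xi\}$, mollifies them in a parameter $\tau$ so they become $C^2$ convex functions of $|U|$, tests the resulting $C^2$ superposition against a nonnegative $v \in C_0^\infty(M_\delta^\epsilon)$ and integrates by parts, then passes to the limit first in $\tau$ and then in $\xi$ using dominated convergence together with the monotone increase of $\phi_\xi'(|U|)$ to the indicator of $\{|U| > 0\}$. Your argument instead regularizes $|U|$ directly by $|U|_\eta := \sqrt{|U|^2 + \eta^2} - \eta$, which is already $C^{2,\gamma}$ globally on $M_\delta^\epsilon$, so you can compute $\Delta'|U|_\eta$ everywhere and take a single limit $\eta \to 0$. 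The two strategies are mathematically equivalent here, but yours is somewhat more streamlined: there is only one limiting parameter, no mollification step, and the Kato-type inequality $\Delta'|U|_\eta \geq \langle \Delta U, U\rangle / \sqrt{|U|^2+\eta^2}$ holds pointwise on all of $M_\delta^\epsilon$ without excising the zero set. The paper's truncation approach is the one from the Kato/Moser tradition and extends more readily when one wants level-set information (e.g. Caccioppoli estimates on super-level sets), but for the bare subsolution statement your regularization does the job cleanly. Your computations — the fiberwise Cauchy--Schwarz for the gradient terms, the bounds $\langle F, U\rangle \geq -|F||U|$ and $0 \leq \langle \tilde A(U), U\rangle \leq |A|^2|U|^2$, the $H^1$ convergence $|U|_\eta \to |U|$ via $|\nabla'|U|_\eta| \leq |\nabla U|$ and dominated convergence on the compact $M_\delta^\epsilon$, and the observation that the error term $|A|^2(|U|-|U|_\eta)\varphi$ vanishes because $0 \leq |U| - |U|_\eta \leq \eta$ — are all correct.
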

\begin{proof}
    We adapt the argument given in Lemma 4.6 in \cite{HL}. The fact that $|U| \in H^{1}(M_\delta^\epsilon)$ is well known. When $x_0 \in \{|U| > 0\}$, then $|U|$ is $C^2$ near $x_0$ so we may compute $\Delta^\prime |U|(x_0)$ directly. Let $e_1, \ldots, e_n$ be an orthonormal frame for $TM^\epsilon$ near $x_0$. Then 
    $$
        \Delta^\prime |U| = \frac{\langle \Delta U , U \rangle}{|U|} + \sum_{i = 1}^n \frac{|\nabla_{e_i} U|^2}{|U|} - \sum_{i = 1}^n \frac{\langle \nabla_{e_i} U , U\rangle^2}{|U|^3}.
    $$
    In particular,
    $$
        -\Delta^\prime|U| \leq |\Delta U| \leq |A|^2|U| +|F| \text{ at } x_0
    $$
    implying 
    $$
        \Delta^\prime |U| + |A|^2|U| \geq -|F| \text{ at } x_0.
    $$
   Hence, $|U|$ is a $C^2$ subsolution to $L^\prime w = -|F|$ on $\{|U| > 0\}$. For each $\xi > 0$, set
   $$
        \phi_\xi(s) := \max\{0, s-\xi\}.
   $$
    By writing
   $$
        \phi_\xi(s) = \frac{s-\xi + |s-\xi|}{2},
   $$
   we see that $\phi_\xi^\prime(s) = 0$ almost everywhere on $\{s \leq \xi\}$ and $\phi_\xi^\prime(s) = 1$ on $\{s > \xi\}$. Let $v \in C_0^\infty(M_\delta^\epsilon)$ be a function such that $v\geq0$ and set $\phi_{\xi,\tau} := \eta_\tau * \phi_\xi$ where $\eta_\tau$ is the standard mollifier. Using that $\phi_{\xi,\tau}$ is $C^2$ and convex and that $|U|$ is a subsolution on $\{|U| > 0\}$, we may compute
    \begin{align*}
         \int_{M_\delta^\epsilon} \langle \nabla^\prime \phi_{\xi,\tau}(|U|) , \nabla^\prime v \rangle &= \int_{M_\delta^\epsilon} \langle \phi_{\xi,\tau}^\prime(|U|) \nabla^\prime|U|, \nabla^\prime v\rangle \\
         &=\int_{M_\delta^\epsilon} \big(\langle \nabla^\prime |U|, \nabla^\prime(\phi_{\xi,\tau}^\prime(|U|)v) \rangle - \langle \nabla^\prime|U|, \nabla^\prime |U| \rangle v \phi_{\xi,\tau}^{\prime \prime}(|U|) \big)\\
         &\leq \int_{M_\delta^\epsilon} |F|\phi_{\xi,\tau}^\prime(|U|) v   + \int_{M_\delta^\epsilon} |A|^2|U|\phi_{\xi,\tau}^\prime(|U|) v 
    \end{align*}
    for $\tau$ small enough. Letting $\tau \rightarrow 0^+$, we see that 
    $$
         \int_{M_\delta^\epsilon} \langle \nabla^\prime \phi_{\xi}(|U|), \nabla^\prime v \rangle  \leq \int_{M_\delta^\epsilon} |F|\phi_{\xi}^\prime(|U|) v + \int_{M_\delta^\epsilon} |A|^2|U|\phi_{\xi}^\prime(|U|) v.
    $$
    Note that $|\nabla^\prime \phi_\xi(|U|)| = |\phi_\xi^\prime(|U|)\nabla^\prime |U|| \leq |\nabla^\prime|U||$ and $\phi_\xi^\prime(|U|)$ increases to the characteristic function $\chi_{\{|U|>0\}}$ as $\xi \rightarrow 0$. Letting $\xi \rightarrow 0$ and applying the dominated convergence theorem shows 
    $$
        \int_{M_\delta^\epsilon} \langle \nabla^\prime |U| , \nabla^\prime v \rangle \leq \int_{M_\delta^\epsilon} |F| v + \int_{M_\delta^\epsilon} |A|^2|U| v,
    $$
    so that $|U|$ is a subsolution.
\end{proof}

As a consequence of Lemma \ref{weaksoln}, we easily obtain both local and global bounds for $L$. 
\begin{coro}[Global $C^0$ Bound] \label{sest2}
    Suppose $LU = F$ on $M_\delta^\epsilon$ for some $\delta \in (0,1)$ and $U \in C^2(M_\delta^\epsilon)$. If $n \geq 4$, then for any $\tau > 0$ there is a universal constant $c := c(\tau,\delta)$ such that
    $$
        \norm{U}_{C^0(M_\delta^\epsilon)} \leq c\big(\norm{F}_{L^{\frac{n}{2} + \tau}(M_\delta^\epsilon)} + \norm{U}_{C^0( \partial M_\delta^\epsilon)} + \norm{U}_{L^2(M_\delta^\epsilon)}\big).
    $$
    If $n = 3$, then 
    $$
        \norm{U}_{C^0(M_\delta^\epsilon)} \leq c\big(\norm{F}_{L^{2}(M_\delta^\epsilon)} + \norm{U}_{C^0( \partial M_\delta^\epsilon)} + \norm{U}_{L^2(M_\delta^\epsilon)}\big)
    $$
    for some universal constant $c := c(\delta)$.
\end{coro}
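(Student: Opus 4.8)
The plan is to combine the weak subsolution property from \Cref{weaksoln} with the De Giorgi--Nash--Moser machinery for the scalar operator $L' = \Delta' + |A|^2$ on $M_\delta^\epsilon$. By \Cref{weaksoln}, $w := |U| \in H^1(M_\delta^\epsilon)$ is a nonnegative weak subsolution to $L'w = -|F|$. The first step is to reduce to a homogeneous Dirichlet situation: let $\bar w$ be the $H^1$-harmonic-type extension handling the boundary data, or more simply split $w = w_0 + h$ where $h$ is $L'$-harmonic with $h = |U|$ on $\partial M_\delta^\epsilon$ and $w_0 \in H_0^1$; the maximum principle (valid here since $L'$ has nonnegative zeroth-order coefficient only up to the fact that we are on a fixed compact manifold, so we may need to subtract off a constant or use the first Dirichlet eigenvalue) controls $\|h\|_{C^0}$ by $\|U\|_{C^0(\partial M_\delta^\epsilon)}$. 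Then $w_0$ is a subsolution with zero boundary data and right-hand side $-|F|$ plus a bounded lower-order perturbation.

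The second step is the interior-to-boundary $L^\infty$ estimate: apply the standard De Giorgi--Nash--Moser $C^0$ bound (e.g. Theorem 8.16 in \cite{GT}, adapted to the Riemannian setting, which is legitimate because $M_\delta^\epsilon$ is a fixed smooth compact manifold with coefficients of $L'$ uniformly bounded independent of $\epsilon$ on compact subsets as noted before \Cref{weaksoln}) to $w_0$. This yields
$$
    \|w_0\|_{C^0(M_\delta^\epsilon)} \leq c\big(\|U\|_{L^2(M_\delta^\epsilon)} + \|\,|F|\,\|_{L^q(M_\delta^\epsilon)}\big)
$$
where $q > n/2$ is exactly the integrability threshold required by the Moser iteration (the $L^2$ term absorbs the contribution of the $|A|^2 w$ term via $\|\,|A|^2 w\,\|_{L^2} \le c\|w\|_{L^2}$, since $|A|$ is bounded on the fixed compact set $M_\delta^\epsilon$). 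Taking $q = n/2 + \tau$ for $n \ge 4$ gives the first inequality after combining with the bound on $h$. For $n = 3$ one has $n/2 + \tau < 2$ available for any small $\tau$, but it is cleaner (and the statement asks) to just use $q = 2 \ge n/2 + \tau$, giving the second inequality; the stated constant then depends only on $\delta$ (through the geometry of $M_\delta^\epsilon$ and the uniform coefficient bounds) and the universal data.

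The main obstacle I anticipate is \emph{uniformity of the constant $c$ in $\epsilon$}, i.e. ensuring $c = c(\tau,\delta)$ genuinely does not blow up as $\epsilon \to 0$. The Moser iteration constant depends on the ellipticity constants of $L'$, the $L^\infty$ norm of the zeroth-order coefficient $|A|^2$, the Sobolev constant of $M_\delta^\epsilon$, and its volume. The danger is that the bridges $\Gamma_l(\epsilon)$ shrink and, a priori, could degenerate the Sobolev or Poincar\'e inequality; but by the bridge estimates \eqref{bridge1} the interior second fundamental form $\|A_{\Gamma(\epsilon)}\|_{C^0}$ is bounded independent of $\epsilon$, and $\mathcal{H}^n(M^\epsilon)$ is controlled, so a covering/Sobolev argument on the fixed-geometry pieces plus the thin bridges gives a uniform Sobolev constant on $M_\delta^\epsilon$ (the bridges contribute volume $O(\epsilon^{n-1})$ which only helps). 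I would make this precise by noting $M_\delta^\epsilon$ has uniformly bounded geometry away from the $p_i$ at scale $\delta$, invoke a uniform local Sobolev inequality, and patch. Once uniformity of the Sobolev and coefficient bounds is established, the De Giorgi--Nash--Moser estimate applies verbatim and the corollary follows; the role of $\delta$ is simply to stay a definite distance from the singularities so these bounds hold.
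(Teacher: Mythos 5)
Your plan to apply De Giorgi--Nash--Moser through \Cref{weaksoln} is the right idea, and you correctly identify uniformity of the Sobolev constant as a point needing care, but there are two gaps in the execution.

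The first concerns the decomposition $w = w_0 + h$ with $h$ being $L'$-harmonic and controlled by the maximum principle. The operator $L' = \Delta' + |A|^2$ has a \emph{nonnegative} zeroth-order coefficient, which is precisely the wrong sign for the maximum principle: one can easily have $L'h = 0$ with $h = 0$ on $\partial M_\delta^\epsilon$ and $h \not\equiv 0$. Your parenthetical ``we may need to subtract off a constant or use the first Dirichlet eigenvalue'' flags the problem but does not resolve it; the first Dirichlet eigenvalue of $-L'$ on $M_\delta^\epsilon$ need not be positive (the paper's \Cref{deltastable} establishes positivity only for the vector operator $L$ weighted by $h^2$, not for the scalar $L'$), and subtracting a constant does not change the sign of the coefficient. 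The paper's proof sidesteps the decomposition entirely: it views $|U|$ as a subsolution of the \emph{pure Laplacian} $\Delta' |U| \geq -|F| - |A|^2|U|$, moving $|A|^2|U|$ to the right-hand side, and invokes the boundary-value form of the estimate (GT Theorem 8.16, which requires the zeroth-order coefficient of the operator to be nonpositive --- satisfied trivially for $\Delta'$) to get $\|U\|_{C^0} \leq c(\tau)(\|F\|_{L^{n/2+\tau}} + \||A|^2|U|\|_{L^{n/2+\tau}} + \|U\|_{C^0(\partial)})$ with a constant independent of $|A|$, and only then uses the $\delta$-dependent $C^0$ bound on $|A|^2$.

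The second gap is the absorption step. You write that ``the $L^2$ term absorbs the contribution of the $|A|^2 w$ term via $\||A|^2 w\|_{L^2} \leq c\|w\|_{L^2}$,'' but for $n \geq 5$ the right-hand side of the Laplacian must lie in $L^{n/2+\tau}$ with $n/2 + \tau > 2$, so the quantity one actually has to control is $\|U\|_{L^{n/2+\tau}}$, and this is \emph{not} bounded by $\|U\|_{L^2}$ alone. The paper's proof handles this by interpolating $\|U\|_{L^{n/2+\tau}}$ between $\|U\|_{C^0}$ and $\|U\|_{L^2}$ and then applying Young's inequality $ab \leq \sigma a^p + c_\sigma b^q$ with $p = (n+2\tau)/(n-4+2\tau)$ to absorb the small multiple of $\|U\|_{C^0}$ into the left-hand side; this absorption is exactly what produces the $\|U\|_{L^2}$ term in the corollary's conclusion for $n \geq 4$. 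For $n = 3$ one has $n/2+\tau < 2$ for small $\tau$ and H\"older against the finite volume of $M_\delta^\epsilon$ suffices, which is why the paper can set $\tau = 1/4$ and avoid any absorption. Your proposal needs to be supplemented with this interpolation argument to be correct for $n \geq 4$.
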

\begin{proof}
    Since the coefficients of $L^\prime$ are uniformly bounded in $C^0$ independent of $\epsilon$ for each $n \geq 3$, $M_\delta^\epsilon$ is compact, and $M_\delta^\epsilon$ satisfies a uniform Sobolev inequality (see \cite{MS}), we may apply Lemma \ref{weaksoln}, \eqref{scale}, and \eqref{bridge1}
    to conclude:
    \begin{align*}
        \norm{U}_{C^0(M_\delta^\epsilon)} &\leq c(\tau)\big(\norm{F}_{L^{\frac{n}{2} + \tau}(M_\delta^\epsilon)} + \norm{|A|^2|U|}_{L^{\frac{n}{2} + \tau}(M_\delta^\epsilon)} + \norm{U}_{C^0( \partial M_\delta^\epsilon)}\big) \\
        &\leq c\big(\tau,\delta)(\norm{F}_{L^{\frac{n}{2} + \tau}(M_\delta^\epsilon)} + \norm{U}_{L^{\frac{n}{2} + \tau}(M_\delta^\epsilon)} + \norm{U}_{C^0( \partial M_\delta^\epsilon)}\big).
    \end{align*}
    The estimate for $n \geq 4$ follows from Young's inequality $ab \leq \sigma a^p + c_\sigma b^q$ with
    $$
        p = \frac{n + 2\tau}{n - 4 + 2\tau} \text{ and } q = \frac{p}{p-1} \text{ (e.g. see Proposition 3.2 in \cite{NS2})}.
    $$
    When $n = 3$, we choose $\tau = 4^{-1}$ and use H\"older's inequality with $p = \frac{8}{7}$ and $q = 8$ in the first and second terms on the right-hand side.
\end{proof}
\begin{coro}[Local $C^0$ Bound]\label{sest1}
    Let $K \subset M^\epsilon$ be a compact region and let $F \in C^{0,\gamma}(K)$. Suppose $U \in C^{2,\gamma}(K)$ satisfies $LU = F$ on $K$. Let $B_{2r} \subset K$ be a geodesic ball in $M^\epsilon$ of radius $2r$. Then there is a universal constant $c := c(K)$ such that
    $$
        \norm{U}_{C^0(B_r)} \leq c\big(r^{-\frac{n}{2}}\norm{U}_{L^2(B_{2r})} + r\norm{F}_{L^n(B_{2r})}\big),
    $$
\end{coro}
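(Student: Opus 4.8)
The plan is to prove the local $C^0$ bound by the same strategy used for the global bound in \Cref{sest2}: reduce to the scalar subsolution $|U|$ via \Cref{weaksoln}, then invoke a standard interior $L^\infty$--$L^2$ estimate (local De Giorgi--Nash--Moser / local boundedness for subsolutions; cf. \cite{GT}, Theorem 8.17) on the geodesic ball $B_{2r}$, being careful to track the $r$-dependence of the constants. First I would fix the geodesic ball $B_{2r} \subset K$ and recall from \Cref{weaksoln} that $|U| \in H^1(B_{2r})$ is a weak subsolution of $L'|U| = -|F|$ on $B_{2r}$, i.e. of $\Delta' w + |A|^2 w = -|F|$ in the weak sense. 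On the compact region $K$ the coefficient $|A|^2$ and the geometry of $M^\epsilon$ are controlled by a universal constant $c(K)$ (uniformly in $\epsilon$, by the bridge constructions in Sec. 8.1 together with \eqref{scale} and \eqref{bridge1}), so the ambient manifold looks, on $B_{2r}$, like a uniformly bounded-geometry piece of a Riemannian manifold, and a uniform Sobolev inequality holds there (again \cite{MS}).

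The key step is then the scaling. Writing $w := |U|$, $w$ satisfies $\Delta' w \geq -|A|^2 w - |F| \geq -c(K)(w + |F|)$ weakly on $B_{2r}$. Applying the local boundedness estimate for subsolutions of $\Delta' w \geq -\mathrm{(lower order)}$ on the ball $B_{2r}$ gives
\begin{equation*}
    \sup_{B_r} w \leq c\big(r^{-\frac{n}{2}}\norm{w}_{L^2(B_{2r})} + c(K)\,r^{2-\frac{n}{q}}\norm{w}_{L^q(B_{2r})} + r^{2-\frac{n}{q}}\norm{F}_{L^q(B_{2r})}\big)
\end{equation*}
for $q > n/2$; the powers of $r$ are dictated by parabolic/elliptic scaling (replace $x$ by $x/r$ on $B_{2r}$, noting $\|\Delta'\|$ picks up $r^{-2}$ and volume scales like $r^n$). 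Choosing $q = n$ makes the two source terms carry the factor $r^{2-1} = r$, matching the statement. The middle term $c(K) r \norm{w}_{L^n(B_{2r})}$ must be absorbed: since $r \leq \diam K$ is bounded and $B_{2r}$ has volume $\leq c(K) r^n$, Hölder gives $r\norm{w}_{L^n(B_{2r})} \leq c(K) r \cdot r^{n/n - n/2}\,\|w\|_{L^2(B_{2r})}^{?}$ — more carefully, one iterates the estimate on a sequence of shrinking balls $B_{(1+2^{-k})r}$ and uses a standard absorption/interpolation (Young's inequality on the $L^n$ norm against the $L^2$ norm plus the $\sup$, exactly as in the proof of \cite{GT} Theorem 8.17) to swallow the $w$-term into $r^{-n/2}\|w\|_{L^2(B_{2r})}$ at the cost of enlarging $c$. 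This yields $\sup_{B_r}|U| \leq c(K)\big(r^{-n/2}\|U\|_{L^2(B_{2r})} + r\|F\|_{L^n(B_{2r})}\big)$, which is the claim.

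The main obstacle is bookkeeping the $r$-dependence of the constant so that it is genuinely universal (independent of $\epsilon$ and of $r$, depending only on $K$): one must check that the Sobolev constant, the ellipticity bounds, and the $C^0$ bound on $|A|^2$ on $B_{2r} \subset K$ are all controlled by $c(K)$ uniformly in $\epsilon$ — which is exactly what the bridge constructions in Sec. 8.1, \eqref{scale}, and \eqref{bridge1} guarantee — and that the rescaling $x \mapsto x/r$ does not degrade these bounds (it does not, for $r$ bounded above). A secondary technical point, handled as in \Cref{sest2}, is that for $n = 3$ the exponent $n/2 = 3/2 < 2$ so the $L^n = L^3$ norm on $F$ already dominates the borderline exponent and the argument goes through directly; for $n \geq 4$ one uses $L^n \supset L^{n/2+\tau}$ locally (via Hölder on the bounded-volume ball $B_{2r}$) to feed the source term into the De Giorgi--Nash--Moser machinery. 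No new ideas beyond those already deployed for the global estimate are needed; the content is the careful scaling and absorption.
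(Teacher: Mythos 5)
Your proposal is correct and takes essentially the same route as the paper's: pass to the scalar subsolution $|U|$ of $L^\prime w = -|F|$ via Lemma \ref{weaksoln}, then apply Theorem 8.17 of \cite{GT} on $B_{2r} \subset K$ with the $L^2$-norm of $|U|$ and $L^n$-data for $F$ (so the source term carries the factor $r^{2 - n/n} = r$), using the uniform-in-$\epsilon$ control of $|A|^2$ and of the geometry on the compact region $K$ to keep the constant universal. The only inessential variation is that you move the zeroth-order term $|A|^2 w$ to the right-hand side and absorb it via a dyadic iteration and interpolation, whereas the paper simply treats $|A|^2$ as a bounded coefficient of the operator $L^\prime$ and lets the constant in Theorem 8.17 (which depends on the product of $\sup_K |A|^2$ and $\diam K$, both controlled by $c(K)$) absorb it directly; also note the small typo that on a bounded ball the inclusion is $L^n \subset L^{n/2+\tau}$, not the reverse, though the argument is unaffected.
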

\begin{proof}
   Since $K \subset M_\delta^\epsilon$ for some $\delta > 0$, this follows immediately from the Lemma \ref{weaksoln} and Theorem 8.17 in \cite{GT}, as well as the uniform boundedness, independent of $\epsilon$, of the coefficients of $L^\prime$ on $K$ for $n \geq 3$.
\end{proof}

We will need local and global Schauder estimates on $M_\delta^\epsilon$. The global Schauder estimates below are proved in Sec. 3 of \cite{NS1} (see Lemma 2 and Lemma 3).

\begin{prop}[Global Schauder Estimates] \label{sest3}
    Suppose $U \in C^{2,\gamma}(M_\delta^\epsilon)$ for some $\delta \in (0,1)$ and $\gamma \in (0,1)$. Let $F \in C^{0,\gamma}(M^\epsilon)$ and suppose $U$ solves the Dirichlet problem
    \begin{equation*}
        \begin{cases}
            LV = F \text{ on } M_\delta^\epsilon \\
            V = 0 \text{ on } \partial M_\delta^\epsilon.
        \end{cases}
    \end{equation*}
    Then there is a universal constant $\epsilon_0$ and a universal constant $c(\delta)$ such that, if $\epsilon < \epsilon_0$, then
    \begin{itemize}
     \itemsep = 3pt
        \item[(a)] $\norm{\nabla^k U}_{C^0(M_\delta^\epsilon)} \leq c(\delta)\big(\epsilon^{-k}\norm{U}_{C^{0}(M_\delta^\epsilon)} + \epsilon^{2-k}\norm{F}_{C^{0,\gamma}(M_\delta^\epsilon)}\big)$ for $k = 1,2$ and
        \item[(b)] $|U|_{k, \gamma ,M_\delta^\epsilon} \leq c(\delta)\big(\epsilon^{-k-\gamma}\norm{U}_{C^0(M_\delta^\epsilon)} + \epsilon^{2-k-\gamma}\norm{F}_{C^{0,\gamma}(M_\delta^\epsilon)} \big)$ for $k = 0,1,2$.
    \end{itemize}
\end{prop}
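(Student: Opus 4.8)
The plan is to run the rescaling argument of \cite{NS1} (Section 3, Lemmas 2 and 3) on $M_\delta^\epsilon$. The point is that $M_\delta^\epsilon$ decomposes into two kinds of regions living at incompatible scales. The first kind consists of the truncated cones $C_{i,\delta}$ (with the geodesic disks $D_{5\epsilon}(q_k)$ where the bridges attach removed), together with a fixed-size collar of the part of $\partial M_\delta^\epsilon$ lying in the cones --- i.e.\ the spheres $|x-p_i|=\delta$ and the untouched portions of the $\Sigma_i$. On this kind, \eqref{scale} and $\norm{A_{C_i}}_{C^0}\le c\,|x-p_i|^{-1}\le c\,\delta^{-1}$ control the metric and the coefficients $b_\beta^{i\alpha},c_\beta^\alpha$ of $L$ from \eqref{genstab} in $C^{0,\gamma}$ uniformly in $\epsilon$ (with constants depending on $\delta$). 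The second kind consists of the bridge-plus-patching pieces $\Gamma_l(\epsilon)$, whose geometry lives at scale $\epsilon$ by \eqref{bridge1}. On each region I would apply the standard interior and boundary $C^{2,\gamma}$ Schauder estimate for $LU=F$, $U=0$ on $\partial M_\delta^\epsilon$; since the principal part of $L$ in \eqref{genstab} is $g^{ij}\partial_{x^i}\partial_{x^j}$ times the identity, scalar Schauder theory applies componentwise with the lower-order coupling absorbed perturbatively (cf.\ \cite{GT}, Ch.\ 6). The $\epsilon$-weights in (a) and (b) then come from rescaling on the bridge pieces and from interpolation on the cone pieces.

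On the cone pieces, a finite collection of coordinate charts whose cardinality is independent of $\epsilon$, with geometry controlled purely in terms of $\delta$, gives $\norm{U}_{C^{2,\gamma}}\le c(\delta)(\norm{U}_{C^0}+\norm{F}_{C^{0,\gamma}})$ there. To bring this to the stated form I would interpolate, e.g.\ $\norm{\nabla^k U}_{C^0}\le\sigma\norm{\nabla^2 U}_{C^0}+c\,\sigma^{-1}\norm{U}_{C^0}$ and the analogue for the Hölder seminorms, and take the interpolation parameter $\sigma$ to be the appropriate power of $\epsilon$, absorbing the residual cross terms with a weighted Young inequality; since $\gamma<1$ and $\epsilon^{-k}\ge1$ this reproduces (a) and (b) on the cone pieces.

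On the bridge pieces, I would fix $\Gamma_l(\epsilon)$ with its two patching regions, a fixed-size collar into the adjacent cones, and the portion $\partial\Gamma_l(\epsilon)$ of $\partial M_\delta^\epsilon$ it carries, and dilate this region by the factor $\epsilon^{-1}$ to obtain $\hat\Gamma_l$ with induced metric $\epsilon^{-2}g$. By \eqref{bridge1}, $\diam\hat\Gamma_l\in[c_0^{-1},c_0]$, $\norm{A_{\hat\Gamma_l}}_{C^0}\le c_0\epsilon$, $\norm{\nabla A_{\hat\Gamma_l}}_{C^0}\le c_0\epsilon^2$, and $\norm{A_{\partial\hat\Gamma_l}}_{C^0}\le c_0$; hence $\hat\Gamma_l$ has $C^{1,\gamma}$ geometry bounded uniformly in $\epsilon$, the coefficients of $\hat L:=\epsilon^2 L$ are bounded in $C^{0,\gamma}(\hat\Gamma_l)$ uniformly in $\epsilon$, and for $\epsilon<\epsilon_0$ there are uniform charts on $\hat\Gamma_l$ in which $\hat L$ is uniformly elliptic. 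The equation becomes $\hat L U=\epsilon^2 F$, and the boundary Schauder estimate along $\partial\hat\Gamma_l$ (where $U=0$) together with the interior estimate along the collar where $\hat\Gamma_l$ meets a cone piece (bounding $\norm{U}_{C^0}$ on the enlarged region by $\norm{U}_{C^0(M_\delta^\epsilon)}$) give $\norm{U}_{C^{2,\gamma}(\hat\Gamma_l)}\le c(\norm{U}_{C^0(M_\delta^\epsilon)}+\epsilon^2\norm{F}_{C^{0,\gamma}(M_\delta^\epsilon)})$, using $\norm{\epsilon^2 F}_{C^{0,\gamma}(\hat\Gamma_l)}=\epsilon^2\norm{F}_{C^0(\Gamma_l)}+\epsilon^{2+\gamma}|F|_{\gamma,\Gamma_l}\le\epsilon^2\norm{F}_{C^{0,\gamma}(\Gamma_l)}$ since $\epsilon<1$. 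Undoing the dilation, $\norm{\nabla^k U}_{C^0(\Gamma_l)}=\epsilon^{-k}\norm{\nabla^k U}_{C^0(\hat\Gamma_l)}$ and $|U|_{k,\gamma,\Gamma_l}=\epsilon^{-k-\gamma}|U|_{k,\gamma,\hat\Gamma_l}$, which turns the previous bound into precisely (a) and (b) on $\Gamma_l(\epsilon)$. Finally, since $N$ and the number of bridges are fixed, $M_\delta^\epsilon$ is a union of finitely many such pieces with overlaps contained in the collars, so the local estimates combine into the global ones; $\epsilon_0$ is fixed by requiring the rescaled charts to be well defined and $\hat L$ to be a small perturbation of a constant-coefficient operator on them.

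I expect the main obstacle to be the bookkeeping in the bridge-rescaling step: checking that \eqref{bridge1} --- in particular the bound on $\norm{\nabla A_{\Gamma(\epsilon)}}_{C^0}$, which after rescaling controls the Hölder norms of $b_\beta^{i\alpha}$ and $c_\beta^\alpha$ --- really yields uniform $C^{0,\gamma}$ control of the entire coefficient array of $\hat L$, and that the interior estimate across the cone--bridge interface meshes with the boundary estimate along $\partial\Gamma_l(\epsilon)$ so that no power of $\epsilon$ is lost. The cone-piece interpolation is routine, but $\sigma$ must be an exact power of $\epsilon$ so that the two families of local estimates carry matching weights, and one should track throughout that all constants depend on $\delta$ only through the curvature bound $c\,\delta^{-1}$.
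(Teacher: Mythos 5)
Your proposal is correct and follows the same route the paper relies on: the paper gives no proof here but cites Lemmas~2 and~3 of Section~3 of \cite{NS1}, and those are exactly the two-regime decomposition you describe — a $\delta$-controlled estimate on the truncated cones (then interpolated down to $\epsilon$-scale, which you correctly observe is needed for $k=0,1$ but not for $k=2$), combined with an $\epsilon^{-1}$ dilation on the bridge pieces that converts \eqref{bridge1} into uniform $C^{0,\gamma}$ coefficient and $C^{2,\gamma}$ boundary control for $\hat L=\epsilon^2 L$, after which undoing the dilation supplies the stated powers of $\epsilon$. The bookkeeping concern you flag is real but benign: the bridge is built as a smooth ruled surface so that \eqref{psiest}--\eqref{nest} (with the $l=1$ bounds $\delta$-independent) already give the uniform coefficient control of the rescaled operator that the argument needs.
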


Using the local Schauder estimates in Sec. 8.2 for linear elliptic systems, we can prove local Schauder estimates for $L$ on $M^\epsilon$. To do so, we need some interpolation inequalities for $C^{2,\gamma}$ functions on a closed $n$-ball $B_r^n$ in Euclidean space: 
\begin{align}
    \sum_{i,j} \norm{u_{x^i x^j}}_{C^0(B_r^n)} &\leq r^\gamma \sum_{i,j} |u_{x^i x^j}|_{\gamma,B_r^n} + cr^{-2}\norm{u}_{C^0(B_r^n)}, \label{i1} \\
    \sum_i|u_{x^i}|_{\gamma, B_r^n} &\leq r\sum_{i,j}|u_{x^i x^j}|_{\gamma,B_r^n} + cr^{-1-\gamma}\norm{u}_{C^0(B_r^n)}, \label{i2}\\
    \sum_i \norm{u_{x^i}}_{C^0(B_r^n)} &\leq r^{1+\gamma}\sum_{i,j} |u_{x^i x^j}|_{\gamma, B_r^n} + c r^{-1}\norm{u}_{C^0(B_r^n)}, \label{i3}\\
    |u|_{\gamma,B_r^n} &\leq r^2 \sum_{i,j}|u_{x^i x^j}|_{\gamma,B_r^n} + c r^{-\gamma} \norm{u}_{C^0(B_r^n)}. \label{i4}
\end{align}
For the details, see \cite{GT}.
\begin{prop}[Local Schauder Estimates] \label{sest4}
    Let $K$ be a compact region contained in the interior of $M^\epsilon$, let $B_{2r} \subset K$ be a geodesic ball with $r \in (0, 1]$, and let $F \in C^{0,\gamma}(K)$. Suppose $U \in C^{2,\gamma}(K)$ satisfies $LU = F$ on $K$. Then there is a universal constant $c:=c(K)$ such that
    \begin{itemize}
     \itemsep = 3pt
        \item[(a)] $\norm{\nabla^k U}_{C^0(B_r)} \leq c\big(r^{-k}\norm{U}_{C^0(B_{2r})} + r^{2-k}\norm{F}_{C^{0,\gamma}(B_{2r})}\big)$ for $k=1,2$, and 
        \item[(b)] $|U|_{k,\gamma, B_r} \leq c\big( r^{-k-\gamma}\norm{U}_{C^0(B_{2r})} + r^{2-k-\gamma}\norm{F}_{C^{0,\gamma}(B_{2r})}\big)$ for $k=0,1,2$.
    \end{itemize}
\end{prop}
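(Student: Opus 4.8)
The plan is to transfer the classical interior Schauder estimates for second-order linear elliptic systems in Euclidean balls (stated in Sec.~8.2) to geodesic balls in $M^\epsilon$ by working in a fixed smooth local parameterization. Since $B_{2r} \subset K$ lies in a compact region of the interior, I would first cover $B_{2r}$ by finitely many coordinate charts $(\Omega, \psi)$ of the type constructed in Sec.~8.1, in which the metric $g_{ij}$, its inverse $g^{ij}$, the Christoffel symbols $\Gamma_{ij}^k$, and the connection coefficients $B_{\beta i}^\alpha$ for a fixed smooth orthonormal normal frame $n_1,\dots,n_{m+1}$ are all bounded in $C^{0,\gamma}$ by a constant depending only on $K$ (hence universal, and crucially \emph{independent of $\epsilon$} by the bridge constructions). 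Writing $U = u^\alpha n_\alpha$, the equation $LU = F$ becomes, via \eqref{genstab}, the linear elliptic system $g^{ij} u^\alpha_{x^i x^j} + b^{i\alpha}_\beta u^\beta_{x^i} + c^\alpha_\beta u^\beta = f^\alpha$ in Euclidean coordinates with uniformly elliptic, $C^{0,\gamma}$-bounded coefficients. The Euclidean Schauder estimate on $B_r^n$ then gives control of $\sum_{i,j}\|u^\alpha_{x^ix^j}\|_{C^0} + \sum_{i,j}|u^\alpha_{x^ix^j}|_{\gamma}$ in terms of $\|u\|_{C^0(B_{2r}^n)} + \|f\|_{C^{0,\gamma}(B_{2r}^n)}$; since $|f^\alpha| \le c(K)\|F\|$ and conversely $|F| \le c(K)(\sum|u^\alpha_{x^ix^j}| + \sum|u^\alpha_{x^i}| + |u^\alpha|)$, this is equivalent to controlling the Euclidean $C^{2,\gamma}$ norm of the components by $\|U\|_{C^0(B_{2r})} + \|F\|_{C^{0,\gamma}(B_{2r})}$.

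The remaining work is to restore the correct powers of $r$. Here I would use the scaling interpolation inequalities \eqref{i1}--\eqref{i4} on $B_r^n$: these are precisely designed so that, after absorbing the top-order term with a small coefficient, one obtains
\begin{equation*}
    \sum_{i,j}|u^\alpha_{x^ix^j}|_{\gamma, B_r^n} \le c\big( r^{-2-\gamma}\|u^\alpha\|_{C^0(B_{2r}^n)} + \|f^\alpha\|_{C^{0,\gamma}(B_{2r}^n)}\big),
\end{equation*}
and then \eqref{i1}, \eqref{i3}, \eqref{i4} convert this into the stated bounds for $\|u^\alpha_{x^i}\|_{C^0}$, $\|u^\alpha_{x^ix^j}\|_{C^0}$, and the lower-order Hölder seminorms, each with the homogeneity $r^{-k}$ or $r^{-k-\gamma}$ for the $C^0$ term and $r^{2-k}$ or $r^{2-k-\gamma}$ for the $F$ term. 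A standard covering/chaining argument (dividing $B_{2r}$ into overlapping balls of comparable radius and using a finite chain, as $B_{2r}\subset K$ keeps the geometry uniformly controlled) upgrades the local Euclidean estimates to the geodesic balls $B_r \subset M^\epsilon$; one must check that $\nabla^k U$ in the induced connection differs from the Euclidean derivatives of $u^\alpha$ only by lower-order terms involving the (uniformly bounded) Christoffel and connection symbols, which is handled by \eqref{coord1}--\eqref{coord2}, so that $|\nabla U|$ and $|\nabla^2 U|$ are comparable to the coordinate expressions up to universal constants and the powers of $r$ are unaffected.

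The main obstacle — and the one point that genuinely requires the hypotheses of this paper rather than off-the-shelf Schauder theory — is ensuring that all constants are \emph{universal}, i.e. independent of $\epsilon$. Since $B_{2r}$ is assumed to lie in a fixed compact interior region $K$ (uniformly away from the singular points $p_i$ and, for small $r$, either inside a cone piece where \eqref{scale} gives scale-invariant geometry or inside a bridge region where \eqref{bridge1} applies), the coordinate charts of Sec.~8.1 have metric and connection coefficients bounded in $C^{0,\gamma}(K)$ uniformly in $\epsilon$, and the ellipticity constant of $(g^{ij})$ is bounded below uniformly; this is exactly the remark made just before Lemma~\ref{weaksoln}. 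I expect the bookkeeping — tracking the correct $r$-homogeneity through the interpolation inequalities and verifying that passing between intrinsic covariant derivatives and coordinate derivatives costs only universally-bounded lower-order corrections — to be the most delicate routine part, but it presents no conceptual difficulty beyond the standard localization-and-scaling argument for systems.
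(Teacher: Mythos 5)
Your proposal matches the paper's proof in all essentials: the paper also writes $U=u^\alpha n_\alpha$ in a local chart, uses \eqref{coord1}--\eqref{coord2} to pass between covariant and coordinate derivatives, invokes the interpolation inequalities \eqref{i1}--\eqref{i4} to restore the $r$-homogeneity, and closes the argument by applying Corollary~\ref{cor1} (the rescaled Euclidean system Schauder estimate), with universality of the constants coming from the uniform $C^{0,\gamma}$ control of the metric and connection coefficients on the compact interior region $K$. The only cosmetic difference is that the paper works directly in a single coordinate chart rather than via the covering/chaining argument you mention, but this does not change the substance of the proof.
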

\begin{proof}
Let $n_1, \ldots, n_{m+1}$ be a local orthonormal frame for $B_{2r}$ and write $U = u^\alpha n_\alpha$. The constant $c_1,c_2,\ldots$ will denote universal constants.
    \begin{itemize}
        \item[(a)] Using \eqref{coord1}, we find
    \begin{align*}
    \sup_{B_r}|\nabla U| &\leq c_1 \sum_{i,\alpha,\beta} \norm{u_{x^i}^\alpha + u^\beta B_{\beta i}^\alpha}_{C^0(B_r^n)} \\
    &\leq c_2\Big( \sum_{i,\alpha} \norm{u_{x^i}^\alpha}_{C^0(B_r^n)} + \sum_{\alpha} \norm{u^\alpha}_{C^0(B_r^n)}\Big)
    \end{align*}
    Applying the interpolation inequality \eqref{i3} to the first term above and using that $r \leq 1$ in the second, we get
    \begin{equation}
         \sup_{B_r}|\nabla U| \leq c_3\Big(r^{1 + \gamma}\sum_{i,j,\alpha} |u_{x^i x^j}^\alpha|_{\gamma, B_r^n}  + r^{-1} \norm{U}_{C^0(B_{r})}\Big).\label{gradest}
    \end{equation}
    Similarly, we can use \eqref{coord2} to find
    \begin{equation}
        \sup_{B_r}|\nabla^2 U| \leq c_4 \Big(r^\gamma \sum_{i,j,\alpha} |u_{x^i x^j}^\alpha|_{\gamma, B_r^n} + r^{-2} \norm{U}_{C^0(B_{r})}\Big). \label{hessest}
    \end{equation}
    By the coordinate formula \eqref{genstab}, we can apply Corollary \ref{cor1} in \eqref{gradest} and \eqref{hessest} to conclude (a).

    \item[(b)] We have
   \begin{align*}
       |U|_{2,\gamma, B_{r}} &\leq c_5\sum_{i,j,\alpha}\Big(\norm{u_{x^i x^j}^\alpha}_{C^0(B_r^n)} + |u_{x^i x^j}^\alpha|_{\gamma, B_r^n} + \norm{u_{x^i}^\alpha}_{C^0(B_r^n)}\nonumber  \\
       &\qquad+ |u_{x^i}^\alpha|_{\gamma, B_r^n}  + |u_{x^j}^\alpha|_{\gamma, B_r^n} + \norm{u^\alpha}_{C^0(B_r^n)} + |u^\alpha|_{\gamma,B_r^n}\Big).
   \end{align*}
   Applying the interpolation inequalities \eqref{i1}-\eqref{i4} on the right-hand side above and using that $r \leq 1$ yields
   \begin{equation}
         |U|_{2,\gamma, B_{r}} \leq c_6\Big(\sum_{i,j,\alpha} |u_{x^i x^j}^\alpha|_{\gamma, B_r^n} + r^{-2-\gamma}\norm{U}_{C^0(B_{2r})}\Big) \label{lio}
   \end{equation}
   By the coordinate expression \eqref{genstab}, we can apply Corollary \ref{cor1} to the first term in \eqref{lio} to find
   \begin{equation*}
       |U|_{2,\gamma, B_{r}} \leq c_7 \Big( r^{-\gamma}\norm{F}_{C^{0,\gamma}(B_{2r})} + r^{-2-\gamma}\norm{U}_{C^0(B_{2r})}\Big),
   \end{equation*}
   proving the desired inequality when $k = 2$. The other cases are similar and follow by interpolation.
    \end{itemize}
    Combining parts (a) and (b) completes the proof.
\end{proof}

\subsection{Existence and Regularity for the Dirichlet Problem}
We briefly summarize the existence and regularity theory for the Dirichlet problem for $L$ on $M^\epsilon$. Set
\begin{equation}
   h(x) := \begin{cases}
                |x-p_i| \text{ for } x \in C_i \\
                1 \text{ for } x \in \cup_l \Gamma_l(\epsilon).
            \end{cases} \label{h}
\end{equation}
Let $H_{1}^1(M_\delta^\epsilon)$ be the set $H^1(M_\delta^\epsilon)$ equipped with the weighted Sobolev norm
\begin{equation}
    \norm{U}_{H_{1}^1(M_\delta^\epsilon)} := \Big(\int_{M_\delta^\epsilon} |\nabla U|^2 \, d\mathcal{H}^n + \int_{M_\delta^\epsilon} |U|^2h^{-2} \, d\mathcal{H}^n\Big)^{\frac{1}{2}}, \label{wsobolev} 
\end{equation}
and let $H_{0,1}^1(M_\delta^\epsilon)$ be the closure of $C_0^\infty(M_\delta^\epsilon)$ in $H_{1}^1(M_\delta^\epsilon)$ with respect to \eqref{wsobolev}.
Then $H_{1}^1(M_\delta^\epsilon)$ and $H_{0,1}^1(M_\delta^\epsilon)$ are Hilbert spaces when given the inner product associated to \eqref{wsobolev}. Furthermore, the spaces $H_{0,1}^1(M_\delta^\epsilon)$ and $H_0^1(M_\delta^\epsilon)$ coincide. Hence, the norm \eqref{wsobolev} is equivalent to the $H^1$ norm \eqref{snorm} on $M_\delta^\epsilon$. The definition of $H_{1}^1(M_\delta^\epsilon)$ can be extended to normal sections on $M^\epsilon$ by restricting $H^1(M^\epsilon)$ to the subclass of normal sections for which the norm \eqref{wsobolev} is finite, where the integrals are taken over $M^\epsilon$. In this case, the $H^1(M^\epsilon)$ and $H_{1}^1(M^\epsilon)$ norms are not equivalent, though we do have the inequality $\norm{U}_{H^1(M^\epsilon)} \leq \norm{U}_{H_{1}^1(M^\epsilon)}$. We refer the reader to \cite{B} and \cite{PR} Ch. 8 for more on the weighted Sobolev spaces above. 

By \eqref{param1}-\eqref{nest}, Remark \ref{nbdds}, and the polar coordinate expression for $L$, $h^2L$ has bounded smooth coefficients independent of $\delta$ for each $n \geq 3$, and the symmetric bilinear form on $H_{0,1}^1(M_\delta^\epsilon)$ associated to $h^2L$ (i.e. the bilinear form associated to \eqref{weak}) is continuous and weakly coercive on $H_{0,1}^1(M_\delta^\epsilon)$ (see Definition 3.41 in \cite{Ma}). Thus, the Fredholm alternative applies. In addition, $h^2 L$ is strongly elliptic on $C_0^\infty(M_\delta^\epsilon)$ and self-adjoint with respect to 
\begin{equation}
    L_{1}^2(M_\delta^\epsilon) := L^2(NM_\delta^\epsilon; h^{-2}d\mathcal{H}^n). \label{hminus}
\end{equation}
Therefore, $h^2 L$ has a discrete set of eigenvalues
$$
    \lambda_1 \leq \lambda_2 \leq \lambda_3 \leq \cdots \text{ } (\lambda_i := \lambda_i(\epsilon,\delta))
$$
with $\lambda_i \rightarrow \infty$ and a corresponding orthonormal basis of smooth eigensections for $L_{1}^2(M_\delta^\epsilon)$ given by
$$
    \Phi_{1}, \Phi_{2}, \Phi_3, \ldots,
$$
where $\Phi_{i}$ vanishes on $\partial M_\delta^\epsilon$ for each $i \in \mathbb{N}$, each $\delta \in (0,1)$, and $\epsilon$ small. 

We define $L_{1}^2(M^\epsilon)$ the same as in \eqref{hminus} with $M^\epsilon$ replacing $M_\delta^\epsilon$. It will be helpful to also define the spaces
$$
    L_{-1}^2(M_\delta^\epsilon) := L^2(M_\delta^\epsilon; h^{2}d\mathcal{H}^n) \text{ and } L_{-1}^2(M^\epsilon) := L^2(M^\epsilon; h^{2}d\mathcal{H}^n).
$$
Of course, the $L_{-1}^2$ norms are always bounded by the $L^2$ norms and the $L_{1}^2$ norms always bound the $L^2$ norms. The main results in this section are as follows:

\begin{lem}[\cite{NS2}, Lemma 3.1]\label{deltastable}
    Let $d_1 := \min_{1,\ldots, N} d_0(C_i) >0$. There are universal constants $d_0 \in (0,1)$ and $\epsilon_0$ (depending on $d_1$) such that for all $\epsilon \in (0,\epsilon_0)$, all $\delta \in (0,1)$, and all $U \in C_{0}^2(M_\delta^\epsilon)$ we have
    \begin{equation*}
        \lambda_1(\delta,\epsilon) \geq d_0 \text{ and } \norm{U}_{H_{1}^1(M_\delta^\epsilon)} \leq d_0^{-1} \norm{LU}_{L_{-1}^2(M_\delta^\epsilon)}.
    \end{equation*}
\end{lem}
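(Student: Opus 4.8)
The plan is to reduce both assertions to the single weighted coercivity estimate
\begin{equation*}
    Q(U,U) \ge d_1\, \norm{U}_{L_{1}^2(M_\delta^\epsilon)}^2 \quad\text{for all } U \in C_0^2(M_\delta^\epsilon) \text{ and } \epsilon < \epsilon_0,
\end{equation*}
where $Q$ is the second variation form in \eqref{weak} and $d_1 := \min_i d_0(C_i) > 0$. Since $h^2L$ is $L_{1}^2(M_\delta^\epsilon)$-self-adjoint with associated quadratic form $Q$ (by \eqref{weak} and $L_{1}^2 = L^2(h^{-2}d\mathcal{H}^n)$), the min-max principle gives $\lambda_1(\delta,\epsilon) = \inf\{Q(U,U)/\norm{U}_{L_{1}^2}^2 : U \in C_0^\infty(M_\delta^\epsilon)\setminus\{0\}\}$, so the displayed estimate forces $\lambda_1(\delta,\epsilon) \ge d_1$. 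For the resolvent bound, writing $Q(U,U) = -\int_{M_\delta^\epsilon}\langle h\,LU, h^{-1}U\rangle\,d\mathcal{H}^n$ and applying Cauchy--Schwarz gives $Q(U,U) \le \norm{LU}_{L_{-1}^2}\norm{U}_{L_{1}^2}$; combined with the coercivity estimate this yields $\norm{U}_{L_{1}^2} \le d_1^{-1}\norm{LU}_{L_{-1}^2}$ and $Q(U,U) \le d_1^{-1}\norm{LU}_{L_{-1}^2}^2$. Since $|\langle U,\tilde{A}(U)\rangle| \le |A|^2|U|^2 \le c\,h^{-2}|U|^2$ by \eqref{scale} (on the cones) and \eqref{bridge1} (on the bridges), \eqref{weak} gives $\int_{M_\delta^\epsilon}|\nabla U|^2 = Q(U,U) + \int_{M_\delta^\epsilon}\langle U,\tilde{A}(U)\rangle \le Q(U,U) + c\norm{U}_{L_{1}^2}^2$, hence $\norm{U}_{H_{1}^1} \le c\,\norm{LU}_{L_{-1}^2}$; relabeling $d_0 := \min(d_1, c^{-1}) \in (0,1)$ delivers the lemma.

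For the coercivity estimate I would decompose $M_\delta^\epsilon = \bigcup_i C_{i,\delta} \cup \bigcup_l \Gamma_l(\epsilon)$, a cover with pairwise intersections of measure zero (the attaching disks $D_{5\epsilon}(q_k)$), so that $Q(U,U) = \sum_i \int_{C_{i,\delta}}(|\nabla U|^2 - \langle U,\tilde{A}(U)\rangle) + \sum_l \int_{\Gamma_l(\epsilon)}(|\nabla U|^2 - \langle U,\tilde{A}(U)\rangle)$. On each cone I would use the polar form \eqref{pstab} and expand $U(r,\cdot) = \sum_k a_k(r)\phi_k$ in an $L^2(\Sigma_i)$-orthonormal basis of eigensections of $-L_{\Sigma_i}$ with eigenvalues $\mu_1^i \le \mu_2^i \le \cdots$; a direct computation (the zeroth-order part of \eqref{pstab} is $r^{-2}\tilde{A}_{\Sigma_i}$, $\Sigma_i$ is closed) gives $\int_{C_{i,\delta}}(|\nabla U|^2 - \langle U,\tilde{A}(U)\rangle) = \sum_k \int_\delta^1 (|a_k'|^2 + \mu_k^i\, r^{-2}a_k^2)\,r^{n-1}\,dr$. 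Because $U$ vanishes at $r=\delta$, the one-dimensional Hardy inequality with a boundary term, $\int_\delta^1 |a_k'|^2 r^{n-1}\,dr \ge \frac{(n-2)^2}{4}\int_\delta^1 a_k^2 r^{n-3}\,dr - \frac{n-2}{2}a_k(1)^2$, together with $\frac{(n-2)^2}{4} + \mu_k^i \ge d_0(C_i) > 0$ (strict stability of $C_i$) and $\sum_k a_k(1)^2 = \int_{\Sigma_i}|U|^2$ yields
\begin{equation*}
    \int_{C_{i,\delta}}\big(|\nabla U|^2 - \langle U,\tilde{A}(U)\rangle\big) \ge d_0(C_i)\int_{C_{i,\delta}}\frac{|U|^2}{|x-p_i|^2} - \frac{n-2}{2}\int_{\Sigma_i}|U|^2.
\end{equation*}
Crucially, $U$ vanishes on $\Sigma_i \setminus \bigcup_k D_{5\epsilon}(q_k)$, so the boundary term sees only the small attaching disks: $\int_{\Sigma_i}|U|^2 = \sum_k \int_{D_{5\epsilon}(q_k)}|U|^2\,d\mathcal{H}^{n-1}$.

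On each bridge $\Gamma_l(\epsilon)$ one has $h\equiv1$ and, by \eqref{bridge1}, $|\langle U,\tilde{A}(U)\rangle| \le |A_{\Gamma_l(\epsilon)}|^2|U|^2 \le c_0^2|U|^2$. Since $\Gamma_l(\epsilon)$ is a controlled $\epsilon$-thin strip on whose lateral boundary $\partial\Gamma_l(\epsilon) \subset \partial M_\delta^\epsilon$ the section $U$ vanishes, a slicewise Poincar\'e inequality gives $\int_{\Gamma_l(\epsilon)}|U|^2 \le c\epsilon^2\int_{\Gamma_l(\epsilon)}|\nabla U|^2$, so $\int_{\Gamma_l(\epsilon)}(|\nabla U|^2 - \langle U,\tilde{A}(U)\rangle) \ge (1-c\epsilon^2)\int_{\Gamma_l(\epsilon)}|\nabla U|^2 \ge \frac12\int_{\Gamma_l(\epsilon)}|\nabla U|^2$ for $\epsilon$ small; and a trace inequality at scale $\epsilon$ on $\Gamma_l(\epsilon)$ followed by the same Poincar\'e inequality gives $\int_{D_{5\epsilon}(q_k)}|U|^2\,d\mathcal{H}^{n-1} \le c\epsilon\int_{\Gamma_l(\epsilon)}|\nabla U|^2$ for each disk adjacent to $\Gamma_l(\epsilon)$. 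Summing the cone and bridge estimates, and noting that each bridge abuts at most two disks, gives $Q(U,U) \ge d_1\int_{\bigcup_i C_{i,\delta}}|U|^2 h^{-2} + (\tfrac12 - c\epsilon)\int_{\bigcup_l \Gamma_l(\epsilon)}|\nabla U|^2$; using the bridge Poincar\'e inequality once more on the gradient term then yields $Q(U,U) \ge d_1\int_{\bigcup_i C_{i,\delta}}|U|^2 h^{-2} + c\epsilon^{-2}\int_{\bigcup_l \Gamma_l(\epsilon)}|U|^2 h^{-2} \ge d_1\norm{U}_{L_{1}^2(M_\delta^\epsilon)}^2$ once $\epsilon_0$ is chosen small depending on $d_1$ and $c_0$. (When $\delta$ exceeds a fixed constant the inner cone region is absent, $h$ is bounded below, and the argument simplifies.)

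I expect Step 3 --- the Poincar\'e and trace inequalities on the bridges, with the correct $\epsilon$-scaling and constants independent of $\epsilon$ and $\delta$ --- to be the main obstacle; this is where the structural hypotheses \eqref{bridge1} (the bridges are genuine thin strips with pointwise-bounded second fundamental form and diameter comparable to $\epsilon$) together with the vanishing of $U$ on the bridges' lateral boundaries must be used essentially, and the delicate point is that the cone stability inequality produces exactly the link boundary term $\frac{n-2}{2}\int_{\Sigma_i}|U|^2$, supported on the $O(\epsilon^{n-1})$-area attaching disks, which the bridge gradient energy more than absorbs. In codimension greater than one the only new feature relative to \cite{NS2} is that the radial expansion is in vector-valued eigensections of $-L_{\Sigma_i}$ and that $\tilde{A}$ replaces $|A|^2$, but $|\langle U,\tilde{A}(U)\rangle| \le |A|^2|U|^2$ keeps every estimate intact.
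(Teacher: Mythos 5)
Your proof is, as far as I can tell, correct, but it takes a genuinely different route from the paper. The paper defers to Smale's Lemma 3.1 in \cite{NS2}, which is a contradiction/compactness argument: one supposes $\lambda_1(\delta_j,\epsilon_j)\to 0$ along some sequence, normalizes the first eigensection in $L_1^2$, and uses the $C^0$ bounds (Lemma \ref{weaksoln}, Corollary \ref{sest2}) and the Schauder estimates (Propositions \ref{sest3}, \ref{sest4}) to extract, after passing to a subsequence, a nontrivial limit Jacobi field on one of the cones (or on a fixed compact part of $M^\epsilon$), contradicting strict stability. That is exactly why the paper places the apparatus of Sec.\ 3.1 before this lemma and remarks that Lemma \ref{weaksoln} extends to $L+|\lambda_1|h^{-2}$. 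You instead establish the quantitative coercivity estimate $Q(U,U)\geq d_1\,\norm{U}_{L_1^2}^2$ directly, via: the weighted one-dimensional Hardy inequality with a boundary remainder on each truncated cone, applied mode-by-mode in the link eigensection expansion, which produces the term $d_0(C_i)\int_{C_{i,\delta}}|U|^2|x-p_i|^{-2}$ at the cost of the link trace $\tfrac{n-2}{2}\int_{\Sigma_i}|U|^2$; the $\epsilon^2$-scaled Poincar\'e inequality on each thin bridge (using Kato's inequality $|\nabla'|U||\leq|\nabla U|$ to reduce to a scalar slicewise estimate, and the vanishing of $U$ on the lateral boundary $\partial\Gamma_l(\epsilon)\subset\partial M_\delta^\epsilon$); and an $\epsilon$-scaled trace inequality to absorb the link boundary term, which is supported only on the attaching disks $D_{5\epsilon}(q_k)$, into the bridge gradient energy. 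The comparison: your approach is more elementary (no compactness, no a priori regularity theory), gives a clean explicit constant $d_1=\min_i d_0(C_i)$ rather than an unspecified one, and is manifestly uniform in $\delta$. What it costs is the verification of the Poincar\'e and trace inequalities on the bridges with $\epsilon$-scaling and constants independent of $\epsilon$ and $\delta$; this is where the structural bridge hypotheses \eqref{bridge1} and \eqref{param1}--\eqref{nest} (uniform comparability of the bridge metric to the Euclidean one, diameter $\sim\epsilon$) are used essentially, and where one must be a little careful that the patching-region geometry for $n=3,4,5$ (Sec.\ 8.1.2) is still a controlled $O(\epsilon)$-thin strip --- which it is, by construction.

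A few small remarks worth recording. The identity $\int_\Sigma(|\nabla_\Sigma U|^2-\langle U,\tilde A_\Sigma(U)\rangle)=\sum_k\mu_k^i a_k^2$ uses only that $\Sigma_i$ is closed, so there are no spurious boundary terms on the link; and $\frac{(n-2)^2}{4}+\mu_k^i\geq d_0(C_i)$ for all $k$ since $\mu_k^i\geq\mu_1^i$, so the Hardy step is uniform across modes. Since $\langle U,\tilde A(U)\rangle\geq 0$ and $\langle U,\tilde A(U)\rangle\leq|A|^2|U|^2$ pointwise, the bridge estimate $Q_{\Gamma_l}(U,U)\geq(1-c\epsilon^2)\int_{\Gamma_l}|\nabla U|^2$ and the $H_1^1$ bound via $\int|\nabla U|^2=Q(U,U)+\int\langle U,\tilde A(U)\rangle$ are both clean. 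Finally, when stating the conclusion you should note that each bridge abuts exactly two attaching disks, so the trace contributions from the $2I$ disks are absorbed with a factor of two, and then $\epsilon_0$ must be chosen small enough that both $1-c_0^2c\epsilon^2-(n-2)c\epsilon\geq\tfrac12$ and $c\epsilon^{-2}\geq d_1$ hold; this makes the dependence of $\epsilon_0$ on $d_1$ and $c_0$ explicit, consistent with the lemma statement.
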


\begin{lem}[\cite{NS2}, Lemma 3.2]\label{existence}
    Let $F \in C_{\text{loc}}^{0,\gamma}(M^\epsilon) \cap L_{-1}^2(M^\epsilon)$, $\Psi \in C_c^{2,\gamma}(M^\epsilon)$, $\epsilon \leq \epsilon_0$, and $\gamma \in (0,1)$. Then there exists a unique solution $U \in C_{\text{loc}}^{2,\gamma}(M^\epsilon) \cap L_{1}^2(M^\epsilon)$ to the Dirichlet problem
    \begin{equation*}
        \begin{cases}
            LV = F \text{ on } M^\epsilon \\
            V = \Psi \text{ on } \partial M^\epsilon \label{DP}
        \end{cases}
    \end{equation*}
    satisfying
     \begin{equation*}
         \norm{U}_{H_{1}^1(M^\epsilon)} 
        \leq d_0^{-1} \Big( \norm{F}_{L_{-1}^2(M^\epsilon)} + \norm{\Psi}_{H_{1}^1(M^\epsilon)} + \norm{L\Psi}_{L_{-1}^2(M^\epsilon)}\Big).
    \end{equation*}
\end{lem}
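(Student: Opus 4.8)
\textbf{Proof plan for Lemma \ref{existence}.}
The strategy is the standard reduction to the homogeneous-boundary case followed by an application of Lemma \ref{deltastable} and a limiting argument as $\delta \to 0^+$. First I would subtract off the boundary data: since $\Psi \in C_c^{2,\gamma}(M^\epsilon)$ is supported away from the $p_i$, it is supported in $M_\delta^\epsilon$ for all sufficiently small $\delta$, so solving $LV = F$ with $V = \Psi$ on $\partial M^\epsilon$ is equivalent to solving $LW = F - L\Psi =: \tilde F$ with $W = 0$ on $\partial M^\epsilon$ and setting $U = W + \Psi$. Note $\tilde F \in C_{\mathrm{loc}}^{0,\gamma}(M^\epsilon) \cap L_{-1}^2(M^\epsilon)$ because $\Psi$ has compact support. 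So it suffices to produce $W \in C_{\mathrm{loc}}^{2,\gamma}(M^\epsilon) \cap H_{0,1}^1(M^\epsilon)$ solving the homogeneous-boundary problem with the bound $\|W\|_{H_1^1(M^\epsilon)} \leq d_0^{-1}\|\tilde F\|_{L_{-1}^2(M^\epsilon)}$, after which the triangle inequality and $\|L\Psi\|_{L_{-1}^2} \leq \|L\Psi\|_{L_{-1}^2}$ (together with $\|\Psi\|_{H_1^1}$) give the stated estimate.

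\emph{Existence on the truncated pieces.} On each $M_\delta^\epsilon$, the operator $h^2 L$ is strongly elliptic with bounded smooth coefficients independent of $\delta$, its associated bilinear form is continuous and weakly coercive on $H_{0,1}^1(M_\delta^\epsilon)$, and by Lemma \ref{deltastable} its first eigenvalue is bounded below by $d_0 > 0$ for all $\delta \in (0,1)$ and $\epsilon < \epsilon_0$; in particular $0$ is not in the spectrum, so the Fredholm alternative yields a unique weak solution $W_\delta \in H_{0,1}^1(M_\delta^\epsilon) = H_0^1(M_\delta^\epsilon)$ to $LW_\delta = \tilde F$ with $W_\delta = 0$ on $\partial M_\delta^\epsilon$. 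The coercivity estimate in Lemma \ref{deltastable} applied to $W_\delta$ gives
\begin{equation*}
    \|W_\delta\|_{H_1^1(M_\delta^\epsilon)} \leq d_0^{-1}\|LW_\delta\|_{L_{-1}^2(M_\delta^\epsilon)} = d_0^{-1}\|\tilde F\|_{L_{-1}^2(M_\delta^\epsilon)} \leq d_0^{-1}\|\tilde F\|_{L_{-1}^2(M^\epsilon)},
\end{equation*}
a bound uniform in $\delta$. Interior elliptic regularity (the local Schauder estimates of Proposition \ref{sest4}, applied on compact subsets of $M^\epsilon$ that eventually lie inside every $M_\delta^\epsilon$, together with the local $C^0$ bound of Corollary \ref{sest1} to control $\|W_\delta\|_{C^0}$ in terms of $\|W_\delta\|_{L^2} \leq \|W_\delta\|_{H_1^1}$ and $\|\tilde F\|_{C^{0,\gamma}}$ locally) upgrades $W_\delta$ to $C_{\mathrm{loc}}^{2,\gamma}$ and gives, on each fixed compact $K \Subset M^\epsilon$, a bound on $\|W_\delta\|_{C^{2,\gamma}(K)}$ uniform in small $\delta$.

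\emph{Passing to the limit $\delta \to 0^+$.} Extend each $W_\delta$ by $0$ to all of $M^\epsilon$. By the uniform $H_1^1$ bound we may extract a subsequence $W_{\delta_j} \rightharpoonup W$ weakly in $H_1^1(M^\epsilon)$, and by the uniform local $C^{2,\gamma}$ bounds and Arzelà–Ascoli we may arrange (after a diagonal argument over an exhaustion of $M^\epsilon$ by compacts) that $W_{\delta_j} \to W$ in $C_{\mathrm{loc}}^{2}(M^\epsilon)$, whence $W \in C_{\mathrm{loc}}^{2,\gamma}(M^\epsilon)$ and $LW = \tilde F$ pointwise on $M^\epsilon$. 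Weak lower semicontinuity of the norm passes the bound $\|W\|_{H_1^1(M^\epsilon)} \leq d_0^{-1}\|\tilde F\|_{L_{-1}^2(M^\epsilon)}$ to the limit; in particular $W \in L_1^2(M^\epsilon)$, and since each $W_{\delta_j} \in H_0^1$ vanishes on $\partial M^\epsilon$ one checks $W = 0$ on $\partial M^\epsilon$ in the trace sense (the boundary $\partial M^\epsilon$ stays a fixed positive distance from the $p_i$, so near $\partial M^\epsilon$ the convergence is in $C^2$ and the boundary values pass). Setting $U := W + \Psi$ then solves the original Dirichlet problem with the asserted estimate. Uniqueness follows because the difference of two solutions lies in $H_{0,1}^1(M^\epsilon) = H_0^1(M^\epsilon)$ and is annihilated by $L$, so the second inequality of Lemma \ref{deltastable} (which passes to $M^\epsilon$ by the same truncation-and-limit reasoning, or is applied directly on $M^\epsilon$ via density of $C_0^\infty$) forces it to vanish.

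The main obstacle is the interchange of the two limiting regimes: one must verify that the weighted coercivity constant $d_0$ from Lemma \ref{deltastable} is genuinely independent of $\delta$ (which is exactly what that lemma asserts), and — more delicately — that the limit $W$ genuinely lies in $L_1^2(M^\epsilon)$ rather than merely $H^1_{\mathrm{loc}}$; this is where the weighted norm does its work, guaranteeing the quadratic-type decay $|W| = O(h)$ near each $p_i$ that prevents mass from escaping into the singularities in the limit. Everything else is bookkeeping with the already-established Schauder and $C^0$ estimates.
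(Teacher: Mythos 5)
Your overall strategy — reduce to homogeneous boundary data by subtracting $\Psi$, solve on $M_\delta^\epsilon$ with a uniform $H_1^1$ bound, and pass $\delta \to 0^+$ by compactness — is precisely the paper's route, which invokes Lemma \ref{reg1} followed by the compactness argument of \cite{NS2}. However, there is a genuine circularity in your regularity step. Proposition \ref{sest4} and Corollary \ref{sest1} are \emph{a priori} estimates whose hypotheses include $U \in C^{2,\gamma}(K)$; they cannot by themselves upgrade a weak $H_0^1$ solution $W_\delta$ to $C^{2,\gamma}_{\mathrm{loc}}$. This is exactly the gap that Lemma \ref{reg1} is proved to fill: there the paper approximates $F$ by smooth $F_j$, obtains smooth $U_j$ via $L^2$ elliptic regularity and Sobolev embedding, and only then feeds the a priori Schauder bounds into Arzel\`a--Ascoli to extract a $C^{2,\gamma}$ limit. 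You should cite Lemma \ref{reg1} for the existence, $C^{2,\gamma}$ regularity, and $H_1^1$ estimate on $M_\delta^\epsilon$ rather than re-deriving a weaker version of it; once you have that, the uniform-in-$\delta$ local estimates and the diagonal/weak-compactness argument you sketch are fine.

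One further overstatement: you write $H_{0,1}^1(M^\epsilon) = H_0^1(M^\epsilon)$, but the paper only asserts this coincidence on the truncated manifolds $M_\delta^\epsilon$; on $M^\epsilon$ itself the weighted and unweighted norms are explicitly not equivalent. Your uniqueness argument doesn't need this equality — the right move is your parenthetical fallback: cut off near the $p_i$, apply the coercivity estimate of Lemma \ref{deltastable} on $M_\delta^\epsilon$, and let $\delta \to 0$ using the $L_1^2(M^\epsilon)$ integrability of the difference to kill the error terms from the cutoff.
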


Replacing $L^\prime$ with $L^\prime + |\lambda_1| h^{-2}$ in the proof of Lemma \ref{weaksoln}, we see that Lemma \ref{weaksoln} holds for $L + \lambda_1h^{-2}$ as well, along with Corollary \ref{sest2}. Combining this with the local Schauder estimates in the previous section, we may argue precisely as in Lemma 3.1 in \cite{NS2} to prove Lemma \ref{deltastable}. In order to prove Lemma \ref{existence} via the compactness argument in Lemma 3.2 in \cite{NS2}, we need to study existence and regularity of the Dirichlet problem for $L$ on $M_\delta^\epsilon$. Note that it is enough to prove Lemma \ref{existence} for $\Psi = 0$ by linearity of $L$. Hence, we only consider the case of zero boundary data.

\begin{lem}\label{reg1}
    Fix $\delta \in (0,1)$ and let $F \in C^{0,\gamma}(M_\delta^\epsilon)$. Suppose $0 < \epsilon < \epsilon_0$, where $\epsilon_0$ is as in Lemma \ref{deltastable}. Then the Dirichlet problem 
    \begin{equation*} 
        \begin{cases}
             L V = F \text{ on } M_\delta^\epsilon \\
             V = 0 \text{ on } \partial M_\delta^\epsilon \label{DP2}
        \end{cases}
    \end{equation*}
    has a unique solution $U \in C^{2,\gamma}(M_\delta^\epsilon)$. Furthermore, $U$ satisfies 
    \begin{equation*}
        \norm{U}_{H_{1}^1(M_\delta^\epsilon)} \leq d_0^{-1} \norm{F}_{L_{-1}^2(M_\delta^\epsilon)}.  \label{DPineq}
    \end{equation*}
\end{lem}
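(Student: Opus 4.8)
The plan is to solve the Dirichlet problem for $L$ on the compact manifold-with-boundary $M_\delta^\epsilon$ by the standard Fredholm/variational route, then upgrade regularity via Schauder theory. First I would observe that, as noted in the discussion preceding the statement, $h^2 L$ is strongly elliptic with bounded smooth coefficients on $M_\delta^\epsilon$, is self-adjoint with respect to $L_1^2(M_\delta^\epsilon)$, and the associated bilinear form on $H_{0,1}^1(M_\delta^\epsilon)$ (which, recall, equals $H_0^1(M_\delta^\epsilon)$) is continuous and weakly coercive, so the Fredholm alternative applies. By Lemma~\ref{deltastable} the first eigenvalue $\lambda_1(\delta,\epsilon) \geq d_0 > 0$ for $\epsilon < \epsilon_0$; in particular $0$ is not an eigenvalue, so the homogeneous problem $LV = 0$, $V = 0$ on $\partial M_\delta^\epsilon$ has only the trivial solution, and the Fredholm alternative yields a unique weak solution $U \in H_0^1(M_\delta^\epsilon)$ to $LU = F$ for any $F \in L_{-1}^2(M_\delta^\epsilon) \supset C^{0,\gamma}(M_\delta^\epsilon)$ (the inclusion holding since $M_\delta^\epsilon$ is compact and $h$ is bounded there).

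Next I would establish the quantitative bound. Testing the weak formulation against $U$ itself gives $Q(U,U) = -\int_{M_\delta^\epsilon} \langle U, F\rangle$ up to the sign convention, and since $Q$ is weakly coercive with the spectral gap $\lambda_1 \geq d_0$ one gets $\norm{U}_{H_1^1(M_\delta^\epsilon)}^2 \leq d_0^{-1} \norm{U}_{L_1^2(M_\delta^\epsilon)} \norm{F}_{L_{-1}^2(M_\delta^\epsilon)}$ by Cauchy--Schwarz (pairing the $h^{-1}$ weight on $U$ with the $h$ weight on $F$), and then absorbing $\norm{U}_{L_1^2} \leq \norm{U}_{H_1^1}$ produces exactly $\norm{U}_{H_1^1(M_\delta^\epsilon)} \leq d_0^{-1}\norm{F}_{L_{-1}^2(M_\delta^\epsilon)}$. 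This is really just the second inequality in Lemma~\ref{deltastable} applied to the solution, valid since $U \in C_0^2$ once regularity is known (or by density), so I would simply cite that lemma.

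Finally, for the regularity claim $U \in C^{2,\gamma}(M_\delta^\epsilon)$: interior regularity follows from the local Schauder estimates of Proposition~\ref{sest4} applied on geodesic balls, bootstrapping from $U \in H^1 \subset L^2$ through $L^p$ and Sobolev embedding to $C^{0,\gamma}_{\text{loc}}$ and then to $C^{2,\gamma}_{\text{loc}}$ using $LU = F \in C^{0,\gamma}$; boundary regularity comes from the global Schauder estimates of Proposition~\ref{sest3} together with the smoothness of $\partial M_\delta^\epsilon$ and the vanishing boundary data, which upgrade $U$ to $C^{2,\gamma}$ up to $\partial M_\delta^\epsilon$. Uniqueness in $C^{2,\gamma}$ is immediate from uniqueness of the weak solution. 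I expect the main obstacle to be purely bookkeeping: confirming that the weighted space $H_{0,1}^1(M_\delta^\epsilon)$ genuinely coincides with $H_0^1(M_\delta^\epsilon)$ on the truncated manifold (so the Fredholm machinery with the $h^{-2}$ weight is legitimate and the coercivity constant is uniform in $\delta$), and checking that the regularity bootstrap can be started — i.e. that a weak $H^1$ solution is good enough to feed into the local Schauder estimates, which is standard but must be spelled out since the estimates in Proposition~\ref{sest4} are stated for $C^{2,\gamma}$ solutions.
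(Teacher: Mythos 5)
Your existence and uniqueness step (Fredholm alternative on $H_{0,1}^1(M_\delta^\epsilon) = H_0^1(M_\delta^\epsilon)$ plus the spectral gap from Lemma \ref{deltastable}) and your derivation of the $H_1^1$ bound match the paper's argument. The place where your proposal has a genuine gap is exactly where you flag one and then leave it: the passage from a weak $H^1$ solution to a genuine $C^{2,\gamma}$ solution. Propositions \ref{sest3} and \ref{sest4} are a priori estimates, stated for solutions already known to be $C^{2,\gamma}$, so they cannot by themselves upgrade the regularity of a function that is only known to be in $H^1$. Saying the step ``is standard but must be spelled out'' does not close it, and the filling is not trivial for a weakly coupled elliptic system.

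The paper closes this gap with an approximation argument that you should adopt rather than a direct bootstrap. One approximates $F$ uniformly by $F_j \in C^\infty(M_\delta^\epsilon)$ with uniformly bounded $C^{0,\gamma}$ norm, and lets $U_j$ be the corresponding unique weak solutions. Since the data are smooth, $L^2$ elliptic regularity on the compact manifold-with-boundary together with Sobolev embedding gives $U_j \in C^k$ for every $k$, so the Schauder a priori estimates legitimately apply to the $U_j$. Uniform convergence of the $F_j$ gives $L^p$ control, which via Corollary \ref{sest2} and Lemma \ref{deltastable} yields uniform $C^0$ bounds on the $U_j$; Proposition \ref{sest3} then promotes these to uniform $C^{2,\gamma}$ bounds. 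Arzel\`a--Ascoli extracts a $C^{2,\gamma/2}$-convergent subsequence with limit $\tilde U \in C^{2,\gamma}(M_\delta^\epsilon)$, and one checks that $\tilde U$ is a weak solution, hence $\tilde U = U$ by uniqueness. That is the concrete replacement for the bootstrap you sketched; without it, the regularity claim in your proposal is asserted rather than proved.
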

\begin{proof}
    Existence and uniqueness of an $H_{0,1}^1(M_\delta^\epsilon)$ solution $U$ with $L$ replaced by $h^2L$ and $F$ replaced by $h^2F$ follows from Lemma \ref{deltastable} and the Fredholm alternative. To see this, note that any solution to $h^2 L U= 0$ with $U = 0$ on $\partial M_\delta^\epsilon$ is smooth by elliptic regularity, and Lemma \ref{deltastable} says that such a $U$ must be zero. Since $H_{0,1}^1(M_\delta^\epsilon)$ and $H_0^1(M_\delta^\epsilon)$ coincide, a $H_{0,1}^1(M_\delta^\epsilon)$ weak solution of $h^2 L U = h^2 F$ satisfies $LU = F$ in the weak sense on $H_0^1(M_\delta^\epsilon)$. We show $U \in C^{2,\gamma}(M_\delta^\epsilon)$ by approximation with smooth normal sections.

    Let $\{F_j\}_1^\infty$ be a sequence in $C^\infty(M_\delta^\epsilon)$ converging uniformly to $F$ which is uniformly bounded in the $C^{0,\gamma}$ norm. Let $\{U_j\}_1^\infty$ be the corresponding sequence of unique $H_{0,1}^1(M_\delta^\epsilon)$ weak solutions to the problem $h^2L V = h^2F_j$ for each $j \in \mathbb{N}$. Then the $U_j$ are $H_0^1(M_\delta^\epsilon)$ weak solutions to $LV = F_j$ for each $j$. Since the $F_j$ are smooth, the compactness of $M_\delta^\epsilon$, $L^2$ elliptic regularity, and the Sobolev embedding theorem together show the $U_j$ are in  $C^k(M_\delta^\epsilon)$ for each $k$ and each $j$. Since uniform convergence implies convergence in $L^p(M_\delta^\epsilon)$ for any $p \geq 1$, Corollary \ref{sest2} together with Lemma \ref{deltastable} imply that the $U_j$ are uniformly bounded in the $L^\infty$ norm on $M_\delta^\epsilon$ for fixed $\epsilon$ giving a uniform $C^0$ bound for the $U_j$. Proposition \ref{sest3} applied to the $U_j$ then gives a uniform $C^{2,\gamma}$ bound so by the Arzel\'a-Ascoli theorem there is a subsequence of the $U_j$ (which we do not relabel) converging to some $\tilde{U} \in C^{2,\gamma}(M_\delta^\epsilon)$ in the $C^{2,\frac{\gamma}{2}}$ norm. By construction, we must have $h^2L\tilde{U} = h^2F$ in the weak sense on $H_{0,1}^1(M_\delta^\epsilon)$ with $\tilde{U} \in H_{0,1}^1(M_\delta^\epsilon)$ since $\tilde{U} \in C^{2,\gamma}$ on the truncated submanifold $M_\delta^\epsilon$. Thus, $U = \tilde{U}$ by uniqueness so $U \in C^{2,\gamma}(M_\delta^\epsilon)$. The desired inequality is now immediate from Lemma \ref{deltastable}.
\end{proof}
Using Lemma \ref{reg1} and the estimates derived in Sec. 3.1, we can now follow the arguments in \cite{NS2} to prove Lemma \ref{existence}. 

\subsection{Asymptotic Formula}
We now state an important lemma due to \cite{CHS,NS2} concerning the asymptotic behavior of solutions to \eqref{dp} near the singularities $p_1,\ldots,p_N$ (see Lemma \ref{eigenexp}). The normal sections $F$ are chosen so that they decay sufficiently rapidly near the $p_i$. This will allow us to expand our solution $U$ to \eqref{dp} in a Fourier series and solve an ODE for the coefficient functions. The asymptotic formula then follows from the prior estimates. For a comprehensive discussion of the material in this section, we refer the reader to \cite{CHS, Oo, NS2} and ch. 2 in \cite{PR}. 

For an $N$-vector $\nu := (\nu_1,\ldots, \nu_N)$ with $\nu_i >0$ for each $i$, set 
\begin{align*}
    C_\nu^{k,\gamma}(M^\epsilon) := \Big\{ U \in C_{\text{loc}}^{k,\gamma}(M^\epsilon) : &\max_{r \in (0,\frac{1}{2}]} \sum_{j = 0}^k \norm{\nabla^j U}_{C^0(S_{r,2r}^i)}r^{j - \nu_i} < \infty \text{ and } \\
    &\max_{r \in (0,\frac{1}{2}]} \sum_{j = 0}^k| U|_{j,\gamma, S_{r,2r}^i}r^{j + \gamma - \nu_i} < \infty, \text{ } i = 1,\ldots,N \Big\}
\end{align*}
where $\gamma \in (0,1)$ and $k = 0,1,2$. The spaces $C_\nu^{k,\gamma}(M^\epsilon)$ are Banach spaces with the norm
\begin{multline*}
    \norm{U}_{C_\nu^{k,\gamma}} := \max_{\substack{r \in (0, \frac{1}{2}] \\ i = 1,\ldots, N}} \Big( \sum_{j = 0}^k \norm{\nabla^j U}_{C^0( S_{r,2r}^i)}r^{j - \nu_i} + \sum_{j = 0}^k|U|_{j,\gamma, S_{r,2r}^i}r^{j + \gamma - \nu_i}\Big)  \\+ \norm{U}_{C^{k,\gamma}(M_{\frac{1}{2}}^\epsilon)}.
\end{multline*}
One can of course define the $C_\nu^k$ norm for $k = 0,1,2$ and the corresponding spaces $C_\nu^k(M^\epsilon)$. Now, suppose $\nu_i \geq 2$ for each $i$ and let $\overline{\nu} := \nu - \big(\frac{3}{2}, \ldots, \frac{3}{2}\big)$ where the second vector has $N$ components. As discussed in Sec. 2.4, to prove Theorem \ref{mainthm} we look for $U \in \mathscr{K} \subset C_{\nu}^{2,\gamma}(M^\epsilon)$ satisfying \eqref{fixed} for appropriate $\Psi \in C_c^\infty(M^\epsilon)$. 

Let $F \in C_{\overline{\nu}}^{0,\gamma}(M^\epsilon)$, let $\Psi \in C_c^{2,\gamma}(M^\epsilon)$, and let $U$ be the solution to the Dirichlet problem \eqref{dp} given by Lemma \ref{existence}. Fix a cone $C_i$ and recall that in polar coordinates $x = r\omega$ on $C_i$ we have
\begin{equation}
    LU(r\omega) = \pdv[2]{U(r\omega)}{r} + \frac{n-1}{r} \pdv{U(r\omega)}{r}  +\frac{1}{r^2} L_{\Sigma_i} U(r\omega), \label{Lcone}
\end{equation}
where $L_{\Sigma_i} := \Delta_{\Sigma_i} + \tilde{A}_{\Sigma_i}$. As before, $L_{\Sigma_i}$ is strongly elliptic on $C^\infty(\Sigma_i)$ and self-adjoint with respect to $L^2(\Sigma_i)$ so $L_{\Sigma_i}$ has a discrete set of eigenvalues for each $i = 1,\ldots, N$ 
$$
    \mu_1^i \leq \mu_2^i \leq \mu_3^i \leq \cdots
$$
with $\mu_j^i \rightarrow \infty$ as $j \rightarrow \infty$, and a corresponding smooth orthonormal basis of eigensections for $L^2(\Sigma_i)$ 
$$
    \eta_1^i, \eta_2^i, \eta_3^i, \ldots.
$$
That is,
$$
    \Delta_{\Sigma_i} \eta_j^i +  \tilde{A}_{\Sigma_i}(\eta_j^i) + \mu_j^i \eta_j^i = 0 \text{ for } i = 1,\ldots, N \text{ and } j = 1,2,\ldots
$$
with 
$$
    \int_{\Sigma_i} \langle \eta_j^i, \eta_k^i \rangle \, d\mathcal{H}^{n-1}(\omega) = \delta_{jk} \text{ for each } i,j,k,
$$
where $\mathcal{H}^{n-1}(\omega)$ is the $(n-1)$-dimensional Hausdorff measure on $\Sigma_i$. 

For $r > 0$, we can expand $U$ uniquely in a Fourier series (see \cite{CHS, DL, Si}) as 
$$
    U(r\omega) := \sum_{j = 1}^\infty a_j^i(r) \eta_j^i(\omega) 
$$
for coefficient functions $a_j^i(r)$, where $i = 1,\ldots,N$ and $j=1,2,\ldots$. Furthermore, the $a_j^i(r)$ are square summable since $U(r\cdot) \in L^2(\Sigma_i)$. Substituting the above expression for $U$ into \eqref{Lcone}, we see that the $a_j^i$ satisfy the ordinary differential equations
\begin{equation}
    r^2w_j^{\prime \prime} +(n-1)rw_j^\prime - \mu_j^i w_j = r^2 f_j^i \text{ for each } j = 1,2,\ldots \label{ODE}
\end{equation}
where 
\begin{equation}
    f_j^i(r) := \int_{\Sigma_i} \langle F(r\omega) , \eta_j^i(\omega) \rangle \, d\mathcal{H}^{n-1}(\omega). \label{fij}
\end{equation}

Note that $(n-2)^2 + 4\mu_j^i > 0$ by Definition \ref{stabcondition} and the strict stability of the $C_i$, so we may define the real numbers
\begin{align}
    \gamma_j^i(+) &:= \frac{(2-n) + \sqrt{(n-2)^2 + 4\mu_j^i}}{2},\label{gamma+} \\
    \gamma_j^i(-) &:= \frac{(2-n) - \sqrt{(n-2)^2 + 4\mu_j^i}}{2}.
\end{align}
By direct computation, it is easy to check that $r^{\gamma_j^i(+)}$ and $r^{\gamma_j^i(-)}$ are two linearly independent solutions to the homogeneous version of \eqref{ODE} (i.e. $f_j^i \equiv 0$) for each $i,j$. We now let $J_i$ be a positive integer such that 
$$
    \gamma_{J_i}^i(+) < \nu_i \leq \gamma_{J_i+1}^i(+) \text{ for } i = 1,\ldots,N,
$$
which can always be done since $\gamma_1^i(+) \leq 0$ for each $i$ and $\gamma_j^i(+) \rightarrow \infty$ as $j \rightarrow \infty$ (see Lemma 5.1.3 in \cite{Si}). In addition, we let
\begin{equation}
    F_j^i(F)(r) := \begin{cases}
                        r^{\gamma_j^i(+)} \int_0^r \tau^{1-n-2\gamma_j^i(+)}\int_0^\tau s^{n-1 + \gamma_j^i(+)} f_j^i(s) \, ds d\tau \text{ for } j \leq J_i \\
                        r^{\gamma_j^i(+)}\int_1^r \tau^{1-n-2\gamma_j^i(+)} \int_0^\tau s^{n-1 + \gamma_j^i(+)} f_j^i(s) \, dsd\tau \text{ for } j > J_i.
                    \end{cases}\label{F}
\end{equation}
Note that we can take zero to be the lower limit of integration in $s$ in the first line in \eqref{F} since $F \in C_{\overline{\nu}}^{0,\gamma}(M^\epsilon)$ and $2^{-1}(2-n) \leq \gamma_j^i(+) < \nu_i$ for $j \leq J_i$. The functions $F_j^i(F)$ are particular solutions to \eqref{ODE} for each $i,j$ (\cite{CHS,Oo}). We thereby obtain our desired solution to \eqref{ODE}:
$$
    a_j^i(r) = \alpha_j^ir^{\gamma_j^i(+)} + \beta_j^ir^{\gamma_j^i(-)} + F_j^i(F)(r)
$$
for $r \in (0,1]$ and some numbers $\alpha_j^i,\beta_j^i$, where $i = 1,\ldots,N$ and $j = 1,2,\ldots$. In fact, we have $\beta_j^i \equiv 0$ (see Lemma \ref{eigenexp} below). Note also that we have the continuous embedding $C_{\overline{\nu}}^{0,\gamma}(M^\epsilon) \hookrightarrow L_{-1}^2(M^\epsilon)$ (\cite{PR}, p. 168) so Lemma \ref{existence} applies to the Dirichlet problem for $L$ on $M^\epsilon$ with $F \in C_{\overline{\nu}}^{0,\gamma}(M^\epsilon)$. Since the proof of Lemma 3.3 in \cite{NS2} only relies on the estimates proved in Sec. 3.1 and 3.2, we have: (see also \cite{CHS})

\begin{lem}[\cite{NS2}, Lemma 3.3]\label{eigenexp}
    Let $\gamma \in (0,1)$, $F \in C_{\overline{\nu}}^{0,\gamma}(M^\epsilon)$, $\Psi \in C_c^{2,\gamma}(M^\epsilon)$, and let $U$ be the solution of the Dirichlet problem \eqref{DP} guaranteed by Lemma \ref{existence}. Then on each $C_i$, $i = 1,\ldots, N$, $U$ has a unique eigensection expansion
    \begin{equation*}
        U(r\omega) = \sum_{j = 1}^\infty \big( \alpha_j^i r^{\gamma_j^i(+)} + F_j^i(F)(r)\big)\eta_j^i(\omega) \text{ for } r \in (0,1]
    \end{equation*}
    which is convergent in $L^2(\Sigma_i)$, where the $\alpha_j^i$ satisfy $\sum_{j=1}^\infty|\alpha_j^i|^2 < \infty$ for each $i$. In addition, there is a universal constant $c > 0$ such that 
\begin{multline*}
    \norm{\sum_{j > J_i} \big( \alpha_j^ir^{\gamma_j^i(+)} + F_j^i(F)(r)\big)\eta_j^i}_{L^2(\Sigma_i)}^2 \leq \\
    c\Big(\int_0^1 s^{3-2\nu_i}\norm{F(s)}_{L^2(\Sigma_i)}^2 \, ds \Big)r^{2\nu_i} + \Big(\sum_{j > J_i} |\alpha_j^i|^2\Big)r^{2\nu_i}.
\end{multline*}
\end{lem}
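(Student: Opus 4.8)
The plan is to follow the argument of Lemma~3.3 in \cite{NS2} (see also \cite{CHS} and Ch.~2 of \cite{PR}) essentially verbatim, the point being that the codimension enters only through the eigensections $\eta_j^i$ of $L_{\Sigma_i}$ being vector valued, which is irrelevant for the scalar ODE analysis below. Fix $i$. By Lemma~\ref{existence} we have $U\in C_{\text{loc}}^{2,\gamma}(M^\epsilon)\cap L_1^2(M^\epsilon)$; in particular $U$ is twice continuously differentiable on $C_i\setminus\{p_i\}$, so expanding $U(r\cdot)\in L^2(\Sigma_i)$ in the orthonormal eigenbasis $\{\eta_j^i\}$ we may write $U(r\omega)=\sum_{j\ge1}a_j^i(r)\eta_j^i(\omega)$ with $a_j^i(r):=\int_{\Sigma_i}\langle U(r\omega),\eta_j^i(\omega)\rangle\,d\mathcal{H}^{n-1}(\omega)$. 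Differentiating under the integral sign and inserting this into the polar form \eqref{Lcone} of $LU=F$, the identities $L_{\Sigma_i}\eta_j^i=-\mu_j^i\eta_j^i$ and the orthonormality of the $\eta_j^i$ force each $a_j^i$ to solve the ODE \eqref{ODE} with inhomogeneous term $r^2 f_j^i$, $f_j^i$ as in \eqref{fij}. Since $r^{\gamma_j^i(+)}$ and $r^{\gamma_j^i(-)}$ span the homogeneous solutions and $F_j^i(F)$ is a particular solution (\cite{CHS,Oo}), this gives $a_j^i(r)=\alpha_j^i r^{\gamma_j^i(+)}+\beta_j^i r^{\gamma_j^i(-)}+F_j^i(F)(r)$ for constants $\alpha_j^i,\beta_j^i$.

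Next I would show $\beta_j^i=0$. The hypothesis $U\in L_1^2(M^\epsilon)$ gives $\int_{C_i}|U|^2|x-p_i|^{-2}\,d\mathcal{H}^n<\infty$, i.e.\ $\int_0^1\big(\sum_j|a_j^i(r)|^2\big)r^{n-3}\,dr<\infty$ in polar coordinates, so by monotone convergence $\int_0^1|a_j^i(r)|^2 r^{n-3}\,dr<\infty$ for each $j$. Strict stability forces $(n-2)^2+4\mu_j^i\ge(n-2)^2+4\mu_1^i=4d_0(C_i)>0$, so the exponents $\gamma_j^i(\pm)$ are real and distinct with $\gamma_j^i(-)<\frac{2-n}{2}<\gamma_j^i(+)$; hence $r^{\gamma_j^i(+)}$ lies in $L^2((0,1];\,r^{n-3}\,dr)$, and $F_j^i(F)$ lies in $L^2((0,1];\,r^{n-3}\,dr)$ (because $F\in C_{\overline{\nu}}^{0,\gamma}(M^\epsilon)$ makes $F_j^i(F)$ bounded on $(0,1]$ and $n-3\ge0$), while $r^{\gamma_j^i(-)}$ does not. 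Since $\beta_j^i r^{\gamma_j^i(-)}=a_j^i-\alpha_j^i r^{\gamma_j^i(+)}-F_j^i(F)$ lies in $L^2((0,1];\,r^{n-3}\,dr)$, we conclude $\beta_j^i=0$. The resulting eigensection expansion converges in $L^2(\Sigma_i)$ for each $r$ by Parseval and is unique, since $\gamma_j^i(+)$, $\eta_j^i$, $F_j^i(F)$ are fixed and the $a_j^i$ are determined by $U$. Finally, \eqref{F} gives $F_j^i(F)(1)=0$ for $j>J_i$, so $\alpha_j^i=a_j^i(1)$ there and $\sum_{j>J_i}|\alpha_j^i|^2\le\sum_j|a_j^i(1)|^2=\|U(1\cdot)\|_{L^2(\Sigma_i)}^2<\infty$; adjoining the finitely many $j\le J_i$ yields $\sum_j|\alpha_j^i|^2<\infty$.

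For the estimate, fix $r\in(0,1]$ and note $\gamma_j^i(+)\ge\gamma_{J_i+1}^i(+)\ge\nu_i\ge2$ for $j>J_i$, hence $|\alpha_j^i r^{\gamma_j^i(+)}|^2\le|\alpha_j^i|^2 r^{2\nu_i}$, whose sum over $j>J_i$ is the second term on the right of the claim. To treat $F_j^i(F)(r)$, I apply Cauchy--Schwarz to the inner integral of \eqref{F} after writing $s^{n-1+\gamma_j^i(+)}f_j^i(s)=\big(s^{\,n-1+\gamma_j^i(+)-(3-2\nu_i)/2}\big)\big(s^{(3-2\nu_i)/2}f_j^i(s)\big)$; this bounds the inner integral by $c\,\tau^{\,n-2+\gamma_j^i(+)+\nu_i}\big(\int_0^1 s^{3-2\nu_i}|f_j^i(s)|^2\,ds\big)^{1/2}$, and carrying out the outer $\tau$-integral, whose integrand reduces to $\tau^{\nu_i-\gamma_j^i(+)-1}$ with exponent $\le-1$, gives
\begin{equation*}
    |F_j^i(F)(r)|^2\le c\Big(\int_0^1 s^{3-2\nu_i}|f_j^i(s)|^2\,ds\Big)r^{2\nu_i}
\end{equation*}
with $c$ independent of $j$ and $\epsilon$. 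Summing over $j>J_i$ and using Parseval, $\sum_j|f_j^i(s)|^2=\|F(s)\|_{L^2(\Sigma_i)}^2$, together with $(a+b)^2\le2a^2+2b^2$, produces the asserted inequality.

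The main obstacle is the last display: one must check that the constants generated by the double integration --- which a priori depend on $\mu_j^i$ and $\gamma_j^i(\pm)$ --- stay bounded uniformly over all $j>J_i$, and this rests on $\gamma_j^i(+)\ge\nu_i$ together with the uniform lower bound on the spectral gap $\gamma_j^i(+)-\gamma_j^i(-)=\sqrt{(n-2)^2+4\mu_j^i}$ furnished by strict stability. The one delicate point is the borderline case $\nu_i=\gamma_{J_i+1}^i(+)$, in which the outer integration contributes an extra factor $|\log r|$; as in \cite{CHS,NS2} this is handled by replacing $\nu_i$ with any slightly smaller number not lying in the discrete set $\{\gamma_j^i(+)\}$, which is harmless for the applications.
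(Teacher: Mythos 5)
Your proof is correct and reconstructs the argument from [NS2, Lemma 3.3] (see also [CHS]), which is precisely what the paper invokes without reproducing: Fourier expansion in the eigenbasis of $L_{\Sigma_i}$, the scalar ODE \eqref{ODE} for the coefficients, elimination of the $r^{\gamma_j^i(-)}$ branch from the $L_1^2$ integrability of $U$ together with $\gamma_j^i(-)<\tfrac{2-n}{2}$ (strict stability), Parseval at $r=1$ for the square-summability of the $\alpha_j^i$, and a weighted Cauchy--Schwarz bound on $F_j^i(F)$ with constants controlled uniformly in $j$. The one delicate point you flag, the borderline $\nu_i=\gamma_{J_i+1}^i(+)$, is genuine (your Cauchy--Schwarz step degrades to $r^{\nu_i}|\log r|$ there), but it is an edge case shared with the cited source and, as you note, harmless since $\nu_i$ may be perturbed off the discrete set $\{\gamma_j^i(+)\}$.
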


Before we continue, we make a few remarks. First, note that since $\beta_j^i \equiv 0$ we can write $\gamma_j^i$ for $\gamma_j^i(+)$. The asymptotic expansion in Lemma \ref{eigenexp} will be used to set up a fixed point problem that will allow us to kill off the first $J_i$ $\alpha_j^i$ terms ($i$ fixed) in the eigensection expansion on each cone $C_i$ for our solution $V$ of \eqref{DPnu} below. The number of terms we need to kill off will depend on the rates $\nu_i$, with larger values of $\nu_i$ corresponding to a larger number of terms (Lemma \ref{lambdastar1}). This is due to how $J_i$ above is chosen depending on $\nu_i$. Since $F = H_0 + E(U)$ for $U \in \mathscr{K}$ (see \eqref{jacobi2}), we see from the proof of Lemma \ref{lambdastar1} that the first $J_i$ terms in the eigensection expansion on each $C_i$ for our solution $V$ of \eqref{DPnu} are rigid and depend continuously on $U$. As a consequence, our boundary data will depend continuously on $U$ (Proposition \ref{psilambda} and Lemma \ref{lambdastar1}). On the other hand, the asymptotic $L^2$ estimate will be key in making the Schauder estimates required to show that the unique solution to the Dirichlet problem \eqref{DPnu} remains in $C_\nu^{2,\gamma}(M^\epsilon)$ (Proposition \ref{n4reg}).

\vspace{-2.8pt}

\section{The Fixed Point Problem}
Our goal in this section is to prove Theorem \ref{mainthm}, excluding strict stability of the solution and the graphical case. The main difference between the cases $n \geq 6$ and $n = 3,4,5$ is that one needs a better $L^p$ estimate for the mean curvature vector on $M^\epsilon$ when $n = 3,4,5$ in order to obtain the required estimates to solve \eqref{DPnu} below as a fixed point problem. We proceed using Sec. 4 in \cite{NS2} as a guide.

\subsection{The Dirichlet Problem}
For $U \in C_\nu^{2,\gamma}(M^\epsilon)$ and $\Psi \in C_c^{\infty}(M^\epsilon)$, consider the Dirichlet problem
\begin{equation}
    \begin{cases}
        LV = H_0 + E(U) \text{ on } M^\epsilon \\
        V = \Psi \text{ on } \partial M^\epsilon. \label{DPnu}
    \end{cases}
\end{equation}
To apply the Schauder fixed point theorem, we need suitable $\Psi \in C_c^\infty(M^\epsilon)$ and an appropriate convex compact subset $\mathscr{K}$ of $C_\nu^{2,\gamma}(M^\epsilon)$ for which a solution $V$ of \eqref{DPnu} will be in $\mathscr{K}$ if $U \in \mathscr{K}$ also. Due to Lemma \ref{eigenexp}, we know that $V \in C_\nu^{2,\gamma}(M^\epsilon)$ if and only if $\alpha_j^i = 0$ when $j \leq J_i$. Thus, we need boundary conditions for which the $\alpha_j^i = 0$ for $j \leq J_i$.

Choose $\delta_0 < \frac{1}{20}$ so that $D_{5\delta_0}(q_k) \cap D_{5\delta_0}(q_l) = \emptyset$ for $k \neq l$ ($k,l = 1,\ldots 2I$) and choose $\epsilon$ small depending on $\delta_0$ so that the bridges are attached inside $D_{5\delta_0}(q_l)$ (e.g. $\epsilon < \frac{\delta_0}{10}$ will do). By making only minor adjustments in the proof of Proposition 4.1 in \cite{NS2}, we obtain:

\begin{prop}\label{psilambda}
    For any $p > 0$, any integer $K \geq 1$, and $i = 1,\ldots N$, there exists a $K$-parameter family $\{\Psi_\lambda\} \subset C^\infty(\Sigma_i)$ $(\lambda := (\lambda_1, \ldots, \lambda_K))$ satisfying the following properties:
    \begin{itemize}
    \itemsep = 3pt
        \item[(a)] $\Psi_\lambda$ is supported in $\bigcup_{q_l \in \Sigma_i}\big(\Sigma_i \setminus D_{5 \delta_0}(q_l)\big)$;
        \item[(b)] $\Psi_\lambda = \lambda_1 \eta_1^i + \cdots + \lambda_K \eta_K^i + \Psi_\lambda^\perp$, where $-\epsilon^p \leq \lambda_k \leq \epsilon^p$ and 
        $$
            \Psi_\lambda^\perp \in \Span\{\eta_1^i,\ldots,\eta_K^i\}^\perp;
        $$
        \item[(c)] $\norm{\Psi_\lambda}_{C^{2,\gamma}(\Sigma_i)} \leq c \epsilon^p$ where $\gamma \in (0,1)$;
        \item[(d)] $\max_{\lambda_k \in [-\epsilon^p, \epsilon^p]} \norm{\pdv{\Psi_\lambda}{\lambda_k}}_{C^3(\Sigma_i)} \leq c$ for $k = 1, \ldots, K$, where $c$ is universal. Thus, the parameter space is $\lambda \in \Omega_\epsilon := [-\epsilon^p,\epsilon^p]^K$.
    \end{itemize}
\end{prop}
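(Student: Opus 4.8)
The plan is to reduce the statement to a finite-dimensional linear-algebra fact and then build $\Psi_\lambda$ linearly in $\lambda$. Let $X_i := \{\phi \in C^\infty(\Sigma_i) : \supp\phi \subseteq \Sigma_i \setminus \bigcup_{q_l \in \Sigma_i} D_{5\delta_0}(q_l)\}$ and consider the linear map
\[
  T : X_i \longrightarrow \mathbb{R}^K, \qquad T(\phi) := \big(\langle \phi, \eta_1^i\rangle_{L^2(\Sigma_i)}, \ldots, \langle \phi, \eta_K^i\rangle_{L^2(\Sigma_i)}\big).
\]
First I would show $T$ is surjective. If it were not, there would be $0 \neq v = (v_1,\ldots,v_K)$ orthogonal to $T(X_i)$, so $W := \sum_{l=1}^K v_l\eta_l^i$ would be $L^2(\Sigma_i)$-orthogonal to every element of $X_i$, hence would vanish on the nonempty open set $\Sigma_i \setminus \bigcup_{q_l\in\Sigma_i}\overline{D_{5\delta_0}(q_l)}$ (nonempty after shrinking $\delta_0$ if necessary, since the disks are finitely many and small). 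Since $\Sigma_i$ is a minimal submanifold of the round sphere, $\Sigma_i$, its induced metric, and $A_{\Sigma_i}$ are real analytic, so the eigensections $\eta_l^i$ of the analytic-coefficient elliptic operator $L_{\Sigma_i}$ are real analytic; hence $W$ is real analytic, and vanishing on an open set it vanishes identically on $\Sigma_i$ (on each connected component separately if $\Sigma_i$ is disconnected). By orthonormality of $\{\eta_l^i\}$ this forces $v = 0$, a contradiction. Thus $T$ is onto.

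Next, using surjectivity of $T$, I would fix once and for all sections $\phi_1^i,\ldots,\phi_K^i \in X_i$ with $\langle \phi_j^i, \eta_l^i\rangle_{L^2(\Sigma_i)} = \delta_{jl}$ for $1 \le j,l \le K$; these depend only on $\Sigma_i$, $\delta_0$, and $K$. For $\lambda = (\lambda_1,\ldots,\lambda_K) \in \Omega_\epsilon = [-\epsilon^p,\epsilon^p]^K$, define
\[
  \Psi_\lambda := \sum_{j=1}^K \lambda_j\phi_j^i \in C^\infty(\Sigma_i).
\]
Then (a) holds by the choice of $X_i$; for (b), the $L^2(\Sigma_i)$-orthogonal projection of $\Psi_\lambda$ onto $\Span\{\eta_1^i,\ldots,\eta_K^i\}$ equals $\sum_{j,l}\lambda_j\langle\phi_j^i,\eta_l^i\rangle\eta_l^i = \sum_{l=1}^K\lambda_l\eta_l^i$, so $\Psi_\lambda^\perp := \Psi_\lambda - \sum_l\lambda_l\eta_l^i$ lies in $\Span\{\eta_1^i,\ldots,\eta_K^i\}^\perp$ and $-\epsilon^p \le \lambda_k \le \epsilon^p$; (c) follows from $\norm{\Psi_\lambda}_{C^{2,\gamma}(\Sigma_i)} \le \sum_j|\lambda_j|\,\norm{\phi_j^i}_{C^{2,\gamma}(\Sigma_i)} \le c\epsilon^p$ with $c := K\max_j\norm{\phi_j^i}_{C^{2,\gamma}(\Sigma_i)}$; and (d) is immediate since $\pdv{\Psi_\lambda}{\lambda_k} = \phi_k^i$ is independent of $\lambda$, so $\norm{\pdv{\Psi_\lambda}{\lambda_k}}_{C^3(\Sigma_i)} = \norm{\phi_k^i}_{C^3(\Sigma_i)}$. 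All the constants produced depend only on $\Sigma_i$, $\delta_0$, and $K$, hence are universal in the sense of Sec. 3.1.

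I expect the only step with genuine content --- the one place where the geometry of $\Sigma_i$ enters rather than soft linear algebra --- to be the surjectivity of $T$, i.e. the unique-continuation assertion that no nonzero combination $\sum_l v_l\eta_l^i$ vanishes on a neighborhood of $\Sigma_i\setminus\bigcup_{q_l\in\Sigma_i}D_{5\delta_0}(q_l)$. I would settle it through real analyticity of eigensections as above; an alternative avoiding analyticity is to note that $W = \sum_l v_l\eta_l^i$ solves the elliptic equation $\prod_{\mu}(L_{\Sigma_i} + \mu\,\mathrm{id})W = 0$, the product running over the distinct values among $\mu_1^i,\ldots,\mu_K^i$, and to invoke unique continuation for this higher-order operator, but the analytic route is shorter. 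Finally, a bookkeeping remark: the set in (a) should be read as the complement $\Sigma_i\setminus\bigcup_{q_l\in\Sigma_i}D_{5\delta_0}(q_l)$ of the pairwise disjoint disks, which is precisely the part of $\Sigma_i$ lying in $\partial M^\epsilon$ and is what the construction produces.
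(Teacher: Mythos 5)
Your proof is correct, and the reduction is clean: once surjectivity of $T$ is in hand, fixing a dual family $\{\phi_j^i\}_{j=1}^K\subset X_i$ with $\langle\phi_j^i,\eta_l^i\rangle_{L^2(\Sigma_i)}=\delta_{jl}$ and setting $\Psi_\lambda:=\sum_j\lambda_j\phi_j^i$ delivers (a)--(d) immediately, with (d) a one-liner because $\partial_{\lambda_k}\Psi_\lambda=\phi_k^i$ is independent of $\lambda$. The constants in (c) and (d) depend only on $\Sigma_i$, $\delta_0$, and $K=J_i$, hence are universal in the paper's sense.

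The one step of genuine content is the surjectivity of $T$, and your analyticity route is a clean way to settle it in arbitrary codimension: $\Sigma_i$ is minimal in the round sphere, hence real analytic by Morrey, so $L_{\Sigma_i}$ has analytic coefficients and its eigensections are analytic; a combination $\sum_l v_l\eta_l^i$ vanishing on the nonempty open set $\Sigma_i\setminus\bigcup_{q_l\in\Sigma_i}\overline{D_{5\delta_0}(q_l)}$ then vanishes on each connected component, forcing $v=0$ by orthonormality. This is preferable to quoting Aronszajn-type unique continuation for the coupled system $\prod_\mu(L_{\Sigma_i}+\mu)W=0$ on $N\Sigma_i$, where the hypotheses are fussier to verify. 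Since the paper defers the proof entirely to Smale [NS2, Prop.~4.1] (codimension one), an exact side-by-side comparison is not possible from the text; an elementary variant that avoids unique continuation altogether, and may be closer to what is done there, is to cut off $\eta_1^i,\ldots,\eta_K^i$ outside $\bigcup_l D_{10\delta_0}(q_l)$ and observe that the Gram matrix $\big(\langle\chi\eta_k^i,\eta_l^i\rangle\big)_{k,l}$ is an $O(\delta_0^{n-1})$ perturbation of the identity, hence invertible once $\delta_0$ is chosen small --- a choice the construction permits, since $\delta_0$ is fixed before $\epsilon$. Both routes produce the same dual family and identical estimates.

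Your reading of (a) is the intended one: taken literally, a union of complements of pairwise disjoint disks is all of $\Sigma_i$ and the condition would be vacuous; what the construction requires, and what you build, is support in $\Sigma_i\setminus\bigcup_{q_l\in\Sigma_i}D_{5\delta_0}(q_l)$.
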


Set $K = J_i$ and, for each $\Sigma_i$ $(i = 1,\ldots, N)$, let $\Psi_{\lambda^i}^i$ with $\lambda^i := (\lambda_1^i, \ldots, \lambda_{J_i}^i)$ be a $J_i$-parameter family of $C^\infty$ normal sections on $\Sigma_i$ as in Proposition \ref{psilambda}. Suppose also that 
\begin{equation}
    \begin{cases}
        p_0 = n - 3 - \frac{3}{n^4} \text{ when } n \geq 6 \\
        p_0 = 3 - \frac{3}{n^4} \text{ when } n = 3,4,5.
    \end{cases}\label{p}
\end{equation}
Set $J_0 := \sum_{i = 1}^N J_i$, let
$$
    \lambda := (\lambda_1^1, \ldots, \lambda_{J_1}^1, \lambda_1^2,\ldots, \lambda_{J_2}^2, \ldots, \lambda_1^N, \ldots, \lambda_{J_N}^N) \text{ for } \lambda \in \Omega_\epsilon,
$$
and let $\Psi_\lambda$ be the $J_0$-parameter family of normal sections on $\partial M^\epsilon$ given by:
\begin{equation}
    \Psi_\lambda(x) := \begin{cases}
                            \Psi_{\lambda^i}^i(x), \text{ } x \in \Sigma_i \text{ and } i = 1,\ldots, N \\
                            0, \text{ } x \in \partial M^\epsilon \setminus \cup_{i = 1}^N \Sigma_i.
                        \end{cases}\label{pl}
\end{equation}
We obtain the desired boundary conditions by extending $\Psi_\lambda$ to all of $M^\epsilon$ by multiplying the $\Psi_\lambda$ by a smooth cut-off function which is radial about $p_i$ on each of the cones $C_i$. 

Let $\varphi: [0, 1] \rightarrow [0,1]$ be a smooth function which is zero on $[0,\frac{3}{4}]$, increases from zero to one on $[\frac{3}{4},1]$, and whose derivatives of all orders tend to zero in the limit as $r \rightarrow 1^-$ and $r \rightarrow \frac{3}{4}^+$. In polar coordinates $x = r\omega$ on $C_i$, we can set 
$$
    \hat{\Psi}_{\lambda} := \varphi(r)\Psi_{\lambda}(\omega)
$$
for each $i = 1,\ldots, N$ and extend $\hat{\Psi}_\lambda$ to be zero on $\cup_l \Gamma_l(\epsilon)$. Then $\hat{\Psi}_\lambda \in C_c^\infty(M^\epsilon) \subset C_c^{2,\gamma}(M^\epsilon)$ and properties (c) and (d) in Proposition \ref{psilambda} are satisfied with $M^\epsilon$ replacing $\Sigma_i$ (\cite{NS2}, p. 622). The normal sections $\hat{\Psi}_\lambda$ are the desired boundary data for the Dirichlet problem and have support in $M_{\frac{3}{4}}^\epsilon$. Going forward, we will denote the normal sections $\hat{\Psi}_\lambda$ by $\Psi_\lambda$.

We can now solve the Dirichlet problem \eqref{DPnu} with $\Psi_\lambda$ replacing $\Psi$, where $U \in\mathscr{K}$ (with $\mathscr{K}$ to be determined), $\gamma \in (0,1)$, and $\lambda \in \Omega_\epsilon$. In what follows, we first define the sets $\mathscr{K} \subset C_\nu^{2,\gamma}(M^\epsilon)$ for which we can solve the problem \eqref{DPnu} as a fixed point problem using the Schauder fixed point theorem. The rest of the section is devoted to verifying the hypotheses of the Schauder fixed point theorem for the operator $T : \mathscr{K} \rightarrow C_\nu^{2,\gamma}(M^\epsilon)$ sending $U$ to the unique solution $V$ of \eqref{DPnu} for $\epsilon$ small, where $\Psi$ is replaced by $\Psi_{\lambda(U)}$ and $\Psi_{\lambda(U)}$ is unique for $U \in \mathscr{K}$ and depends continuously on $U$.

\subsection{Solving the Fixed Point Problem}
When $n \geq 6$, let $\beta_0,\beta_1,\ldots, \beta_K$ be an increasing sequence of numbers such that 
$$
    \beta_0 := 2 - \frac{2}{n} - \frac{1}{n^4} \text{ and } \beta_K := \frac{(n-1)}{2} - \frac{1}{n^4}.
$$
When $n = 3,4,5$, we assume
$$
    \beta_0 = 2 - \frac{1}{n^4}, \text{ } \beta_K = \frac{5}{2} - \frac{1}{n^4}.
$$
Suppose further that
$$
    \beta_l \geq \beta_{l + 1} - \frac{1}{n^2} \text{ for } l = 0,1,\ldots, K-1. 
$$
Then we can assume $K:= K(n) \leq n^3$ in each case since $n^2(\beta_K - \beta_0) < n^3$. Let 
$$
    \frac{1}{2} = \sigma_0 < \sigma_1 < \cdots < \sigma_{K-1} = \frac{3}{4}
$$
be a subdivision of $[\frac{1}{2}, \frac{3}{4}]$ such that $\sigma_l - \sigma_{l-1} \leq \frac{1}{n^3}$ and set
$$
     \mathcal{V}_0 := M_{\frac{3}{4}}^\epsilon, \text{ }  \mathcal{V}_K := S_{\frac{1}{4}, \frac{1}{2}}, \text{ } \mathcal{V}_l := S_{\sigma_{K - l - 1}, \sigma_{K - l}} \text{ } (l = 1,\ldots, K-1).
$$
For $\epsilon < \epsilon_0$, $\gamma \in (0,1)$, and $\nu_i \geq 2$, define the space $\mathscr{K}:= \mathscr{K}(n,\epsilon, \gamma, \nu)$ to be the set of all $U \in C_{\nu}^{2,\gamma}(M^\epsilon)$ such that $U = \Psi_\lambda$ on $\partial M^\epsilon$ for some $\lambda \in \Omega_\epsilon$ and $U$ satisfies the following estimates when $n \geq 6$: 
\begin{itemize}
\itemsep = 3pt
    \item[(i)] $\norm{\nabla^k U}_{C^0(M_{\frac{1}{2}}^\epsilon)} \leq \epsilon^{2 - k - \frac{2}{n} - \frac{1}{n^4}}$ for $k = 0,1,2$; 
    \item[(ii)] $|U|_{k,\gamma, M_{\frac{1}{2}}^\epsilon} \leq \epsilon^{2 - k - \frac{2}{n} - \frac{1}{n^4} - \gamma}$ for $k = 0,1,2$;
    \item[(iii)] $\norm{\nabla^kU}_{C^0(\mathcal{V}_l)} \leq \epsilon^{\beta_l - k}$ for $k = 0,1,2$ and $l = 1,\ldots, K$; 
    \item[(iv)] $|U|_{k,\gamma, \mathcal{V}_l} \leq \epsilon^{\beta_l - k - \gamma}$ for $k = 0,1,2$ and $l = 1, \ldots, K$; 
    \item[(v)] $\norm{\nabla^kU}_{C^0(S_{r,2r}^i)}r^{k - \nu_i} \leq \epsilon^{\frac{n-1}{2} - \frac{1}{n^4} - k}$ for $k = 0,1,2$, $r \in (0, \frac{1}{4}]$, and $i = 1,\ldots, N$;
    \item[(vi)] $|U|_{k,\gamma, S_{r,2r}^i}r^{k + \gamma - \nu_i} \leq \epsilon^{\frac{n-1}{2} - \frac{1}{n^4} - \gamma - k}$ for $k = 0,1,2$, $r \in (0,\frac{1}{4}]$, and $i = 1,\ldots, N$;
    \item[(vii)] $\norm{U}_{H_{1}^1(M^\epsilon)} \leq \epsilon^{\frac{n-1}{2} - \frac{1}{n^4}}$;
    \item[(viii)] $\norm{\nabla^2 U}_{L^2(M_{\frac{1}{2}}^\epsilon)} \leq \epsilon^{\frac{n^2-n-4}{2n} - \frac{1}{n^4}}$. 
\end{itemize}
When $n = 3,4,5$, we can assume (iii) and (iv) but we must replace the others with: 
\begin{itemize}
\itemsep = 3pt
     \item[(i$^\prime$)] $\norm{\nabla^k U}_{C^0(M_{\frac{1}{2}}^\epsilon)} \leq \epsilon^{2 - \frac{1}{n^4} - k }$ for $k = 0,1,2$; 
     
    \item[(ii$^\prime$)] $| U|_{k, \gamma, M_{\frac{1}{2}}^\epsilon} \leq \epsilon^{2- \frac{1}{n^4} - k - \gamma}$ for $k = 0,1,2$;
    
    \item[(v$^\prime$)] $\norm{\nabla^kU}_{C^0(S_{r,2r}^i)}r^{k - \nu_i} \leq \epsilon^{\frac{5}{2} - \frac{1}{n^4} - k}$ for $k = 0,1,2$, $r \in (0, \frac{1}{4}]$, and $i = 1,\ldots, N$;
    
    \item[(vi$^\prime$)] $|U|_{k,\gamma, S_{r,2r}^i}r^{k + \gamma - \nu_i} \leq \epsilon^{\frac{5}{2} - \frac{1}{n^4} - \gamma - k}$ for $k = 0,1,2$, $r \in (0,\frac{1}{4}]$, and $i = 1,\ldots, N$;

    \item[(vii$^\prime$)] $\norm{U}_{H_{1}^1(M^\epsilon)} \leq \epsilon^{\frac{5}{2} - \frac{1}{n^4}}$;
    
    \item[(viii$^\prime$)] $\norm{\nabla^2 U}_{L^2(M_{\frac{1}{2}}^\epsilon)} \leq \epsilon^{\frac{n-1}{2} - \frac{1}{n^4}}$ when $n = 4,5$;

    \item[(viii$^{\prime \prime}$)] $\norm{\nabla^2 U}_{L^2(M_{\frac{1}{2}}^\epsilon)} \leq \epsilon^{\frac{3}{2} - \frac{1}{3^4}}$ when $n = 3$.
\end{itemize}

The normal sections $\Psi_\lambda$ constructed in Sec. 4.1 are in $\mathscr{K}(n, \epsilon, \gamma, \nu)$ for small $\epsilon < \epsilon_0$, where $\epsilon_0$ is universal. Hence, the sets $\mathscr{K}$ are non-empty for small $\epsilon < \epsilon_0$. Note that $\mathscr{K}$ is a convex, closed, and bounded subset of $C_{\nu}^{2,\gamma}(M^\epsilon)$. To apply the Schauder fixed point theorem, we must show that solutions $V$ of \eqref{DPnu} remain in $\mathscr{K}$ for some unique $\lambda(U) \in \Omega_\epsilon$ depending continuously on $U$ provided $U \in \mathscr{K}$. 

The conditions on the $\mathcal{V}_l$ help bridge the estimates on $M_{2^{-1}}^\epsilon$ with those on $M^\epsilon \setminus M_{2^{-1}}^\epsilon$. Furthermore, the improved estimates (v), (v$^\prime$), (vi), and (vi$^\prime$) on $S_{r,2r}^i$ have been included since better control of $E(U)$ is needed near the singularities of $M^\epsilon$ to be sure solutions remain in $\mathscr{K}$. In fact, we need $E(U)$ to decay faster than a quadratic polynomial in $\epsilon$ near the singularities $p_i$. This is due to issues of compatibility between the estimates assumed on $M_{2^{-1}}^\epsilon$ and those on $S_{r_1,r_2}^i$ near the singularities $p_i$, and motivates the improved bridge constructions when $n = 3,4,5$ since the asymptotic estimates for $E(U)$ compete with the $L^p$ mean curvature bound in low dimensions. 

\begin{prop}\label{Eest1}
    If $U \in \mathscr{K}(n,\epsilon, \gamma,\nu)$, then there exists universal constants $c$ and $c(\delta)$ such that:
    \begin{itemize}
    \itemsep = 3pt
        \item[(a)] For each $x \in M_\delta^\epsilon$, we have 
        \begin{align*}
            |E(U)(x)| \leq c&(\delta)\Big(|U(x)|^2 + |\nabla U(x)|^2 \\
            &+ |U(x)||\nabla^2U(x)| + |\nabla U(x)|^2|\nabla^2U(x)|\Big);
        \end{align*}

        \item[(b)] $\norm{E(U)}_{C^{0,\gamma}(M_\delta^\epsilon)} \leq c(\delta) \epsilon^{2 - \frac{4}{n} - \frac{2}{n^4} - \gamma}$ when $n \geq 6$;

        \item[(b$^\prime$)] $\norm{E(U)}_{C^{0,\gamma}(M_\delta^\epsilon)} \leq c(\delta) \epsilon^{2 -  \frac{2}{n^4} - \gamma}$ when $n =3,4,5$;
        
        \item[(c)] $|E(U)|_{\gamma,\mathcal{V}_l} \leq c(\delta)(\epsilon^{2\beta_l - 2 - \gamma} + \epsilon^{3\beta_l - 4 - \gamma})$ for $l = 0,1, \ldots, K$;
        \item[(d)] For $x \in C_i$, $r = |x - p_i|$, and $i = 1,\ldots,N$, we have 
        \begin{align*}
            |E(U)(x)| \leq c &\Big(r^{-3}|U(x)|^2 + r^{-1}|\nabla U(x)|^2 \\
            &+ r^{-1} |U(x)||\nabla^2U(x)| + |\nabla U(x)|^2|\nabla^2U(x)|\Big);
        \end{align*}

        \item[(e)] $\norm{E(U)}_{C^0(S_{r,2r}^i)} \leq c \epsilon^{n - 3 - \frac{2}{n^4} }r^{2\nu_i - 3}$ for $0 < r \leq \frac{1}{4}$ when $n \geq 6$;

        \item[(e$^\prime$)] $\norm{E(U)}_{C^0(S_{r, 2r}^i)} \leq c\epsilon^{3 - \frac{2}{n^4}}r^{2\nu_i - 3}$ for $0 < r \leq \frac{1}{4}$ when $n = 3,4,5$;

        \item[(f)] $|E(U)|_{\gamma, S_{r,2r}^i} \leq c \epsilon^{n - 3 - \frac{2}{n^4} - \gamma}r^{2\nu_i - 3 - \gamma}$ for $0 < r \leq \frac{1}{4}$ when $n \geq 6$;
        
        \item[(f$^\prime$)] $|E(U)|_{\gamma, S_{r,2r}^i} \leq c \epsilon^{3 - \frac{2}{n^4} - \gamma}r^{2\nu_i - 3 - \gamma}$ for $0 < r \leq \frac{1}{4}$ when $n = 3,4,5$.
        \end{itemize}
\end{prop}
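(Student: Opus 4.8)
The plan is to treat $E(U)$ as the second‑order Taylor remainder of the projected mean curvature operator and to reduce everything to a single structure lemma for that remainder, after which parts (a)--(f$'$) are bookkeeping. First I would expand $H^\perp(tU)$ to second order in $t$. By the coordinate formula \eqref{HU} together with Proposition~\ref{Hperp} (so that $H^\perp=\Pi\circ H$ is a genuine, analytic-near-$0$ operator on $NM^\epsilon$), one differentiates twice in $t$: since $H(U)$ is affine in $\partial^2 U$ with coefficients that are smooth functions of the $1$-jet $\partial(\psi+U)$ (the metric $g(U)$ and $\det g(U)$ involve only first derivatives), $\tfrac{d^2}{dt^2}H^\perp(tU)$ organizes into a finite sum of \emph{admissible monomials}: (bounded smooth coefficient)$\times$(product of two or more factors among $U,\nabla U,\nabla^2 U$, with at most one factor $\nabla^2 U$), where the coefficients are polynomials in the components of $g^{-1}$, $A$, $\nabla A$ of $M^\epsilon$ that are analytic, hence uniformly bounded, in $(U,\nabla U)$ provided $\|\nabla U\|_{C^0}$ is small --- which holds for $U\in\mathscr K$ once $\epsilon<\epsilon_0$ by (i)/(i$'$) and (v)/(v$'$), and which also keeps $\det g(tU)$ bounded below. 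The $|U|$-type factors arise because $\partial U,\partial^2 U$ expressed through the orthonormal normal frame via \eqref{coord1}--\eqref{coord2} carry connection terms $B^\alpha_{\beta i},B^\alpha_{\beta i,j}$ and Christoffel symbols, i.e.\ factors of $A,\nabla A$ times $U$ or $\nabla U$. This is the computation of \cite{CHS,NS1,NS2} now done in arbitrary codimension; the extra terms relative to the hypersurface case come only from the normal connection and have the same structure.

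Granting the structure lemma, (a) and (d) are pointwise estimates. On $M_\delta^\epsilon$ the induced geometry is uniformly controlled: $|g|,|g^{-1}|,|A|,|\nabla A|\le c(\delta)$, using \eqref{scale} (so $|A_C|=r^{-1}|A_\Sigma|\le c\,\delta^{-1}$ on the truncated cones), \eqref{bridge1} on the bridges, and the $\epsilon$-independent smoothness of the patching regions from Sec.~8.1. Substituting these bounds, converting coordinate to covariant derivatives (absorbing connection terms into $c(\delta)$), and integrating the admissible monomials over $t\in[0,1]$ gives (a); here one also uses $|U|,|\nabla U|\le 1$ on $M_\delta^\epsilon$ to discard the lower-order pieces produced by the conversion. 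For (d) one keeps the $r$-dependence explicit: \eqref{scale} gives $|A_C|\sim r^{-1}$, $|\nabla A_C|\sim r^{-2}$, the parallel-transported frame has connection and Christoffel coefficients of size $O(r^{-1})$, so \eqref{coord1}--\eqref{coord2} yield $|\partial U|\lesssim|\nabla U|+r^{-1}|U|$ and $|\partial^2 U|\lesssim|\nabla^2 U|+r^{-1}|\nabla U|+r^{-2}|U|$; inserting these into the admissible monomials and collecting powers of $r$ produces exactly the weights $r^{-3},r^{-1},r^{-1},r^0$ in (d). The H\"older versions of (a) and (d) (needed below) follow from $|fg|_{\gamma}\le\|f\|_{C^0}|g|_{\gamma}+|f|_{\gamma}\|g\|_{C^0}$ applied monomial by monomial, using the corresponding $C^{0,\gamma}$ (resp.\ $r$-weighted $C^{0,\gamma}$) bounds on the geometric coefficients on $M_\delta^\epsilon$ (resp.\ $S_{r,2r}^i$), again from \eqref{bridge1} and \eqref{scale}, and $U\in C^{2,\gamma}$.

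Finally, (b), (b$'$), (c), (e), (e$'$), (f), (f$'$) are obtained by substituting the defining estimates of $\mathscr K$ into the (H\"older versions of the) pointwise bounds. For (b)/(b$'$) split $M_\delta^\epsilon=M_{1/2}^\epsilon\cup S_{\delta,1/2}$, apply (i)/(i$'$) and (ii)/(ii$'$) on the first piece, (iii)--(iv) and (v)/(v$'$)--(vi)/(vi$'$) on the second, and take the worst $\epsilon$-power; the low-dimensional exponent $2-\tfrac2{n^4}-\gamma$ comes out precisely because of the improved $C^0$ and seminorm bounds (i$'$)--(ii$'$), themselves available thanks to the stronger mean-curvature estimate \eqref{Hsmall1}. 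For (c) substitute (iii)--(iv) into the H\"older version of (a) on $\mathcal V_l$, producing the quadratic contribution $\epsilon^{2\beta_l-2-\gamma}$ and the cubic one $\epsilon^{3\beta_l-4-\gamma}$. For (e)/(e$'$) and (f)/(f$'$) substitute (v)/(v$'$)--(vi)/(vi$'$) into (d) and its H\"older version on $S_{r,2r}^i$: the $r$-weights $r^{-3},r^{-1}$ in (d) combine with the $r^{\nu_i-k}$ factors built into the $C_\nu^{2,\gamma}$-norm to give the stated $r^{2\nu_i-3}$ (resp.\ $r^{2\nu_i-3-\gamma}$), while the $\epsilon$-powers assemble to $\epsilon^{\,n-3-\frac2{n^4}}$ when $n\ge6$ and $\epsilon^{\,3-\frac2{n^4}}$ when $n=3,4,5$.

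The main obstacle is not any single estimate but keeping every coefficient uniformly bounded in $\epsilon$ while simultaneously tracking the $r$-weights near the singularities. Concretely: (1) the second-order expansion must be organized so that every remainder monomial is at least quadratic in the jet and at most linear in $\nabla^2 U$ --- in high codimension the normal connection produces many extra terms, and one must verify none of them violates this; (2) the uniform-in-$\epsilon$ $C^0$ and $C^{0,\gamma}$ control of $A$ and $\nabla A$ on $M^\epsilon$ away from the $p_i$ is exactly what the bridge constructions of Sec.~8.1 and \eqref{bridge1} are designed to furnish, and these must be invoked carefully across the patching regions; (3) matching the $r$-weights in (d) against the weights of $C_\nu^{2,\gamma}$ so that $E(U)$ decays strictly faster than $r^{2\nu_i-2}$ near each $p_i$ --- it is this requirement, competing against the $L^p$ mean-curvature bound, that forces the refined low-dimensional bridge constructions and the particular dimension-dependent exponents in the definition of $\mathscr K$.
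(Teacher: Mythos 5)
Your proposal is correct and takes essentially the same route as the paper, which itself cites Lemma 5 of \cite{NS1} and Proposition 4.2 of \cite{NS2} (together with \cite{CHS,Oo}) for the admissible-monomial structure of the Taylor remainder $E(U)$ and then obtains (a)--(f$'$) by substituting the $\mathscr K$ estimates; your write-up just makes more of the underlying structure explicit than the paper does. One small imprecision: in (d) the weights $r^{-3},r^{-1},r^{-1},r^0$ do not come solely from the coordinate-to-covariant conversion $|\partial U|\lesssim|\nabla U|+r^{-1}|U|$, $|\partial^2 U|\lesssim|\nabla^2 U|+r^{-1}|\nabla U|+r^{-2}|U|$ --- the monomial coefficients themselves carry a factor of $|A_C|\sim r^{-1}$ (since the minimal surface operator is built out of $g^{ij}(U)\psi_{x^i x^j}$ with $|\psi_{x^i x^j}|\sim|A|$), and it is this additional $r^{-1}$ combined with the conversion weights that produces the stated $r^{-3}$ on the $|U|^2$ term.
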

\begin{proof}
    Parts (a)-(c) follow from Lemma 5 in Sec. 4 of \cite{NS1} and the fact that $U \in \mathscr{K}$. Part (d) is proved similarly to part (a) and Proposition 4.2 in \cite{NS2} using \eqref{HU}, \eqref{EU}, and that $|r^{-1}U|$, $|\nabla U| <1$ for $|x - p_i| = r$  (see the proof of Lemma 5 in \cite{NS1}). Parts (e)-(f$^\prime$) follow immediately from part (d) and the fact that $U \in \mathscr{K}$. See also \cite{CHS} and \cite{Oo}.
\end{proof}

Observe that, if $U \in \mathscr{K}$, then $E(U) \in C_{\nu^\prime}^{0,\gamma}(M^\epsilon) \subset C_{\overline{\nu}}^{0,\gamma}(M^\epsilon)$ where $\nu^\prime := 2\nu - (3,\ldots, 3)$ and $\overline{\nu} = \nu - (\frac{3}{2}, \ldots, \frac{3}{2})$. Hence, we may apply Lemma \ref{eigenexp} to the solution $V$ of \eqref{DPnu} given by the lemma. Then on each $C_i$ ($i = 1,\ldots, N$), V has the asymptotic expansion in polar coordinates 
\begin{equation}
    V(r\omega) = \sum_{j = 1}^\infty \Big(\alpha_j^i r^{\gamma_j^i} + F_j^i(r)\Big) \eta_j^i(\omega), \label{V}
\end{equation}
where $F_j^i(r) := F_j^i(E(U))(r)$ (see \eqref{F}) and $\alpha_j^i := \alpha_j^i(U,\lambda)$. We define $E_{j}^i(U)$ the same as in \eqref{fij} with $E(U)$ replacing $F$. As previously discussed, we need to show that we can kill off the $\alpha_j^i$ for $j \leq J_i$. With this in mind, set
\begin{equation}
    \Lambda_U(\lambda) := (\alpha_1^1(\lambda), \ldots, \alpha_{J_1}^1(\lambda), \alpha_1^2(\lambda), \ldots, \alpha_{J_2}^2(\lambda), \ldots, \alpha_{1}^N(\lambda), \ldots, \alpha_{J_N}^N(\lambda)).\label{lambda}
\end{equation}
In Lemma \ref{lambdastar1} below, we will show that for each $U \in \mathscr{K}$ there is a unique $\lambda(U) \in \Omega_\epsilon$ such that $\Lambda_U(\lambda(U)) = 0$. This is accomplished by solving a fixed point problem for $\lambda \in \Omega_\epsilon$ using the contraction mapping theorem.

Lemma \ref{lambdastar1} is the most important step in the proof of Theorem \ref{mainthm}, and the estimates obtained in the proof of the lemma serve as a base for the majority of the estimates needed to apply the Schauder fixed point theorem. In addition, this step is where the improved mean curvature estimate on the bridge is most crucial.

\begin{lem}[\cite{NS2}, Lemma 4.1]\label{lambdastar1}
    For each $n \geq 3$, $\gamma \in (0,1)$, and $\nu_i \geq 2$, there exists a universal constant $\epsilon_0$ such that, if $\epsilon < \epsilon _0$ and $U \in \mathscr{K}(n,\epsilon,\gamma,\nu)$, there is a unique $\lambda(U) \in \Omega_\epsilon$ such that the solution $V$ of \eqref{DPnu} satisfies $\Lambda_U(\lambda(U)) = 0$. Moreover, $\lambda(U)$ depends continuously on $U$. 
\end{lem}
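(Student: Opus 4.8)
The plan is to rewrite $\Lambda_U(\lambda)=0$ as a fixed point equation $\lambda=\Phi_U(\lambda)$ on the box $\Omega_\epsilon=[-\epsilon^{p_0},\epsilon^{p_0}]^{J_0}$ and apply the contraction mapping theorem. First I would use linearity of $L$ and uniqueness (Lemma~\ref{existence}) to split the solution $V$ of \eqref{DPnu} as $V=V_0+W_\lambda$, where $V_0$ solves $LV_0=H_0+E(U)$ with zero boundary data and $W_\lambda$ solves $LW_\lambda=0$ with $W_\lambda=\Psi_\lambda$ on $\partial M^\epsilon$. Since $E(U)\in C_{\nu'}^{0,\gamma}(M^\epsilon)\subset C_{\overline\nu}^{0,\gamma}(M^\epsilon)$ by Proposition~\ref{Eest1}, Lemma~\ref{eigenexp} applies to $V$, and comparing eigensection expansions on each $C_i$ gives $\alpha_j^i(U,\lambda)=\alpha_j^i(V_0)+\alpha_j^i(W_\lambda)$, where $\alpha_j^i(V_0)$ depends only on $U$ and only $\alpha_j^i(W_\lambda)$ carries the $\lambda$-dependence.

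The argument then rests on two estimates: (i) the map $\lambda\mapsto\big(\alpha_j^i(W_\lambda)\big)_{j\le J_i}$ is a small perturbation of the identity, and (ii) $|\alpha_j^i(V_0)|$ is strictly smaller than $\epsilon^{p_0}$, uniformly over $\mathscr{K}$. For (i), observe that on each $C_i$, away from the $O(\epsilon)$-sized patching regions near $r=1$, the section $W_\lambda$ satisfies $LW_\lambda=0$ on full radial slices, hence separates variables, and each eigencomponent solves the homogeneous ODE \eqref{ODE}; boundedness near $p_i$ kills the $r^{\gamma_j^i(-)}$ mode, so $W_\lambda(r\omega)=\sum_j\alpha_j^i(W_\lambda)\,r^{\gamma_j^i}\eta_j^i(\omega)$ on an annulus reaching nearly to $r=1$. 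Matching this against the boundary value of $W_\lambda$ on $\Sigma_i$ — namely $\sum_{k\le J_i}\lambda_k^i\eta_k^i$ plus a component orthogonal to $\eta_1^i,\dots,\eta_{J_i}^i$ — shows $\alpha_j^i(W_\lambda)=\lambda_j^i+R_j^i(\lambda)$, where $R_j^i$ collects the discrepancies coming from the thin bridges (volume $O(\epsilon^{n-1})$), from the orthogonal component, and from the cut-off $\varphi$, all of which are controlled by the bridge bounds \eqref{bridge1}, the a priori $H_1^1$ estimate of Lemma~\ref{existence}, and the Schauder estimates of Section~3.1. Using Proposition~\ref{psilambda}(b),(d) one obtains that $R=R(\lambda)$ is bounded by $\tfrac14\epsilon^{p_0}$ on $\Omega_\epsilon$ and Lipschitz in $\lambda$ with constant at most $\tfrac12$ once $\epsilon$ is small. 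For (ii), I would combine the representation formula \eqref{F} for the particular solutions $F_j^i$ with the global and near-vertex decay of $E(U)$ from Proposition~\ref{Eest1}(b),(b$'$),(e),(e$'$),(f),(f$'$) and of $H_0$ from \eqref{Hsmall}, \eqref{Hsmall1}, together with the embedding $C_{\overline\nu}^{0,\gamma}\hookrightarrow L_{-1}^2$ and Lemma~\ref{existence}; the exponents in \eqref{p} are arranged precisely so that $|\alpha_j^i(V_0)|\le\tfrac14\epsilon^{p_0}$.

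Granting (i) and (ii), set $\Phi_U(\lambda)_j^i:=\lambda_j^i-\alpha_j^i(U,\lambda)=-\alpha_j^i(V_0)-R_j^i(\lambda)$, so that $\Phi_U(\lambda)=\lambda$ is equivalent to $\Lambda_U(\lambda)=0$. The bounds from (i) and (ii) give $|\Phi_U(\lambda)_j^i|\le\tfrac12\epsilon^{p_0}\le\epsilon^{p_0}$ for $\lambda\in\Omega_\epsilon$, so $\Phi_U(\Omega_\epsilon)\subset\Omega_\epsilon$, while $\Phi_U(\lambda)-\Phi_U(\mu)=-\big(R(\lambda)-R(\mu)\big)$ shows $\Phi_U$ is a $\tfrac12$-contraction; the contraction mapping theorem yields a unique $\lambda(U)\in\Omega_\epsilon$ with $\Lambda_U(\lambda(U))=0$. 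For continuous dependence on $U$, I would invoke the uniform contraction principle: $U\mapsto\Phi_U$ is continuous from $\mathscr{K}$ into the continuous self-maps of $\Omega_\epsilon$ because $E$ is an analytic map of the relevant Banach spaces, the solution operator of \eqref{DPnu} is bounded, and extracting the coefficients $\alpha_j^i$ is a continuous operation; since the contraction constant $\tfrac12$ is uniform in $U\in\mathscr{K}$, the fixed point $\lambda(U)$ depends continuously on $U$.

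I expect the main obstacle to be estimate (ii). Pushing $|\alpha_j^i(V_0)|$ below $\epsilon^{p_0}$ forces $E(U)$ to decay faster than a quadratic power of $\epsilon$ near the singularities $p_i$, and it is exactly this requirement that necessitates the improved mean curvature bound \eqref{Hsmall1} and the refined bridge constructions for $n=3,4,5$ in Section~8.1.2 (for $n\ge6$ one uses \eqref{Hsmall} instead); the delicate point is tracking how the estimates imposed on $M_{1/2}^\epsilon$ in the definition of $\mathscr{K}$ propagate, through \eqref{ODE} and the integral formula \eqref{F}, to the annuli $S_{r,2r}^i$. By contrast, estimate (i) is perturbative and insensitive to the bridges' mean curvature: it uses only their small volume and the uniform second fundamental form bounds in \eqref{bridge1}.
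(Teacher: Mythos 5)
Your decomposition $V = V_0 + W_\lambda$ is algebraically equivalent to what the paper does and exposes the $\lambda$-dependence more cleanly. In the paper, $\alpha_j^i(\lambda) = 0$ is rewritten as $\lambda_j^i = F_j^i(1) - \overline v_j^i(\lambda)$ with $\overline v_j^i(\lambda)$ the $L^2(\Sigma_i)$-coefficient of $V$ restricted to $\cup_l D_{5\epsilon}(q_l)$. By your linear splitting, $\overline v_j^i(\lambda) = \overline v_{0,j}^i + \overline w_j^i(\lambda)$ with $\overline v_{0,j}^i$ $\lambda$-independent, and one computes $\alpha_j^i(V_0) = \overline v_{0,j}^i - F_j^i(1)$, so your fixed point equation $\lambda = -\alpha(V_0) - R(\lambda)$ is exactly the paper's $\lambda = F - \overline v_\lambda$ with the $\lambda$-independent part of $\overline v_\lambda$ absorbed into the constant term. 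What the split buys you is the transparent observation that $D_\lambda \Phi_U$ involves only the boundary data and not $H_0$ or $E(U)$ — which is exactly what the paper's Jacobian estimate (Lemma~\ref{contract}) is secretly proving. So the two proofs are the same at the analytic level; yours is a slightly crisper bookkeeping.

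Three points need tightening. First, in your estimate (i) the orthogonal component $(\Psi_{\lambda^i}^i)^\perp$ contributes nothing to $R_j^i$: by Proposition~\ref{psilambda}(b) it lies in $\Span\{\eta_1^i,\dots,\eta_{J_i}^i\}^\perp$, so its pairing with any $\eta_j^i$, $j\le J_i$, vanishes. Likewise the cut-off $\varphi$ affects only the interior extension $\hat\Psi_\lambda$ and not the boundary trace, so it does not enter $R_j^i$ either; the only source of $R_j^i$ is the unknown trace of $W_\lambda$ on $\cup_l D_{5\epsilon}(q_l)\subset\Sigma_i\setminus\partial M^\epsilon$. Second, and more substantively, your estimate (ii) as described does not close. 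Since $\alpha_j^i(V_0) = \overline v_{0,j}^i - F_j^i(1)$, you need to bound not only $F_j^i(1)$ via the representation formula but also $\overline v_{0,j}^i = \int_{\cup_l D_{5\epsilon}(q_l)} \langle V_0, \eta_j^i\rangle$, and the $H_1^1$ bound from Lemma~\ref{existence} combined with a trace inequality is not by itself sharp enough for all $n$ (the constant in a trace estimate on $\Sigma_i$ must be shown uniform in $\epsilon$). What is actually needed is the $C^0$ bound on $V_0$ (equivalently $V$) near $\Sigma_i$, obtained by the cut-off plus De Giorgi--Nash--Moser argument (Corollary~\ref{sest2}) and the local $C^0$ estimate (Corollary~\ref{sest1}), with the $L^{n/2+\tau}$ control of $H_0$ and $E(U)$, interpolation on $\nabla V$ on the intermediate annulus, etc. This is where the bulk of the paper's proof lives, and it is where \eqref{Hsmall1} is used for $n=3,4,5$; your paragraph attributes the difficulty to the near-vertex decay of $E(U)$, which contributes only to the $F_j^i(1)$ piece. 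Third, Proposition~\ref{psilambda}(b),(d) are structural statements about $\Psi_\lambda$ and do not by themselves give the $\tfrac14\epsilon^{p_0}$ bound on $R$ or the Lipschitz constant $\tfrac12$; those come from the same $C^0$ machinery applied to $W_\lambda$ and its $\lambda$-derivatives (cf.\ Lemma~\ref{contract}).
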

\begin{proof}
    We first motivate the proof. Fix $U \in \mathscr{K}$ and let $V := V_\lambda$ be the solution of \eqref{DPnu} guaranteed by Lemma \ref{existence} and Lemma \ref{eigenexp} for $\lambda \in \Omega_\epsilon$. On $\Sigma_i$, we have the $L^2(\Sigma_i)$ expansion
    \begin{equation}
        V \lvert_{\Sigma_i}= \sum_{j = 1}^\infty \Big(\alpha_j^i(\lambda) + F_j^i(1)\Big) \eta_j^i \text{ for } i = 1,\ldots, N, \label{v1}
    \end{equation}
while on $\Sigma_i \setminus \cup_l D_{5\delta_0}(q_l)$ we have $V = \Psi_{\lambda^i}^i$ due to the boundary conditions. The issue is that we do not know what $V$ is on $\Sigma_i \setminus \partial M^\epsilon$ (i.e. on $\cup_l D_{5\epsilon}(q_l)$ for $q_l \in \Sigma_i$) since we only know $V = \Psi_{\lambda^i}^i$ on $\Sigma_i \cap \partial M^\epsilon$. 
    
Take $\delta_0$ as in the paragraph preceding Proposition \ref{psilambda}, let $\epsilon$ small depending on $\delta_0$, and for each $i = 1,\ldots, N$ define $\overline{V}^i := \overline{V}_\lambda^i$ by
$$
    \overline{V}^i(x) = \begin{cases}
                            V(x) \text{ when } x \in D_{5 \epsilon}(q_l), \text{ } q_l \in \Sigma_i \\
                                0 \text{ when } x \in \Sigma_i \setminus \cup_l D_{5\epsilon}(q_l).
                            \end{cases}
$$
Notice that $\overline{V}^i \in C^{\infty}(\Sigma_i)$ since the support of $\Psi_\lambda$ does not intersect $D_{5\epsilon}(q_l)$ for any $l = 1,\ldots, 2I$. In addition, $\overline{V}^i$ has an $L^2(\Sigma_i)$ expansion 
\begin{equation*}
    \overline{V}^i = \sum_{j = 1}^{J_i} \overline{v}_j^i(\lambda) \eta_j^i + (\overline{V}^i)^\perp, \label{vlambda1}
\end{equation*}
where $(\overline{V}^i)^\perp$ is defined as in Proposition \ref{psilambda}(b). It follows that on $\Sigma_i$ we have $V = \Psi_{\lambda^i}^i + \overline{V}^i$. Explicitly,
\begin{equation}
    V\lvert_{\Sigma_i} = \sum_{j \leq J_i} \big(\lambda_j^i + \overline{v}_j^i(\lambda)\big) \eta_j^i + (\overline{V}^i + \Psi_{\lambda^i}^i)^\perp. \label{v2}
\end{equation}
Setting \eqref{v1} and \eqref{v2} equal, we find
$$
    \alpha_j^i(\lambda) + F_j^i(1) = \lambda_j^i + \overline{v}_j^i(\lambda) \text{ for } j = 1,\ldots, J_i, \text{ } i = 1,\ldots, N. 
$$
Rearranging terms, we see that $\alpha_j^i(\lambda) = 0$ if and only if 
\begin{equation}
    \lambda_j^i - F_j^i(1) + \overline{v}_j^i(\lambda) = 0 \Leftrightarrow \lambda - F + \overline{v}_\lambda = 0 \label{fixedpt}
\end{equation}
where 
$$
    F = (F_1^1(1), \ldots, F_{J_1}^1(1), \ldots, F_1^N(1),\ldots, F_{J_N}^N(1))
$$
and 
\begin{equation*}
    \overline{v}_\lambda := (\overline{v}_1^1(\lambda), \ldots, \overline{v}_{J_1}^1(\lambda), \ldots, \overline{v}_1^N(\lambda),\ldots, \overline{v}_{J_N}^N(\lambda)). \label{vlambda2}
\end{equation*}
In other words, \eqref{fixedpt} says that we need to solve a fixed point problem uniquely for $\lambda \in \Omega_\epsilon$, provided $\epsilon$ is small. To do so, we show that the map $\lambda \mapsto F - \overline{v}_\lambda$ is a contraction map from $\Omega_\epsilon$ into $\Omega_\epsilon$ when $\epsilon$ is small by showing that $|F - \overline{v}_\lambda| \leq c\epsilon^\theta$ for some $\theta > p_0$, with $p_0$ given by \eqref{p} and $c$ universal. Here, $|\cdot|$ denotes the Euclidean norm in $\mathbb{R}^{J_0}$. Note also that $F$ is independent of $\lambda$. The independence of $F$ from $\lambda$ will be important in proving the continuous dependence of $\lambda(U)$ on $U$. Going forward, the constants $c_1, c_2, \ldots$ are always universal. 

We begin with the estimate for $F$. Using \eqref{F}, for $i= 1,\ldots, N$ and $j = 1,\ldots ,J_i$ we have
    $$
        |F_j^i(1)| = \Big|\int_0^1 \tau^{1-n -2\gamma_j^i} \Big(\int_0^\tau s^{n-1 + \gamma_j^i} E_j^i(U(s)) \, ds\Big) d\tau \Big|.
    $$
Changing the order of integration and using \eqref{fij} along with the fact that $2-n-2\gamma_j^i < 0$ by \eqref{gamma+}, we can split the integral in $s$ into two integrals to find 
\begin{align*}
    |F_j^i(1)| &\leq c_1\Lambda \Big|\int_0^1 s^{1-\gamma_j^i} E_j^i(U(s)) \, ds \Big| \\
    &\leq c_2\Lambda \Bigg(\Big|\int_0^{\frac{1}{2}} s^{1-\gamma_j^i}E_j^i(U(s)) \, ds \Big| + \Big|\int_{\frac{1}{2}}^1 s^{n-1}\!\int_{\Sigma_i} \langle E(U(s\omega)), \eta_j^i(\omega) \rangle \, d\mathcal{H}^{n-1}(\omega) ds \Big| \Bigg)\\
    &\leq c_3 \Lambda \max_{\substack{s \in (0, \frac{1}{2}] \\ \omega \in \Sigma_i}}[s^{1-\nu_i}|E(U(s\omega))|] + c_4 \Lambda \int_{M_\frac{1}{2}^\epsilon}|E(U)| \, d\mathcal{H}^n, 
\end{align*}
where $\Lambda = -(2-n-2\gamma_1^i)^{-1} \geq -(2 - n - 2\gamma_j^i)^{-1}$. Using Proposition \ref{Eest1}(e) and (e$^\prime$), the definition of $\mathscr{K}$, and the fact that $\nu_i \geq 2$, we see that the the first term in the last line is bounded by:
\begin{equation}
    c_3 \Lambda \max_{\substack{s \in (0, \frac{1}{2}] \\ \omega \in \Sigma_i}}[s^{1-\nu_i}|E(U(s\omega))|] \leq
    \begin{cases}
        c_5\epsilon^{n - 3 - \frac{2}{n^4}} \text{ for } n\geq 6 \\
        c_5\epsilon^{3 - \frac{2}{n^4}} \text{ for } n = 3,4,5.
    \end{cases} \label{newest}
\end{equation}
For the second term, Proposition \ref{Eest1}(a) gives 
\begin{equation*}
    \int_{M_\frac{1}{2}^\epsilon }|E(U)|  \leq c_6 \int_{M_\frac{1}{2}^\epsilon} \Big(|U|^2 + |\nabla U|^2 + |U||\nabla^2U| + |\nabla U|^2|\nabla^2U|\Big). 
\end{equation*}
The largest term on the right-hand side in the last line is the third one. Applying the Cauchy-Schwarz inequality and the definition of $\mathscr{K}$, we get 
\begin{equation}
\norm{E(U)}_{L^1(M_{\frac{1}{2}}^\epsilon)} \leq
\begin{cases}
     c_7 \epsilon^{n - 1 - \frac{2}{n} - \frac{2}{n^4}} \text{ for } n \geq 6 \\
    c_7\epsilon^{\frac{n+4}{2} - \frac{2}{n^4}} \text{ for } n = 4,5 \\
    c_7\epsilon^{4 - \frac{2}{3^4}} \text{ for } n = 3.
\end{cases} \label{intest1}
\end{equation}
Hence, \eqref{newest} and \eqref{intest1} show 
\begin{equation}
   |F| \leq
   \begin{cases}
         c_8 \epsilon^{n-3 - \frac{2}{n^4}} \text{ for } n \geq 6 \\
        c_{8} \epsilon^{3 - \frac{2}{n^4}} \text{ for } n = 3,4,5. 
    \end{cases}\label{n43}
\end{equation}
    
    We now estimate $\overline{v}_\lambda$. We have 
    \begin{equation}
        |\overline{v}_j^i(\lambda)| = \sum_{q_l \in \Sigma_i} \Big|\int_{D_{5\epsilon}(q_l)}\langle V(\omega) , \eta_j^i \rangle\, d\mathcal{H}^{n-1}(\omega) \Big| \leq c_{9} \norm{V}_{C^0(M_{\frac{1}{2}}^\epsilon)} \epsilon^{n-1} \label{n5}
    \end{equation}
    as $\mathcal{H}^{n-1}(D_{5\epsilon}(q_l)) \leq c \epsilon^{n-1}$. Thus, we need a $C^0$ estimate for $V$. This is achieved via a cut-off argument. Let $\varphi \in C^2(M^\epsilon)$ be a (real-valued) cut-off function satisfying 
    \begin{equation*}
        \varphi(x) := \begin{cases}
                        1, \text{ } x \in M_{\frac{1}{3}}^\epsilon \\
                        0, \text{ } x \in M^\epsilon \setminus M_{\frac{1}{4}}^\epsilon.
                    \end{cases}
    \end{equation*}
    We first consider when $n \geq 4$. Using Corollary \ref{sest2} and the Minkowski inequality, we get 
    \begin{align}
        \norm{\varphi V}_{C^0(M_{\frac{1}{4}}^\epsilon)} &\leq c(\tau)\Big( \norm{H_0}_{L^{\frac{n}{2} + \tau}(M_{\frac{1}{4}}^\epsilon)} + \norm{E(U)}_{L^{\frac{n}{2} + \tau}(M_{\frac{1}{4}}^\epsilon)}  + \norm{V}_{L^2(M_{\frac{1}{4}}^\epsilon)} \nonumber \\
        &\qquad+ \norm{V}_{C^0(S_{\frac{1}{4}, \frac{1}{3}})} + \norm{\nabla V}_{L^{\frac{n}{2}+\tau}(S_{\frac{1}{4}, \frac{1}{3}})} + \norm{\Psi_\lambda}_{C^0(\partial M^\epsilon)} \Big). \label{extra2}
    \end{align}
    First, we estimate $\norm{V}_{L^2(M_{4^{-1}}^\epsilon)}$. In fact, we will show that $V$ satisfies the Sobolev estimates (vii) and (vii$^\prime$) in the definition of $\mathscr{K}$ for $\epsilon$ small enough.
    
    Using Lemma \ref{existence}, we find 
    \begin{align*}
        \norm{V}_{H_{1}^1(M^\epsilon)} &\leq d_0^{-1}\Big( \norm{\Psi_\lambda}_{H_{1}^1(M^\epsilon)} + \norm{L\Psi_\lambda}_{L_{-1}^2(M^\epsilon)} + \norm{H_0}_{L_{-1}^2(M^\epsilon)} + \norm{E(U)}_{L_{-1}^2(M^\epsilon)}\Big). 
    \end{align*}
    Since $\Psi_\lambda \in C_c^\infty(M^\epsilon)$ and $U \in \mathscr{K}$, Proposition \ref{psilambda}(c), \eqref{bridge1}, \eqref{Hsmall}, \eqref{Hsmall1}, and \eqref{p} imply
    \begin{equation}
        \norm{V}_{H_{1}^1} \leq
        \begin{cases}
            c_{10}\Big(\epsilon^{\frac{n-1}{2}}   + \norm{E(U)}_{L^2(M^\epsilon \setminus M_{\frac{1}{2}}^\epsilon)} + \norm{E(U)}_{L^2(M_{\frac{1}{2}}^\epsilon)}\Big), \text{ }  n \geq 6 \\
            c_{10} \Big( \epsilon^{\frac{5}{2}} + \norm{E(U)}_{L^2(M^\epsilon \setminus M_{\frac{1}{2}}^\epsilon)} + \norm{E(U)}_{L^2(M_{\frac{1}{2}}^\epsilon)}\Big), \text{ } n = 4,5.
        \end{cases}\label{n44}
    \end{equation}
    The estimate for the second term on the right-hand side is similar to the estimate for the first term in the estimate for $|F_j^i(1)|$ (i.e. the estimate for $s \in [0,\frac{1}{2}]$), and is bounded by 
    \begin{equation}
    \norm{E(U)}_{L^2(M^\epsilon \setminus M_{\frac{1}{2}}^\epsilon)} \leq
        \begin{cases}
             c_{11}\epsilon^{n-3 - \frac{2}{n^4}} \text{ for } n \geq 6 \\
            c_{11}\epsilon^{3 - \frac{2}{n^4}} \text{ for } n = 4,5.
        \end{cases} \label{estimate1}
    \end{equation}
    For the third term, we apply Proposition \ref{Eest1}(a): 
    \begin{align*}
        \norm{E(U)}_{L^2(M_{\frac{1}{2}}^\epsilon)}^2 \leq c_{12} \int_{M_{\frac{1}{2}}^\epsilon} \Big(|U|^4 + |\nabla U|^4 
        + |U|^2|\nabla^2U|^2 + |\nabla U|^4|\nabla^2U|^2\Big).
    \end{align*}
    When $n \geq 5$, the second term on the right-hand side is the largest, and when $n = 4$ the third term is the largest. We have:
    \begin{equation}
    \norm{E(U)}_{L^2(M_{\frac{1}{2}}^\epsilon)} \leq
    \begin{cases}
         c_{13} \epsilon^{\frac{n+1}{2} - \frac{2}{n} - \frac{2}{n^4}} \text{ for } n \geq 6 \\
         c_{13} \epsilon^{\frac{7}{2}  - \frac{2}{n^4}} \text{ for } n =4,5.
    \end{cases}\label{L2est}
    \end{equation}
    Substituting \eqref{estimate1} and \eqref{L2est} into \eqref{n44} yields 
    \begin{equation}
    \norm{V}_{H_{1}^1(M^\epsilon)} \leq
    \begin{cases} 
         c_{14} \epsilon^{\frac{n-1}{2}} \text{ for } n \geq 6 \\ 
         c_{14} \epsilon^{\frac{5}{2}} \text{ for } n =4,5.
    \end{cases} \label{n45}
    \end{equation}
    
    Next, we estimate the fourth and fifth terms on the right-hand side of \eqref{extra2}. Applying Corollary \ref{sest1} on a covering of $S_{\frac{1}{6}, \frac{5}{12}}$ with geodesic balls of radius at most $24^{-1}$ and using that $H_0$ is supported in $\cup_l \Gamma_l(\epsilon)$ due to the minimality of the cones $C_i$, we obtain
    \begin{equation}
        \norm{V}_{C^0(S_{\frac{1}{6}, \frac{5}{12}})} \leq c_{15} \Big(\norm{V}_{L^2(S_{\frac{1}{12}, \frac{1}{2}})} + \norm{E(U)}_{L^n(S_{\frac{1}{12}, \frac{1}{2}})}\Big). \label{est2}
    \end{equation}
    Since the second term on the right-hand side satisfies the same estimates in \eqref{n43}, \eqref{n45} gives
    \begin{equation}
    \norm{V}_{C^0(S_{\frac{1}{6}, \frac{5}{12}})} \leq
    \begin{cases}
         c_{16} \epsilon^{\frac{n-1}{2}} \text{ for } n \geq 6 \\
        c_{16} \epsilon^{\frac{5}{2}} \text{ for } n = 4,5.
    \end{cases} \label{extra}
    \end{equation}
    Since $S_{\frac{1}{4},\frac{1}{3}} \subset \subset  S_{\frac{1}{6}, \frac{5}{12}}$, this gives an estimate for the fourth term on the right-hand side in \eqref{extra2}. We can now apply Proposition \ref{sest4}(a) and a covering argument to conclude
    \begin{equation*}
        \norm{\nabla V}_{C^0(S_{\frac{1}{4}, \frac{1}{3}})} \leq c_{17}\Big( \norm{V}_{C^0(S_{\frac{1}{6}, \frac{5}{12}})} + \norm{E(U)}_{C^{0,\gamma}(S_{\frac{1}{6}, \frac{5}{12}})}\Big).
    \end{equation*}
    The first term on the right-hand side is estimated by \eqref{extra}. For the second, first note that
    \begin{equation*}
    \norm{E(U)}_{C^{0,\gamma}(S_{\frac{1}{6}, \frac{5}{12}})} \leq
        \begin{cases}
             c_{18} \epsilon^{n - 3 - \frac{2}{n^4} - \gamma} \text{ for } n \geq 6 \\
             c_{18} \epsilon^{3 - \frac{2}{n^4} - \gamma} \text{ for } n = 4,5
        \end{cases}
    \end{equation*}
    due to Proposition \ref{Eest1}(e)-(f$^\prime$). Thus,
    \begin{equation}
     \norm{\nabla V}_{C^0(S_{\frac{1}{4}, \frac{1}{3}})} \leq
        \begin{cases}
            c_{19} \epsilon^{n-4 - \frac{2}{n^4}} \text{ for } n \geq 6 \\
            c_{19} \epsilon^{2 - \frac{2}{n^4}} \text{ for } n =4,5.
        \end{cases}\label{nabla}
    \end{equation}
    We have:
    \begin{equation*}
        \norm{\nabla V}_{L^{\frac{n}{2} +\tau}(S_{\frac{1}{4}, \frac{1}{3}})} \leq c_{20} \norm{\nabla V}_{C^0(S_{\frac{1}{4}, \frac{1}{3}})}^{\frac{n-4}{n+2\tau} + \frac{2\tau}{n + 2\tau}}\norm{\nabla V}_{L^2(S_{\frac{1}{4}, \frac{1}{3}})}^{\frac{4}{n+2\tau}}.
    \end{equation*}
    Therefore, \eqref{n45} and \eqref{nabla} give
    \begin{equation*}
     \norm{\nabla V}_{L^{\frac{n}{2} +\tau}(S_{\frac{1}{4}, \frac{1}{3}})} \leq
        \begin{cases}
            c_{21}\epsilon^{(n-4 - \frac{2}{n^4})\frac{n-4}{n+2\tau} + (n-4 - \frac{2}{n^4})\frac{2\tau}{n + 2\tau} + \frac{4(n-1)}{2n +4\tau}} \text{ for } n \geq 6 \\
             c_{21}\epsilon^{(2 - \frac{2}{n^4})\frac{n-4}{n+2\tau} + (2 - \frac{2}{n^4})\frac{2\tau}{n + 2\tau} + \frac{20}{2n +4\tau}} \text{ for } n = 4,5.
        \end{cases}
    \end{equation*}
    Choosing $\tau$ small depending on $n$ yields 
    \begin{equation}
         \norm{\nabla V}_{L^{\frac{n}{2}+\tau}(S_{\frac{1}{4}, \frac{1}{3}})} \leq c_{22}\epsilon^{2} \text{ for } n \geq 4. \label{nablaV}
    \end{equation}
  
    The remaining terms to estimate on the right-hand side of \eqref{extra2} are the first and second terms (i.e. the $L^{\frac{n}{2} + \tau}$ norms of $H_0$ and $E(U)$, respectively). By Proposition \ref{Eest1}(a) and the Minkowski inequality, for each $n \geq 4$ we have
    \begin{align*}
        \norm{E(U)}_{L^{\frac{n}{2} + \tau}(M_{\frac{1}{4}}^\epsilon)} \leq c_{23}\Bigg(\Big(&\int_{M_{\frac{1}{4}}^\epsilon}|U|^{n + 2\tau}  \Big)^{\frac{2}{n +2\tau}} + \Big( \int_{M_{\frac{1}{4}}^\epsilon}|\nabla U|^{n + 2\tau}  \Big)^{\frac{2}{n+2\tau}} \\
        &+ \Big(\int_{M_{\frac{1}{4}}^\epsilon}|\nabla U|^{n + 2\tau} |\nabla^2 U|^{\frac{n}{2} + \tau} \Big)^{\frac{2}{n +2\tau}} \\
        &+ \Big(\int_{M_{\frac{1}{4}}^\epsilon}|U|^{\frac{n}{2} + \tau} |\nabla^2 U|^{\frac{n}{2} + \tau}  \Big)^{\frac{2}{n+2\tau}}\Bigg).
    \end{align*}
    The worst term is the third term. We have 
    \begin{align*}
        \Big(\int_{M_{\frac{1}{4}}^\epsilon}|\nabla U|^{n + 2\tau} |\nabla^2 U|^{\frac{n}{2} + \tau} \Big)^{\frac{2}{n+2\tau}} &\leq \norm{\nabla U}_{C^0(M_{\frac{1}{4}}^\epsilon )}^2\norm{\nabla^2 U}_{C^0(M_{\frac{1}{4}}^\epsilon)}^{\frac{n-4}{n + 2\tau} + \frac{2\tau}{n + 2\tau}} \norm{\nabla^2 U}_{L^2(M_{\frac{1}{4}}^\epsilon)}^{\frac{4}{n + 2\tau}} \\
        &\leq \epsilon^{\frac{5}{2}} \text{ for each } n \geq 4,
    \end{align*}
    where the second inequality follows from taking $\tau$ small depending on $n$ and using that $U \in \mathscr{K}$. We conclude
    \begin{equation}
        \norm{E(U)}_{L^{\frac{n}{2} + \tau}(M_{\frac{1}{4}}^\epsilon)} \leq c_{24} \epsilon^{\frac{5}{2}} \text{ for } n \geq 4. \label{E(U)}
    \end{equation}
    Using \eqref{Hsmall} and \eqref{Hsmall1}, the mean curvature term is estimated by:
    \begin{equation}
    \norm{H_0}_{L^{\frac{n}{2} + \tau}(M_{\frac{1}{4}}^\epsilon)} \leq
        \begin{cases}
             c_{25}\epsilon^{2 - \frac{2}{n} - \tau} \leq c_{26} \epsilon^{2- \frac{2}{n} - \frac{1}{2n^4}} \text{ when } n \geq 6 \\
             c_{27}\epsilon^{2 - 4\tau} \leq c_{28} \epsilon^{2 - \frac{1}{2(5^4)}} \text{ when } n = 5 \\
             c_{29} \epsilon^{\frac{5}{2} - \frac{1}{2(4^4)}} \text{ when } n = 4,
        \end{cases}\label{H04}
    \end{equation}
    where we have chosen $\tau$  small depending on $n$ in each case.
    Inserting the \eqref{E(U)} and \eqref{H04} into \eqref{extra2} for each $n \geq 4$, we conclude
    \begin{equation}
    \norm{V}_{C^0(M_{\frac{1}{3}}^\epsilon)} \leq
        \begin{cases}
             c_{30} \epsilon^{2 - \frac{2}{n} - \frac{1}{2n^4}} \text{ for } n \geq 6 \\
            c_{31} \epsilon^{2 - \frac{1}{2n^4}} \label{n49} \text{ for } n = 4,5.
        \end{cases}
    \end{equation}

    The case $n = 3$ is simpler. Define $\varphi \in C^2(M^\epsilon)$ as before. Then Corollary \ref{sest2} gives
     \begin{align}
        \norm{\varphi V}_{C^0(M_{\frac{1}{4}}^\epsilon)} &\leq c(\tau)\Big( \norm{H_0}_{L^{2}(M_{\frac{1}{4}}^\epsilon)} + \norm{E(U)}_{L^{2}(M_{\frac{1}{4}}^\epsilon)}  + \norm{V}_{L^2(M_{\frac{1}{4}}^\epsilon)} \nonumber \\
        &\qquad+ \norm{\nabla V}_{L^{2}(M_{\frac{1}{4}}^\epsilon)} + \norm{\Psi_\lambda}_{C^0(\partial M^\epsilon)} \Big). \label{newextra2}
    \end{align}
    Note that we can use the argument that was used to prove the estimate \eqref{L2est} to estimate the second term on the right-hand side in \eqref{newextra2}. Hence,
    \begin{equation}
         \norm{E(U)}_{L^2(M_{\frac{1}{4}}^\epsilon)} \leq c_{32} \epsilon^{\frac{7}{2} -\frac{2}{3^4}}. \label{n2}
    \end{equation}
    Both the $\Psi_\lambda$ term and the mean curvature term are bounded by $c_{33}\epsilon^{\frac{5}{2}}$ due to \eqref{Hsmall1} and \eqref{p}. As a consequence of this, \eqref{p}, and \eqref{n2} we can follow the argument used to prove \eqref{n45} to conclude 
      \begin{equation}
        \norm{V}_{H_{1}^1(M^\epsilon)} \leq 
         c_{34} \epsilon^{\frac{5}{2}}. \label{n3}
    \end{equation}
    Therefore, the third and fourth terms on the right-hand side of \eqref{newextra2} are bounded by the same quantity in \eqref{n3} and $V$ satisfies the estimate (vii$^\prime$) for $\epsilon$ small when $n = 3$. It follows that 
    \begin{equation}
        \norm{V}_{C^0(M_{\frac{1}{3}}^\epsilon)} \leq c_{35} \epsilon^{\frac{5}{2}}. \label{n4}
    \end{equation}
    Substituting \eqref{n49} and \eqref{n4} into \eqref{n5}, we find
    \begin{equation}
    \norm{\overline{v}_\lambda}_{C^0(\Omega_\epsilon)}   \leq
        \begin{cases}
             c_{36} \epsilon^{n + 1 - \frac{2}{n} - \frac{1}{2n^4}} \text{ for } n \geq 6\\
             c_{36} \epsilon^{n + 1 - \frac{1}{2n^4}} \text{ for } n = 4,5 \\
             c_{36}\epsilon^{\frac{9}{2}} \text{ for } n = 3.
        \end{cases} \label{n47}
    \end{equation}
    Notice also that \eqref{n49} and \eqref{n4} together show that $V$ satisfies the estimates (i) and (i$^\prime$) in the definition of $\mathscr{K}$ for $\epsilon$ small when $k = 0$ and each $n \geq 3$. Combining \eqref{n43} and \eqref{n47} shows 
    \begin{equation*}
    \norm{F - \overline{v}_\lambda}_{C^0(\Omega_\epsilon)} \leq
    \begin{cases}
         c_{37} \epsilon^{n - 3 - \frac{2}{n^4}} \text{ for } n \geq 6 \\
         c_{37} \epsilon^{3 - \frac{2}{n^4}} \text{ for } n = 3,4,5.
    \end{cases}
    \end{equation*}
    Thus, we may take $\theta = n - 3 - \frac{2}{n^4}$ when $n \geq 6$, $\theta = 3 - \frac{2}{n^4}$ when $n = 3,4,5$, and $c = c_{37}$ to conclude that $\lambda \mapsto F - \overline{v}_\lambda$ maps $\Omega_\epsilon$ into $\Omega_\epsilon$ for small $\epsilon < c_{37}^{-n^4}$ and each $n \geq 3$. Since Lemma \ref{contract} below holds independent of dimension, this map is a contraction. The continuous dependence of $\lambda(U)$ on $U$ follows immediately from the fact that $F$ is independent of $\lambda$, $F$ depends continuously on $U \in \mathscr{K}$ (since $E(U)$ is continuous as a map $\mathscr{K} \rightarrow C_{\overline{\nu}}^{0,\gamma}(M^\epsilon)$ by \eqref{HU}, \eqref{EU}, and Proposition \ref{Eest1}; see \eqref{F} also), and Lemma \ref{contract}. 
\end{proof}

\begin{lem}[\cite{NS2}, Proposition 4.2]\label{contract}
    For each $n \geq 3$ there is a universal constant $\epsilon_0$ such that, for all $\epsilon < \epsilon_0$, we have 
    $$
        \max_{\lambda \in \Omega_\epsilon}|D\overline{v}_\lambda| < 1. 
    $$
    Here, $D\overline{v}_\lambda$ is the Jacobian matrix of the map $\lambda \mapsto \overline{v}_\lambda$ of $\Omega_\epsilon$ into $\mathbb{R}^{J_0}$.
\end{lem}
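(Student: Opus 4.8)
The plan is to prove the sharper quantitative statement $|D\overline{v}_\lambda| \leq c\,\epsilon^{n-1}$ for every $\lambda \in \Omega_\epsilon$, with $c$ universal; since $n \geq 3$ forces $n-1 \geq 2 > 0$, one then picks a universal $\epsilon_0$ with $c\,\epsilon_0^{n-1} < 1$ and is done. The first step is to exploit linearity. With $U \in \mathscr{K}$ fixed, the right-hand side $H_0 + E(U)$ of \eqref{DPnu} is independent of $\lambda$ and $\Psi_\lambda$ depends affinely on $\lambda$ (Sec. 4.1), so $V_\lambda$ depends affinely on $\lambda$ and, for each index $k$, the section $W_k := \pdv{V_\lambda}{\lambda_k}$ is well-defined, independent of $\lambda$, and solves
\[
    L W_k = 0 \text{ on } M^\epsilon, \qquad W_k = \pdv{\Psi_\lambda}{\lambda_k} \text{ on } \partial M^\epsilon .
\]
Since $V_\lambda$ is affine in $\lambda$ it is in particular smooth in $\lambda$ in the $C^0$ norm on a fixed neighborhood of $\bigcup_l D_{5\epsilon}(q_l)$, so I would differentiate the defining formula $\overline{v}_j^i(\lambda) = \sum_{q_l \in \Sigma_i} \int_{D_{5\epsilon}(q_l)} \langle V_\lambda(\omega), \eta_j^i(\omega)\rangle \, d\mathcal{H}^{n-1}(\omega)$ under the integral sign to obtain, for $j \leq J_i$,
\[
    \pdv{\overline{v}_j^i}{\lambda_k}(\lambda) = \sum_{q_l \in \Sigma_i} \int_{D_{5\epsilon}(q_l)} \langle W_k(\omega), \eta_j^i(\omega)\rangle \, d\mathcal{H}^{n-1}(\omega).
\]

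The core of the argument is then a $C^0$ bound on $W_k$ near the attachment disks that is universal and independent of both $\epsilon$ and $\lambda$. By Proposition \ref{psilambda}(c),(d) and the extension construction of Sec. 4.1, the boundary datum $\pdv{\Psi_\lambda}{\lambda_k} \in C_c^\infty(M^\epsilon)$ is supported in $M_{3/4}^\epsilon$ and bounded in $C^3(M^\epsilon)$ by a universal constant, uniformly in $\lambda \in \Omega_\epsilon$ and $\epsilon$; since $h \asymp 1$ on $M_{3/4}^\epsilon$ and the coefficients of $L$ are uniformly bounded there, this gives $\norm{\pdv{\Psi_\lambda}{\lambda_k}}_{H_1^1(M^\epsilon)} + \norm{L\,\pdv{\Psi_\lambda}{\lambda_k}}_{L_{-1}^2(M^\epsilon)} \leq c$, so Lemma \ref{existence} yields $\norm{W_k}_{H_1^1(M^\epsilon)} \leq c$ and hence $\norm{W_k}_{L^2(M^\epsilon)} \leq c$. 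Next I would use Proposition \ref{psilambda}(a) together with \eqref{pl}: $\Psi_\lambda$, hence $\pdv{\Psi_\lambda}{\lambda_k}$, vanishes on $\partial M^\epsilon$ within a fixed distance of every $q_l$ (recall $\epsilon < \delta_0$). Since moreover $L W_k = 0$, since the disks $D_{5\epsilon}(q_l)$ lie in a fixed compact region of $M^\epsilon$ that stays uniformly away from the singularities $p_i$, and since on that region the coefficients of $L$ are uniformly bounded independently of $\epsilon$ by \eqref{scale}, \eqref{bridge1}, and the bridge constructions of Sec. 8.1, the De Giorgi--Nash--Moser $C^0$ bound applied exactly as in Lemma \ref{weaksoln} and Corollary \ref{sest2}, but localized to this region (where the boundary portion it meets carries zero data), yields
\[
    \norm{W_k}_{C^0(\cup_l D_{5\epsilon}(q_l))} \leq c\,\norm{W_k}_{L^2(M^\epsilon)} \leq c,
\]
with $c$ universal.

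Finally I would assemble the estimate. Only finitely many eigensections $\eta_j^i$ with $j \leq J_i$ (so $\norm{\eta_j^i}_{C^0(\Sigma_i)} \leq c$) and only finitely many indices $i,j,k,l$ are involved, all bounded by universal constants, and $\mathcal{H}^{n-1}(D_{5\epsilon}(q_l)) \leq c\,\epsilon^{n-1}$; so the displayed formula for $\pdv{\overline{v}_j^i}{\lambda_k}$ gives $\big|\pdv{\overline{v}_j^i}{\lambda_k}\big| \leq c\,\norm{W_k}_{C^0}\,\epsilon^{n-1} \leq c\,\epsilon^{n-1}$, whence $|D\overline{v}_\lambda| \leq c\,\epsilon^{n-1}$ for all $\lambda \in \Omega_\epsilon$ with $c$ universal, and the choice of $\epsilon_0$ above finishes the proof. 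I expect the main obstacle to be the middle step: securing the $\epsilon$-independent $C^0$ bound on $W_k = \pdv{V_\lambda}{\lambda_k}$ near the shrinking disks $D_{5\epsilon}(q_l)$, which hinges on combining Lemma \ref{existence} for the $L^2$ control, the vanishing of both the source and the boundary data of $W_k$ in a fixed neighborhood of each $q_l$, and the uniform boundedness of the coefficients of $L$ on compact subsets away from the $p_i$; everything else is routine bookkeeping.
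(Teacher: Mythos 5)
Your proposal is correct and follows essentially the same route as the paper, which simply defers to Proposition 4.3 in Smale's original paper with the Section 3 estimates swapped in. Differentiating the Dirichlet problem in $\lambda$ to reduce to the homogeneous problem $LW_k = 0$ with boundary data $\partial\Psi_\lambda/\partial\lambda_k$, invoking Proposition \ref{psilambda}(d) for the uniform $C^3$ bound on that data, running the $H_1^1 \to C^0$ chain of estimates from Section 3 on a fixed compact region away from the $p_i$ (where coefficients and the Sobolev constant are uniform in $\epsilon$), and then exploiting $\mathcal{H}^{n-1}(D_{5\epsilon}(q_l)) \lesssim \epsilon^{n-1}$ is exactly the intended argument; the resulting bound $|D\overline{v}_\lambda| \leq c\,\epsilon^{n-1}$ vanishes as $\epsilon \to 0$ for all $n \geq 3$ with no need for the refined mean curvature estimates, which is consistent with the fact that no source term appears in the differentiated problem.
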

\begin{proof}
    The proof is identical to Proposition 4.3 in \cite{NS2}, with the estimates in Sec. 3 above replacing those in \cite{NS2}.
\end{proof}

Next, we show that the unique solution given by Lemma \ref{existence} and Lemma \ref{lambdastar1} is in $C_\nu^{2,\gamma}(M^\epsilon)$ for small enough $\epsilon$.

\begin{prop}[\cite{NS2}, Proposition 4.4]\label{n4reg}
    Suppose $U \in \mathscr{K}(n,\epsilon, \gamma, \nu)$ for $n \geq 3$, $\gamma \in (0,1)$, $\nu_i \geq 2$, and $\epsilon 
< \epsilon_0$. Then the unique solution $V$ of \eqref{DPnu} with $\Psi = \Psi_{\lambda(U)}$ given by Lemma \ref{existence} and Lemma \ref{lambdastar1} is in $C_{\nu}^{2,\gamma}(M^\epsilon)$.
\end{prop}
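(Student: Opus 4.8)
The plan is to prove that the weighted norm $\norm{V}_{C_\nu^{2,\gamma}(M^\epsilon)}$ is finite. Since $V\in C_{\text{loc}}^{2,\gamma}(M^\epsilon)$ by Lemma~\ref{existence}, the term $\norm{V}_{C^{2,\gamma}(M_{1/2}^\epsilon)}$ in that norm is automatically controlled, and for $r$ bounded away from zero the annular quantities $\norm{\nabla^k V}_{C^0(S_{r,2r}^i)}\,r^{k-\nu_i}$ and $|V|_{k,\gamma,S_{r,2r}^i}\,r^{k+\gamma-\nu_i}$ are controlled by the $C_{\text{loc}}^{2,\gamma}$ regularity of $V$ on a compact region away from $p_i$ (the weights being bounded there). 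Thus the real task is to bound these quantities uniformly as $r\to 0$ near each singularity $p_i$, and I would carry this out along the lines of Proposition~4.4 in \cite{NS2}, taking care that the estimates established above are strong enough to run the argument.

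The first step is to exploit the rigidity of the boundary data. By Lemma~\ref{lambdastar1} we have $\Lambda_U(\lambda(U))=0$, i.e.\ $\alpha_j^i=0$ for every $j\le J_i$, so by Lemma~\ref{eigenexp} the restriction of $V$ to each $C_i$ coincides with its own tail $\sum_{j>J_i}\bigl(\alpha_j^i r^{\gamma_j^i}+F_j^i(r)\bigr)\eta_j^i$. Since $H_0$ vanishes near the $p_i$ and $U\in\mathscr{K}$ forces $E(U)\in C_{\nu'}^{0,\gamma}(M^\epsilon)$ with $\nu':=2\nu-(3,\dots,3)$ and $\nu_i\ge 2$, the right-hand side $F:=H_0+E(U)$ of \eqref{DPnu} satisfies $\norm{F(s\,\cdot)}_{L^2(\Sigma_i)}\le c\,s^{2\nu_i-3}$ for small $s$; hence in the asymptotic estimate of Lemma~\ref{eigenexp},
\[
    \norm{V(r\,\cdot)}_{L^2(\Sigma_i)}^2 \le c\Bigl(\int_0^1 s^{3-2\nu_i}\norm{F(s\,\cdot)}_{L^2(\Sigma_i)}^2\,ds + \sum_{j>J_i}|\alpha_j^i|^2\Bigr)r^{2\nu_i},
\]
the integral converges (near $0$ it is a multiple of $\int_0^1 s^{2\nu_i-3}\,ds$, finite because $2\nu_i-3\ge 1>-1$) and the sum is finite, giving $\norm{V}_{L^2(S_{r,2r}^i)}\le c\,r^{n/2+\nu_i}$ for all small $r$.

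The second step upgrades this $L^2$ decay to the full $C^{2,\gamma}$ bound by rescaling. On $S_{r,2r}^i$ with $r$ small we have $H_0\equiv 0$, since the cones are minimal and $H_0$ is supported on the bridges, so $LV=E(U)$ there; the dilation about $p_i$ by $r^{-1}$ sends $S_{r/2,4r}^i$ onto a fixed compact annulus in the cone over $\Sigma_i$, on which the coefficients of $r^2L$ are uniformly bounded by \eqref{scale} and \eqref{pstab}. Applying Corollary~\ref{sest1} followed by Proposition~\ref{sest4} on this fixed annulus and undoing the dilation yields, for $k=0,1,2$,
\[
    \norm{\nabla^k V}_{C^0(S_{r,2r}^i)} \le c\Bigl(r^{-\frac n2-k}\norm{V}_{L^2(S_{r/2,4r}^i)} + r^{1-k}\norm{E(U)}_{L^n(S_{r/2,4r}^i)} + r^{2-k}\norm{E(U)}_{C^{0,\gamma}(S_{r/2,4r}^i)}\Bigr)
\]
and the analogous bound with $|V|_{k,\gamma,S_{r,2r}^i}$ on the left. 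Inserting the $L^2$ bound from the first step together with $\norm{E(U)}_{C^{0,\gamma}(S_{r,2r}^i)}\le c\,r^{2\nu_i-3-\gamma}$ (again from $E(U)\in C_{\nu'}^{0,\gamma}$; cf.\ Proposition~\ref{Eest1}) and using $\nu_i\ge 2$, every term on the right is $O(r^{\nu_i-k})$, respectively $O(r^{\nu_i-k-\gamma})$; multiplying by the weights shows the weighted seminorms are bounded uniformly in $r$, and combining with the $M_{1/2}^\epsilon$ part gives $V\in C_\nu^{2,\gamma}(M^\epsilon)$.

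The step I expect to be the main obstacle is the first one: it is precisely the hypothesis $\nu_i\ge 2$ --- equivalently, that all exponents of $\nu'=2\nu-(3,\dots,3)$ are $\ge 1$ --- that makes the weighted integral of $\norm{F}_{L^2(\Sigma_i)}^2$ converge and pins the decay rate of $V$ at $p_i$ to be at least $\nu_i$. One must verify carefully that the near-singularity decay estimates for $E(U)$ coming from the definition of $\mathscr{K}$ and Proposition~\ref{Eest1} --- which themselves rest on the improved $L^p$ mean curvature bound \eqref{Hsmall1} when $n=3,4,5$ --- are strong enough that no power of $r$ produced along the way drops below $\nu_i$. With this bookkeeping in hand, the remainder is the rescaling scheme of \cite[Proposition~4.4]{NS2} (see also \cite{CHS,Oo}).
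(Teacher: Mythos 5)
There is a genuine gap in your first step. You assert that, after Lemma~\ref{lambdastar1} kills the coefficients $\alpha_j^i$ for $j\le J_i$, ``the restriction of $V$ to each $C_i$ coincides with its own tail $\sum_{j>J_i}(\alpha_j^i r^{\gamma_j^i}+F_j^i(r))\eta_j^i$.'' This is false: by Lemma~\ref{eigenexp}, killing the low-frequency $\alpha_j^i$'s leaves
\[
    V(r\omega) \;=\; \sum_{j\le J_i} F_j^i(r)\,\eta_j^i(\omega) \;+\; \sum_{j>J_i}\bigl(\alpha_j^i r^{\gamma_j^i}+F_j^i(r)\bigr)\eta_j^i(\omega),
\]
and the particular solutions $F_j^i(r)$ for $j\le J_i$ are not zero --- they solve \eqref{ODE} with the generically nonzero source $f_j^i = \int_{\Sigma_i}\langle E(U),\eta_j^i\rangle$. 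Your next move, feeding the asymptotic inequality of Lemma~\ref{eigenexp} in to conclude $\norm{V(r\cdot)}_{L^2(\Sigma_i)}\le c\,r^{\nu_i}$, is therefore unjustified, because that inequality only bounds the high-frequency tail $\sum_{j>J_i}$, not the full $V$. Everything downstream (the $L^2$ decay, the rescaled Schauder step) inherits this missing piece.

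The fix is available with tools you already cite but did not use. The low-frequency terms must be estimated directly from the explicit integral formula \eqref{F}: since $F\in C_{\overline\nu}^{0,\gamma}$ and $E(U)\in C_{\nu'}^{0,\gamma}$ with $\nu'=2\nu-(3,\dots,3)$, one has $|f_j^i(s)|\le c\,s^{2\nu_i-3}$, and the double integral in \eqref{F} (with $\gamma_j^i<\nu_i$ for $j\le J_i$) then yields $|F_j^i(r)|\le c\,r^{2\nu_i-1}$, which is $O(r^{\nu_i})$ precisely because $\nu_i\ge 2$ gives $2\nu_i-1\ge\nu_i+1$. This is exactly where the paper's computation (its estimate for $\sum_{j\le J_i}F_j^i(r)\eta_j^i$) intervenes, and it is the step your proposal skips. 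Once you include it, the rest of your argument --- verifying convergence of $\int_0^1 s^{3-2\nu_i}\norm{F(s\cdot)}_{L^2(\Sigma_i)}^2\,ds$, combining with finiteness of $\sum_{j>J_i}|\alpha_j^i|^2$, and upgrading the $L^2$ decay to weighted $C^{2,\gamma}$ bounds via Corollary~\ref{sest1} and Proposition~\ref{sest4} on annuli $S_{r/2,4r}^i$ --- is sound and follows the same route as the paper. One further remark: you treat $\sum_{j>J_i}|\alpha_j^i|^2$ as merely ``finite,'' which is sufficient for the membership claim of this proposition, but the paper pushes through a quantitative $\epsilon$-dependent bound for this sum because those explicit decay rates are reused in Lemma~\ref{fixedpt2}.
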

\begin{proof}
    Combining Lemma \ref{eigenexp} and Lemma \ref{lambdastar1} shows that on each $C_i$ the solution $V$ given by Lemma \ref{existence} has the Fourier expansion in polar coordinates:
    \begin{equation*}
        V(r\omega) = \sum_{j \leq J_i} F_j^i(r) \eta_j^i(\omega) + \sum_{j > J_i}\Big(\alpha_j^i r^{\gamma_j^i} + F_j^i(r) \Big) \eta_j^i(\omega)
    \end{equation*}
    for unique $\alpha_j^i$, $i = 1,\ldots, N$ and $j = J_i + 1, \ldots$. We need to estimate $\norm{V(r\omega)}_{L^2(\Sigma_i)}$ for $r \leq \frac{1}{2}$. Using Proposition \ref{Eest1}(e), we compute for $r \in (0,\frac{1}{2}]$, $j \leq J_i$, and $n \geq 6$:
     \begin{align*}
        |F_j^i(r)| &\leq \Big|r^{\gamma_j^i} \int_0^r \tau^{1-n-2\gamma_j^i} \int_0^\tau s^{n-1 + \gamma_j^i}E_j^i(U(s)) \, ds d\tau \Big| \\
        &\leq c_1 \epsilon^{n-3 - \frac{2}{n^4}} r^{\gamma_j^i} \int_0^r \tau^{1-n-2\gamma_j^i} \int_0^\tau s^{n-4 + \gamma_j^i + 2\nu_i} \, ds d\tau \\
        &\leq c_2 \epsilon^{n-3   - \frac{2}{n^4}} r^{\gamma_j^i} \int_0^r \tau^{-2 - \gamma_j^i + 2\nu_i} \, d\tau \\ 
        &\leq c_3 \epsilon^{n-3  - \frac{2}{n^4}} r^{2\nu_i -1}.
    \end{align*}
    When $n = 3,4,5$, the same computation using Proposition \ref{Eest1}(e$^\prime$) shows
    $$
        |F_j^i(r)| \leq c_4\epsilon^{3 - \frac{2}{n^4}}r^{2\nu_i - 1}.
    $$
    Hence, for each $n \geq 3$ and $r \in (0,\frac{1}{2}]$ we have
    \begin{equation}
     \norm{\sum_{j \leq J_i} F_j^i(r) \eta_j^i(\omega)}_{L^2(\Sigma_i)} \leq
     \begin{cases}
        c_5 \epsilon^{n-3 - \frac{2}{n^4}} r^{2\nu_i -1} \text{ when } n \geq 6 \\
        c_5 \epsilon^{3 - \frac{2}{n^4}} r^{2\nu_i -1} \text{ when } n = 3,4,5.
    \end{cases}\label{n48}
    \end{equation}
    
    We need to estimate the right-hand side in Lemma \ref{eigenexp}. Recall that
    \begin{multline*}
        \norm{\sum_{j > J_i} \Big(\alpha_j^ir^{\gamma_j^i} + F_j^i(r)\Big)\eta_j^i(\omega)}_{L^2(\Sigma_i)} \leq \\
        \Bigg(\sum_{j > J_i} |\alpha_j^i|^2 \Bigg)^{\frac{1}{2}} r^{\nu_i} + c \Big(\int_0^1 s^{3 - 2\nu_i} \norm{E(U(s))}_{L^2(\Sigma_i)}^2 \, ds \Big)^{\frac{1}{2}} r^{\nu_i}.
    \end{multline*}
    Setting $r=1$ and noting that $F_j^i(1)$ vanishes for $j > J_i$ (see \eqref{F}) shows
    \begin{equation}
        V \lvert_{\Sigma_i} = \sum_{j \leq J_i} F_j^i(1) \eta_j^i + \sum_{j > J_i} \alpha_j^i \eta_j^i \text{ and }  V \lvert_{\Sigma_i} = \Psi_{\lambda^i}^i + \overline{V}^i. \label{n410}
    \end{equation}
    Since $\overline{V}^i = V$ on $D:= \cup_l D_{5\epsilon}(q_l) \cap \Sigma_i$, is zero on $\Sigma_i \setminus D$, and $\mathcal{H}^{n-1}(D) \leq c \epsilon^{n-1}$ the estimates \eqref{n49} and \eqref{n4} along with the second expression for $V \lvert_{\Sigma_i}$ in \eqref{n410} give
    \begin{equation*}
    \norm{V \lvert_{\Sigma_i}}_{L^2(\Sigma_i)} \leq
    \begin{cases}
         c_6\epsilon^{\frac{n+3}{2} - \frac{2}{n} - \frac{1}{2n^4}} \text{ for } n \geq 8 \\
         c_6\epsilon^{n-3 - \frac{3}{n^4}} \text{ for } n = 6,7 \\
         c_6\epsilon^{3 - \frac{3}{n^4}} \text{ for } n = 3,4,5.
    \end{cases}
    \end{equation*}
    Writing 
     $$
        \sum_{j > J_i} \alpha_j^i \eta_j^i = V \lvert_{\Sigma_i} - \sum_{j \leq J_i} F_j^i(1) \eta_j^i
    $$
    and using \eqref{n43}, \eqref{n410}, and that the $\eta_j^i$ $(j = 1,2,\ldots)$ are an $L^2(\Sigma_i)$ orthonormal set for each $i = 1,\ldots, N$, we find
    \begin{equation}
       \Bigg(\sum_{j > J_i} |\alpha_j^i|^2 \Bigg)^{\frac{1}{2}} \leq  
       \begin{cases}
         c_7 \epsilon^{\frac{n+3}{2} - \frac{2}{n} - \frac{1}{2n^4}} \text{ for } n \geq 8 \\
         c_7 \epsilon^{n-3 - \frac{3}{n^4}} \text{ for } n = 6,7 \\
        c_7 \epsilon^{3 - \frac{3}{n^4}} \text{ for } n = 3,4,5.
    \end{cases}\label{n411}
    \end{equation}
 
    By the argument used to estimate \eqref{n43} (i.e. splitting the integral), we get
    \begin{equation*}
        \Big(\int_0^1 s^{3 - 2\nu_i} \norm{E(U(s\omega))}_{L^2(\Sigma_i)}^2 \, ds \Big)^{\frac{1}{2}} \leq c_8 \Bigg( \norm{E(U)}_{L^2(M_{\frac{1}{2}}^\epsilon)} + \max_{\substack{0 < s \leq \frac{1}{2} \\ \omega \in \Sigma_i}}\Big(s^{\frac{3}{2} - \nu_i} |E(U(s\omega))|\Big)\Bigg). 
    \end{equation*}
    Applying \eqref{L2est} and \eqref{n2} to the first term on the right-hand side and Proposition (e) and (e$^\prime$) in the second while combining these with \eqref{n48} and \eqref{n411}, we deduce that for $r \in (0, \frac{1}{2}]$, and each $i = 1,\ldots, N$
    \begin{equation}
    \norm{V(r\omega)}_{L^2(\Sigma_i)} \leq
    \begin{cases}
         c_9 \epsilon^{\frac{n+1}{2} -\frac{2}{n} - \frac{1}{2n^4}} r^{\nu_i} \text{ for } n \geq 7 \\
        c_9 \epsilon^{3 - \frac{3}{n^4}} r^{\nu_i} \text{ for } n = 3,4,5,6. 
    \end{cases}\label{n412}
    \end{equation}
    We can now use \eqref{n412} to get a $C^0$ estimate for $V$ near the $p_i$. Note that by Corollary \ref{sest1} together with a covering argument with geodesic balls of radius at most $\frac{r}{4}$, for $0 < r \leq \frac{1}{8}$ we find
    \begin{multline}
        \norm{V}_{C^0(S_{r,2r}^i)} \leq \\c_{10}\Bigg(r^{-\frac{n}{2}}\Big(\int_{\frac{r}{2}}^{4r} s^{n-1} \norm{V(s\omega)}_{L^2(\Sigma_i)}^2ds\Big)^{\frac{1}{2}} + r \norm{E(U)}_{L^n(S_{\frac{r}{2}, 4r})}\Bigg). \label{n413}
    \end{multline}
    By \eqref{n412} and Proposition \ref{Eest1}(e)-(e$^\prime$), both terms are bounded by a similar quantity as in \eqref{n412} so we have 
    \begin{equation}
        \norm{V}_{C^0(S_{r,2r}^i)} \leq 
        \begin{cases}
            c_{11} \epsilon^{\frac{n+1}{2} -\frac{2}{n} - \frac{1}{2n^4}} r^{\nu_i} \text{ for } n \geq 7 \\
            c_{11} \epsilon^{3 - \frac{3}{n^4}}r^{\nu_i} \text{ for } n = 3,4,5,6.
        \end{cases} \label{c0}
    \end{equation}
    
   By covering $S_{r,2r}^i$ by geodesic balls of radius at most $\frac{r}{4}$ once more and applying Proposition \ref{sest4} together with \eqref{c0}, Proposition \ref{Eest1}(f), and the fact that $\nu_i \geq 2$ for each $i$, we find that for $n \geq 7$, each $r \in (0,\frac{1}{8}]$, and each $i = 1,\ldots, N$
   \begin{equation}
        \begin{cases}
            \norm{V}_{C^0(S_{r,2r}^i)} \leq c_{12} \epsilon^{\frac{n+1}{2} - \frac{2}{n} - \frac{1}{2n^4}}r^{\nu_i} \\
            \norm{\nabla^k V}_{C^0(S_{r,2r}^i)} \leq c_{13} \epsilon^{\frac{n-1}{2} - \frac{2}{n} - \frac{1}{2n^4}}r^{\nu_i - k}  \text{ for } k = 1,2 \\
            |V|_{k,\gamma, S_{r,2r}^i} \leq c_{14} \epsilon^{\frac{n+1}{2} - \frac{2}{n} - \frac{1}{2n^4} - \gamma}r^{\nu_i -k - \gamma} \text{ for } k = 0,1,2.
        \end{cases}\label{singest}
    \end{equation}
    When $n = 3,4,5,6$, we get
    \begin{equation}
        \begin{cases}
            \norm{V}_{C^0(S_{r,2r}^i)} \leq c_{15} \epsilon^{3 - \frac{3}{n^4}}r^{\nu_i} \\
            \norm{\nabla^k V}_{C^0(S_{r,2r}^i)} \leq c_{16} \epsilon^{2 - \frac{3}{n^4}}r^{\nu_i - k}  \text{ for } k = 1,2 \\
            |V|_{k,\gamma, S_{r,2r}^i} \leq c_{17} \epsilon^{3 - \frac{3}{n^4} - \gamma}r^{\nu_i -k - \gamma} \text{ for } k = 0,1,2.
        \end{cases}\label{n414}
    \end{equation}
    We have therefore shown that $V \in C_\nu^{2,\gamma}(M^\epsilon)$ for each $n \geq 3$. Note that \eqref{n414} also shows that, for $r \in (0, \frac{1}{8}]$, $V$ satisfies (v) and (vi) in the definition of $\mathscr{K}$ when $n \geq 6$ and satisfies (v$^\prime$) and (vi$^\prime$) when $n = 3,4,5$.
\end{proof}

Using Lemma \ref{existence}, Lemma \ref{lambdastar1}, and Proposition \ref{n4reg} we can define a mapping 
$$
    T: \mathscr{K}(n,\epsilon, \gamma, \nu) \rightarrow C_\nu^{2,\gamma}(M^\epsilon) \text{ for } \epsilon < \epsilon_0 \text{ and } \gamma \in (0,1),
$$
where $T(U)$ is the solution $V$ of \eqref{DPnu} with $\Psi_{\lambda(U)}$ replacing $\Psi_\lambda$. One can show that $T$ is continuous in the $C_\nu^{2,\gamma}$ topology as in \cite{NS2}. To see this, observe that the map $U \mapsto \lambda(U)$ is continuous as a map $\mathscr{K} \rightarrow \Omega_\epsilon$ and $\lambda \mapsto \Psi_\lambda$ is continuous as a map $\Omega_\epsilon \rightarrow C_{\nu}^{2,\gamma}(M^\epsilon)$ by the construction of $\Psi_\lambda$ and Proposition \ref{psilambda}(d). Furthermore, the map $U \mapsto H_0 + E(U)$ is continuous as a map $\mathscr{K} \rightarrow C_{\overline{\nu}}^{0,\gamma}(M^\epsilon)$. The continuity of $T$ then follows from the global and local $C^0$ bounds (Corollaries \ref{sest2} and \ref{sest1}), the global and local Schauder estimates (Propositions \ref{sest3} and \ref{sest4}), the asymptotic estimate \eqref{n413}, and Lemma \ref{existence}. The next lemma shows that $T$ maps $\mathscr{K} \rightarrow \mathscr{K}$ for small $\epsilon$, and is the final step in proving existence of a small perturbation of $M^\epsilon$ that is minimal in $\mathbb{R}^{n+m+1}$.
\begin{lem}[see \cite{NS2}, Lemma 4.2]\label{fixedpt2}
    For any $n \geq 3$, $\gamma \in (0,1)$, and $\nu_i\geq 2$, there is a universal constant $\epsilon_0 > 0$ such that
    $$
        T: \mathscr{K}(n,\epsilon, \gamma, \nu) \rightarrow \mathscr{K}(n, \epsilon, \gamma, \nu) \text{ for all } \epsilon < \epsilon_0.
    $$
\end{lem}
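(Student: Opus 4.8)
The plan is to show that $V:=T(U)$ satisfies every inequality in the definition of $\mathscr{K}(n,\epsilon,\gamma,\nu)$, following the scheme of \cite{NS2}, Lemma 4.2, but feeding in the estimates of Sections 3--4 (and the improved bound \eqref{Hsmall1} when $n=3,4,5$) in place of those of \cite{NS2}. That $V\in C_\nu^{2,\gamma}(M^\epsilon)$ and $V=\Psi_{\lambda(U)}$ on $\partial M^\epsilon$ with $\lambda(U)\in\Omega_\epsilon$ is already provided by Lemma \ref{lambdastar1} and Proposition \ref{n4reg}, and several of the estimates have been obtained along the way: \eqref{n45} and \eqref{n3} give (vii) and (vii$'$); \eqref{n49} and \eqref{n4} give (i) and (i$'$) for $k=0$; and \eqref{singest}, \eqref{n414} give (v), (vi), (v$'$), (vi$'$) on $S_{r,2r}^i$ for $r\le\tfrac18$. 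It therefore remains to (a) upgrade the $C^0$ bound on $M_{1/2}^\epsilon$ to the full first- and second-order bounds (i)--(ii), (i$'$)--(ii$'$); (b) prove the relay bounds (iii)--(iv) on the $\mathcal{V}_l$; (c) prove the $L^2$ Hessian bounds (viii), (viii$'$), (viii$''$); and (d) extend the near-singularity bounds across the compact transitional annulus $r\in(\tfrac18,\tfrac14]$.

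For (a), since $M_{1/2}^\epsilon\subset M_{1/3}^\epsilon$, I would combine the $C^0$ bound on $V$ over $M_{1/3}^\epsilon$ from \eqref{n49}, \eqref{n4} with the bound $\norm{H_0+E(U)}_{C^{0,\gamma}(M_{1/3}^\epsilon)}\le c$ — valid because $H_0$ is supported on the bridges with $|H_0|\le c_0$ and $|H_0|_{\gamma,M_{1/3}^\epsilon}\lesssim\epsilon^{1-\gamma}$ by \eqref{bridge1}, while $E(U)$ is controlled by Proposition \ref{Eest1}(b), (b$'$) — and apply the global Schauder estimates of Proposition \ref{sest3} with $\delta=\tfrac13$. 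The outputs are bounds of the shape $\norm{\nabla^k V}_{C^0(M_{1/2}^\epsilon)}\le c\,(\epsilon^{2-k-2/n-1/(2n^4)}+\epsilon^{2-k})$ (and the analogous Hölder bounds), with the low-dimensional exponent $2-1/(2n^4)-k$ (resp. $\tfrac52-k$ when $n=3$) in place of the first term; since these exponents exceed the targets $\beta_0-k$ and their low-dimensional analogues by a fixed positive power of $\epsilon$, the universal constant is absorbed for $\epsilon$ small, giving (i)--(ii) and (i$'$)--(ii$'$). The same computation over $M_{3/4}^\epsilon$ gives the base case $\mathcal{V}_0$ of the relay.

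For (b) and (d), one works on the conical regions, where $H_0\equiv0$ by minimality of the cones, so $F=E(U)$. Using Proposition \ref{Eest1} and $U\in\mathscr{K}$, one propagates the estimates between consecutive $\mathcal{V}_l$ (and across $(\tfrac18,\tfrac14]$) by covering each region with geodesic balls of radius $\rho\sim n^{-3}$ — independent of $\epsilon$ — and applying Corollary \ref{sest1} and Proposition \ref{sest4}, controlling $\norm{V}_{L^2}$ over the doubled balls by the global quantity $\norm{V}_{H_{1}^1(M^\epsilon)}$ from (vii)/(vii$'$). The regions with $\beta_l$ not exceeding the exponent in \eqref{n49}/\eqref{n4} are covered directly by (a); the remaining inner regions are reached by iterating inward, and the conditions $\beta_{l-1}\ge\beta_l-n^{-2}$, $\sigma_l-\sigma_{l-1}\le n^{-3}$, $\beta_K=\tfrac{n-1}2-n^{-4}$, $\nu_i\ge2$, and $K\le n^3$ are precisely what makes each step lose at most a factor $\epsilon^{-1/n^4}$ in the exponent while keeping the constant accumulated over the at most $n^3$ steps universal. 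For (c), I would use an interior $L^2$ second-derivative estimate of the form $\norm{\nabla^2 V}_{L^2(M_{1/2}^\epsilon)}\le c\,(\norm{V}_{H^1(M^\epsilon)}+\norm{F}_{L^2(M_{1/3}^\epsilon)})$ — valid because the coefficients of $L$ are uniformly bounded on $M_{1/3}^\epsilon$ — together with (vii)/(vii$'$), \eqref{Hsmall}--\eqref{Hsmall1}, and the $L^2$ bounds \eqref{L2est}, \eqref{n2} for $E(U)$.

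I expect the relay of step (b) to be the main obstacle: one must verify that the finitely many regions $\mathcal{V}_l$ genuinely bridge the gap between the weak bound valid near the bridges ($\beta_0$) and the strong bound valid near the singularities ($\beta_K$), that each step absorbs its universal constant by the built-in slack $\epsilon^{-1/n^4}$, and that the competition on the conical regions between the $\epsilon$-powers coming from $E(U)$ (which are essentially quadratic in the exponents of $\mathscr{K}$) and the linear targets in $\mathscr{K}$ never closes up — a point which in dimensions $n=3,4,5$ works only because of the improved mean curvature bound \eqref{Hsmall1} and the correspondingly modified estimates (i$'$)--(viii$''$) defining $\mathscr{K}$. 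Since only finitely many universal constants enter, the resulting $\epsilon_0$ is universal, and the remaining steps are bookkeeping with the estimates assembled in Sections 3 and 4.
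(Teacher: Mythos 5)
Your plan matches the paper's scheme: decompose the task into verifying each inequality defining $\mathscr{K}$ for $V=T(U)$, with margin in the $\epsilon$-exponents so that the universal constants are absorbed for small $\epsilon$, reusing \eqref{n45}, \eqref{n3}, \eqref{n49}, \eqref{n4}, \eqref{singest}, \eqref{n414} for the parts already established and filling in the rest via Schauder and $L^2$ estimates on the cones. Your steps (a), (b), and (d) are essentially the paper's argument (the paper works with $W=V-\Psi_{\lambda(U)}$ and a cutoff $\varphi$ rather than $V$ directly, and the small imprecisions you have there — e.g.\ the claim $|H_0|_{\gamma}\lesssim\epsilon^{1-\gamma}$, which is not right since $H_0$ transitions to zero at the ends of the bridge, and the conflation of the $\bar\sigma/4$-ball covering for the $C^0$ step with the $2\epsilon$-ball covering for the Schauder step on $\mathcal{V}_l$ — are easily repaired and do not affect the exponents).

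The one genuine gap is in step (c), the $L^2$ Hessian bound (viii)/(viii$'$)/(viii$''$). You propose a ``universal'' interior $L^2$ estimate $\norm{\nabla^2 V}_{L^2(M_{1/2}^\epsilon)}\le c\,(\norm{V}_{H^1(M^\epsilon)}+\norm{F}_{L^2(M_{1/3}^\epsilon)})$ justified by bounded coefficients of $L$. But bounded coefficients alone do not give a universal constant: the region $M_{1/3}^\epsilon$ contains all of the bridges, whose lateral boundary $\partial\Gamma_l(\epsilon)$ has second fundamental form of size $\epsilon^{-1}$ by \eqref{bridge1}, and whose injectivity radius and attachment geometry at the disks $D_{5\epsilon}(q_l)$ depend on $\epsilon$. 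Any boundary-regularity or Bochner-type argument for the $L^2$ estimate would see this $\epsilon$-dependent geometry, and you have not argued that the resulting constant stays bounded as $\epsilon\to 0$. The paper sidesteps this entirely: it writes $M_{1/2}^\epsilon\subset R_\epsilon\cup S_{1/3,1}$, controls $\norm{\nabla^2 V}_{L^2(R_\epsilon)}$ by the pointwise bound from step (a) multiplied by $\mathcal{H}^n(R_\epsilon)^{1/2}\lesssim\epsilon^{(n-1)/2}$ (this is where the weaker exponent $\frac{n^2-n-4}{2n}-\frac{1}{2n^4}$ in \eqref{Repsilon1}/\eqref{n8} comes from, which is still better than the target in (viii)), and then applies the standard $L^2$ estimate only to $\varphi\tilde\varphi W$ on the fixed, $\epsilon$-independent domain $S_{1/3,1}$, with $\tilde\varphi$ cutting off away from the bridges and the commutator terms estimated using the pointwise bounds from (a) together with $|\nabla'\tilde\varphi|\lesssim\epsilon^{-1}$ and $\mathcal{H}^n(\supp\nabla'\tilde\varphi)\lesssim\epsilon^n$. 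You should adopt this split (or explicitly verify the universality of your $L^2$ constant on the bridge region, which is not done anywhere in the paper's apparatus).
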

\begin{proof}
We need to show that the solution $V$ to the Dirichlet problem in Proposition \ref{n4reg} is in $\mathscr{K}$ for $\epsilon < \epsilon_0$, where $\epsilon_0$ is universal. More precisely, we need to show
 \begin{itemize}
 \itemsep = 3pt
        \item[(a)] For $k = 0,1,2$ 
        \begin{equation*}
            \begin{cases}
                \norm{\nabla^k V}_{C^0(M_{\frac{1}{2}}^\epsilon)} \leq c \epsilon^{\theta - k} \\
                |V|_{k,\gamma, M_{\frac{1}{2}}^\epsilon}\leq c \epsilon^{\theta - k - \gamma},
            \end{cases}
        \end{equation*}
        where $\theta > 2 - \frac{2}{n} - \frac{1}{n^4}$ when $n \geq 6$ and $\theta > 2 - \frac{1}{n^4}$ when $n =3,4,5$;

        \item[(b)] For $\rho_l > \beta_l$, $l = 1,\ldots, K$, and $k = 0,1,2$
        \begin{equation*}       
        \begin{cases}\norm{\nabla^k V}_{C^0(\mathcal{V}_l)} \leq c \epsilon^{\rho_l - k} \\
            |V|_{k,\gamma, \mathcal{V}_l} \leq c \epsilon^{\rho_l - k - \gamma};
        \end{cases}
        \end{equation*}

        \item[(c)] For $i = 1,\ldots, N$, $k = 0,1,2$, and $0 < r \leq \frac{1}{4}$
        \begin{equation*}
            \begin{cases}
                \norm{\nabla^k V}_{C^0(S_{r,2r}^i)}r^{k - \nu_i} c \epsilon^{\tau - k} \\
                |V|_{k,\gamma, S_{r,2r}^i}r^{k + \gamma - \nu_i} \leq c \epsilon^{\tau - k - \gamma},
            \end{cases}
        \end{equation*}
        where $\tau > \frac{n-1}{2} - \frac{1}{n^4}$ when $n \geq 6$ and $\tau > \frac{5}{2} - \frac{1}{n^4}$ when $n = 3,4,5$;

        \item[(d)] We have 
        $$
            \norm{V}_{H_{1}^1(M^\epsilon)}  \leq c \epsilon^{\zeta_1},
        $$
        where $\zeta_1 > \frac{n-1}{2} - \frac{1}{n^4}$ when $n \geq 6$ and $\zeta_1 > \frac{5}{2} - \frac{1}{n^4}$ when $n = 3,4,5$;

        \item[(e)] We have 
        $$
            \norm{\nabla^2 V}_{L^2(M_{\frac{1}{2}}^\epsilon)} \leq c \epsilon^{\zeta_2},
        $$
        where $\zeta_2 > \frac{n^2-n-4}{2n} - \frac{1}{n^4}$ when $n \geq 6$, $\zeta_2 > \frac{n-1}{2} - \frac{1}{n^4}$ when $n = 4,5$, and $\zeta_2 > \frac{3}{2} - \frac{1}{3^4}$ when $n = 3$.
    \end{itemize}
    In each case, $c$ is a universal constant and $\zeta_1$, $\zeta_2$, $\zeta_3$, $\theta$, $\rho_1, \ldots, \rho_K$, $\tau$ depend only on $n$. Note that (d) is just \eqref{n45} and \eqref{n3} with $\zeta_1 = \frac{n-1}{2}$ when $n \geq 6$ and $\zeta_1 = \frac{5}{2}$ when $n = 3,4,5$. Also, when $k = 0$ \eqref{n49} shows we may take $\theta = 2 - \frac{2}{n} - \frac{1}{2n^4}$ in (a) when $n \geq 6$ and $\theta = 2 - \frac{1}{2n^4}$ in (a) when $n = 4,5$, while \eqref{n4} shows we may take $\theta = \frac{5}{2}$ when $n = 3$. We now let $V = W + \Psi_{\lambda(U)}$ so that $LW = H_0 + E(U) - L\Psi_{\lambda(U)}$ and $W = 0$ on $\partial M^\epsilon$. 
    
    Let $\varphi$ be a smooth cutoff function satisfying
    \begin{equation*}
        \varphi(x) := \begin{cases}
                        1, \text{ } x \in M_{\frac{1}{2}}^\epsilon \\
                        0, \text{ } x \in M^\epsilon \setminus M_{\frac{1}{3}}^\epsilon.
                    \end{cases}
    \end{equation*}
    Applying Proposition \ref{sest3}(a) to $\varphi W$ for $k = 1,2$ gives
    \begin{align}
        \norm{\nabla^k W}_{C^0(M_{\frac{1}{2}}^\epsilon)} &\leq c_1 \Big( \epsilon^{2-k}\big(\norm{H_0}_{C^{0,\gamma}(M_{\frac{1}{4}}^\epsilon)} + \norm{E(U)}_{C^{0,\gamma}(M_{\frac{1}{4}}^\epsilon)} + \norm{L \Psi_{\lambda(U)}}_{C^{0,\gamma}(M_{\frac{1}{4}}^\epsilon)} \nonumber\\
        &\qquad+\norm{W}_{C^{0,\gamma}(S_{\frac{1}{3},\frac{1}{2}})} +\norm{\nabla W}_{C^{0,\gamma}(S_{\frac{1}{3},\frac{1}{2}})}\big) + \epsilon^{-k} \norm{W}_{C^{0}(M_{\frac{1}{4}}^\epsilon)} \Big) \nonumber\\
        &\leq c_2\Big( \epsilon^{2 - k} + \epsilon^{2 - k}\big( \norm{E(U)}_{C^{0,\gamma}(M_{\frac{1}{4}}^\epsilon)} \label{final3} \\
        &\qquad+ \norm{L \Psi_{\lambda(U)}}_{C^{0,\gamma}(M_{\frac{1}{4}}^\epsilon)}\big) + \epsilon^{-k} \norm{W}_{C^0(M_{\frac{1}{4}}^\epsilon)}\Big), \nonumber
    \end{align}
where the second inequality comes from the bounds on the second fundamental form (see \eqref{bridge1}) and from Proposition \ref{sest4} applied with a covering by $\epsilon$-balls. Using that $U \in \mathscr{K}$, $W = V - \Psi_{\lambda(U)}$, \eqref{p}, Proposition \eqref{Eest1}(b)-(b$^\prime$), and part (a) of this proof with $k = 0$, we get 
\begin{align}
    \epsilon^{2 - k}\norm{E(U)}_{C^{0,\gamma}(M_{\frac{1}{4}}^\epsilon)} &+ \epsilon^{-k} \norm{W}_{C^0(M_{\frac{1}{4}}^\epsilon)}  \leq \nonumber \\
    &\begin{cases}
        c_3(\epsilon^{3 - k - \frac{4}{n} - \frac{2}{n^4}} + \epsilon^{2 -k - \frac{2}{n} - \frac{1}{2n^4}}, \text{ } n \geq 6\\
        c_3(\epsilon^{3 - k - \frac{3}{n^4}} + \epsilon^{2 -k - \frac{1}{2n^4}}), \text{ } n = 4,5 \\
        c_3(\epsilon^{3 - k - \frac{3}{n^4}} + \epsilon^{\frac{5}{2} - k}), \text{ }n = 3.
    \end{cases}\label{final1}
\end{align}
Applying \eqref{bridge1} and \eqref{p}, we find 
\begin{equation}
    \epsilon^{2-k}\norm{L \Psi_{\lambda(U)}}_{C^{0,\gamma}(M_{\frac{1}{4}}^\epsilon)}  \leq
    \begin{cases}
        c_4 \epsilon^{n - 1 - k - \frac{3}{n^4}} \text{ for } n \geq 6 \\
        c_4 \epsilon^{5 - k - \frac{3}{n^4}} \text{ for } n = 3,4,5.
    \end{cases} \label{final2}
\end{equation}
Thus, \eqref{p}, \eqref{final1}, and \eqref{final2} together give
\begin{equation*}
    \norm{\nabla^k V}_{C^0(M_{\frac{1}{2}}^\epsilon)} \leq 
    \begin{cases}
        c_5 \epsilon^{2 - \frac{2}{n} - \frac{1}{2n^4} - k} \text{ for } n \geq 6 \\
        c_5 \epsilon^{2 - \frac{1}{2n^4} - k} \text{ for } n =4,5 \\
        c_5 \epsilon^{\frac{5}{2} - k} \text{ for } n = 3,
    \end{cases}
\end{equation*}
where $k = 1,2$. The proof of the estimate for the H\"older norms of $V$ is identical with Proposition \ref{sest3}(b) replacing Proposition \ref{sest3}(a). Hence, condition (a) is proved with $\theta$ as in the first paragraph. 

Next, we prove condition (e). Set $R_\epsilon := \bigcup_{l = 1}^I \Gamma_l(\epsilon) \cup \bigcup_{k = 1}^{2I} B_{10\epsilon}(q_k)$ and note that on $R_\epsilon$ part (a) shows that
\begin{equation}
    \norm{\nabla^2V}_{L^2(R_\epsilon)} \leq c_6 \epsilon^{-\frac{2}{n} - \frac{1}{2n^4}} \mathcal{H}^{n}(R_\epsilon)^{\frac{1}{2}} \leq c_7 \epsilon^{\frac{n-1}{2} - \frac{2}{n} - \frac{1}{2n^4}} \text{ for } n \geq 6. \label{Repsilon1}
\end{equation}
Likewise, we have
\begin{equation}
     \norm{\nabla^2 V}_{L^2(R_\epsilon)} \leq
     \begin{cases}
        c_8\epsilon^{\frac{n-1}{2} - \frac{1}{n^4}} \text{ for } n = 4,5\\
        c_8\epsilon^{\frac{3}{2}} \text{ for } n = 3.
     \end{cases}\label{n8}
\end{equation}
Let $W$ and $\varphi$ be as in the proof of (a) and let $\tilde{\varphi}$ be a smooth cutoff function such that 
    $$
        \tilde{\varphi}(x) := \begin{cases}
                                1 \text{ for } x \in M^\epsilon \setminus R_\epsilon \\
                                0 \text{ for } \tilde{R}_\epsilon,
                            \end{cases}
$$
where $\tilde{R}_\epsilon := \bigcup_{l = 1}^I \Gamma_l(\epsilon) \cup \bigcup_{k = 1}^{2I} B_{7\epsilon}(q_k)$. Suppose further that 
$$
    \norm{\nabla^\prime \tilde{\varphi}}_{C^0(M^\epsilon)}\epsilon + \norm{(\nabla^\prime)^2 \tilde{\varphi}}_{C^0(M^\epsilon)}\epsilon^2 \leq c_{9}
$$
and note that $\mathcal{H}^n\big({\supp ((\nabla^\prime)^k \tilde{\varphi})}\big) \leq c_{10} \epsilon^n$ for $k = 1,2$. Then $\varphi \tilde{\varphi} W \in C^2(M^\epsilon)$ and is supported on $M_{3^{-1}}^\epsilon \setminus \tilde{R}_\epsilon$. Hence, it is supported on $S_{3^{-1},1}$ and vanishes on $\partial S_{3^{-1},1}$ in a neighborhood of where the bridges meet the cones. Since $S_{3^{-1},1}$ is a smooth submanifold independent of $\epsilon$, we can apply standard elliptic $L^2$ estimates for $L$ (e.g. \cite{Ma, MO}) to find 
    \begin{align*}
        \norm{\nabla^2 (\varphi \tilde{\varphi} W)}_{L^2(S_{\frac{1}{3},1})} &\leq c_{11} \big( \norm{L(\varphi \tilde{\varphi} W)}_{L^2(S_{\frac{1}{3},1})} + \norm{\varphi \tilde{\varphi} W}_{L^2(S_{\frac{1}{3},1})}\big) \\
        &\leq c_{12} \big( \norm{E(U)}_{L^2(M_{\frac{1}{3}}^\epsilon)} + \norm{L\Psi_{\lambda(U)}}_{L^2(M^\epsilon)}  \\
        &\qquad+ \norm{\Psi}_{H_1^1(M^\epsilon)} +\norm{V}_{H_1^1(M^\epsilon)} \\
        &\qquad+ \norm{\nabla W}_{C^0(R_\epsilon \setminus \tilde{R}_\epsilon)}\epsilon^{\frac{n}{2} - 1} + \norm{W}_{C^0(R_\epsilon \setminus \tilde{R}_\epsilon)}\epsilon^{\frac{n}{2} - 2}\big).
    \end{align*}
    To see how to get the second inequality, recall that $W = V - \Psi_{\lambda(U)}$, the fact that $V$ solves $LV = E(U)$ on $S_{3^{-1},1}$, and the definition of $\varphi$ and $\tilde{\varphi}$. Each of the terms on the right-hand side we can bound using previous estimates. By the argument used to estimate \eqref{L2est} and \eqref{n2}, we have
    \begin{align*}
        \norm{E(U)}_{L^2(M_{\frac{1}{3}}^\epsilon)} \leq \begin{cases}
            c_{13}\epsilon^{\frac{n+1}{2} - \frac{2}{n} - \frac{2}{n^4}} \text{ for } n \geq 6 \\
            c_{13} \epsilon^{\frac{7}{2} - \frac{2}{n^4}} \text{ for } n = 3,4,5
        \end{cases}
    \end{align*}
    and the definition of $\Psi_{\lambda(U)}$ along with \eqref{bridge1}, \eqref{p}, \eqref{n45}, and \eqref{n3} gives 
    \begin{equation*}
        \norm{V}_{H_1^1(M^\epsilon)} + \norm{L\Psi_{\lambda(U)}}_{L^2(M^\epsilon)} + \norm{\Psi}_{H_1^1(M^\epsilon)} \leq 
        \begin{cases}
            c_{14} \epsilon^{\frac{n-1}{2}} \text{ for } n \geq 6 \\
            c_{14} \epsilon^{\frac{5}{2}} \text{ for } n = 3,4,5.
        \end{cases}
    \end{equation*}
    Using the definition of $W$, part (a) of this proof, and \eqref{p} we find
    \begin{equation*}
        \norm{W}_{C^0(R_\epsilon \setminus \tilde{R}_\epsilon)} \epsilon^{\frac{n}{2} -2} + \norm{\nabla W}_{C^0(R_\epsilon \setminus \tilde{R}_\epsilon)} \epsilon^{\frac{n}{2} - 1} \leq
        \begin{cases}
            c_{15} \epsilon^{\frac{n}{2} - \frac{2}{n} -\frac{1}{2n^4}} \text{ for } n \geq 6 \\
            c_{15} \epsilon^{\frac{n}{2} - \frac{1}{2n^4}} \text{ for } n = 4,5 \\
            c_{15} \epsilon^{2} \text{ for } n = 3.
        \end{cases}
    \end{equation*}
    It follows that 
    \begin{equation}
        \norm{\nabla^2 (\varphi \tilde{\varphi} W)}_{L^2(S_{\frac{1}{3},1})} \leq 
        \begin{cases}
            c_{16} \epsilon^{\frac{n}{2} - \frac{2}{n} - \frac{1}{2n^4}} \text{ for } n \geq 6 \\
            c_{16} \epsilon^{\frac{n}{2} - \frac{1}{2n^4}} \text{ for } n = 4,5 \\
            c_{16} \epsilon^{2} \text{ for } n = 3.
        \end{cases}\label{final4}
    \end{equation}
    Since $M_{2^{-1}}^\epsilon \subset S_{3^{-1},1} \cup R_\epsilon$, combining \eqref{Repsilon1}, \eqref{n8}, and \eqref{final4} we find
    \begin{equation*}
        \norm{\nabla^2 V}_{L^2(M_{\frac{1}{2}}^\epsilon)} \leq 
        \begin{cases}
            c_{17} \epsilon^{\frac{n^2 - n - 4}{2n} - \frac{1}{2n^4}} \text{ for } n\geq 6 \\
            c_{17} \epsilon^{\frac{n-1}{2} - \frac{1}{2n^4}} \text{ for } n = 4,5 \\
            c_{17} \epsilon^{\frac{3}{2}} \text{ for } n = 3,
        \end{cases}
    \end{equation*}
    which proves (e). 
    
    Part (c) is proved for $r \in (0,\frac{1}{8}]$ with $\tau_0 = \frac{n+1}{2} - \frac{2}{n} - \frac{1}{2n^4}$ when $n \geq 7$ and $\tau_0 = 3 - \frac{3}{n^4}$ when $n = 3,4,5,6$ due to \eqref{singest} and \eqref{n414}. Thus, we need to sufficiently extend these estimates to $r \in (\frac{1}{8}, \frac{1}{4}]$ to prove (c). Observe that, by definition of $\beta_K$ and $\mathcal{V}_K$, to conclude the proof of (c) it suffices to prove (b) since we can then take $\tau = \min\{\tau_0, \rho_K\}$. Hence, we will only prove (b). In addition, we will only consider when $n = 3,4,5$ since the estimates when $n \geq 6$ are similar, and are precisely as in \cite{NS2} (see p. 636).
    
    To prove (b), one first considers when $k = 0$. This is done using an estimate for the $C^0$ norm of $V$ on some larger sets $\hat{\mathcal{V}}_l$ containing $\mathcal{V}_l$ for each $l = 1,\ldots, K$ and patching the estimates together (\cite{NS2}, fig. 5 on p. 635). Recall that $\mathcal{V}_l = S_{\sigma_{K - l - 1}, \sigma_{K - l}}$ ($l = 1,\ldots, K -1$) and $\mathcal{V}_K = S_{4^{-1}, 2^{-1}}$. Set $\overline{\sigma} := \min_{l = 1, \ldots, K-1}|\sigma_l - \sigma_{l-1}|$ and 
    $$\hat{\mathcal{V}}_l := S_{\sigma_{K - l - 1} - \frac{\overline{\sigma}}{2}, \sigma_{K - l} + \frac{\overline{\sigma}}{2}} \text{  } (l = 1,\ldots, K-1), \text{  } \hat{\mathcal{V}}_K := S_{\frac{1}{4} - \frac{\overline{\sigma}}{2}, \frac{1}{2} + \frac{\overline{\sigma}}{2}}.
    $$
    Applying Corollary \ref{sest1} on balls of radius $\frac{\overline{\sigma}}{4}$ in $\hat{\mathcal{V}}_l$ and using that $H_0 = 0$ on these sets, we obtain
    \begin{equation*}
        \norm{V}_{C^0(\hat{\mathcal{V}}_l)} \leq c_{18}\big(\epsilon^{\frac{5}{2}} + \norm{E(U)}_{L^n(\mathcal{V}_{l-1})} + \norm{E(U)}_{L^n(\mathcal{V}_l)} + \norm{E(U)}_{L^n(\mathcal{V}_{l+1})}\big),
    \end{equation*}
    where $\mathcal{V}_{l + 1} = S_{8^{-1}, 4^{-1}}$ when $l = K$. Since the $\beta_l$ increase on each $\mathcal{V}_l$ ($l$ increasing), the first $L^n$ term is the largest. Using Proposition \ref{Eest1}(d), the definition of $\mathscr{K}$, and the Minkowski inequality, we get for $n = 4,5$
    \begin{align*}
       \norm{E(U)}_{L^n(\mathcal{V}_{l-1})} &\leq c_{19}\Bigg(\Big( \int_{\mathcal{V}_{l-1}} |U|^{2n} \Big)^{\frac{1}{n}} + \Big(\int_{\mathcal{V}_{l-1}} |\nabla U|^{2n} \Big)^{\frac{1}{n}} \\
        &\qquad+ \Big(\int_{\mathcal{V}_{l-1}} |\nabla U|^{2n} |\nabla^2 U|^n  \Big)^{\frac{1}{n}} + \Big(\int_{\mathcal{V}_{l-1}}|U|^n|\nabla^2 U|^n \Big)^{\frac{1}{n}}\Bigg) \\
        &\leq c_{20} \big(\norm{U}_{C^0(\mathcal{V}_{l-1})}^{\frac{2n- 2}{n}} \norm{U}_{L^2(\mathcal{V}_{l-1})}^{\frac{2}{n}} + \norm{\nabla U}_{C^0(\mathcal{V}_{l-1})}^{\frac{2n- 2}{n}}\norm{\nabla U}_{L^2(\mathcal{V}_{l-1})}^{\frac{2}{n}} \\
        &\qquad+ \norm{\nabla U}_{C^0(\mathcal{V}_{l-1})}^2\norm{\nabla^2U}_{C^0(\mathcal{V}_{l-1})}^{\frac{n-2}{n}}\norm{\nabla^2 U}_{L^2(\mathcal{V}_{l-1})}^{\frac{2}{n}} \\
        &\qquad+ \norm{U}_{C^0(\mathcal{V}_{l-1})}\norm{\nabla^2U}_{C^0(\mathcal{V}_{l-1})}^{\frac{n-2}{n}}\norm{\nabla^2 U}_{L^2(\mathcal{V}_{l-1})}^{\frac{2}{n}}\big) \\
        &\leq c_{21}\Big( \epsilon^{\beta_{l-1} \frac{2n - 2}{n} + \frac{5}{n} - \frac{2}{n^5}} + \epsilon^{(\beta_{l - 1} - 1)\frac{2n-2}{n} + \frac{5}{n} - \frac{2}{n^5}} \\
        &\qquad+ \epsilon^{2(\beta_{l-1} - 1)  + (\beta_{l-1} - 2)\frac{n-2}{n} + 1 - \frac{1}{n} - \frac{2}{n^5}} + \epsilon^{\beta_{l-1} + (\beta_{l-1} - 2)\frac{n-2}{n} + 1 - \frac{1}{n} - \frac{2}{n^5}}\Big).
    \end{align*}
    When $n = 3$, we have
    \begin{align*}
        \norm{E(U)}_{L^3(\mathcal{V}_{l-1})} &\leq c_{22}\Big( \epsilon^{\frac{4}{3}\beta_{l-1} + \frac{5}{3} - \frac{2}{3^5}} + \epsilon^{\frac{4}{3}(\beta_{l - 1} - 1) + \frac{5}{3} - \frac{2}{3^5}} \\
        &\qquad+ \epsilon^{2(\beta_{l-1} - 1)  + \frac{1}{3} (\beta_{l-1} - 2) + 1  - \frac{2}{3^5}} + \epsilon^{\beta_{l-1} + \frac{1}{3}(\beta_{l-1} - 2) + 1 - \frac{2}{3^5}}\Big).
    \end{align*}
    
    Since $\beta_0 = 2 -\frac{1}{n^2}$ and $\beta_{l-1} \geq \beta_l - \frac{1}{n^4}$, one can check that each of the exponents in the terms above is larger than $\beta_l$ for each $n = 3,4,5$ and each $l = 1,\ldots K$. It follows that for $n = 3,4,5$
    \begin{equation}
        \norm{V}_{C^0(\hat{\mathcal{V}_l})} \leq c_{23} \epsilon^{\tilde{\rho}_l} \text{ for some } \tilde{\rho}_l > \beta_l \text{ } (l = 1,\ldots, K). \label{partb1}
    \end{equation}
    The corresponding estimate for $l = 0$ follows from part (a).
    
    Applying Proposition \ref{sest4}(a) on a covering of $\mathcal{V}_l$ by balls of radius $2\epsilon$ contained in $\hat{\mathcal{V}}_l$ and using \eqref{partb1} along with Proposition \ref{Eest1}(c), we find
    \begin{align*}
        \norm{\nabla^k V}_{C^0(\mathcal{V}_l)} &\leq c_{24} \big( \epsilon^{-k} \norm{V}_{C^0(\hat{\mathcal{V}}_l)} + \epsilon^{2-k}\norm{E(U)}_{C^{0,\gamma}(\hat{\mathcal{V}_l})}\big) \\
        &\leq c_{25}\big(\epsilon^{\tilde{\rho}_l - k} + \epsilon^{2 \beta_l - \frac{2}{n^2} - k - \gamma} + \epsilon^{3 \beta_l - 2 - \frac{3}{n^2} - k - \gamma}\big)
    \end{align*}
    where we have used $\beta_{l-1} \geq \beta_l - \frac{1}{n^2}$. Since $\beta_0 = 2 - \frac{1}{n^4}$, we have 
    $$
        2\beta_l - \frac{2}{n^4} - k - 1 > \beta_l - k \text{ and } 3\beta_l - 3 - \frac{3}{n^4} - k > \beta_l - k.
    $$
    Taking 
    $$
        \rho_l := \min\Big\{\tilde{\rho}_l, 2\beta_l - \frac{2}{n^4} - 1, 3\beta_l - 3 - \frac{3}{n^4}\Big\}
    $$
    proves the $C^0$ estimates in (b) for $k = 0,1,2$ and $l = 1,\ldots, K$, and we once again can defer to part (a) when $l = 0$. The proof of the estimates for the H\"older semi-norms in (b) is identical. Thus, the lemma is proved.
    \end{proof}
    
Lemma \ref{fixedpt2} gives us the desired solution in Theorem \ref{mainthm}. Indeed, since $\mathscr{K}$ is convex, closed, and bounded in $C_\nu^{2,\gamma}(M^\epsilon)$, the set $\mathscr{K}$ is contained in $C_\nu^{2,\frac{\gamma}{2}}(M^\epsilon)$ as a compact subset by standard properties of H\"older norms. Hence, 
$$
    T : \mathscr{K}(n,\epsilon, \gamma, \nu) \subset \mathscr{K}\Big(n,\epsilon, \frac{\gamma}{2}, \nu \Big) \rightarrow C_\nu^{2,\frac{\gamma}{2}}(M^\epsilon)
$$
is continuous in the $C_\nu^{2,\frac{\gamma}{2}}$ topology by the discussion preceding Lemma \ref{fixedpt2}. By the Schauder fixed point theorem, $T$ has a fixed point $U \in \mathscr{K}$ for $\epsilon < \min\{\epsilon_0(\gamma), \epsilon_0(\frac{\gamma}{2})\}$. In particular, $U$ solves the Dirichlet problem \eqref{DPnu} with $\Psi = \Psi_{\lambda(U)}$ and is unique by Lemma \ref{existence}. This implies the perturbed submanifold $M_U^\epsilon$ determined by $U$ is minimal. Since $U \in C_\nu^{2,\gamma}(M^\epsilon)$, we have the pointwise estimate
$$
    |U(x)| \leq c |x - p_i|^{\nu_i} \text{ for } x \in C_i, \text{ } i = 1,\ldots,N.
$$
In addition, $M_U^\epsilon$ is embedded if $\epsilon$ is sufficiently small since $\norm{U}_{C_\nu^1}$ is small relative to $\epsilon$ and $M^\epsilon$ is embedded. We further note that the inclusion of $U \in \mathscr{K}$ implies that the singularities of $M^\epsilon$ are preserved. We need to show that $M_U^\epsilon$ is smooth away from the $p_i$.

\subsection{Regularity}
We will show $U \in C^\infty(M^\epsilon)$ directly. Let $(\Omega, \psi)$ be a local parameterization of an interior patch of $M^\epsilon$ so that $(\Omega, \psi + U)$ is a local parameterization of an interior patch of $M_U^\epsilon$. Let $n_1, \ldots, n_{m+1}$ be a smooth local orthonormal frame for $NM^\epsilon \lvert_{\Omega}$ and write $U = u^\alpha n_\alpha$. Using the coordinate functions for $U$ in this frame, we can define a function $u := (u^1,\ldots, u^{m+1}): \Omega \rightarrow \mathbb{R}^{m+1}$. Define $F$ as follows: 
\begin{align*}
    F(Du, u, x) &:= (\det [D(\psi + U)^TD(\psi+U)])^{\frac{1}{2}} \\
    &= (\det \tilde{g})^{\frac{1}{2}}(\det[I + ((Du + q)\sigma)^T((Du + q)\sigma)])^{\frac{1}{2}},
\end{align*}
where $\tilde{g} := \tilde{g}(u,x) \in \mathbb{R}^{n \times n}$ is an invertible positive symmetric matrix with $\tilde{g} \rightarrow g$ in $C^0$ as $\epsilon \rightarrow 0$ (since $U \in \mathscr{K}$) and $g := (\psi_{x^i} \cdot \psi_{x^j})_{ij}$ is the metric on $M^\epsilon$, $\sigma := \sigma(u,x) \in \mathbb{R}^{n\times n}$ is an invertible positive symmetric matrix, $q := q(u,x) \in \mathbb{R}^{m+1 \times n}$, and $\norm{q}_{C^0} \rightarrow 0$ as $\epsilon \rightarrow 0$ since $U \in \mathscr{K}$. It thereby makes sense to consider the smooth map $F: \mathbb{R}^{m+1 \times n} \times \mathbb{R}^{m+1} \times \mathbb{R}^n \rightarrow \mathbb{R}$ given by 
\begin{equation}
   F(p,z,x) = (\det \tilde{g})^{\frac{1}{2}}(\det[I + ((p + q)\sigma)^T((p + q)\sigma)])^{\frac{1}{2}}, \label{R1}
\end{equation}
where we now view $\tilde{g}$, $\sigma$, and $q$ as matrix valued functions of $z$ and $x$. 

Notice that, for fixed $z,x$, the right-hand side in \eqref{R1} can be viewed as a multiple of the area integrand under an affine change of variable. Define $G: \mathbb{R}^{m + 1 \times n} \rightarrow \mathbb{R}$ by
\begin{equation}
    G(p) := \mathcal{A}((p + q)\sigma), \label{R2}
\end{equation}
where $\mathcal{A}$ denotes the usual area integrand. Using the strong rank-one convexity of $\mathcal{A}$ (\cite{T}, Lemma 6.7; the lemma is stated for $n=2$ but generalizes to arbitrary $n$), one can check that $G$ in \eqref{R2} is strongly rank-one convex for small $\epsilon$. Therefore, $F$ in \eqref{R1} is strongly rank-one convex in the variable $p$; hence, satisfies the strong Legendre-Hadamard condition (\cite{Ma}, Definition 3.36). Since $u \in C^{2,\gamma}$, we can differentiate the Euler-Lagrange system corresponding to \eqref{R1} to see that $Du$ satisfies 
\begin{align}
    \partial_i \Big(F_{p_j^\alpha p_k^\beta}(Du,u,x)u_{x^j x^k}^\beta\Big) &= \partial_i\Big(F_{z^\alpha}(Du,u,x)\delta_{ik} \label{R3}\\
    &\qquad -F_{p_j^\alpha z^\beta}(Du,u,x)u_{x^j}^\beta - F_{p_j^\alpha x^j}(Du,u,x)\Big) \nonumber
\end{align}
for each $\alpha = 1,\ldots, m+1$. Since the equation \eqref{R3} has $C^{1,\gamma}$ coefficients satisfying the strong Legendre-Hadamard condition and $C^{0,\gamma}$ right-hand side, Schauder theory implies $u \in C^{3,\gamma}(\Omega)$. Bootstrapping shows $u \in C^{\infty}(\Omega)$ which gives $U \in C^{\infty}(\Omega)$. In the case $(\Omega,\psi)$ parameterizes a boundary patch, one uses that $\Psi_{\lambda(U)} \in C_c^{\infty}(M^\epsilon)$ and boundary Schauder estimates. 

\begin{rmk}\label{rregular}
    One can alternatively show $M_U^\epsilon$ is smooth away from the $p_i$ by writing $M_U^\epsilon$ locally as the graph of a $C^{2,\gamma}$ function $v: \Omega \subset \mathbb{R}^n \rightarrow \mathbb{R}^{m+1}$ solving \eqref{MSS2} and bootstrapping. However, we presented the proof above since we will need the higher regularity of $U$ in the next section to develop a suitable normal frame for $M_U^\epsilon$. 
\end{rmk}

This completes the proof of Theorem \ref{mainthm}, excluding the strict stability statement and the graphical case when $n \geq 4$.  

\section{Stability}
To conclude the generalization, we must show that the submanifold $M_U^\epsilon$ obtained from Theorem \ref{mainthm} is strictly stable for $\epsilon$ small enough and each $n \geq 3$. The argument is motivated by the stability arguments in \cite{NS2} in the codimension one case (see also \cite{CHS, NS3}). The key difference is that, in the codimension one case, normal vector fields on $M_U^\epsilon$ can be identified with smooth functions on $M^\epsilon$. In high codimension, this is not the case since the contribution from normal derivatives of elements of a smooth local orthonormal frame for $NM_U^\epsilon$ are non-negligible. Thus, we need to derive an appropriate local orthonormal frame for $NM_U^\epsilon$ around each of its points so that we may compare the values of the stability operator $L_U$ on $M_U^\epsilon$ to those of $L$ on $M^\epsilon$ when $\epsilon$ is small. Precisely, we show that given a smooth local orthonormal frame for $NM^\epsilon$ there is an associated local orthonormal frame on $NM_U^\epsilon$ that is a small perturbation of the original. This allows us to directly compare the spectrums of the stability operator on $M^\epsilon$ and $M_U^\epsilon$. Going forward, the constants $c,\eta$ will always denote positive constants independent of $\epsilon$ and $\delta$.

Let $\psi: \Omega \subset \mathbb{R}^n \rightarrow \mathbb{R}^{n + m + 1}$ be a smooth local parameterization for $M^\epsilon$ about a point $p := \psi(x_0)$ satisfying the hypotheses in Sec. 8.1, and let $n_1,\ldots, n_{m+1}$ be a smooth local orthonormal frame for $NM^\epsilon$ in a neighborhood of $p$ contained in $\psi(\Omega)$. Applying Theorem \ref{mainthm} for $\epsilon$ small enough, we obtain $U \in \mathscr{K}$ solving \eqref{DPnu} and an associated minimal submanifold $M_U^\epsilon$. The map $\phi := \psi + U$ is a local parameterization of $M_U^\epsilon$ near $q:= p + U(p)$. A smooth local frame for $TM^\epsilon$ near $p$ is given by $\{\psi_{x^i}\}_{i=1}^n$ and a smooth local frame for $TM_U^\epsilon$ near $q$ is $\{\phi_{x^i}\}_{i = 1}^n$. Let $g_{ij} := \psi_{x^i} \cdot \psi_{x^j}$ be the components of the metric on $M^\epsilon$ and set $g:= (g_{ij})$, $g^{-1} := (g^{ij})$. We first want to find a local frame for $NM_U^\epsilon$ of the form 
$$
    \nu_\alpha := n_\alpha + \epsilon_\alpha \text{ for } \alpha = 1,\ldots, m + 1,
$$
where $\epsilon_\alpha$ is a smooth section of $TM^\epsilon$ near $p$. To do so, we solve the system
$$
    \phi_{x^i} \cdot \nu_\alpha = 0 \text{ for each } i = 1,\ldots, n \text{ and } \alpha = 1,\ldots, m+1.
$$

Write $U = u^\beta n_\beta$. Expanding the above expression gives 
\begin{equation}
    \psi_{x^i} \cdot \epsilon_\alpha + u^\beta(n_\beta)_{x^i} \cdot \epsilon_\alpha = - u_{;i}^\alpha \text{ for each } i,\alpha \text{ fixed}, \label{stable1}
\end{equation}
where $u_{;i}^\alpha$ is defined by \eqref{coord1}. Since $\epsilon_\alpha \in TM^\epsilon$, we may write $\epsilon_\alpha := a^{k\alpha}\psi_{x^k}$. Plugging this into \eqref{stable1}, we get 
\begin{equation}
    g_{ik}a^{k\alpha} + u^\beta((n_\beta)_{x^i} \cdot \psi_{x^k})a^{k\alpha} = -u_{;i}^\alpha. \label{stable2}
\end{equation}
Define the column vectors 
\begin{equation*}
    a_\alpha := \begin{pmatrix}
                a^{1\alpha} \\
                \vdots \\
                a^{n\alpha}
            \end{pmatrix}
    \text{ and }
    b_\alpha := \begin{pmatrix}
                -u_{;1}^\alpha \\
                \vdots \\
                -u_{;n}^\alpha
            \end{pmatrix}.
\end{equation*}
For $\alpha$ fixed, we can then write \eqref{stable2} as 
\begin{equation}
    Ba_\alpha = b_\alpha, \label{stable3}
\end{equation}
 where $B:=B(\epsilon) = g + S$ and $S := S(\epsilon) = (s_{kl}(\epsilon))$ is given by  
\begin{equation}
    s_{kl} = u^\alpha((n_\alpha)_{x^k} \cdot \psi_{x^l}). \label{s}
\end{equation}
Using \eqref{psiest}-\eqref{nest} and Remark \ref{nbdds}, we get the pointwise bounds for each $n \geq 3$:
\begin{equation}
    |s_{kl}| \leq ch^{-1}|U| \text{ on } M^\epsilon, \label{s2}
\end{equation}
where $h$ is the function \eqref{h}. The inclusion of $h$ in the second estimate comes from the $(n_\alpha)_{x^i}$ term in \eqref{s}.

Notice that $S \rightarrow 0$ as $\epsilon \rightarrow 0$ in $C^0$ (i.e. supremum of the Frobenius norm) since $U \in \mathscr{K}$. By continuity and the invertibility of $g$, it follows that $B$ is invertible for small $\epsilon$ with 
\begin{equation}
    B^{-1} = (I + g^{-1}S)^{-1}g^{-1} = \sum_{k = 0}^\infty(-1)^k(g^{-1} S)^kg^{-1}. \label{Binverse}
\end{equation}
Inverting the expression \eqref{stable3} for each $\alpha$, we find $a_\alpha = B^{-1} b_\alpha$. We will prove the following pointwise bounds on $M^\epsilon$ for $n \geq 3$ and small $\epsilon$:
\begin{equation}
    \begin{cases}
        |\epsilon_\alpha| \leq c|\nabla U| \\
        |(\epsilon_\alpha)_{x^i} \cdot n_\beta| \leq ch^{-1}|\nabla U| \\
        |(\epsilon_\alpha)_{x^i}| \leq c(|\nabla^2 U| + h^{-1}|\nabla U| + h^{-2} |U|)
    \end{cases} \label{stable4}
\end{equation}
for each $\alpha = 1,\ldots, m+1$ and each $i = 1,\ldots, n$. As before, the terms involving $h$ in \eqref{stable4} come from differentiating the metric or the $n_\alpha$ on the cones $C_i$.

To prove \eqref{stable4}, it suffices to bound the $a_\alpha$ and their derivatives. We have
$$
    |a_\alpha| \leq |B^{-1}||b_\alpha| \leq c|\nabla U||B^{-1}|.
$$
Using \eqref{s2} and that $U \in \mathscr{K}$, we see that the higher order terms in \eqref{Binverse} are dominated by the lower order terms for $\epsilon$ small. Hence, 
$$
    |B^{-1}| \leq c(1 + h^{-1}|U|).
$$
Combining the previous two estimates and using that $h^{-1}|U| < 1$ for $U \in \mathscr{K}$ and $\epsilon$ small proves the first estimate in \eqref{stable4}. Next, note that 
$$
    (a_\alpha)_{x^i} = \pdv{B^{-1}}{x^i}b_\alpha + B^{-1}(b_\alpha)_{x^i}. 
$$
Using the estimate for $B^{-1}$, \eqref{coord1}, and $h^{-1}|U| < 1$ we get
$$
    |B^{-1}(b_\alpha)_{x^i}| \leq |B^{-1}||(b_\alpha)_{x^i}| \leq c(|\nabla^2 U| + h^{-1}|\nabla U| + h^{-2}|U|).
$$
Recalling that
\begin{equation}
    \pdv{C^{-1}}{x^i} = - C^{-1} \pdv{C}{x^i} C^{-1} \label{dinv}
\end{equation}
for any invertible matrix $C$ and using that $B = g + S$, we obtain 
$$
    \Big|\pdv{B^{-1}}{x^i}b_\alpha\Big| \leq c|\nabla U|\Big|\pdv{B^{-1}}{x^i}\Big| \leq c|\nabla U|\Big|\pdv{B}{x^i}\Big| \leq c|\nabla U|(1 + |\nabla U| + h^{-1}|U|).
$$
Since $|\nabla U| < 1$, this combined with the previous estimate proves the third estimate in \eqref{stable4}, and the second estimate follows from the first and the expression for $(a_\alpha)_{x^i}$. In addition, $U \in \mathscr{K}$ and \eqref{stable4} imply that for small $\epsilon$ the normal sections $\nu_\alpha$ are linearly independent near $q$. Hence, they form a smooth local normal frame for $M_U^\epsilon$ near $q$.

We now perform the Gram-Schmidt procedure on the $\nu_\alpha$ to obtain a smooth local orthonormal frame $\overline{n}_1, \ldots, \overline{n}_{m+1}$ for $NM_U^\epsilon$. In fact, we show that we can write
\begin{equation}
    \overline{n}_\alpha = n_\alpha + \xi_\alpha \text{ for each } \alpha = 1,2,\ldots, m+1 \label{sigma}
\end{equation}
for some smooth vector field $\xi_\alpha \in T\mathbb{R}^{n+m+1}\lvert_{M^\epsilon}$ satisfying the same bounds as the $\epsilon_\alpha$ in \eqref{stable4}. Precisely, on $M^\epsilon$
\begin{equation}
     \begin{cases}
        |\xi_\alpha| \leq c|\nabla U| \\
        |(\xi_\alpha)_{x^i} \cdot n_\beta| \leq ch^{-1}|\nabla U| \\
        |(\xi_\alpha)_{x^i}| \leq c(|\nabla^2 U| + h^{-1}|\nabla U| + h^{-2} |U|)
    \end{cases} \label{xi1}
\end{equation}
for each $\alpha = 1,\ldots, m+1$. 

Indeed, when $\alpha = 1$
$$
     \overline{n}_1 = \frac{n_1 + \epsilon_1}{|n_1 + \epsilon_1|}.
$$
Noting that $|n_1 + \epsilon_1|^{-1} = (1 + |\epsilon_1|^2)^{-\frac{1}{2}}$ and using the power series expansion 
\begin{equation}
    (1 + x^2)^{-\frac{1}{2}} = 1 - \frac{x^2}{2} + \frac{3x^4}{8} - \frac{5x^6}{16} +  \frac{35 x^8}{128} + O(x^{10}) \label{power}
\end{equation}
about zero shows that 
$$
    \overline{n}_1 = n_1 + \epsilon_1 + (n_1 + \epsilon_1)f_1.
$$
where $f_1$ is given by 
$$
    f_1 := \frac{1}{(1 + |\epsilon_1|^2)^{\frac{1}{2}}} - 1 = -\frac{|\epsilon_1|^2}{2} + O(|\epsilon_1|^4).
$$
Set
$$
    \xi_1 := \epsilon_1 + (n_1 + \epsilon_1)f_1.
$$
By direct computation, the estimates \eqref{xi1} then follow from \eqref{stable4} and the fact that $U \in \mathscr{K}$. Iterating this procedure for $\alpha \geq 2$ starting with $\alpha = 2$, the Gram-Schmidt process gives
\begin{equation*}
     \overline{n}_\alpha = \frac{n_\alpha + \epsilon_\alpha - \sum_{\beta = 1}^{\alpha-1}(\epsilon_\alpha \cdot \xi_\beta)(n_\beta + \xi_\beta)}{(1 + |\epsilon_\alpha - \sum_{\beta = 1}^{\alpha-1}(\epsilon_\alpha \cdot \xi_\beta)(n_\beta + \xi_\beta)|^2)^{\frac{1}{2}}}
\end{equation*}
where
$$
    \xi_\beta := \epsilon_\beta - \sum_{\sigma = 1}^{\beta-1}(\epsilon_\beta \cdot \xi_\sigma)(n_\sigma + \xi_\sigma) +(n_\beta + \epsilon_\beta - \sum_{\sigma = 1}^{\beta-1}(\epsilon_\beta \cdot \xi_\sigma)(n_\sigma + \xi_\sigma))f_\beta
$$
and
\begin{align*}
    f_\beta &:= \frac{1}{(1 + |\epsilon_\beta - \sum_{\sigma = 1}^{\beta-1}(\epsilon_\beta \cdot \xi_\sigma)(n_\sigma + \xi_\sigma)|^2)^{\frac{1}{2}}} -1 \\
    &= -\frac{|\epsilon_\beta - \sum_{\sigma = 1}^{\beta-1}(\epsilon_\beta \cdot \xi_\sigma)(n_\sigma + \xi_\sigma)|^2}{2} + O\Big(|\epsilon_\beta - \sum_{\sigma = 1}^{\beta-1}(\epsilon_\beta \cdot \xi_\sigma)(n_\sigma + \xi_\sigma)|^4\Big).
\end{align*}
by \eqref{power}. Using the power series representation \eqref{power} once more then shows $\overline{n}_\alpha = n_\alpha + \xi_\alpha$ for $\xi_\alpha$ defined as above with $\alpha$ replacing $\beta$. Direct computation shows $\xi_\alpha$ satisfies the bounds \eqref{xi1} for each $\alpha = 1,\ldots, m+1$ since the norm of the derivative of $\epsilon_\alpha$ dominates the norm of the derivative of the higher order terms involving the products of $\epsilon_\alpha$ and the $\xi_\beta$, as well as the derivative of $f_\alpha$. We leave the details to the reader (e.g. start with $\alpha = 2$).

Having developed suitable local orthonormal frames for $NM_U^\epsilon$, we can now compare the metrics and second fundamental forms on $M^\epsilon$ and $M_U^\epsilon$. Denote the metric on $M_U^\epsilon$ by $g(U) := (g_{ij}(U))$ and denote the second fundamental forms for $M^\epsilon$ and $M_U^\epsilon$ by $A$ and $A(U)$, respectively. Write 
$$
    A_{ij}^\alpha := \psi_{x^i x^j} \cdot n_\alpha \text{ and } A_{ij}^\alpha(U) := (\psi_{x^i x^j} + U_{x^i x^j}) \cdot \overline{n}_\alpha.
$$
We have 
\begin{align*}
    g_{ij}(U) &= g_{ij} + U_{x^i} \cdot U_{x^j} - 2 \psi_{x^i x^j} \cdot U \\
    &=: g_{ij} + R_{ij}(U) \\
    A_{ij}^\alpha(U) &= A_{ij}^\alpha + \psi_{x^i x^j} \cdot \xi_\alpha + U_{x^i x^j} \cdot n_\alpha + U_{x^i x^j} \cdot \xi_\alpha \\
    &=: A_{ij}^\alpha + R_{ij}^\alpha(U).
\end{align*}
Using \eqref{coord1}, \eqref{coord2}, and \eqref{xi1} we obtain pointwise bounds for the remainders in the expressions above on $M^\epsilon$:
\begin{equation}
    \begin{cases}
        |R_{ij}(U)| \leq ch^{-1}(|\nabla U|^2 + |U|) \\
        |R_{ij}^\alpha(U)| \leq c(|\nabla^2 U| + h^{-1}|\nabla U| + h^{-2}|U|),
    \end{cases} \label{remainder}
\end{equation}
where the lower order terms come from $\xi_\alpha$ and the tangential component of $U_{x^i x^j}$. By \eqref{remainder} and the fact that $U \in \mathscr{K}$, we may compute $g^{-1}$ precisely as in \eqref{Binverse}. This along with the previous estimates shows that on $M^\epsilon$ we have
\begin{equation}
    \begin{cases}
         |g_{ij}(U) - g_{ij}| \leq ch^{-1}(|\nabla U|^2 + |U|) \\
         |g^{ij}(U) - g^{ij}| \leq ch^{-1}(|\nabla U|^2 + |U|).
    \end{cases} \label{stable5}
\end{equation}

Let $\mathcal{H}^n$ be the $n$-dimensional Hausdorff measure on $M^\epsilon$ and write $\mathcal{H}_U^n$ for the $n$-dimensional Hausdorff measure on $M_U^\epsilon$. Since 
$$
    \det g(U) = \det g \det(I + g^{-1}R(U))
$$
for $R(U) := (R_{ij}(U))$, direct computation shows 
\begin{equation}
    d\mathcal{H}_U^n = \sqrt{\det g(U)} \, dx = (1 + \mu) \, d\mathcal{H}^n \label{volume}
\end{equation}
where $\mu$ is some function on $M^\epsilon$ that is a polynomial in the components of $g$, $g^{-1}$, and $R(U)$. Furthermore,
\begin{align*}
    \pdv{R_{ij}(U)}{x^k} &=  U_{x^i x^k} \cdot U_{x^j}  \\
    &\qquad+ U_{x^i} \cdot U_{x^j x^k} -2 (\psi_{x^i x^j x^k} \cdot U + \psi_{x^i x^j} \cdot U_{x^k}). \nonumber
\end{align*}
Using \eqref{coord1}, \eqref{coord2}, and that $U \in \mathscr{K}$ shows 
\begin{equation}
    \Big|\pdv{R_{ij}(U)}{x^k}\Big| \leq c\epsilon^\eta \text{ for some } \eta > 0. \label{rderiv}
\end{equation}
Combining \eqref{rderiv} with \eqref{remainder} gives
\begin{equation}
   |\mu(x)| + |\nabla^\prime \mu(x)| \leq c \epsilon^{\eta_0} \text{ for } x \in M^\epsilon \label{mu} 
\end{equation}
for some constants $c,\eta_0$ independent of $\epsilon$. With the above estimates in hand, we can now prove the strict stability of $M_U^\epsilon$ for $\epsilon$ small.

\begin{proof}[Proof of Strict Stability]
    Set $M_U^\epsilon(\delta) := \{x + U(x) : x \in M_\delta^\epsilon, \text{ } \delta > 0\}$ and let $\lambda_\delta$ be the first eigenvalue of $h^2L_U$ as an operator on $L_{1}^2(M_U^\epsilon(\delta))$. Let $V \in C^\infty(M_U^\epsilon(\delta))$ be a unit eigensection for $h^2L_U$ on $M_U^\epsilon(\delta)$ corresponding to $\lambda_\delta$ so that $V$ satisfies 
    \begin{align*}
        L_U V &= - \lambda_\delta h^{-2} V \text{ on } M_U^\epsilon(\delta) \\
        V &= 0 \text{ on } \partial M_U^\epsilon(\delta), 
    \end{align*}
    and 
    $$
        \int_{M_U^\epsilon(\delta)} h^{-2}|V|^2 \, d\mathcal{H}_U^n = 1.
    $$
    To prove $M_U^\epsilon$ is strictly stable, we need to show $\inf_{\delta>  0} \lambda_\delta > 0$ (e.g. see \cite{CHS, NS2, NS3}). Since $\lambda_\delta$ is decreasing in $\delta$, we may assume $\delta \in (0,\frac{1}{10})$.

    Before we continue, we need to do some preliminary calculations. We can write $V := v^\alpha \overline{n}_\alpha$ in a local orthonormal frame $\overline{n}_1, \ldots, \overline{n}_{m+1}$ for $NM_U^\epsilon(\delta)$ constructed as above. Note that functions on $M_U^\epsilon$ can naturally be viewed as functions on $M^\epsilon$. Let $\tilde{V} \in C_0^{\infty}(M_\delta^\epsilon)$ be the orthogonal projection of $V$ onto $NM^\epsilon$ at each point. Then $\tilde{V} = \tilde{v}^\alpha n_\alpha$ in the constructed frame, where $\tilde{v}^\alpha := v^\alpha + v^\beta(\xi_\beta \cdot n_\alpha)$.
    In addition,
    \begin{equation}
    \begin{cases}
        v_{;i}^\alpha = v_{x^i}^\alpha + B_{\beta i}^\alpha v^\beta + C_{\beta i}^\alpha v^\beta \\
         \tilde{v}_{;i}^\alpha = v_{x^i}^{\alpha} + B_{\beta i}^\alpha v^\beta + v_{x^i}^\beta (\xi_\beta \cdot n_\alpha)  + D_{\beta i}^\alpha v^\beta  
    \end{cases} \label{stable10}
    \end{equation}
    where
    \begin{equation}
        \begin{cases}
            C_{\beta i}^\alpha = (n_\beta)_{x^i} \cdot \xi_\alpha + (\xi_\beta)_{x^i} \cdot n_\alpha + (\xi_\beta)_{x^i} \cdot \xi_\alpha\\
            D_{\beta i}^\alpha =(\xi_\beta)_{x^i} \cdot n_\alpha + \xi_\beta \cdot (n_\alpha)_{x^i}  + \sum_\sigma B_{\sigma i}^\alpha (\xi_\sigma \cdot n_\beta). \label{CD}
        \end{cases}
    \end{equation}
    Using \eqref{xi1} and that $U \in \mathscr{K}$ shows 
    \begin{equation}
        \norm{C_{\beta i}^\alpha}_{C^0} \leq c_1\epsilon^{\eta_1} \text{ and } \norm{D_{\beta i}^\alpha}_{C^0} \leq c_2\epsilon^{\eta_2} \label{CD2}
    \end{equation}
    for each $i,\alpha, \beta$. We thus have the pointwise estimates: 
    \begin{equation}
        \begin{cases}
            |\tilde{V}|^2 \geq |V|^2(1 - c_3\epsilon^{\eta_3}) \\
            |\nabla \tilde{V}|^2 \leq c_4(|\nabla_U V|^2 + |V|^2),
        \end{cases} \label{stable6}
    \end{equation}
    where the first inequality follows from \eqref{xi1} and the second from \eqref{coord1}, \eqref{stable10}, \eqref{CD}, and \eqref{CD2}. Here, $\nabla_U$ is the connection on $NM_U^\epsilon$. We further note that if we represent Simons' operator on $M^\epsilon$ and $M_U^\epsilon$ by $\tilde{A}$ and $\tilde{A}_U$, respectively, then 
    \begin{equation}
        \begin{cases}
            \tilde{A}(\tilde{V}) = (g^{kj}g^{il} A_{kl}^\beta A_{ij}^\alpha (v^\beta + v^\sigma (\xi_\sigma \cdot n_\beta)))n_\alpha \\
            \tilde{A}_U(V) = (g^{kj}(U)g^{il}(U)A_{kl}^\beta(U)A_{ij}^\alpha(U)v^\beta)\overline{n}_\alpha
        \end{cases} \label{simstab}
    \end{equation}
    in the chosen coordinates by \eqref{csimons}, where we have summed over repeated indices.
    
     Let $\Delta_U$ be the normal Laplacian on $NM_U^\epsilon$. Then integration by parts combined with \eqref{volume} and the fact that $V$ is an $L_{1}^2(M_U^\epsilon(\delta))$ unit vector solving the eigenvalue equation shows
    \begin{align*}
        - \int_{M_\delta^\epsilon} \langle \tilde{V}, L\tilde{V} \rangle \, d\mathcal{H}^n &\leq \lambda_\delta + \Big|\int_{M^\epsilon} \big(|\nabla \tilde{V}|^2 - |\nabla_U V|^2 \big) \, d\mathcal{H}^n \Big| \\
        &\qquad+ \Big|\int_{M^\epsilon} \big(\langle V, \tilde{A}_U(V) \rangle- \langle \tilde{V}, \tilde{A}(\tilde{V}) \rangle \big) \, d\mathcal{H}^n \Big| \\
        &\qquad+ \Big| \int_{M^\epsilon}|\nabla \tilde{V}|^2 \mu \, d\mathcal{H}^n \Big| + \Big| \int_{M^\epsilon} |\nabla_U V|^2 \mu \, d\mathcal{H}^n\Big|,
    \end{align*}
    where on the right-hand side we have extended $V$, $\tilde{V}$, and their covariant derivatives to be zero outside $M_U^\epsilon(\delta)$ and $M_\delta^\epsilon$, respectively. Furthermore, standard Hilbert space theory and Lemma \ref{deltastable} show
    $$
        - \int_{M_\delta^\epsilon} \langle \tilde{V}, L\tilde{V} \rangle \, d\mathcal{H}^n \geq d_0\int_{M_\delta^\epsilon} |\tilde{V}|^2 h^{-2} \, d\mathcal{H}^n.
    $$
    Applying \eqref{volume}, \eqref{mu}, and the first inequality in \eqref{stable6}, for small enough $\epsilon$ we then get
    \begin{align}
        0 < \frac{d_0}{2} &\leq \lambda_\delta + \Big|\int_{M^\epsilon} \big(|\nabla \tilde{V}|^2 - |\nabla_U V|^2\big) \, d\mathcal{H}^n\Big| \label{stable8}\\
        &\qquad+ \Big| \int_{M^\epsilon} \big(\langle V, \tilde{A}_U(V) \rangle- \langle \tilde{V}, \tilde{A}(\tilde{V}) \rangle \big) \, d\mathcal{H}^n \Big| \nonumber \\
        &\qquad+ \Big|\int_{M^\epsilon}|\nabla \tilde{V}|^2 \mu \, d\mathcal{H}^n \Big| + \Big| \int_{M^\epsilon} |\nabla_U V|^2 \mu \, d\mathcal{H}^n \Big|. \nonumber
    \end{align}
    If we can show each of the integral terms on the right-hand side is bounded by $c\epsilon^\eta$, we are done. We begin with the second integral term (i.e. the one involving Simons' operator). 

    Using \eqref{simstab}, we deduce 
    \begin{align}
        |\langle \tilde{A}_U(V), V \rangle &- \langle \tilde{A}(\tilde{V}), \tilde{V} \rangle| \leq \nonumber \\
        &|(g^{kj}(U)g^{il}(U)A_{kl}^\beta (U)A_{ij}^\alpha(U) - g^{kj}g^{il}A_{kl}^\beta A_{ij}^\alpha)v^\alpha v^\beta| \nonumber \\
        + &|g^{kj}g^{il} A_{kl}^\beta A_{ij}^\alpha v^\alpha v^\sigma (\xi_\sigma \cdot n_\beta)| +|g^{kj}g^{il} A_{kl}^\beta A_{ij}^\alpha v^\gamma v^\beta(\xi_\gamma \cdot n_\alpha)| \label{Aest1} \\
        + &|g^{kj}g^{il} A_{kl}^\beta A_{ij}^\alpha v^\sigma v^\gamma (\xi_\sigma \cdot n_\beta)(\xi_\gamma \cdot n_\alpha)|. \nonumber
    \end{align}
    Observe that 
    \begin{align*}
        |g^{kj}(U)g^{il}(U)&A_{kl}^\beta (U)A_{ij}^\alpha(U) - g^{kj}g^{il}A_{kl}^\beta A_{ij}^\alpha| \leq \nonumber \\
        &|(g^{kj}(U)g^{il}(U) - g^{kj}g^{il})A_{kl}^\beta A_{ij}^\alpha| 
        +|g^{kj}(U)g^{il}(U)A_{kl}^\beta R_{ij}^\alpha(U)|  \\
        + &|g^{kj}(U)g^{il}(U)A_{ij}^\alpha R_{kl}^\beta(U)| 
        + |g^{kj}(U)g^{il}(U)R_{kl}^\beta(U)R_{ij}^\alpha(U)|. \nonumber
    \end{align*}
    Since $|A_{ij}^\alpha| \leq ch^{-1}$ for some constant $c$ independent of $\epsilon$ when $\epsilon$ is small, \eqref{stable5} shows that
    \begin{equation*}
        |(g^{kj}(U)g^{il}(U) - g^{kj}g^{il})A_{kl}^\beta A_{ij}^\alpha| \leq c_5h^{-3}(|\nabla U|^2 +  |U|) \text{ on } M^\epsilon. \label{z1}
    \end{equation*}
    Since $g^{ij}(U)$ is also uniformly bounded independent of $\epsilon$ on $M^\epsilon$ when $\epsilon$ is small, we have 
    \begin{align*}
        |g^{kj}(U)g^{il}(U)A_{kl}^\beta R_{ij}^\alpha(U)| &\leq c_6h^{-1}(|\nabla^2 U| + h^{-1}|\nabla U| + h^{-2}|U|) \\
        |g^{kj}(U)g^{il}(U)R_{kl}^\beta(U)R_{ij}^\alpha(U)| &\leq c_7(|\nabla^2 U|^2 + h^{-2}|\nabla U|^2 + h^{-4}|U|^2).
    \end{align*}
    Hence, on $M^\epsilon$
    \begin{multline*}
        |(g^{kj}(U)g^{il}(U)A_{kl}^\beta (U)A_{ij}^\alpha(U) - g^{kj}g^{il}A_{kl}^\beta A_{ij}^\alpha)v^\alpha v^\beta| \leq \\ c_8(|\nabla^2 U|^2 + |\nabla^2 U| + h^{-1}|\nabla U| + h^{-2}|U|)|V|^2.
    \end{multline*}
    The remaining terms in \eqref{Aest1} are smaller since
    \begin{align*}
        |g^{kj}g^{il} A_{kl}^\beta A_{ij}^\alpha v^\gamma v^\beta(\xi_\gamma \cdot n_\alpha)| &\leq c_9h^{-2}|\nabla U||V|^2  \\
        |g^{kj}g^{il} A_{kl}^\beta A_{ij}^\alpha v^\sigma v^\gamma (\xi_\sigma \cdot n_\beta)(\xi_\gamma \cdot n_\alpha)| &\leq c_{10}h^{-2}|\nabla U|^2|V|^2
    \end{align*}
    due to \eqref{xi1}. Thus, on $M^\epsilon$
    \begin{equation}
        |\langle \tilde{A}_U(V), V \rangle - \langle \tilde{A}(\tilde{V}), \tilde{V} \rangle| \leq c_{11}(|\nabla^2 U|^2 + |\nabla^2 U| + h^{-1}|\nabla U| +h^{-2}|U|)|V|^2. \label{stable7}
    \end{equation}

    Next, we show that $\norm{V}_{C^0(M_U^\epsilon(\frac{1}{2}))}$ is bounded independent of $\epsilon$. Note that $M_U^\epsilon(\frac{1}{5})$ has a uniform Sobolev inequality independent of $\epsilon$ since it has zero mean curvature and has volume bounded independent of $\epsilon$ (see \cite{MS}). In addition, it is not hard to show that an analogous version of Lemma \ref{weaksoln} holds for equations of the form $L_U W + \lambda_\delta h^{-2} W= F$. Hence, we may use Theorem 8.16 in \cite{GT} as well as Corollary \ref{sest1}. Let $\Delta_U^\prime$ be the Laplace-Beltrami operator on $M_U^\epsilon$. Applying Theorem 8.16 in \cite{GT} to $\Delta_U^\prime |V|$ with $F = 0$, we find 
    \begin{align}
        \norm{V}_{C^0(M_U^\epsilon(\frac{1}{5}))} &\leq c(\tau)\Bigg(\Big( \int_{M_U^\epsilon(\frac{1}{5})} |A(U)|^{n + 2\tau}|V|^{\frac{n}{2} + \tau} \, d\mathcal{H}_U^n \Big)^{\frac{2}{n + 2\tau}} \nonumber \\
        &\qquad+ |\lambda_\delta| \Big(\int_{M_U^\epsilon(\frac{1}{5})} h^{-n - 2\tau} |V|^{\frac{n}{2} + \tau} \, d\mathcal{H}_U^n \Big)^{\frac{2}{n + 2\tau}} + \norm{V}_{C^0(\partial M_U^\epsilon(\frac{1}{5}))} \Bigg). \label{s0} 
    \end{align}
    When $n = 3$, we take $\tau = 2^{-1}$. We estimate the terms on the right-hand side above separately.  

    For any $W \in C_0^\infty(M_U^\epsilon(\delta))$ which is a unit vector in $L_{1}^2(M_U^\epsilon(\delta))$, we have the following inequality
    \begin{equation}
        |\lambda_\delta| \leq \Big|\int_{M_U^\epsilon(\delta)} \langle W, L_UW\rangle \, d\mathcal{H}_U^n\Big|.\label{eigen}
    \end{equation}
    Choosing $W$ vanishing in the interior of $\cup_{i = 1}^N (C_{i,\frac{1}{2}}(U))$, where 
    $$
        C_{i,\frac{1}{2}}(U) := \{ x + U(x) : x \in C_{i,\frac{1}{2}} \}
    $$
    and $C_{i,\frac{1}{2}}$ are the truncated cones \eqref{tcone}, we may integrate by parts in \eqref{eigen} and use \eqref{stable5}, \eqref{stable10}, \eqref{CD}, \eqref{simstab}, and $U \in \mathscr{K}$ to see that $|\lambda_\delta| \leq c^\prime$ for some $c^\prime$ independent of $\delta$ and $\epsilon$ whenever $\epsilon$ is small. Thus,
    \begin{align*}
        |\lambda_\delta| \Big( \int_{M_U^\epsilon(\frac{1}{5})} h^{-n - 2\tau} |V|^{\frac{n}{2} + \tau} \, d\mathcal{H}_U^n \Big)^{\frac{2}{n + 2\tau}} &\leq c_{12}\norm{V}_{C^0(M_U^\epsilon(\frac{1}{5}))}^{\frac{n - 4 + 2\tau}{n + 2\tau}}\norm{V}_{L^2(M_U^\epsilon(\frac{1}{5}))}^{\frac{4}{n+2\tau}}.
    \end{align*}
    Applying Young's inequality $ab \leq \sigma a^p + c_\sigma b^q$ with $\sigma = (4 c(\tau) c_{12})^{-1}$ and
    $$
        p = \frac{n + 2\tau}{n - 4 + 2\tau}, \text{ } q = \frac{p}{p-1}
    $$
    gives
    \begin{align}
        |\lambda_\delta| \Big( \int_{M_U^\epsilon(\frac{1}{5})} h^{-n - 2\tau} &|V|^{\frac{n}{2} + \tau} \, d\mathcal{H}_U^n \Big)^{\frac{2}{n + 2\tau}} \leq \label{s1} \\ 
        &\frac{1}{4c(\tau)}\norm{V}_{C^0(M_U^\epsilon(\frac{1}{5}))} +  c_\sigma \norm{V}_{L^2(M_U^\epsilon(\frac{1}{5}))}. \nonumber
    \end{align}
    Using Young's inequality as above with $\sigma = (4c(\tau))^{-1}$, the Schwarz inequality, and the fact that $\norm{V}_{L_{1}^2(M_U^\epsilon(\delta))} = 1$, we find 
    \begin{align*}
        \Big( &\int_{M_U^\epsilon(\frac{1}{5})} |A(U)|^{n + 2\tau}|V|^{\frac{n}{2} + \tau} \, d\mathcal{H}_U^n \Big)^{\frac{2}{n + 2\tau}} \leq \nonumber\\
        &\norm{V}_{C^0(M_U^\epsilon(\frac{1}{5}))}^{\frac{n - 2 + 2\tau}{n + 2\tau}} \Big( \int_{M_U^\epsilon(\frac{1}{5})} h^2|A(U)|^{2n + 4\tau} \, d\mathcal{H}_U^n \Big)^{\frac{1}{n + 2\tau}}  \Big( \int_{M_U^\epsilon(\frac{1}{5})} h^{-2}|V|^{2} \, d\mathcal{H}_U^n \Big)^{\frac{1}{n + 2\tau}}  \\
        &\leq \frac{1}{4c(\tau)} \norm{V}_{C^0(M_U^\epsilon(\frac{1}{5}))} + c_\sigma \Big( \int_{M_U^\epsilon(\frac{1}{5})} |A(U)|^{2n + 4\tau}\, d\mathcal{H}_U^n \Big)^{\frac{1}{2}}. \nonumber
    \end{align*}
    By \eqref{remainder}, \eqref{stable5}, the boundedness of $|A|$ on $M_{\frac{1}{5}}^\epsilon$, the Minkowski inequality, \eqref{volume}, \eqref{mu}, and the fact that $U \in \mathscr{K}$ we have for small $\epsilon$
    \begin{align}
    \Big( \int_{M_U^\epsilon(\frac{1}{5})} &|A(U)|^{2n + 4\tau}\, d\mathcal{H}_U^n \Big)^{\frac{1}{2}} \leq \nonumber \\ 
    &\qquad c_{13}\Big(\int_{M_{\frac{1}{5}}^\epsilon} \big(1 + |\nabla^2 U| + |\nabla U| + |U|\big)^{2n + 4\tau} (1+\mu) \, d\mathcal{H}^n \Big)^{\frac{1}{2}} \nonumber \\
    &\leq c_{14}\Bigg(1 +  \Big(\int_{M_{\frac{1}{5}}^\epsilon} |\nabla^2 U|^{2n + 4\tau} \, d\mathcal{H}^n \Big)^{\frac{1}{2n + 4\tau}} + \Big(\int_{M_{\frac{1}{5}}^\epsilon} |U|^{2n + 4\tau} \, d\mathcal{H}^n \Big)^{\frac{1}{2n + 4\tau}} \nonumber \\
    &\qquad+ \Big(\int_{M_{\frac{1}{5}}^\epsilon} |\nabla U|^{2n + 4\tau} \, d\mathcal{H}^n \Big)^{\frac{1}{2n + 4\tau}}\Bigg)^{n + 2\tau} \nonumber \\
    &\leq c_{15}\Bigg(1 + \epsilon^{\eta_4} + \norm{\nabla^2 U}_{C^0(M_{\frac{1}{5}}^\epsilon)}^{1-\frac{1}{n+2\tau}}\Big(\int_{M_{\frac{1}{5}}^\epsilon} |\nabla^2 U|^{2} \, d\mathcal{H}^n \Big)^{\frac{1}{2n + 4\tau}}\Bigg)^{n + 2\tau} \nonumber \\
    &\leq c_{16}\big(1 + \epsilon^{ \eta_5}\big) \label{s3}
    \end{align}
    for some $\eta_5 > 0$ depending on $n$, where $\tau$ has been chosen small depending on $n$ also. Hence, the right-hand side is bounded by $2c_{17}$ for $\epsilon$ small. By covering $\partial M_U^\epsilon(\frac{1}{5})$ with geodesic balls having radius at most $10^{-1}$ and applying Corollary \ref{sest1} to $V$ while using that $V$ solves the eigenvalue equation, one finds 
   \begin{equation}
       \norm{V}_{C^0(\partial M_U^\epsilon(\frac{1}{5}))} \leq c_{18}. \label{bddry}
   \end{equation}
   Combining \eqref{s0}-\eqref{bddry} gives 
    \begin{equation}
        \norm{V}_{C^0(M_U^\epsilon(\frac{1}{5}))} \leq c_{19}. \label{deltasch}
    \end{equation}
    We may now combine \eqref{stable7}  and \eqref{deltasch} and use that $U \in \mathscr{K}$ to get
    \begin{multline*}
        \Big| \int_{M^\epsilon} \langle \tilde{A}_U(V), V \rangle - \langle \tilde{A}(\tilde{V}), \tilde{V} \rangle \, d\mathcal{H}^n \Big| \leq \\ 
        c_{20}\int
        _{M_{\frac{1}{2}}^\epsilon} (|\nabla^2 U|^2 + |\nabla^2 U| + |\nabla U| + |U|) \, d\mathcal{H}^n + c_{21}\epsilon^{\eta_6} \int_{M^\epsilon \setminus M_{\frac{1}{2}}^\epsilon} |V|^2 h^{-2}\, d\mathcal{H}^n \leq c_{22} \epsilon^{\eta_7}.
    \end{multline*}
   We have thereby obtained the desired estimate.

    The remaining terms on the right-hand side of \eqref{stable8} are easier. Integrating by parts in the expression 
    \begin{equation*}
        \int_{M_U^\epsilon(\delta)} |\nabla_U V|^2 \, d\mathcal{H}_U^n
    \end{equation*}
    and using that $V$ solves the eigenvalue equation, \eqref{deltasch}, $|\lambda_\delta| \leq c^\prime$, and $U \in \mathscr{K}$, we deduce
    \begin{equation}
        \int_{M_U^\epsilon} |\nabla_U V|^2 \, d\mathcal{H}_U^n \leq c_{23}. \label{stable11}
    \end{equation}
    As a consequence of \eqref{stable6} and \eqref{stable11}, we have 
    \begin{equation}
        \int_{M^\epsilon} |\nabla \tilde{V}|^2 \, d\mathcal{H}^n \leq c_{24}.\label{stable12}
    \end{equation}
    Next, note that by \eqref{stable10}, \eqref{CD}, and \eqref{CD2} we get
    \begin{multline*}
        \big||\nabla \tilde{V}|^2 - |\nabla_U V|^2 \big| \leq \\
        |g^{ij} - g^{ij}(U)||(v_{x^i} + B_{\beta i}^\alpha v^\beta)(v_{x^j} + B_{\beta j}^\alpha v^\beta)| + c_{25}\epsilon^{\eta_8}(|\nabla_U V|^2 + |V|^2).
    \end{multline*}
    Thus, a similar calculation as was done for the terms involving Simons' operator using \eqref{stable5}, \eqref{stable10}, \eqref{CD2}, \eqref{stable6}, and \eqref{stable11} shows
    $$
        \Big|\int_{M^\epsilon} \big(|\nabla \tilde{V}|^2 - |\nabla_U V|^2\big) \, d\mathcal{H}^n\Big| \leq c_{26} \epsilon^{\eta_9}.
    $$
    On the other hand, \eqref{mu}, \eqref{stable11}, and \eqref{stable12} give
    $$
        \Big|\int_{M^\epsilon}|\nabla \tilde{V}|^2 \mu \, d\mathcal{H}^n \Big| + \Big| \int_{M^\epsilon} |\nabla_U V|^2 \mu \, d\mathcal{H}^n \Big| \leq c_{27} \epsilon^{\eta_{10}}.
    $$
   Hence, $M_U^\epsilon$ is strictly stable for $\epsilon$ small enough.
\end{proof}
This completes the generalization of \cite{NS2} to dimensions $n \geq 3$ and arbitrary codimension.

\section{The Graphical Case}
Let $C_1, \ldots, C_N$ be $n$-dimensional graphical strictly stable minimal cones in $\mathbb{R}^{n+m+1}$ with vertices $p_1,\ldots, p_N$ and smooth links $\Sigma_1, \ldots, \Sigma_N$ in $\mathbb{S}^{n+m}(p_i)$, $n \geq 4$. In this case, the $C_i$ can be represented as the graphs of Lipschitz functions $v_i: \Omega_i \subset \mathbb{R}^n \rightarrow \mathbb{R}^{m+1}$ that are stationary solutions to the minimal surface system \eqref{MSS} which are smooth away from $\overline{p}_i \in \Omega_i$, where $\overline{p}_i$ is such that $p_i = (\overline{p}_i,v_i(\overline{p}_i))$. The following proposition completes the proof of Theorem \ref{mainthm}.
\begin{prop}\label{graphical}
    Suppose $M^\epsilon$ has been constructed from the $C_i$ so that it is graphical in $\mathbb{R}^{n+m+1}$ and satisfies the hypotheses of Theorem \ref{mainthm}. Then there is a universal constant $\epsilon_0$ such that $\epsilon < \epsilon_0$ implies the perturbed submanifold $M_U^\epsilon$ guaranteed by Theorem \ref{mainthm} remains graphical.
\end{prop}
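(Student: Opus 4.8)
The plan is to reduce graphicality of $M_U^\epsilon$ to the injectivity of a single map on the base domain and then to use that this map is a uniformly small perturbation of the identity. By the constructions of Sec. 6, $M^\epsilon$ is the graph of a Lipschitz map $w^\epsilon\colon\overline{\Omega^\epsilon}\to\mathbb{R}^{m+1}$ over a compact domain $\Omega^\epsilon\subset\mathbb{R}^n$, smooth away from the points $\overline{p}_i:=\pi(p_i)$, with Lipschitz constant bounded independently of $\epsilon$; here $\pi\colon\mathbb{R}^{n+m+1}\to\mathbb{R}^n$ is the projection onto the first $n$ coordinates. Writing $\Psi^\epsilon(y):=(y,w^\epsilon(y))$ and $\tilde U(y):=U(\Psi^\epsilon(y))=(\tilde U_h(y),\tilde U_v(y))\in\mathbb{R}^n\oplus\mathbb{R}^{m+1}$, the parameterization $y\mapsto\Psi^\epsilon(y)+\tilde U(y)$ of $M_U^\epsilon$ shows that, as soon as $G^\epsilon:=\mathrm{id}+\tilde U_h\colon\overline{\Omega^\epsilon}\to\mathbb{R}^n$ is injective, $M_U^\epsilon$ is the graph of $(w^\epsilon+\tilde U_v)\circ(G^\epsilon)^{-1}$ over $G^\epsilon(\Omega^\epsilon)$. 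Moreover $G^\epsilon$ then is automatically a homeomorphism onto its image, it fixes each $\overline p_i$ (since $U(p_i)=0$), and $DG^\epsilon=I+D\tilde U_h$ is invertible off the $\overline p_i$ for $\epsilon$ small, so $M_U^\epsilon$ is smooth away from $\{p_i\}$, consistent with Theorem \ref{mainthm}. Thus the proposition comes down to proving that $G^\epsilon$ is injective for small $\epsilon$.

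Next I would record the smallness of $b^\epsilon:=G^\epsilon-\mathrm{id}=\tilde U_h$. From $U\in\mathscr{K}$ one has $\norm{b^\epsilon}_{C^0(\overline{\Omega^\epsilon})}\le\norm{U}_{C^0(M^\epsilon)}$, which tends to $0$ as $\epsilon\to0$ by (i)/(i$^\prime$) on $M_{1/2}^\epsilon$ together with the bound $|U(x)|\le c|x-p_i|^{\nu_i}$ on the cones. For the Lipschitz bound, away from the $\overline p_i$ the map $b^\epsilon$ is $C^1$ and, passing from covariant to ambient derivatives as in \eqref{coord1}--\eqref{coord2} and using the uniform tilt of $M^\epsilon$, $|Db^\epsilon|\le c(|\nabla U|+h^{-1}|U|)$, which tends to $0$ uniformly by (iii)/(iv) and (v)/(v$^\prime$); near each $\overline p_i$ the cone-scaling estimates and $\nu_i\ge2$ show that $b^\epsilon$ extends continuously with the same bound down to the vertex. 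Hence on each geometric piece $P$ of $\overline{\Omega^\epsilon}$ — a cone base $\pi(C_i)$, or a bridge projection $\pi(\Gamma_l(\epsilon))$, each of which is quasiconvex with a constant independent of $\epsilon$ by \eqref{bridge1} and the constructions of Sec. 6 — the restriction $b^\epsilon|_P$ is $\kappa(\epsilon)$-Lipschitz with $\kappa(\epsilon)\to0$.

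The injectivity of $G^\epsilon$ then follows by a case split on the location of $y_1\ne y_2\in\overline{\Omega^\epsilon}$. If $y_1,y_2$ lie in a common piece, or in a cone base together with a bridge attached to it (whose union is again uniformly quasiconvex), then $|G^\epsilon(y_1)-G^\epsilon(y_2)|\ge(1-c\kappa(\epsilon))|y_1-y_2|>0$ once $\epsilon$ is small. Otherwise $y_1,y_2$ lie in pieces that share no cone base; in particular they lie in distinct cone bases $\pi(C_i),\pi(C_j)$, which are disjoint compact sets independent of $\epsilon$ (disjoint because $\pi$ is injective on $M^\epsilon\supset C_i\sqcup C_j$), and the bridges attach along the pairwise-disjoint disks $D_{5\delta_0}(q_k)$, so the two pieces are separated by an $\epsilon$-independent $\rho_0>0$; since $|b^\epsilon|\le\norm{U}_{C^0(M^\epsilon)}<\rho_0/2$ for $\epsilon$ small, $|G^\epsilon(y_1)-G^\epsilon(y_2)|\ge\rho_0-2\norm{U}_{C^0(M^\epsilon)}>0$. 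In every case $G^\epsilon(y_1)\ne G^\epsilon(y_2)$, so $G^\epsilon$ is injective and $M_U^\epsilon$ is graphical.

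I expect the only genuinely delicate point to be the behaviour near the singularities: there $M^\epsilon$ is merely Lipschitz and the second fundamental form of $M_U^\epsilon$ blows up, so the $C^1$-smallness of $b^\epsilon$ all the way down to each $\overline p_i$ must be extracted from the weighted bounds defining $\mathscr{K}$ — this is exactly where $\nu_i\ge2$ and the improved decay of $\nabla U$ near the $p_i$ (conditions (v)/(v$^\prime$)) enter. The remaining ingredients — the uniform quasiconvexity of the pieces of $\Omega^\epsilon$ and the $\epsilon$-independent separation of the cone bases — are geometric bookkeeping already built into the construction of $M^\epsilon$ in Sec. 6, and I would invoke them rather than reprove them here.
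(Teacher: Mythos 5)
Your proof is correct and rests on the same fact the paper uses: $\norm{U}_{C_\nu^1}$ can be made as small as one likes, so the horizontal component $\tilde U_h$ is a uniformly small $C^1$ perturbation of the identity on the base domain (with the $\nu_i\ge2$ decay taking care of the vertices). The paper's proof is essentially a one-sentence assertion of this — it chooses $\epsilon_0$ small depending on $\max_i\norm{Dv_i}_{C^0}$ and invokes the smallness of $\norm{U}_{C_\nu^1}$ — whereas you carry out the global injectivity bookkeeping (quasiconvexity of each piece of the projected domain, $\epsilon$-independent separation of distinct pieces) that the paper treats as evident.
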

\begin{proof}
    Apply Theorem \ref{mainthm} to obtain a solution $U$ to \eqref{dp} for small enough $\epsilon < \epsilon_0$ with corresponding minimal submanifold $M_U^\epsilon$. Choosing $\epsilon_0$ smaller if necessary depending on $\max_{i = 1,\ldots,N} \norm{Dv_i}_{C^0(\Omega)}$, we can ensure that $M_U^\epsilon$ is graphical since $\norm{U}_{C_{\nu}^1}$ can be made as small as we like.
\end{proof}
Using Proposition \ref{graphical}, we can now prove Theorem \ref{gsolution}.

\subsection{Graphical Bridges}
In order to apply Proposition \ref{graphical}, we need to show that for each $n \geq 4$ the approximate solution $M^\epsilon$ can be constructed so that it is graphical and satisfies the hypotheses of Theorem \ref{mainthm} when the $C_i$ are assumed graphical. Applying this to the Lawson-Osserman cone will prove Theorem \ref{gsolution}. We split the construction into two cases: $n \geq 6$ and $n = 4,5$. When $n = 4,5$, we will need to improve the bridge region of $M^\epsilon$ constructed when $n \geq 6$ so that \eqref{Hsmall1} is satisfied. We only consider the case $N = 2$, since the case of general $N$ follows by applying the case $N = 2$ inductively.

\subsubsection{The Case $n \geq 6$}
Assume $n \geq 6$ and that each of the $C_i$ ($i = 1,2$) are graphs over $\mathbb{R}^n$. Choose $\delta > 0$ small so that the extended cone 
$$
    C_{i,\delta}^+ := \{ (1-t)p_i + t\theta : \theta \in \Sigma, \text{ } t \in (0, 1 + \delta]\}
$$
is graphical over $\mathbb{R}^n$ for each $i$. Fix $q_i \in \Sigma_i$ and let $\gamma: [0,\ell_0] \rightarrow \mathbb{R}^{n+m+1}$ be any smooth unit speed curve satisfying:
\begin{itemize}
    \item[(i)] $\gamma(0) = q_1$ and $\gamma(\ell_0) = q_2$;
    \item[(ii)] $\gamma$ only intersects the $C_i$ at its endpoints;
    \item[(iii)] $\gamma$ agrees with the radial paths passing through $p_i$ and $q_i$ in $C_{i,\delta}^+$ for each $i = 1,2$;
    \item[(iv)] $\gamma([0,\ell_0])$ is graphical over $\mathbb{R}^n$;
    \item[(v)] $\gamma^\prime(t)$ is not vertical with respect to $\mathbb{R}^n$ for any $t \in [0,\ell_0]$.
\end{itemize}
Let $\mathcal{N}$ be any tubular neighborhood of $\gamma$ and choose smooth vector fields $\{\mu_i(t)\}_{i = 1}^{n-1}$ along $\gamma$ such that $\{\mu_i(t), \gamma^{\prime}(t)\}_{i = 1}^{n-1}$ is an orthonormal set of vector fields for each $t \in [0,\ell_0]$ that spans $T_{\gamma(t)}C_{i,\delta}^+$ for each $t \in [0,\delta] \cap [\ell_0 - \delta, \ell_0]$. Define $\psi: B_{\epsilon}^{n-1}(0) \times [0,\ell_0] \rightarrow \mathbb{R}^{n+m+1}$ by \eqref{strip1}, where $\epsilon < r_0$ and $r_0$ is chosen small enough that the image of $\psi$, we denote $\tilde{\Gamma}(\epsilon)$, lies inside $\mathcal{N}$. Since $C_{i,\delta}^+$ is a graph for each $i = 1,2$ and $\gamma$ is a graph, we can choose the $\mu_i$ such that the projection map $\Pi_n: T_{\gamma(x^n)}\tilde{\Gamma}(\epsilon) \rightarrow \mathbb{R}^n$ is non-singular for each $x^n \in [0,\ell_0]$. By the inverse function theorem, $\tilde{\Gamma}(\epsilon)$ is graphical over $\mathbb{R}^n$ in a small neighborhood of $\psi(0,\ldots, 0,x^n)$ for each $x^n \in [0,\ell_0]$. Thus, a covering argument shows that we can choose $r_0$ so small that $\tilde{\Gamma}(\epsilon)$ is graphical over $\mathbb{R}^n$ for each $\epsilon < r_0$. 

We need to smooth the ends of $\tilde{\Gamma}(\epsilon)$ onto the extended cones $C_{i,\delta}^+$. This can be done by flattening the $C_{i,\delta}^+$ onto the $n$-planes agreeing with $T_{\gamma(t)} C_{i,\delta}^+$ for each $t \in [1, 1+\delta]$ when $i = 1$, and each $t \in [\ell_0 - \delta, \ell_0]$ when $i = 2$ (e.g. see Sec. 8.1.2). After doing so, we obtain a new bridge $\Gamma(\epsilon)$ consisting of the part of $\tilde{\Gamma}(\epsilon)$ that does not intersect the $C_{i,\delta}^+$ and the part of the $C_{i,\delta}^+$ that has been flattened onto the tangent $n$-planes. It is clear from the computations in Sec. 8.1.2 that $\Gamma(\epsilon)$ will remain graphical provided the $C_{i,\delta}^+$ are graphs. Thus, $M^\epsilon := C_1 \cup C_2 \cup \Gamma(\epsilon)$ is a graphical approximate solution satisfying \eqref{Hsmall} and the hypotheses of Theorem \ref{mainthm} up to a diffeomorphism of $\Gamma(\epsilon)$.

\subsubsection{The Cases $n=4,5$}
The construction when $n = 4,5$ is nearly the same as the case $n \geq 6$. However, in this case \eqref{strip1} is replaced with the perturbed bridge defined by \eqref{psiparam}. In both cases, the mean curvature of $\tilde{\Gamma}(\epsilon)$ (i.e. the image of \eqref{psiparam}) along $\gamma$ is zero due to \eqref{vmc2}. We then follow the smoothing process in Sec. 8.1.2 as above to obtain a graphical bridge $\Gamma(\epsilon)$ and a corresponding graphical approximate solution $M^\epsilon$ satisfying \eqref{Hsmall1} when $n = 4,5$. Since the Lawson-Osserman cone is a graphical strictly stable four dimensional minimal cone in $\mathbb{R}^7$ with an isolated singularity, we can apply Proposition \ref{graphical} to $N$ copies of the Lawson-Osserman cone to conclude Theorem \ref{gsolution}. 

\vspace{-6.1pt}

\section{Remarks}
We first note that the results in \cite{DL} and references therein provide ample examples of strictly stable high codimension cones to apply the present construction to. We further remark that the requirement that $\Psi_\lambda \in C_c^\infty(M^\epsilon)$ (see Theorem \ref{mainthm} and Proposition \ref{psilambda}) can be loosened to require only that $\Psi_\lambda \in C_c^{3}(M^\epsilon)$ and that $\partial C_i$ is $C^4$ for each $i = 1,\ldots, N$ (e.g. see \cite{NS2}). In this case, $U$ is smooth in the interior of $M^\epsilon$ and $C^{3}$ up to the boundary, and the same holds for $M_U^\epsilon$. However, we considered $\Psi_\lambda \in C_c^\infty(M^\epsilon)$ for simplicity of presentation. Finally, we note on each $\Sigma_i$ one can specify any finite number of components in the boundary data $\Psi_\lambda$ in the $L^2(\Sigma_i)$ expansion, taking $\epsilon$ smaller as necessary (see Sec. 5 in \cite{NS2}). 

One limitation of Smale's method is that the bridge constructions force the domain of the defining function for the minimal graph to be molecule-shaped. A natural question is whether a similar gluing technique could be used to produce a Lipschitz stationary solution to the minimal surface system with multiple isolated singularities defined on a convex domain, such as a ball. For example, one could imagine producing an approximate solution by positioning the $C_i$ so that their vertices lie on the $\mathbb{R}^n$ subspace of $\mathbb{R}^{n+m+1}$, they are graphs over $\mathbb{R}^n$, and so that the $C_i$ lie entirely over an $n$-ball $B_r^n(0) \subset \mathbb{R}^n$. One could then flatten small extensions of the $C_i$ onto the portion of a graphical $n$-plane lying at height $1 + \epsilon$ above $B_r^n(0)$ for sufficiently small $\epsilon$ and search for a small perturbation of the approximate solution supported in a neighborhood of each of the defining cones to produce a Lipschitz minimal graph with defining function $v : B_r^n(0) \rightarrow \mathbb{R}^{m+1}$. It is reasonable that this type of construction could even be used to produce an entire Lipschitz minimal graph with multiple isolated singularities. Still, one needs to be careful about what condition replaces the boundary data since generic perturbations of $\partial C$ for a minimal cone $C$ with an isolated singularity can perturb away the singularity (\cite{McI}).

Strict stability is important in two key ways: 
\begin{enumerate}
    \item In \cite{CHS, Oo}, the perturbation methods apply to unstable cones. However, the strict stability condition in \cite{NS2} and in this paper helps counteract against boundary effects stemming from the bridges.
    \item It was crucial in deriving the a priori $L^2$ estimates and subsequent Schauder estimates used to apply the Schauder fixed point theorem and to ensure the singularities were preserved after perturbation.
\end{enumerate}
An interesting question considered by Smale is whether the strict stability condition can be removed. Since
$$
    \spec(L) = \Big[\frac{(n-2)^2}{4} + \mu_1, \infty\Big)
$$
on any minimal cone $C$ by separation of variables (e.g. see Sec. 5 in \cite{NS2}), where $\mu_1$ is the smallest eigenvalue of $L$ restricted to the link, if $C$ is not strictly stable then $0 \in \spec(L)$. Hence, there is a Jacobi field on $C$ vanishing on $\partial C$ and at the origin. Since the bridge principle may fail when Jacobi fields exist, even in the smooth case (see Sec. 2 in \cite{W2}), strict stability would likely need to be replaced by other strong analytic conditions; for example, a restrictive class of boundary data.

In the case of a smooth compact embedded submanifold (with or without boundary), there is a simple relationship between strict stability and area-minimization: For any such strictly stable minimal submanifold $M$, there is an open set $U$ containing $M$ such that $M$ has least area among all surfaces (i.e. integral currents) homologous to $M$ in $U$ (\cite{W3}). In the case that each connected component of $M$ has non-empty boundary, it is uniquely area-minimizing with respect to its boundary. When singularities are introduced, the relationship is not clear. For example, the cone over $\mathbb{S}^1\big(\sqrt{\frac{1}{6}}\big) \times \mathbb{S}^5\big(\sqrt{\frac{5}{6}}\big)$ is strictly stable (\cite{DL, Li, NS2}) but not area-minimizing in any open set (\cite{La, Se1, Se}). On the other hand, if $f(z)$ is a homogeneous complex polynomial of degree $n$ on $\mathbb{C}^{n+1}$, then $f^{-1}(0)$ is a complex calibrated cone with an isolated singularity at the origin provided the complex exterior derivative $\partial f$ vanishes only at zero. Using a unique continuation-type argument along with Almgren's regularity theorem (\cite{Al1, Al2}), it can be shown that all area-minimizing cones are uniquely area-minimizing. Hence, $f^{-1}(0)$ is a uniquely area-minimizing minimal cone with an isolated singularity that is not strictly stable (\cite{DL}). Combining these observations with the discussions in the introduction and preceding paragraph, we see that the strict stability of the Lawson-Osserman cone is essential to the present paper. 

Another problem posed by Smale is determining whether any of the examples in Theorem \ref{mainthm} or in Theorem 2.1 in \cite{NS2} are area-minimizing. In the codimension one case, Smale suggested this is plausible provided the cones $C_i$ are strictly area minimizing (see \cite{HS} for the definition) and are positioned such that $\cup C_i$ is uniquely area-minimizing with respect to the boundary $\cup \Sigma_i$. A different approach is to show that one can glue calibrated cones in a way that preserves the calibrated condition. Since there is a well-developed deformation theory for coassociative and special Lagrangian calibrated submanifolds with conical singularities (e.g. \cite{J, Lo, Mc}) and many strictly stable examples, it makes sense to consider these cases.  The primary challenge one expects to face is choosing the boundary data and constructing the bridges so that the calibrated condition and singularities are simultaneously maintained. 

In \cite{Sim}, Simon showed that, with respect to a smooth ambient metric that is close to the Euclidean metric, any closed subset $K \subset \{0\} \times \mathbb{R}^m$ in $\mathbb{R}^{n+m+1}$ can be realized as the singular set of a strictly stable $(n+m)$-dimensional minimal hypersurface without boundary (see  \cite{NS3, NS4, NS5} for more constructions). It would therefore be interesting to study whether gluing methods can be used to join minimal submanifolds, including complete submanifolds (i.e. without boundary), with more general singular sets in a way that preserves their singularities. Since generic perturbations of the boundary can even perturb away singularities of minimal hypersurfaces with higher dimensional singular sets (\cite{MrS}), it is again expected that the boundary data (or an appropriate replacement) will play a fundamental role. 

One could begin by investigating whether singularity preserving bridge principles can be developed for more general minimal submanifolds with isolated singularities. Nevertheless, this also seems difficult since the rate of convergence of the submanifold to its tangent cones at its singularities may be slow (\cite{AL}), and Smale's method seems to require at least quadratic decay. It is therefore unclear how the boundary data for a general minimal submanfold with isolated singularities would interact with the asymptotic behavior of solutions to \eqref{DPnu} since we are not guaranteed a Fourier expansion and there may be no boundary data available at all. 

\section{Appendix}
\subsection{Constructing Approximate Solutions}
We construct approximate solutions $M^\epsilon$ satisfying \eqref{Hsmall} and \eqref{Hsmall1}. The case $n \geq 6$ is due to Smale in \cite{NS1}, while the cases $n = 3,4,5$ are original. 

\subsubsection{The Case $n \geq 6$}
Fix $n \geq 6$ and let $C_1$ and $C_2$ be $n$-dimensional strictly stable cones in $\mathbb{R}^{n+m+1}$ with smooth links $\Sigma_1$ and $\Sigma_2$. In this case, the $\epsilon$-bridge $\Gamma(\epsilon)$ is constructed as a ruled surface which is the image of a smooth sub-strip $S_\epsilon \subset B_{r_0}^{n-1}(0) \times [0,1] \subset \mathbb{R}^n$ under a smooth embedding $\psi: S_\epsilon \rightarrow \mathbb{R}^{n+m+1}$ (\cite{NS1}, p. 507). 

Fix $q_i \in \Sigma_i$ ($i = 1,2$). One begins by joining the cones $C_i$ by a smooth embedded unit speed curve $\gamma: [0, \ell_0] \rightarrow \mathbb{R}^{n+m+1}$ such that:
\begin{itemize}
    \item[(i)] $\gamma(0) = q_1$ and $\gamma(\ell_0) = q_2$;
    \item[(ii)] $\gamma$ only intersects the $C_i$ at its endpoints;
    \item[(iii)] $\gamma$ is smoothly tangent to $C_i$ at $q_i$ for each $i = 1,2$.
\end{itemize}
Let $\mathcal{N}$ be any tubular neighborhood of $\gamma$ and let $\{\mu_i(t)\}_{i = 1}^{n-1}$ be an orthonormal set of vector fields along $\gamma$ that are orthogonal to $\gamma^\prime(t)$ for each $t \in [0,\ell_0]$. Assume further that $\mu_1(t), \ldots, \mu_{n-1}(t), \gamma^\prime(t)$ form an orthonormal basis for $T_{q_1}C_1$ when $t = 0$, and form an orthonormal basis for $T_{q_2}C_2$ when $t = \ell_0$. Define $\psi: B_{r_0}^{n-1} \times [0,\ell_0] \rightarrow \mathbb{R}^{n+m+1}$ by
\begin{equation}
    \psi(x^1,\ldots, x^n) := \gamma(x^n) + \sum_{i = 1}^{n-1} x^i \mu_i(x^n) \label{strip1}
\end{equation}
and note that, for $r_0$ sufficiently small, $\psi$ is a smooth embedding and its image lies inside $\mathcal{N}$. The image of $\psi$ is tangent to $C_1 \cup C_2$ at the $q_i$ ($i = 1,2$), but is not smoothly attached to $C_1 \cup C_2$ along its ends. To handle this, one smoothly extends the cones $C_i$ near $q_i$ so that they contain a small piece of $\gamma$, and pushes the image of $\psi$ smoothly onto the extended pieces. To smooth the boundary of the strip onto $\Sigma_i$, one then restricts the domain of $\psi$ to a sub-strip $S_\epsilon \subset B_{\epsilon}^{n-1} \times [0,\ell_0]$, $\epsilon < r_0$ (\cite{NS1}, pp. 508-509). The $\epsilon$-bridge is then defined to be $\Gamma(\epsilon) := \psi(S_\epsilon)$. The resulting $n$-dimensional submanifold $M^\epsilon := C_1 \cup C_2 \cup \Gamma(\epsilon)$ is an embedded submanifold of $\mathbb{R}^{n+m+1}$ with boundary that is smooth (including the boundary) away from two isolated singularities at the vertices of the cones $C_i$. We call the submanifolds $M^\epsilon$ \emph{approximate solutions}. We further note that the approximate solutions are constructed so that $M^{\epsilon_1} \subset M^{\epsilon_2} \subset M^{r_0}$ for $0 < \epsilon_1 < \epsilon_2 < r_0$.

Since $\psi$ parameterizes $\Gamma(\epsilon)$ and $\norm{\psi}_{C^{4}}$ is bounded independent of $\epsilon$, for each $\delta > 0$ there is a finite collection of smooth local parameterizations of $M_\delta^\epsilon$, we denote $\mathscr{U} := \{(\Omega_l, \psi_l)\}_{l = 1}^J$, with $\psi_1 = \psi$, $\Omega_1 = S_\epsilon$,
\begin{equation}
    \psi_l : \Omega_l \rightarrow M_\delta^\epsilon \text{ for } l = 2,\ldots, J \text{ covering } C_{1,\delta} \cup C_{2, \delta}, \label{param1}
\end{equation}
where $C_{i,\delta}$ are the truncated cones \eqref{tcone}. Furthermore, 
\begin{equation}
\norm{\psi_l}_{C^{k}} \leq c_0\delta^{1-k} \text{ and } c_0^{-1} \leq \norm{D\psi_l}_{C^0} \text{ for each } l = 2, \ldots, J, \text{ } k = 1,2,3,4 \label{psiest}
\end{equation}
for some constant $c_0$ independent of $\epsilon$ and $\delta$. When $l = 1$, the bounds in \eqref{psiest} are independent of $\delta$. Since 
\begin{equation}
    c_0^{-1} \leq \norm{D\psi_1}_{C^0} \leq c_0, \label{param2}
\end{equation}
lengths and volumes are equivalent in $\Omega_1$ and $\Gamma(\epsilon)$. Furthermore, if $(g_{ij})$ is the metric tensor on $M_\delta^\epsilon$ (i.e. $g_{ij} = \psi_{x^i} \cdot \psi_{x^j}$ in any coordinate system $\psi$) in one of the coordinate systems $(\Omega_l, \psi_l)$ ($l = 2,\ldots, J$) and $(g^{ij}) = (g_{ij})^{-1}$, then 
\begin{equation}
    \begin{cases}
        \sum_{i,j = 1}^n \big( \norm{g_{ij}}_{C^{k}} + \norm{g^{ij}}_{C^{k}}\big) \leq c_0\delta^{-k}, \text{ } k = 0,1,2,3 \\
        \lambda_0^{-1} I < (g_{ij}) < \lambda_0I  \\
        \lambda_0^{-1} I < (g^{ij}) < \lambda_0I,
    \end{cases} \label{gest}
\end{equation}
where $I$ is the identity matrix and $\lambda_0$ is a positive constant independent of $\epsilon$ and $\delta$. When $l = 1$, the first bound in \eqref{gest} is independent of $\delta$. Furthermore, for each $l = 2,\ldots, J$ we can find an orthonormal frame $n_1,\ldots, n_{m+1}$ for $NM_\delta^\epsilon \lvert_{\Omega_l}$ such that 
\begin{equation}
    \begin{cases}
        \sum_{\alpha = 1}^{m+1} \norm{n_\alpha}_{C^{k}} \leq c_0\delta^{-k}, \text{ } k = 0,1,2,3.
    \end{cases} \label{nest}
\end{equation}
When $l = 1$, the bound in \eqref{nest} is again independent of $\delta$.

The resulting submanifold $M^\epsilon$ is an approximate solution with mean curvature satisfying the $L^p$ estimate \eqref{Hsmall}. Indeed, if $H_0$ is the mean curvature of $M^\epsilon$, then $H_0$ is bounded independent of $\epsilon$ and supported in $\Gamma(\epsilon)$ since $C_1$ and $C_2$ are minimal. Thus, when $n \geq 6$
\begin{equation*}
    \Big(\int_{M^\epsilon} |H_0|^p\,   \Big)^{\frac{1}{p}} \leq c \mathcal{H}^n(\Gamma(\epsilon))^{\frac{1}{p}} \leq c\epsilon^{\frac{n-1}{p}} \text{ for } p \geq 1
\end{equation*}
and some constant $c$ independent of $\epsilon$. The mean curvature estimate above plays a critical role in deriving the estimates necessary to solve the fixed point problem. This is related to the fact that the mean curvature estimate is better in higher dimensions. It turns out that we need to improve the mean curvature estimate when $n = 3,4,5$. This is done by constructing a strip whose mean curvature is pointwise small depending on $\epsilon$. The new strip will be a small perturbation of \eqref{strip1}.

\subsubsection{The Cases $n = 3,4,5$}
Let $n = 3,4,5$ and let $C_1$, $C_2$ be strictly stable cones in $\mathbb{R}^{n+m+1}$ with smooth links $\Sigma_1$ and $\Sigma_2$. Fix $q_1 \in \Sigma_1$ and choose coordinates $(x^\prime, x^n, z)$ for $\mathbb{R}^{n+m+1}$, where $x^\prime \in \mathbb{R}^{n-1}$ and $z \in \mathbb{R}^{m+1}$, so that:
\begin{itemize}
    \item[(a)] The vertex $p_1$ of $C_1$ coincides with the origin;
    \item[(b)] The line segment joining $p_1$ and $q_1$ lies along the positive $x^n$-axis, $x^n \in [0,1]$.
\end{itemize}
Set $x := (x^\prime, x^n)$. By construction, the $x$-subspace coincides with $T_{(0,x^n,0)} C_1$ for each $x^n \in \mathbb{R}$, where we have extended $C_1$ to an infinite cone by dilation. Furthermore, there is a small neighborhood $N(\epsilon)$ of $q_1$ and a universal constant $r_0 > 0$ such that, if $\epsilon < r_0$, we can represent $N(\epsilon)$ as the graph of a function $v: B_\epsilon^{n-1}(0) \rightarrow \mathbb{R}^{m+1}$, where $v := v(x^\prime)$, $v(0) = (0, \ldots,0) \in \mathbb{R}^{m+1}$, and $Dv(0) = 0 \in \mathbb{R}^{m+1 \times n-1}$. In addition, there is a universal constant $c > 0$ such that the diameter of $N(\epsilon)$ in $\Sigma_1$ is less than $c\epsilon$, and there is a $\beta \in (0,1)$ such that we can represent a small truncated wedge $\mathcal{W}$ of $C_1$ as the graph of a smooth function $w: B_\epsilon^{n-1}(0) \times [\beta, \infty) \rightarrow \mathbb{R}^{m+1}$ by setting $w(x^\prime,x^n) = x^n v\big(\frac{x^\prime}{x^n}\big)$. 

By the definition of $v$ and the product rule applied in the $x^n$ variable, we see that $w(0,x^n) = (0, \ldots, 0) \in \mathbb{R}^{m+1}$ and $Dw(0,x^n) = 0 \in \mathbb{R}^{m+1 \times n}$ for each $x^n \in [\beta, \infty)$. Parameterize $\mathcal{W}$ by $\psi_0(x) := (x, w(x))$. In these coordinates, $\psi_0(0,x^n)$ lies along the $x^n$-axis in $\mathbb{R}^{n+m+1}$ for each $x^n \in [\beta,\infty)$, where $\psi_0(0,1) = q_1 = (0,1,0) \in \mathbb{R}^{n+m+1}$ and the $1$ occupies the $x^n$ slot in both cases. The metric for $\mathcal{W}$ in these coordinates and its inverse are given by 
\begin{equation}
    g := I + Dw^TDw \text{ and } g^{-1} := I + \sum_{k = 1}^\infty (-1)^k Dw^TDw \label{W0metric}
\end{equation}
when $r_0$ is small enough. This can be done since $Dw \sim O(|x^\prime|)$ by a Taylor expansion about the $x^n$-axis. 

Since $\mathcal{W}$ is minimal, \eqref{MSS2} and \eqref{W0metric} together show
\begin{equation*}
    0 = g^{ij}(x)w_{x^ix^j}^\alpha(x) = \Delta_{\mathbb{R}^n} w^\alpha(x) + O(|Dw(x)|^2) \text{ for } \alpha = 1,\ldots, m+1.
\end{equation*}
Here, $\Delta_{\mathbb{R}^n} w^\alpha$ represents the Euclidean Laplacian of the real-valued function $w^\alpha$. Since $Dw \sim O(|x^\prime|)$, we obtain
$$
    \Delta_{\mathbb{R}^n} w^\alpha(x) \sim O(|x^\prime|^2) \text{ for each } \alpha = 1,\ldots, m+1. 
$$
Let $\varphi: \mathbb{R} \rightarrow \mathbb{R}$ be a smooth function satisfying 
\begin{equation*}
    \varphi(t) = \begin{cases}
                1, \text{ } t \in (-\infty, 1] \\
                0, \text{ } t \in [2, \infty)
            \end{cases}
\end{equation*}
and for $\epsilon < r_0$ define $\tilde{w} : B_\epsilon^{n-1}(0) \times [1,2] \rightarrow \mathbb{R}^{m+1}$ by $\tilde{w}(x^\prime, x^n) := \varphi(x^n)w(x^\prime, x^n)$. Then the graph of $\tilde{w}$ flattens $\mathcal{W}$ onto the $x$-subspace for $x^n \in [1,2]$. Let $\tilde{P}_{1,\epsilon}$ be the graph of $\tilde{w}$ and parameterize $\tilde{P}_{1,\epsilon}$ by $\tilde{\psi}(x) := (x,\tilde{w}(x))$ for $x \in B_\epsilon^{n-1}(0) \times [1,2]$. Observe that 
\begin{equation*}
    w_{x^n}^\alpha(x^\prime,x^n) = v^\alpha \Big(\frac{x^\prime}{x^n} \Big) - \frac{x^\prime}{x^n} \cdot D v^\alpha\Big(\frac{x^\prime}{x^n}\Big),
\end{equation*}
so the definition of $v$ and a Taylor expansion about the $x^n$-axis together imply $w_{x^n}^\alpha \sim O(|x^\prime|^2)$. Since $D_{x^\prime}w \sim O(|x^\prime|)$ about the $x^n$-axis, we conclude that $Dw \sim O(|x^\prime|)$. Using this and the definition of $\tilde{w}$, we deduce that $D\tilde{w} \sim O(|x^\prime|)$. Thus, the metric and its inverse in these coordinates are given by
\begin{equation}
    \tilde{g} := I + D\tilde{w}^TD\tilde{w} \text{ and } \tilde{g}^{-1} := I + \sum_{k = 1}^\infty (-1)^k D\tilde{w}^TD\tilde{w}. \label{tildeWmetric}
\end{equation}
The mean curvature of $\tilde{P}_{1,\epsilon}$ is 
\begin{equation}
    \tilde{H}(x) = (\tilde{g}^{ij}(x)\tilde{\psi}_{x^ix^j}(x))^\perp. \label{tildeH}
\end{equation}
We need to estimate the right-hand side of \eqref{tildeH} in terms of $|x^\prime|$. 

Notice that $\tilde{\psi}_{x^i x^j}^k = 0$ for each $k = 1,\ldots, n$, so it suffices to estimate $\tilde{g}^{ij}\tilde{w}_{x^ix^j}^\alpha$ for each $\alpha = 1,\ldots, m+1$. Using the second expression in \eqref{tildeWmetric}, we find
$$
    \tilde{g}^{ij}(x)\tilde{w}_{x^ix^j}^\alpha(x) = \Delta_{\mathbb{R}^n} \tilde{w}^\alpha(x) + O(|D\tilde{w}(x)|^2).
$$ 
Direct computation shows
\begin{equation*}
    \Delta_{\mathbb{R}^n} \tilde{w}^\alpha(x) = \varphi(x^n) \Delta_{\mathbb{R}^n} w^\alpha(x) + 2\varphi^\prime(x^n) w_{x^n}^\alpha(x) + \varphi^{\prime \prime}(x^n) w^\alpha(x).
\end{equation*}
Each term above is $O(|x^\prime|^2)$ due to prior estimates. Furthermore, $D\tilde{w} \sim O(|x^\prime|)$. It follows that $\tilde{g}^{ij}\tilde{w}_{x^ix^j}^\alpha \sim O(|x^\prime|^2)$ for each $\alpha = 1,\ldots, m+1$. Thus, \eqref{tildeH} combined with the above estimates shows
\begin{equation}
    \tilde{H}(x) \sim O(|x^\prime|^2) \text{ on } B_\epsilon^{n-1}(0) \times [1,2]
\end{equation}
provided $\epsilon < r_0$. We can now smooth the boundary of $C_1 \cup \tilde{P}_{1,\epsilon}$ where $\tilde{P}_{1,\epsilon}$ meets $C_1$ by restricting the domain of $\tilde{\psi}$ to a sub-strip $S_{1,\epsilon} \subset B_\epsilon^{n-1}(0) \times [1,2]$ satisfying 
$$
    S_\epsilon \cap B_{\epsilon}^{n-1}(0) \times \Big[\frac{3}{2}, 2\Big] = B_{\frac{\epsilon}{2}}^{n-1}(0) \times \Big[\frac{3}{2}, 2\Big].
$$ 
Set $P_{1,\epsilon} := \tilde{\psi}(S_{1,\epsilon})$ (i.e. the patching region for $C_1$) and $\tilde{C}_1 := C_1 \cup P_{1,\epsilon}$. Let $p_2$ be the vertex of $C_2$ and fix $q_2 \in \Sigma_2$. Repeat the process above on $C_2$ with $p_2$ and $q_2$ replacing $p_1$ and $q_1$, respectively, to obtain an extended submanifold $\tilde{C}_2$ and associated patching region $P_{2,\epsilon}$. Going forward, we will write $P_\epsilon := P_{1,\epsilon} \cup P_{2,\epsilon}$ for any $\epsilon < r_0$.

Suppose $n = 3$ and choose coordinates $(x,z)$ for $\mathbb{R}^{4+m}$ such that $x \in \mathbb{R}^3$ and $z \in \mathbb{R}^{m+1}$. Suppose further that the $x$-subspace is tangent to $\tilde{C}_i$ at each point along the segment joining $p_i$ to $\tilde{q}_i$ and each $i = 1,2$. Orient $\tilde{C}_1$ and $\tilde{C}_2$ so that $p_1$ coincides with the origin, $p_2 = (0,0,5,0 \ldots,0)$, the segment joining $p_1$ to $p_2$ lies on the $x^3$-axis in $\mathbb{R}^{4+m}$, and the segment joining $\tilde{q}_1$ to $\tilde{q}_2$ is a subset of the segment joining $p_1$ to $p_2$. Set
$$
    \Gamma(\epsilon) := P_\epsilon \cup (B_{\frac{\epsilon}{2}}^2(0) \times [2, 3]),  
$$
where the second set in the union is embedded in the $x$-subspace in $\mathbb{R}^{4+m}$. Then $M^\epsilon := C_1 \cup C_2 \cup \Gamma(\epsilon)$ is an approximate solution with mean curvature $H_0$ satisfying
$$
    \Big(\int_{M^\epsilon} |H_0|^p \Big)^{\frac{1}{p}} = \Big(\int_{P_\epsilon} |H_0|^p \Big)^{\frac{1}{p}} \leq c_1 (\epsilon^{2p}\mathcal{H}^3(P_\epsilon))^{\frac{1}{p}} = c_2\epsilon^{\frac{2 + 2p}{p}} \text{ for } p \geq 1.
$$

When $n = 4,5$, we construct a strip $T_\epsilon$ that smoothly attaches $\tilde{C}_1$ to $\tilde{C}_2$ and has zero mean curvature along its center curve. 
Set $\tilde{q}_1 := \tilde{\psi}(0,2) \in \tilde{C}_1$ and define $\tilde{q}_2$ similarly for $\tilde{C_2}$. Let $\gamma: [0,\ell_0] \rightarrow \mathbb{R}^{n+ m + 1}$ be any smooth embedded unit-speed curve such that:
\begin{itemize}
    \item[(i)] $\gamma(0) = \tilde{q}_1$ and $\gamma(\ell_0) = \tilde{q}_2$;
    \item[(ii)] $\gamma$ smoothly extends the line segment joining $p_1$ and $\tilde{q}_1$ as well as the line segment joining $p_2$ and $\tilde{q}_2$.
\end{itemize}
Let $\mathcal{N}$ be any tubular neighborhood of $\gamma$ and let $\mu_1(t),\ldots, \mu_{n-1}(t)$ be smooth vector fields along $\gamma$ for $t \in [0,\ell_0]$ such that $\{\mu_i(t), \gamma^\prime(t)\}_{i = 1}^{n-1}$ is an orthonormal set for each $t$. Suppose further that the set spans $T_{\tilde{q}_1} \tilde{C}_1$ when $t = 0$ and spans $T_{\tilde{q}_2} \tilde{C}_2$ when $t = \ell_0$. For $\epsilon < r_0$, define $\psi: B_{\frac{\epsilon}{2}}^{n-1}(0) \times [0,\ell_0] \rightarrow \mathbb{R}^{n +m +1}$ by 
\begin{equation}
    \psi(x^1, \ldots, x^n) := \gamma(x^n) + \sum_{i = 1}^{n-1} x^i \mu_i(x^n) - \sum_{i = 1}^{n-1} \frac{(x^i)^2}{2(n-1)}\gamma^{\prime \prime}(x^n) \label{psiparam}
\end{equation}
and let $T_\epsilon$ be the image of $\psi$. Then $T_\epsilon$ lies entirely inside $\mathcal{N}$ for $r_0$ small enough. Notice that $\psi(0,\ldots, x^n) = \gamma(x^n)$ for each $x^n \in [0,\ell_0]$ and that $\psi_{x^i}(0,\ldots,x^n) = \mu_i(x^n)$ for each $i = 1, \ldots, n-1$, while $\psi_{x^n}(0,\ldots,x^n) = \gamma^\prime(x^n)$ for each $x^n \in [0,\ell_0]$. Since $\{\mu_i(x^n), \gamma^\prime(x^n)\}_{i=1}^{n-1}$ is an orthonormal basis for $T_{\gamma(x^n)}T_\epsilon$, the mean curvature of $T_\epsilon$ along $\gamma$ is
\begin{equation}
    \Big(\sum_{i = 1}^n \psi_{x^ix^i}(0,\ldots,x^n)\Big)^{\perp} =  (\gamma^{\prime \prime}(x^n) - \gamma^{\prime \prime}(x^n))^{\perp} = 0 \text{ for each } x^n \in [0,\ell_0]. \label{vmc2}
\end{equation}
Moreover, since $|\gamma^{\prime \prime}(0)| = |\gamma^{\prime \prime}(\ell_0)| = 0$, $T_\epsilon$ smoothly attaches to the $\tilde{C}_i$ along the ends of $P_\epsilon$.

Define $\Gamma(\epsilon) := P_\epsilon \cup T_\epsilon$. Then $M^\epsilon := C_1 \cup C_2 \cup \Gamma(\epsilon)$ is an approximate solution. Furthermore, a second-order Taylor expansion of the mean curvature $H_0$ on $P_\epsilon$ shows that $|H_0| \leq c_3 \epsilon^2$, while a first-order Taylor expansion on $T_\epsilon$ shows $|H_0| \leq c_4 \epsilon$. Therefore, 
\begin{equation*}
    \int_{M^\epsilon} |H_0|^p  = \int_{P_\epsilon}|H_0|^p + \int_{T_\epsilon} |H_0|^p \leq c_3\epsilon^{2p}\mathcal{H}^n(P_\epsilon) + c_4\epsilon^p\mathcal{H}^n(T_\epsilon)
\end{equation*}
implying
$$
    \Big(\int_{M^\epsilon}|H_0|^p\Big)^{\frac{1}{p}} \leq c_5 \epsilon^{\frac{n-1 + p}{p}} \text{ for each } p \geq 1 \text{ when } n =4,5.
$$

\begin{rmk}\label{nbdds}
Notice that the $L^p$ estimate for $H_0$ above is better than necessary when $n = 3,4,5$ (see \eqref{Hsmall1}). After examining the construction of $M^\epsilon$ for $n = 3,4,5$, we see that in each case there is a finite collection of smooth local parameterizations of $M_\delta^\epsilon$, $\mathscr{U} := \{(\Omega_l,\psi_l)\}_{l = 1}^J$, such that $\psi_1$ parameterizes $\Gamma(\epsilon)$ and \eqref{param1}-\eqref{gest} are satisfied. Furthermore, for each $l = 2, \ldots, J$ we can find an orthonormal frame for $NM_{\delta}^\epsilon \lvert_{\Omega_l}$ satisfying \eqref{nest}. On $NM_{\delta}^\epsilon \lvert_{\Omega_1}$, the bound in \eqref{nest} is independent of $\delta$.
\end{rmk}

\subsection{Schauder Estimates}
To prove Proposition \ref{sest4}, we need interior Schauder estimates in $B_1 := B_1^n(0)$ for elliptic systems in $\mathbb{R}^n$. Define the operator $L_0$ on $u := (u^1,\ldots, u^{m+1}) \in C^2(B_1; \mathbb{R}^{m+1})$ given component-wise by
\begin{equation}
    (L_0 u)^\alpha := a^{ij}u_{x^ix^j}^\alpha +  b_{\beta}^{i\alpha} u_{x^i}^\beta + c_{\beta}^\alpha u^\beta \text{ for each } \alpha = 1,2,\ldots,m+1, \label{genstab3}
\end{equation}
where we have used summation notation in the variables $i,j,\beta$. We assume the coefficients are uniformly bounded in $C^{0,\gamma}$:
\begin{align*}
    \sum_{i,j}\big(\norm{a^{ij}}_{C^0(B_1)} + |a^{ij}|_{\gamma, B_1}\big) &\leq c_0 \\
    \sum_{i,\alpha,\beta}\big(\norm{b_{\beta}^{i\alpha}}_{C^0(B_1)} + |b_{\beta}^{i\alpha}|_{\gamma, B_1}\big) &\leq c_0 \\
    \sum_{\alpha,\beta} \big(\norm{c_{\beta}^\alpha}_{C^0(B_1)} + |c_{\beta}^\alpha|_{\gamma, B_1}\big) &\leq c_0.
\end{align*}
In addition, we assume:
$$
    \lambda_0^{-1} I \leq (a^{ij}) \leq \lambda_0 I.
$$
The Schauder estimates we need for the system \eqref{genstab3} are as follows:

\begin{prop}\label{eschauder}
    If $u \in C^{2,\gamma}(B_1;\mathbb{R}^{m+1})$ satisfies $L_0 u = f$ in $B_1$ for some $f \in C^{0,\gamma}(B_1; \mathbb{R}^{m+1})$, then there is a constant $c$ depending on $n$, $m$, $\gamma$, and $c_0$ such that 
    $$
        \norm{u}_{C^{2,\gamma}(B_{\frac{1}{2}})} \leq c\big(\norm{u}_{C^0(B_1)} + \norm{f}_{C^{0,\gamma}(B_1)}\big).
    $$
\end{prop}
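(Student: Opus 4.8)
The plan is to exploit the fact that the principal part of $L_0$ is \emph{diagonal} — it is a single scalar uniformly elliptic operator $a^{ij}\partial_{x^ix^j}$ applied to each component — so that no Legendre--Hadamard hypothesis is actually needed and the scalar interior Schauder estimate of \cite{GT} applies componentwise. First I would move all lower order terms to the right-hand side: for each fixed $\alpha$, the equation $L_0 u = f$ reads
\begin{equation*}
    a^{ij}u_{x^ix^j}^\alpha = f^\alpha - b_\beta^{i\alpha}u_{x^i}^\beta - c_\beta^\alpha u^\beta =: g^\alpha .
\end{equation*}
Because the coefficients $b_\beta^{i\alpha}, c_\beta^\alpha$ are bounded in $C^{0,\gamma}(B_1)$ and $u \in C^{2,\gamma} \subset C^{1,\gamma}$, the function $g^\alpha$ belongs to $C^{0,\gamma}$, and for any $\rho \in (0,1)$ one has $\norm{g^\alpha}_{C^{0,\gamma}(B_\rho)} \leq \norm{f}_{C^{0,\gamma}(B_1)} + c_1\norm{u}_{C^{1,\gamma}(B_\rho)}$ with $c_1 = c_1(m,\gamma,c_0)$. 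Each $u^\alpha$ now solves a scalar uniformly elliptic equation (with ellipticity constant $\lambda_0$) whose only coefficients are the $a^{ij}$, so the interior Schauder estimate (Theorem 6.2 / Corollary 6.3 in \cite{GT}) gives, on concentric balls $B_{\rho_1} \subset B_{\rho_2} \subset B_1$,
\begin{equation*}
    \norm{u^\alpha}_{C^{2,\gamma}(B_{\rho_1})} \leq c_2\big(\norm{u^\alpha}_{C^0(B_{\rho_2})} + \norm{g^\alpha}_{C^{0,\gamma}(B_{\rho_2})}\big),
\end{equation*}
with $c_2$ depending on $n,\gamma,c_0,\lambda_0$ and on $\rho_2 - \rho_1$. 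Summing in $\alpha$ and inserting the bound for $g^\alpha$ yields
\begin{equation*}
    \norm{u}_{C^{2,\gamma}(B_{\rho_1})} \leq c_3\big(\norm{u}_{C^0(B_{\rho_2})} + \norm{f}_{C^{0,\gamma}(B_1)} + \norm{u}_{C^{1,\gamma}(B_{\rho_2})}\big).
\end{equation*}

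The remaining task is to remove the $C^{1,\gamma}$ term on the right, and this is the step that requires care. The interpolation inequalities for Hölder norms (Lemma 6.32 in \cite{GT} and its consequences) give, for each $\eta>0$, a constant $C_\eta$ with $\norm{u}_{C^{1,\gamma}(B)} \leq \eta\,\norm{u}_{C^{2,\gamma}(B)} + C_\eta\norm{u}_{C^0(B)}$ on a \emph{fixed} ball $B$; but the Schauder estimate above controls $\norm{u}_{C^{2,\gamma}}$ on the \emph{smaller} ball $B_{\rho_1}$ in terms of $\norm{u}_{C^{1,\gamma}}$ on the larger $B_{\rho_2}$, so a direct absorption is not available. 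The standard remedy — which I would carry out — is either to phrase the whole argument in the dimensionless interior norms of \cite{GT}, §6.1 (for which Theorem 6.2 is already stated in absorbed form), or to run a chaining/iteration argument over a sequence of radii $\tfrac{1}{2} = r_0 < r_1 < r_2 < \cdots \uparrow 1$ with $r_{k+1}-r_k$ geometrically summable: apply the two displayed estimates between consecutive radii, use the interpolation inequality on $B_{r_{k+1}}$ with $\eta$ chosen small relative to $c_3$, and sum a geometric series in $k$ to conclude. I expect this bookkeeping of radii and the convergence of the iteration to be the only real obstacle; everything else is the black-box scalar theory of \cite{GT}.

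After the $C^{1,\gamma}$ contribution is absorbed one is left with
\begin{equation*}
    \norm{u}_{C^{2,\gamma}(B_{\frac{1}{2}})} \leq c\big(\norm{u}_{C^0(B_1)} + \norm{f}_{C^{0,\gamma}(B_1)}\big),
\end{equation*}
with $c = c(n,m,\gamma,c_0)$ (the dependence on $\lambda_0$ being subsumed since $\lambda_0$ is itself controlled by $c_0$ through the ellipticity bound $\lambda_0^{-1}I \leq (a^{ij}) \leq \lambda_0 I$ together with $\norm{a^{ij}}_{C^0} \leq c_0$). This is exactly the asserted estimate, which is then invoked in the proof of Proposition \ref{sest4} after rescaling from $B_1$ to a ball $B_{2r}^n$.
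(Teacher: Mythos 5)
Your proof is correct, and it rests on the same key observation the paper makes: the principal part of $L_0$ is a single scalar operator $a^{ij}\partial_{x^i x^j}$ acting diagonally on the components, so no Legendre--Hadamard hypothesis is needed and the scalar Schauder theory suffices. The paper, however, leaves the details to the reader and suggests a slightly different route: rather than treating the scalar Schauder theorem as a black box, it recommends redoing the usual perturbation argument --- freeze coefficients at a point and compare $a^{ij}(x_0)\partial_{x^ix^j}$ with the Laplacian on each component --- so that the absorption of lower-order and coupling terms is built into the proof from the start, exactly as in the scalar case. Your approach is more modular (apply the ready-made scalar estimate componentwise, then absorb the leftover $\|u\|_{C^{1,\gamma}}$ by interpolation and an iteration over radii), at the cost of the extra bookkeeping you correctly flag. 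Both routes give the same constant dependence and the same conclusion. One small economy you could note: the iteration over geometrically-spaced radii can be avoided altogether by phrasing the argument in the weighted interior norms $\|\cdot\|^{*}_{k,\gamma}$ of \cite{GT}, Ch.\ 6, for which the interpolation inequalities are stated with the absorption already done on a fixed ball; this is essentially equivalent to the ``from scratch'' perturbation route the paper points to.
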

 A simple rescaling argument then gives the following useful corollary:
\begin{coro} \label{cor1}
     If $u \in C^{2,\gamma}(B_{2r};\mathbb{R}^{m+1})$ satisfies $L_0 u = f$ in $B_{2r}$ for some $f \in C^{0,\gamma}(B_{2r}; \mathbb{R}^{m+1})$, then there is a constant $c$ depending on $n$, $m$, $\gamma$, and $c_0$ such that 
    $$
        |D^2u|_{\gamma, B_r} \leq c\big(r^{-2-\gamma}\norm{u}_{C^0(B_{2r})} + r^{-\gamma}\norm{f}_{C^{0,\gamma}(B_{2r})}\big).
    $$
\end{coro}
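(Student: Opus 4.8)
The plan is to deduce Corollary \ref{cor1} from Proposition \ref{eschauder} by a dilation of the domain, keeping careful track of how each quantity scales. Since $L_0$ was defined on $B_1$, having $L_0u = f$ on $B_{2r}$ forces $B_{2r} \subseteq B_1$, hence $2r \leq 1$; this bound will be used repeatedly. Given $u \in C^{2,\gamma}(B_{2r};\mathbb{R}^{m+1})$ with $L_0u = f$ on $B_{2r}$, set $\tilde u(y) := u(2ry)$ for $y \in B_1$. By the chain rule $\tilde u$ solves $\tilde L_0 \tilde u = \tilde f$ on $B_1$, where $\tilde L_0$ is the operator with coefficients $\tilde a^{ij}(y) := a^{ij}(2ry)$, $\tilde b_\beta^{i\alpha}(y) := 2r\, b_\beta^{i\alpha}(2ry)$, $\tilde c_\beta^\alpha(y) := (2r)^2 c_\beta^\alpha(2ry)$, and $\tilde f(y) := (2r)^2 f(2ry)$.

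The main point to check is that $\tilde L_0$ satisfies the hypotheses imposed on $L_0$ just before Proposition \ref{eschauder}, with $c_0$ replaced by a constant depending only on $\gamma$ and $c_0$. Ellipticity is immediate, since $(\tilde a^{ij}(y)) = (a^{ij}(2ry))$ takes the same set of values as $(a^{ij})$, so $\lambda_0^{-1}I \leq (\tilde a^{ij}) \leq \lambda_0 I$. For the $C^0$ norms, the coefficients $\tilde b$ and $\tilde c$ carry nonnegative powers of $2r \leq 1$, so their sup norms do not increase. For the Hölder seminorms, dilating by $2r$ introduces an extra factor $(2r)^\gamma \leq 1$; thus $|\tilde a^{ij}|_{\gamma, B_1} = (2r)^\gamma|a^{ij}|_{\gamma, B_{2r}} \leq c_0$, $|\tilde b_\beta^{i\alpha}|_{\gamma, B_1} = (2r)^{1+\gamma}|b_\beta^{i\alpha}|_{\gamma, B_{2r}} \leq c_0$, and likewise for $\tilde c$. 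Hence Proposition \ref{eschauder} applies to $\tilde u$ and gives $\norm{\tilde u}_{C^{2,\gamma}(B_{\frac{1}{2}})} \leq c\big(\norm{\tilde u}_{C^0(B_1)} + \norm{\tilde f}_{C^{0,\gamma}(B_1)}\big)$ with $c = c(n,m,\gamma,c_0)$.

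Finally I would undo the scaling. From $\tilde u_{y^iy^j}(y) = (2r)^2 u_{x^ix^j}(2ry)$ and one further dilation factor in the seminorm one gets $|D^2\tilde u|_{\gamma, B_{\frac{1}{2}}} = (2r)^{2+\gamma}|D^2u|_{\gamma, B_r}$, so $|D^2u|_{\gamma, B_r} \leq (2r)^{-2-\gamma}\norm{\tilde u}_{C^{2,\gamma}(B_{\frac{1}{2}})}$; also $\norm{\tilde u}_{C^0(B_1)} = \norm{u}_{C^0(B_{2r})}$ and $\norm{\tilde f}_{C^{0,\gamma}(B_1)} \leq (2r)^2\norm{f}_{C^{0,\gamma}(B_{2r})}$, again using $(2r)^\gamma \leq 1$ on the seminorm term. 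Combining these and absorbing the powers of $2$ into $c$ yields
$$
  |D^2u|_{\gamma, B_r} \leq c\big(r^{-2-\gamma}\norm{u}_{C^0(B_{2r})} + r^{-\gamma}\norm{f}_{C^{0,\gamma}(B_{2r})}\big),
$$
as claimed. There is no genuine obstacle here: the only things requiring attention are the bookkeeping of the scaling exponents (which is precisely why the right-hand side carries $r^{-2-\gamma}$ and $r^{-\gamma}$ rather than a single uniform power of $r$) and the observation that every scaling factor attached to a coefficient norm is a nonnegative power of $2r \leq 1$, so that the constant in the rescaled version of Proposition \ref{eschauder} can be taken independent of $r$.
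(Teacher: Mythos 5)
Your proof is correct and is exactly the rescaling argument the paper indicates with "A simple rescaling argument then gives the following useful corollary." The bookkeeping of how each coefficient, seminorm, and derivative scales under $y \mapsto 2ry$ is handled correctly, and the observation that every scaling factor on a coefficient is a nonnegative power of $2r \leq 1$ (so the constant from Proposition~\ref{eschauder} is uniform in $r$) is precisely the point that makes the argument work.
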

Since the second-order term in \eqref{genstab3} does not have any coupling of the components of $u$, Proposition \ref{eschauder} and Corollary \ref{cor1} can be proved by the usual perturbation methods used in the proof of the Schauder estimates for uniformly elliptic scalar equations (i.e. comparing with the Laplacian). We leave the details to the reader (see also Sec. 3 in \cite{NS1}).

\end{document}